\numberwithin{equation}{section}
\theoremstyle{plain}
\newtheorem{thm}{Theorem}[section]
\newtheorem{lem}[thm]{Lemma}
\newtheorem{prop}[thm]{Proposition}
\newtheorem{cor}[thm]{Corollary}
\newtheorem{lem-defi}[thm]{Lemma-Definition}
\newtheorem{prop-defi}[thm]{Proposition-Definition}
\theoremstyle{remark}
\newtheorem{rmk}[thm]{Remark}
\newtheorem{example}[thm]{Example}
\theoremstyle{definition}
\newtheorem{defi}[thm]{Definition}
\newcommand{\mcA}{{\mathcal A}}
\newcommand{\mcB}{{\mathcal B}}
\newcommand{\mcC}{{\mathcal C}}
\newcommand{\mcD}{{\mathcal D}}
\newcommand{\mcG}{{\mathcal G}}
\newcommand{\mcK}{{\mathcal K}}
\newcommand{\mcL}{{\mathcal L}}
\newcommand{\mcM}{{\mathcal M}}
\newcommand{\mcN}{{\mathcal N}}
\newcommand{\mcO}{{\mathcal O}}
\newcommand{\mcS}{{\mathcal S}}
\newcommand{\mcT}{{\mathcal T}}
\newcommand{\mcU}{{\mathcal U}}
\newcommand{\mcV}{{\mathcal V}}
\newcommand{\mcW}{{\mathcal W}}
\newcommand{\mcX}{{\mathcal X}}
\newcommand{\mcY}{{\mathcal Y}}
\newcommand{\mcZ}{{\mathcal Z}}
\newcommand{\mbA}{{\mathbb A}}
\newcommand{\C}{{\mathbb C}}
\newcommand{\mbF}{{\mathbb F}}
\newcommand{\mbG}{{\mathbb G}}
\newcommand{\mbP}{{\mathbb P}}
\newcommand{\mbQ}{{\mathbb Q}}
\newcommand{\mbR}{{\mathbb R}}
\newcommand{\mbS}{{\mathbb S}}
\newcommand{\mbZ}{{\mathbb Z}}
\newcommand{\msO}{{\mathscr O}}
\newcommand{\msT}{{\mathscr T}}
\newcommand{\mfC}{{\mathfrak C}}
\newcommand{\mfD}{{\mathfrak D}}
\newcommand{\mff}{{\mathfrak f}}
\newcommand{\mfg}{{\mathfrak g}}
\newcommand{\mfh}{{\mathfrak h}}
\newcommand{\mfl}{{\mathfrak l}}
\newcommand{\mfm}{{\mathfrak m}}
\newcommand{\bP}{{\mathbb P}}
\newcommand{\id}{{\rm id}}
\newcommand{\Pic}{\text{Pic}}
\newcommand{\Aut}{\text{Aut}}
\newcommand{\Hom}{\text{Hom}}
\newcommand{\Sym}{\text{Sym}}
\newcommand{\End}{\text{End}}
\newcommand{\diag}{\text{diag}}
\newcommand{\fg}{{\mathfrak g}}
\newcommand{\fh}{{\mathfrak h}}
\newcommand{\aut}{{\mathfrak a}{\mathfrak u}{\mathfrak t}}
\newcommand{\BP}{{\mathbb P}}
\newcommand{\0}{{\mathcal O}}
\def\RatCurves{\mathop{\rm RatCurves}\nolimits}
\def\Aut{\mathop{\rm Aut}\nolimits}
\def\Gr{\mathop{\rm Gr}\nolimits}
\def\IG{\mathop{\rm IG}\nolimits}
\def\OG{\mathop{\rm OG}\nolimits}
\def\Sp{\mathop{\rm Sp}\nolimits}
\def\SO{\mathop{\rm SO}\nolimits}
\def\Sym{\mathop{\rm Sym}\nolimits}
\def\Pic{\mathop{\rm Pic}\nolimits}
\def\End{\mathop{\rm End}\nolimits}
\def\Hom{\mathop{\rm Hom}\nolimits}
\def\PGL{\mathop{\rm PGL}\nolimits}
\def\PSL{\mathop{\rm PSL}\nolimits}
\newcommand\LG{\mathop{\rm LG}}
\newcommand{\sC}{{\mathcal C}}
\newcommand{\sH}{{\mathcal H}}
\newcommand{\sK}{{\mathcal K}}
\newcommand{\sL}{\mathcal{L}}
\newcommand{\sO}{{\mathcal O}}
\newcommand{\sU}{{\mathcal U}}
\newcommand{\sW}{{\mathcal W}}
\newcommand{\sX}{{\mathcal X}}
\newcommand{\sZ}{{\mathcal Z}}
\newcommand{\n}{{\rm n}}
\def\cone{\mathop{\rm cone}\nolimits}
\def\Exc{\mathop{\rm Exc}\nolimits}
\def\End{\mathop{\rm End}\nolimits}
\def\Div{\mathop{\rm Div}\nolimits}
\def\aff{\mathop{\rm aff}\nolimits}
\def\FRC{\mathop{\rm FRC}\nolimits}
\begin{document}

\title[Rigidity of wonderful compactifications]{Rigidity of wonderful  group compactifications under Fano deformation}

\author{Baohua Fu and Qifeng Li}

\thanks{\today}

\maketitle

\begin{abstract}
 For  a complex connected semisimple linear algebraic group $G$ of adjoint type and of rank $n$, De Concini and Procesi constructed its wonderful compactification $\bar{G}$, which is a smooth Fano $G \times G$-variety of Picard number $n$ enjoying  many interesting properties.
In this paper, it is shown that the wonderful compactification $\bar{G}$  is rigid under Fano deformation. Namely, for any regular family
 of  Fano manifolds over a connected base, if one fiber is isomorphic to $\bar{G}$, then so are  all other fibers. This answers a question raised by Bien and Brion in their work on the local rigidity of wonderful varieties.
\end{abstract}

\tableofcontents

\section{Introduction}

Throughout this paper, we work over the complex number field. A smooth projective variety $X$ is said to be {\em rigid under projective deformation} if for any smooth projective family over a connected base  $ \sX \to B$ with one fiber $\sX_{b_0}$ isomorphic to $X$, then all fibers $\sX_b$ are isomorphic to $X$. In this article, by {\it rigid} without further qualification, we will mean {\it rigid under projective deformation}. The projective space $\mbP^n$ is rigid by the characterization of projective spaces (\cite{KO}). But in general, the rigidity is a strong property  which is difficult to check.  We refer to  surveys \cite{S} and \cite{H06} for an account of the history and the  development of this problem.

To show the rigidity of a smooth projective variety $X$, it is natural to prove first that $X$ is {\em locally rigid}, namely $\sX_b \simeq X$ for all $b$ in an analytic open neighborhood of $b_0$.  By Kodaira-Spencer deformation theory and by Akizuki-Nakano vanishing theorem(\cite{AN}), a smooth Fano variety $X$ is locally rigid if and only if $H^1(X, T_X)=0$. The latter can be effectively computed in many concrete cases.  For example, rational homogeneous spaces $G/P$ or more generally  spherical  Fano manifolds without color are locally rigid (\cite[Proposition 4.2]{BB96}).

A locally rigid variety $X$ is rigid if and only if it is {\em rigid under specializations}, namely if for any smooth projective family over the unit disk $ \sX \to \Delta$ with  $\sX_{t} \simeq X$ for all $t \neq 0$,  we have $\sX_0 \simeq X$.  It is a difficult and subtle problem to prove the rigidity under specializations and there is no known  cohomological characterization  of it.  Even for rational homogeneous varieties $G/P$ of Picard number one, the rigidity does not always hold. To wit, let $B_3/P_2$ be the variety of lines on a $5$-dimensional smooth hyperquadric $\mathbb{Q}^5$. Pasquier and Perrin constructed in \cite{PP} an explicit family specializing $B_3/P_2$ to a smooth projective $G_2$-variety.
In \cite{HL2}, it is shown that this is the only smooth non-isomorphic  specialization of $B_3/P_2$.
It turns out that $B_3/P_2$ is the only exception among all $G/P$ with Picard number one, as shown by the following
\begin{thm}[\cite{Hw97}, \cite{HM98}, \cite{HM02}, \cite{HM}] \label{t.HMRigidity}\footnote{In the statement of \cite{Hw97} there was an error in which $B_3/P_2$ was also stated to be rigid under K\"ahler deformation. The error was not realized in \cite{HM02} and \cite{HM}, and was corrected in \cite{HL2}.}
 A rational homogeneous variety of Picard number 1 is rigid except for $B_3/P_2$.
\end{thm}

The main ingredient for the proof is the VMRT theory developed by Hwang and Mok.  For a uniruled projective manifold $X$ and a fixed family of minimal rational curves $\sK$, the variety of minimal rational tangents (VMRT for short) of $X$ (with respect to $\sK$) at a general point $x$ is the closed subvariety $\sC_x \subset \mbP T_xX$ consisting of all tangent directions at $x$ of curves in $\sK$ passing through $x$. The projective geometry of $\sC_x \subset \mbP T_xX$ encodes a lot of global geometry of $X$ and in some cases, we can reconstruct $X$ from its VMRT at general points, which is the case for all $G/P$ of Picard number one by series of works (\cite{Mok08}, \cite{Hong-Hwang}, \cite{HL1}, \cite{HLT}).

%For rational homogeneous space $G/P$ associated to a long root, it turns out its VMRT is again homogeneous, hence we can use an induction to prove the invariance of VMRT.

Let us recall the main strategy of the proof of Theorem \ref{t.HMRigidity}. As $G/P$ is locally rigid, we need to check the rigidity under specializations $\sX \to \Delta$. The family of minimal rational curves on $G/P \simeq \sX_t$ deforms to a family of minimal rational curves on $\sX_0$.  The key step is to show that the VMRT of $\sX_0$ at a general point  is projectively equivalent to  that of $G/P$, then we can apply the aforementioned theorem to reconstruct $G/P$ from its VMRT.
The rational homogeneous variety $B_3/P_2$ is an exception in Theorem \ref{t.HMRigidity} precisely because its VMRT is isomorphic to $\BP^1 \times
\mathbb{Q}^1 \subset \mathbb{P}^5$, which  can be deformed to the Hirzebruch surface $S_{1,3} = \mbP_{\mbP^1} (\mathcal{O}(1) \oplus \mathcal{O}(3)) \subset \mathbb{P}^5$ as embedded projective subvarieties. Here to remember the projective embedding, we use $\mathbb{Q}^1$ to  denote a smooth conic in $\mathbb{P}^2$.

For higher Picard number case, the rigidity problem becomes even more difficult.  As a simple example, the surface $\mbP^1 \times \mbP^1$ can be specialized to any even-degree
Hirzebruch surface $\mbP_{\mbP^1}(\0(-a) \oplus \0(a))$.  This is the reason that  we will only consider {\em rigidity under Fano deformation}, namely the fibers of $\sX \to B$  are assumed to be Fano.   We feel this is the right framework for higher Picard number cases.  Note that for Picard number one case, the two notions of rigidity are the same.

One of the nice features of Fano specializations is that they preserve product structures, as shown by the second-named author in \cite{Li}. In particular, a product of Fano manifolds is rigid under Fano deformation if and only if  each factor is rigid.
As an immediate corollary,  a product of rational homogeneous varieties of Picard number one is rigid under Fano deformation if and only if  none of its factors is isomorphic to $B_3/P_2$.

It is proven in \cite{WW} that complete flag varieties $G/B$ are rigid under Fano deformation, by using the characterization of complete flag varieties as the only projective Fano manifolds whose elementary Mori contractions are all $\mbP^1$-fibrations. Recently, the second-named author has proven in \cite{Li2} the rigidity under Fano deformation of many rational homogeneous varieties  of higher Picard numbers by using Cartan connections.  However, one should bear in mind that the rigidity under Fano deformation does not always hold, and a well-known  counter-example of higher Picard number (see for example \cite[Example 3.2]{WW}) is $\BP T_{\mbP^{2n}}$, which can be specialized to $\mbP_{\mbP^{2n-1}}(\mathcal{N}(1) \oplus \mathcal{O}(2))$, where $\mathcal{N}$ denotes the null-correlation bundle on $\mbP^{2n-1}$.  It is still an open problem to classify rational homogeneous varieties which are rigid under Fano deformation.
%while for $\BP T^*_{\mbP^n}$ we can show it is rigid under Fano deformation.

The main purpose of this paper is to study the rigidity under Fano deformation of wonderful compactifications of semisimple linear algebraic groups, which gives an answer to a question raised by Bien and Brion in \cite[Section 4.5]{BB96}. For any semisimple adjoint linear algebraic group $G$ of rank $n$, there exists (\cite{dCP}) a unique smooth Fano $G \times G$-equivariant compactification of $G$, say $\bar{G}$ with the following properties: (1) the boundary divisor $\partial \bar{G} = \cup_{i=1}^n D_i$ is a union of $n$ smooth divisors with normal crossing; (2) the $G\times G$-orbit closures  in $\partial \bar{G}$ are exactly intersections of these boundary divisors.  The main result in this paper is the following:

\begin{thm}\label{t.main}
The wonderful compactification $\bar{G}$ of a semisimple linear algebraic group $G$ of adjoint type is rigid under Fano deformation.
\end{thm}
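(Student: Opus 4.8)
The plan is to follow the two‑step strategy recalled in the introduction — local rigidity, and then rigidity under specialization via the theory of varieties of minimal rational tangents — specialized to the wonderful compactification.

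\emph{Reductions and local rigidity.} If $G=G_1\times G_2$ then $\bar G\cong\bar{G_1}\times\bar{G_2}$, so by the second‑named author's theorem \cite{Li} that Fano deformations preserve product decompositions it suffices to treat $G$ simple; the case $G=\PGL_2$ is classical since $\bar{\PGL_2}\cong\mbP^3$. So assume $G$ simple adjoint of rank $n\ge 2$. As $\bar G$ is a regular $(G\times G)$-variety, $H^1(\bar G,T_{\bar G})=0$ by \cite[Proposition~4.2]{BB96}, so $\bar G$ is locally rigid and it remains to prove it is rigid under specialization: for a smooth projective family $\sX\to\Delta$ with $\sX_t\cong\bar G$ for $t\ne 0$, show $\sX_0\cong\bar G$. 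By flatness and the Fano hypothesis, $\sX_0$ is a Fano manifold sharing all numerical invariants of $\bar G$; in particular $\rho(\sX_0)=n$ and $\sX_0$ has the same minimal anticanonical degree among dominating families of rational curves.

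\emph{The VMRT of $\bar G$.} Next I would fix a minimal dominating family $\sK$ of rational curves on $\bar G$ — those of least $-K_{\bar G}$-degree through a general point — and describe its VMRT. Such curves sweep out the open orbit $G\subset\bar G$ rather than lying in the boundary (the boundary divisors do carry rational curves of strictly smaller degree, but those families are not dominating), and at a general point $x\in G$ the stabilizer in $G\times G$ is the diagonal $\Delta G$, acting on $T_x\bar G\cong\fg$ by the adjoint representation. Hence $\sC_x\subset\mbP T_x\bar G=\mbP(\fg)$ is a $\Delta G$-stable closed subvariety; I expect it to be the closure of a single explicit $G$-orbit, an irreducible, linearly nondegenerate projective variety with large automorphism group — for $G=\PGL_{n+1}$ it should be the Segre $\mbP^n\times\mbP^n$ of rank‑one matrices modulo scalars. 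The outcome of this step is a projective normal form for $\sC_x\subset\mbP T_x\bar G$ together with the structural data used later: irreducibility, nonlinearity, the second fundamental form and prolongations, and the bracket‑generating property of the induced distribution on $\bar G$.

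\emph{Deformation of the VMRT — the main obstacle.} Over the family, $\sK$ specializes to a covering family of rational curves on $\sX_0$ of the same anticanonical degree; replacing it by a minimal irreducible component if needed and using the numerical constraints above, this is a minimal dominating family on $\sX_0$ with irreducible VMRT $\sC_{x_0}\subset\mbP T_{x_0}\sX_0$ at a general point. The heart of the proof — and the step I expect to be hardest — is to show $\sC_{x_0}$ is \emph{projectively equivalent} to $\sC_x$. The natural route is to prove that the projective subvariety $\sC_x\subset\mbP^{N}$ is rigid: any flat limit of subvarieties projectively equivalent to $\sC_x$ that satisfies the constraints a VMRT must satisfy — invariance of the Hilbert polynomial along the family, the correct linear span, vanishing of the relevant obstruction spaces — is again projectively equivalent to $\sC_x$. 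This uses the explicit description from the previous step in an essential way (for instance, that the pertinent cohomology of twists of the normal bundle of $\sC_x$ behaves as in the homogeneous case, ruling out unexpected deformations); it is more delicate than in the $G/P$ situation, both because $\sC_x$ is a less standard variety and because one must simultaneously track how it is embedded in the varying tangent spaces $T_{x_t}\sX_t$.

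\emph{Recognition.} Once $\sC_{x_0}$ is projectively equivalent to $\sC_x$ at general points, I would invoke a recognition theorem characterizing $\bar G$ by its VMRT, in the spirit of the Mok--Hong--Hwang reconstruction theorems: a Cartan--Fubini type extension applied to the common VMRT-structure produces a birational map $\Phi\colon\bar G\dashrightarrow\sX_0$ preserving the minimal rational curve structures, which one then shows is biregular by the standard arguments (it respects the families of minimal rational curves, which cover both manifolds). Equivalently, $\Phi$ transports the $(G\times G)$-action on $\bar G$ — which preserves the VMRT-structure — to an action realizing $\sX_0$ as a smooth projective Fano $(G\times G)$-equivariant compactification of $G$ of Picard number $n$, whence $\sX_0\cong\bar G$ by the uniqueness of the wonderful compactification among these. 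A useful a priori input for this last step is upper semicontinuity of $h^0(\sX_b,T_{\sX_b})$, which already forces $\dim\Aut^\circ(\sX_0)\ge 2\dim G$, so that $\sX_0$ is known in advance to carry the expected amount of symmetry.
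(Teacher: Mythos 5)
The decisive gap is your final ``recognition'' step. There is no recognition theorem characterizing $\bar{G}$ by its VMRT: the Mok--Hong--Hwang reconstruction and Cartan--Fubini extension results you invoke are for rational homogeneous spaces of Picard number one, whereas $\bar{G}$ has Picard number $n\geq 2$ and is not homogeneous. Worse, the local VMRT-structure genuinely fails to pin down $\bar{G}$: the paper's actual argument shows (via the Euler-characteristic computation $h^1(\sX_0,T_{\sX_0})=h^0(\sX_0,T_{\sX_0})-2\dim G$ and the prolongation bound of Fu--Hwang) that after the invariance of VMRT one only gets a dichotomy --- either $\sX_0\cong\bar{G}$, or the VMRT-structure on $\sX_0$ is locally flat and $\sX_0$ is an equivariant compactification of the vector group $\mathbb{G}_a^{\dim G}$, which carries exactly the same local VMRT data as $\bar{G}$. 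No Cartan--Fubini type argument can separate these two; excluding the second alternative is the bulk of the paper (comparison of the boundary-divisor lattice with the weight/root lattice for most types, a Bia{\l}ynicki-Birula surface-family argument with a Weyl-group action for $G_2,F_4,E_8$, and an entirely separate spherical-variety argument for $C_n$, where even the prolongation bound fails because the VMRT is the second Veronese of $\mbP^{2n-1}$ and has too large an automorphism algebra). Your ``a priori input'' $\dim\Aut^0(\sX_0)\geq 2\dim G$ is precisely the starting point of that argument, but it leads to the dichotomy above, not to a reconstruction of $\bar{G}$.

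The VMRT-invariance step is also not viable as you sketch it. The relevant projective varieties are not rigid under flat specialization in the naive sense: for $B_3$ the surface of lines through a general point of the VMRT can a priori degenerate to the scroll $S_{1,3}$ (and $B_3/P_2$ itself admits a nontrivial specialization), which is why the paper devotes a separate section to $B_3$ using a blow-up model of $\bar{B}_3$ over a spinor variety; for type $A$ the VMRT is a projected Segre, handled via the explicit blow-up construction of $\bar{A}_n$ from $\mbP\,{\rm End}(V)$; for the remaining types the paper does not argue by rigidity of $\sC_x$ as an embedded variety but applies Hwang's deformation rigidity of the contact homogeneous space $\mbP\msO$ to the normalized VMRTs along a section, and then uses non-degeneracy of the variety of tangential lines to show the VMRT spans the whole tangent space. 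So both halves of your plan require substantially different ideas than the ones you propose.
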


We do expect that the wonderful compactification $\bar{G}$ is rigid under projective deformation, but our method depends   on the Fano condition in a crucial way (cf. Remark \ref{r.Fanoness}(ii)) and it seems difficult to remove this assumption. 

Let us explain the proof. As shown in \cite[Proposition 4.2]{BB96}, $\bar{G}$ is locally rigid, hence we only need to prove the rigidity under Fano specializations.
Writing $G = G_1 \times \cdots \times G_k$ as a product of simple adjoint linear algebraic groups $G_i$, $1 \leq i \leq k$,  we have a decomposition of the wonderful compactification $\bar{G}$ of $G$ as a product of wonderful compactifications $\bar{G}_i$.  By the aforementioned result in \cite{Li},  $\bar{G}$ is rigid under Fano deformation if and only if so is each factor $\bar{G}_i$.  This reduces the proof of Theorem \ref{t.main} to the case where $G$ is simple, a situation which we will focus on from now on.  Note that if $G$ is of rank 1, then $\bar{G}$ is isomorphic to $\mathbb{P}^3$, which is rigid.  Hence in the following, we always assume that the rank of $G$ is at least 2.

The key ingredient to the proof is again the VMRT theory. For the wonderful compactification $\bar{G}$ of a simple adjoint group, it is shown in \cite{BF15} that it has a unique family of minimal rational curves.  When  $G$ is not of type $A$,  the associated VMRT at a general point of $\bar{G}$ is the unique closed orbit $\mbP \msO \subset \mbP \fg$ in the projective adjoint representation of $G$. For type $A_n$, the VMRT  at a general point of $\bar{A}_n$ is projectively equivalent  to the projection from a general point of the Segre variety $\mbP^n \times \mbP^n \subset \mbP^{n^2+2n}$.

Following \cite{HM98},  a regular family in this paper  means a smooth projective map $\sX \to \Delta$.
Now consider  a regular  family of Fano manifolds $\sX \to \Delta$ such that $\sX_t \simeq \bar{G}$ for $t\neq 0$ and $\sX_0$ is Fano.  We first show that the family of minimal rational curves on $\bar{G}$ deforms to a family of minimal rational curves on $\sX_0$ (Proposition \ref{p.MRC}), by using the uniqueness of family of minimal rational curves on $\bar{G}$.
Then the first key step is to show the invariance of VMRT, namely the VMRT of $\sX_0$ at a general point is projectively equivalent to that of $\bar{G}$.
Note that for $G$ not of type $A$, the VMRT $\mbP \msO$ of $\bar{G}$ at a general point is a rational homogeneous space of Picard number one.  Take a section $\sigma: \Delta \to \sX$ through general points, then the normalized VMRTs (which are smooth) along $\sigma$ give a regular family of  projective varieties, with general fibers isomorphic to $\mbP \msO$.
Whence we can apply Theorem \ref{t.HMRigidity} to obtain the invariance of the normalized  VMRT and then the invariance of VMRT (Proposition \ref{p.invVMRT}).
  For type $A_n$, its wonderful compactification $\bar{A_n}$ can be explicitly  constructed  by successive blowups from
  $\mbP {\rm End}(\mathbb{C}^{n+1})$. We extend this construction to $\sX$ and then show the invariance of VMRT through a careful study (Proposition \ref{p.invVMRTtypeA}).

  The above method to prove the invariance of VMRT does not apply  to  $B_3$, as in this case $\mbP \msO =B_3/P_2$, which is not rigid.
   To remedy for this, we consider the variety of lines $\mathbf{L}_{z_0} \subset \BP^5$ on $\mcC_{x_0}$ passing through a general point $z_0$ on it, where $\mcC_{x_0}$ is the VMRT of $\mcX_0$ through a general point $x_0$. It turns out that $\mathbf{L}_{z_0} \subset \BP^5$ is a linearly non-degenerate surface of degree 4, which has only two possibilities:  either $\BP^1 \times \mathbb{Q}^1$ or $S_{1,3}$.  For the former case, we  conclude that the VMRT of $\mcX_0$ is isomorphic to $B_3/P_2$ by using the recognization of $B_3/P_2$ by its VMRT (\cite[Main Theorem]{Mok08}). It remains to exclude the possibility of $\mathbf{L}_{z_0} \simeq S_{1,3}$.
   To that end,  we first give a geometric construction (Proposition \ref{p.OGtoSpinor}) of the wonderful compactification $\bar{B}_n$ by successive blowups from the spinor variety  ${\rm OG}(2n+1, W_1 \oplus W_2)$, where $(W_i, o_i)$ are vector spaces of dimension $2n+1$ endowed with a non-degenerate symmetric quadric form $o_i$. This gives in particular a birational map $\phi: \bar{B}_3 \to \mathbb{S}:={\rm OG}(7, \mathbb{C}^{14})$, which extends to a birational morphism $\Phi: \sX \to \mcS$. The key point is to show that $\mcS$ is an isotrivial family with fibers isomorphic to $\mathbb{S}$.  Passing to the variety of lines on VMRT, this gives an injective
   holomorphic map  $S_{1,3} \simeq \mathbf{L}_{z_0} \to \BP^1 \times \BP^4$, which is a specialization  of the natural embedding $\BP^1\times \mathbb{Q}^1 \to \BP^1 \times \BP^4$.  This gives an embedding of $S_{1,3}$ into $\BP^4$, which is clearly not possible, concluding the proof of the invariance of VMRT (Proposition \ref{p.invVMRTB3}).

To continue,  we borrow an idea from  \cite{Park}, where the rigidity of odd Lagrangian Grassmannians is proven.  The  observation is  that if $H^1(\sX_0, T_{\sX_0})=0$, then $\sX_0$ is locally rigid, hence $\sX_0 \simeq \sX_t$ for small $t$ and  we are done.  To show the vanishing of $H^1(\sX_0, T_{\sX_0})$, we note that $\chi(\sX_t, T_{\sX_t})$ is invariant and $H^i(\sX_t, T_{\sX_t})=0$ for all $i \geq 2$ by the Akizuki-Nakano vanishing theorem (\cite{AN}), which gives
$$h^1(\sX_0, T_{\sX_0}) = h^0(\sX_0, T_{\sX_0}) - \chi(\sX_t, T_{\sX_t}) = h^0(\sX_0, T_{\sX_0})- 2\dim G.$$
We can now use VMRT theory to bound $h^0(\sX_0, T_{\sX_0})$, which gives that if $G$ is not of type $C$, then $h^0(\sX_0, T_{\sX_0}) \leq  2\dim G+1$, with equality if and only if the VMRT structure is locally flat.  As a consequence, when $\fg$ is not of type $C$, we have that  either $\mcX_0$ is isomorphic to $\bar{G}$ or $\mcX_0$ is an equivariant compactification of the vector group $\mathbb{G}_a^g$ with $g = \dim G$.

To exclude the latter case, we observe that if  $\mcX_0$ is an equivariant compactification of a vector group, then its Picard group is freely generated by its boundary divisors.  By the invariance of pseudo-effective cones of divisors under Fano deformation (cf. Theorem \ref{t.InvCones}), the Picard group of $\bar{G}$ is generated by the boundary divisors of $\bar{G}$.
As the boundary divisors of $\bar{G}$ correspond to simple roots of $G$ and ${\rm Pic}(\bar{G})$ coincides with the  weight lattice,
this implies that the root lattice and the weight lattice of $G$ are the same, which is not possible except for $G_2, F_4$ or $E_8$, whence this proves the rigidity if $G$ is not $C_n, G_2, F_4$ or $E_8$ (Theorem \ref{t.rigidity}). For $G_2, F_4$ and $E_8$, we take another approach to exclude the equivariant compactification case.
Let $T$ be a maximal torus of $G$, and let $T'$ be the subtorus of dimension two that is annihilated by $\alpha_2,\ldots,\alpha_{n-1}$, where $\alpha_1,\ldots,\alpha_n$ is a set of simple roots in Bourbaki's numbering order. Applying the results on fixed point schemes by Fogarty \cite{F}, we obtain a family of smooth projective surfaces $\mcY'/\Delta$ such that the central fiber is an equivariant compactification of $\mathbb{G}_a^2$ while the general fiber is isomorphic to $\bar{T'}$, the closure of $T'$ in the wonderful compactification $\bar{G}$.  The stabilizer $W_{T'}$ of $T'$, under the action of the Weyl group $W(G)$ on $T$, acts on the family $\mcY'/\Delta$ and stabilizes the open orbit of each fiber $\mcY'_t$. There is a subgroup $W'$ of $W_{T'}$ such that the set of prime boundary divisors of $\mcY'_0$ consists of two $W'$-orbits, and each orbit contains at least two elements (Proposition \ref{p.W' action on Y'0}). This is impossible, because there does not exist any equivariant compactification of  $\mathbb{G}_a^2$ admitting such a finite group action (Lemma \ref{p.vector compact surface}), whence we conclude the proof of the rigidity for wonderful compactifications of  $G_2, F_4$ and  $E_8$ (Theorem \ref{t.rigidity_EFG}).

It remains to prove the rigidity under Fano deformation for the wonderful compactification of $C_n$, for which  we use  the theory of spherical varieties.   Let $(W_i, \Omega_i)$ be two  symplectic vector spaces of dimension $2n$ and ${\rm LG}(2n, W_1 \oplus W_2)$ the Lagrangian Grassmannian. One notices that both $\bar{C_n}$ and ${\rm LG}(2n, W_1 \oplus W_2)$ have the same locally flat VMRT-structure.
By using the theory of spherical varieties, we show that there exists a birational morphism $\phi: \bar{C_n} \to  Z:={\rm LG}(2n, W_1 \oplus W_2)/\tau$ which is the composition of successive blowups along explicit strata, where $\tau$ is the involution induced from  $(1_{W_1}, -1_{W_2}) \in {\rm Sp}(W_1) \times {\rm Sp}(W_2)$.  The morphism $\phi$ extends to a morphism $\Phi: \sX \to \sZ$ by the invariance of Mori cones under Fano deformation (cf. Theorem \ref{t.InvCones}).  We then show that $\sZ_0$ is in fact isomorphic to $Z$, making $\sZ \to \Delta$ an isotrivial family, with a $G\times G$-action. As $\Phi_0$ is birational, the vector fields of $G\times G$-action on $\sZ_0$ lift to an open subset of $\sX_0$, which coincide with vector fields on $\sX|_{\Delta^*}$ coming from the $G \times G$-action.  By Hartogs' extension theorem, these vector fields extend to the whole of $\sX$, making $\sX_0$ a $G \times G$-variety.  It follows that $\sX_0$ is a spherical variety under the $G \times G$-action. Then we show that its colored cone is the same as that of $\bar{C_n}$, which concludes our proof (Theorem \ref{t.RigidityTypeC}).

It is worthwhile to point out that the construction of wonderful compactifications of $\bar{B}_n$ by successive blowups from the spinor variety   ${\rm OG}(2n+1, W_1 \oplus W_2)$ also works for type $D$. Together with similar results for  $A_n$ and $C_n$, we obtain explicit constructions of wonderful compactifications of all classical types by successive blowups, which could be of independent interest. As pointed out by Michel Brion, there are constructions of smooth log homogeneous compactifications of classical groups by successive blowups  in \cite{Hu}.  It turns out that our construction of $\bar{B}_n$ is the same as that in \cite{Hu}, while the other cases can be deduced from constructions in \cite{Hu}.  For example, the wonderful compactification of ${\rm Sp}(2n)$ is constructed by successive blowups in \cite{Hu} and we can deduce from it the similar construction for the wonderful compactification of ${\rm PSp}(2n)$ (Section 5.3).

%We have kept our construction in this paper as it is more transparent and we hope this makes our paper more self-contained.

The paper is organized as follows: after recalling the study of minimal rational curves on wonderful compactifications of simple adjoint groups from \cite{BF15} in Section 2, we prove the invariance of VMRT (except $B_3$) in Section 3, where the difficulty is to deal with  $A_n$ case, as its VMRT is a projected Segre variety.  In Section 4, we prove Theorem \ref{t.main} for all $G$  except $C_n$.  We first  show that the central fiber $\mcX_0$  is an equivariant compactification of a vector group if it is not isomorphic to $\bar{G}$.  Then we use a careful study to exclude the case of equivariant compactifications of vector groups.  Section 5 is devoted to prove that the wonderful compactification of $C_n$ is rigid,  where we mainly use the theory of spherical varieties.   In the last section, we prove the invariance of VMRT  for $B_3$. \\

{\em Convention:}  Throughout this paper,  by {\em open set} without further qualification we  mean {\em Zariski open and non-empty set}, while we use {\em analytic open set} means it is open in the complex topology.

{\em Acknowledgements:} We are very grateful to Michel Brion and  Jun-Muk Hwang for  helpful discussions and suggestions.
  We would like to thank the six anonymous  referees  for the careful readings and numerous suggestions,  which help to improve our presentation and  lead to the new Section 6.2.
Baohua Fu is supported by the NSFC grant no. 12288201 and CAS Project for Young Scientists in Basic Research, grant no. YSBR-033. Qifeng Li is supported by the NSFC grant No. 12201348.

%
%The aim of this paper is to show the following
%
%\begin{thm}\label{thm. global rigidity}
%Let $\pi: \mcX\rightarrow\mcY$ be a holomorphic family of connected complex projective manifolds, where $\mcY$ is a connected complex variety. If there exists $y_0\in\mcY$ such that the fiber $\pi^{-1}(y_0)$ is the wonderful compactification of a simple algebraic group of adjoint type, then $\pi^{-1}(y)\cong\pi^{-1}(y_0)$ for all $y\in\mcY$.
%\end{thm}
%
%\begin{rmk}\label{rmk. local rigidity}
%Let $X$ be the wonderful compactification of a simple algebraic group $G$ of adjoint type. Then $X$ is a Fano manifold of Picard number $n$ (cf. \cite[Section 1]{BF15}), where $n$ is the rank of $G$. Hence $H^i(X, T(X))=0$ for $i\geq 1$ by \cite[Proposition 4.2]{BB96}. It follows that $X$ is locally rigid.
%\end{rmk}
%
%By the local rigidity of $X$, it suffices to prove the following
%
%\begin{thm}\label{thm. limit rigidity}
%Let $\pi: \mcX\rightarrow\Delta\ni 0$ be a holomorphic family of connected complex projective manifolds such that $\mcX_t\cong X$ for all $t\neq 0$, where $X$ is the wonderful compactification of a simple algebraic group $G$ of adjoint type. Then $\mcX_0\cong X$ too.
%\end{thm}

\section{Minimal rational curves on wonderful compactifications of simple groups}

\subsection{Minimal rational curves and geometric structures}

For a uniruled projective
manifold $X$, let $\RatCurves^\n(X)$
denote the normalization of the space of rational curves on $X$
(see \cite[II.2.11]{Kollar}). Every irreducible component $\sK$ of
$\RatCurves^\n(X)$ is a (normal) quasi-projective variety equipped with
a quasi-finite morphism to the Chow variety of $X$; the image consists
of the Chow points of irreducible, generically reduced rational curves.
There is a universal family $\sU$ with projections
$\upsilon : \sU \to \sK$, $\mu : \sU \to X$, and $\upsilon$ is
a $\bP^1$-bundle (for these results, see
\cite[II.2.11, II.2.15]{Kollar}).

For any $x \in X$, let $\sU_x := \mu^{-1}(x)$ and $\sK_x := \upsilon(\sU_x)$. We call
 $\sK$ a \emph{family of minimal rational curves}
if $\sK_x$ is non-empty and projective for a general point $x$.
There is a rational map $\tau_x: \sK_x \dasharrow \bP T_x X$
(the projective space of lines in the tangent space at $x$)
that sends any curve which is smooth at $x$ to its tangent direction.
The closure of the image of $\tau_x$ is denoted by $\sC_x$
and called the \emph{variety of minimal rational tangents}
(VMRT) at the point $x$.
By \cite[Thm.~1]{HM2} and \cite[Thm.~3.4]{Kebekus},
composing $\tau_x$ with the normalization map $\sK_x^\n \to \sK_x$
yields the normalization of $\sC_x$. Also, $\sK_x^\n$ is
a union of components of the variety $\RatCurves^\n(x, X)$ defined
in \cite[II.2.11.2]{Kollar}, and hence is smooth for a general point  $x \in X$ by \cite[II.3.11.5]{Kollar}. In this case, $\sU_x \simeq \sK_x^n$ is smooth and the rational map $\tau_x$ induces  a birational morphism $\sU_x \simeq \sK_x^n \to \sC_x$, which is still denoted by $\tau_x$ by abuse of notation. Since $\mcU_x$ is both the normalization of $\sK_x$ and that of $\mcC_x$, we call $\mcU_x$ the normalized Chow space or the normalized VMRT.
%By abuse of notation, we denote by  $\sK_x$ its normalization, which is then smooth with a birational morphism $\tau_x: \sK_x \to \sC_x$.

 The closure $\sC \subset \BP T X$ of the union of  $\sC_x \subset \BP T_xX$  is the {\em VMRT-structure} on $X$. The
natural projection $\sC \to X$ is a proper surjective morphism.
The VMRT-structure $\sC$ is said to be {\em locally flat} if for a general $x \in X$,  there exists  an analytical open subset $U$  of $X$ containing $x$ with an open immersion $\phi: U \to
 \C^n, n= \dim X,$ and a projective subvariety $S \subset \BP^{n-1}$
with $\dim S= \dim \sC_x$ such that $\phi_*: \BP T U \to \BP
T \C^n$ maps $\sC|_{U}$ into the trivial fiber subbundle $\C^n
\times S$ of the trivial projective bundle $\BP T \C^n = \C^n \times \BP^{n-1}.$

\begin{example} \label{e.contact}
Let $\fg$ be a simple Lie algebra on which its adjoint group $G$ acts by adjoint action. In the projective space $\BP \fg$, there is a unique closed $G$-orbit
$\BP \mathscr{O}$, which is a homogeneous Fano contact manifold. When $\fg$ is not of type $A$, the variety $\BP \mathscr{O}$ is of Picard number one, hence of the form $G/P$ for some maximal parabolic subgroup $P$ of $G$. The contact structure on $\BP \mathscr{O}$  gives a corank one subbundle $\sW \subset T G/P$ and the isotropy action of $P$ on $T_o G/P$ at the base point $o$ induces an irreducible representation on $\sW_o$. It turns out (cf. \cite[Proposition 1]{HM02}) that the VMRT of $\BP \mathscr{O}$
is the highest weight variety of this representation. In particular, the VMRT of $\BP \mathscr{O}$ is linearly degenerate in this case. For example, when $\fg$ is of type $B_n$ (with $n \geq 3$), the orbit $\BP \mathscr{O}$ is isomorphic to $B_n/P_2$, which is the variety of lines on the smooth hyperquadric  $\mathbb{Q}^{2n-1}$.  Its VMRT is given by the Segre embedding of $\BP^1 \times \mathbb{Q}^{2n-5}$ to a hyperplane in $\BP T_o G/P$.
\end{example}

\begin{rmk} \label{r.freecurves}
 Given an irreducible component $V$ of $\RatCurves^\n(X)$, there is a proper closed subvariety $N$ of $X$ such that if $C$ is a rational curve represented by an element in $V$ and if $C$ is not contained in $N$, then $C$ is a free rational curve (see \cite[Proof of II.2.11]{Kollar}). In other words, a rational curve through a general point is free.
Let $\FRC(X)$ be the open subset of $\RatCurves^\n(X)$ parameterizing free rational curves on $X$. 
%For each $x\in X$, let $\FRC(x, X)$ be the open subset of $\RatCurves^n(x, X)$ parameterizing free rational curves on $X$ passing through the point $x$. 
For an irreducible component $\mathcal{F}$ of $\RatCurves^n(X)$, if it intersects  $\FRC(X)$, then it will contain an irreducible component of   $\FRC(X)$ as an open subset. Conversely, given an irreducible component of $\FRC(X)$, it is an open subset of an irreducible component of $\RatCurves^n(X)$ which is a dominant famliy.

%
%
%its intersection with $\FRC(X)$ is either empty or an irreducilbe component of $\FRC(X)$. In the first case, $\mathcal{F}$ is not a dominant family. In the second case, $\mathcal{F}$ is a dominnat family, and its intersection with $\FRC(X)$ is  an open subset of $\mathcal{F}$. Conversely, given an irreducible component of $\FRC(X)$, it is an open subset of an irreducible component of $\RatCurves^n(X)$ which is a dominant famliy.
\end{rmk}

\begin{defi} \label{d.prolong}
Let $V$ be  a complex vector space and let $\fg \subset {\rm End}(V)$ be
a Lie subalgebra. The {\em $k$-th prolongation} (denoted by
$\fg^{(k)}$) of $\fg$ is the space of symmetric multi-linear
homomorphisms $A: \Sym^{k+1}V \to V$ such that for any fixed $v_1,
\cdots, v_k \in V$, the endomorphism $A_{v_1, \ldots, v_k}: V \to
V$ defined by $$v\in V \mapsto A_{v_1, \ldots, v_k, v} := A(v,
v_1, \cdots, v_k) \in V$$ is in $\fg$. In other words, $\fg^{(k)}
= \Hom(\Sym^{k+1}V, V) \cap \Hom(\Sym^kV, \fg)$.
\end{defi}

We are interested in the prolongation of infinitesimal automorphisms of a projective subvariety.  Let $S \subsetneq \BP V$
be a smooth linearly non-degenerate  projective subvariety and let $\hat S \subset V$ be the corresponding
affine cone.  Let $\aut(\hat{S})\subset\mathfrak{gl}(V)$ be the Lie algebra
of infinitesimal linear automorphisms of $\hat S$, which is the subalgebra of $\mathfrak{gl}(V)$ preserving $\hat{S}$.
It is shown in \cite{HM} that  the second prolongation of $\aut(\hat{S})$ always vanishes, namely $\aut(\hat{S})^{(2)}=0$.

For a uniruled projective manifold $X$ with a VMRT structure $\mathcal{C} \subset \BP TX$,
we denote by $\aut(\mathcal{C}, x)$ the Lie algebra of infinitesimal automorphisms of $\mathcal{C}$, which consists of germs of vector fields on $X$ whose local flow preserves $\mathcal{C}$ near $x$.
Note that the action of ${\rm Aut}^0(X)$ on $X$ sends minimal rational curves to minimal rational curves, hence it preserves the VMRT structure, which gives a natural inclusion
$\aut(X) \subset \aut(\mathcal{C}, x)$ for $x\in X$ general.

The following result is a combination of Propositions 5.10, 5.12 and 5.14 in \cite{FH}.

\begin{prop} \label{p.prolong}
Let $X$ be a smooth Fano variety. Assume that the  VMRT   $\mathcal{C}_x \subsetneq \BP T_xX$ at a general point $x\in X$ is smooth irreducible and linearly non-degenerate. Then
$$
\dim \aut(\mathcal{C}, x) \leq \dim X + \dim \mathfrak{aut}(\hat{\mathcal{C}_x})+\dim \mathfrak{aut}(\hat{\mathcal{C}_x})^{(1)}.
$$
The equality holds if and only if the VMRT structure $\mathcal{C}$ is locally flat.
\end{prop}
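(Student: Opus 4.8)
The plan is to analyze the filtered Lie algebra $\aut(\mathcal{C},x)$ of germs at a general point $x$ of vector fields whose local flow preserves the VMRT structure, by passing to the associated graded for the filtration by order of vanishing at $x$. Write $\aut(\mathcal{C},x)=F^0\supset F^1\supset F^2\supset\cdots$, where $F^k$ consists of the germs vanishing to order $\geq k$ at $x$, and set $\mathfrak{g}_{-1}:=T_xX$, $\mathfrak{g}_0:=\mathfrak{aut}(\hat{\mathcal{C}_x})$ and $\mathfrak{g}_k:=\mathfrak{aut}(\hat{\mathcal{C}_x})^{(k)}$ for $k\geq 1$. The target is, for each $k\geq 0$, an injection $F^k/F^{k+1}\hookrightarrow\mathfrak{g}_{k-1}$ together with the vanishing $F^3=0$; summing dimensions then gives the inequality, and tracking when these injections are isomorphisms gives the equality clause.

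First I would construct the injections. Evaluation at $x$ gives $F^0/F^1\hookrightarrow T_xX=\mathfrak{g}_{-1}$. For $v\in F^1$ the flow fixes $x$ and preserves $\mathcal{C}$, so its linearization at $x$ lies in $\mathfrak{aut}(\hat{\mathcal{C}_x})$; taking linear parts gives $F^1/F^2\hookrightarrow\mathfrak{g}_0$. For $k\geq 2$ and $v\in F^k$, the lowest-order (degree-$k$) homogeneous part of $v$ in local coordinates centred at $x$ is an element of $\Hom(\Sym^k T_xX,T_xX)$, and an order-of-vanishing analysis of the condition that the flow of $v$ preserve $\mathcal{C}$ at all nearby points forces, after inserting any $k-1$ fixed tangent vectors, an endomorphism lying in $\mathfrak{aut}(\hat{\mathcal{C}_x})$; this is exactly the statement that this symbol lies in $\mathfrak{aut}(\hat{\mathcal{C}_x})^{(k-1)}=\mathfrak{g}_{k-1}$, giving $F^k/F^{k+1}\hookrightarrow\mathfrak{g}_{k-1}$. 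Now the Hwang--Mok vanishing $\mathfrak{aut}(\hat{\mathcal{C}_x})^{(2)}=0$ recalled above, together with the elementary fact that $\mathfrak{g}^{(2)}=0$ forces $\mathfrak{g}^{(k)}=0$ for all $k\geq 2$, yields $\mathfrak{g}_k=0$ for $k\geq 2$. Hence $F^3/F^4=F^4/F^5=\cdots=0$, so any $v\in F^3$ vanishes to infinite order at $x$ and therefore vanishes identically; thus $F^3=0$. Adding the bounds on $F^0/F^1$, $F^1/F^2$ and $F^2/F^3$ gives
\[
\dim\aut(\mathcal{C},x)\ \leq\ \dim X+\dim\mathfrak{aut}(\hat{\mathcal{C}_x})+\dim\mathfrak{aut}(\hat{\mathcal{C}_x})^{(1)}.
\]

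For the equality statement, the direction $(\Leftarrow)$ is routine: if $\mathcal{C}$ is locally flat at $x$, choose flattening coordinates identifying $(X,\mathcal{C})$ near $x$ with $(\C^n,\C^n\times S)$ where $S\cong\mathcal{C}_x$. A vector field $v=\sum f_i\,\partial_{z_i}$ preserves the constant cone field $\C^n\times S$ if and only if its Jacobian lies in $\mathfrak{aut}(\hat{S})$ at every point, which unwinds degree by degree to $v^{(1)}\in\mathfrak{aut}(\hat{S})$ and $v^{(k)}\in\mathfrak{aut}(\hat{S})^{(k-1)}$ for $k\geq 2$. Since $S\cong\mathcal{C}_x$ is smooth and non-degenerate, its prolongations of order $\geq 2$ vanish, so $\aut(\mathcal{C},x)\cong T_xX\oplus\mathfrak{aut}(\hat{S})\oplus\mathfrak{aut}(\hat{S})^{(1)}$ and equality holds. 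For $(\Rightarrow)$, equality forces the injections $F^k/F^{k+1}\hookrightarrow\mathfrak{g}_{k-1}$, $k=0,1,2$, all to be isomorphisms; in particular $\aut(\mathcal{C},x)$ acts infinitesimally transitively near $x$ and realizes every element of $\mathfrak{g}_0$ and $\mathfrak{g}_1$ as a symbol. This maximal symmetry makes the curvature of the associated filtered $G$-structure vanish, and a Cartan--Tanaka type rigidity argument then produces flattening coordinates, so $\mathcal{C}$ is locally flat at $x$. I expect the two delicate points to be: (i) passing rigorously from the condition ``the flow of $v$ preserves $\mathcal{C}$ near $x$'', which a priori involves the varieties $\mathcal{C}_y$ for all $y$ close to $x$ and not just $\mathcal{C}_x$, to the purely algebraic symbol conditions on $\mathcal{C}_x$; and (ii) the implication ``maximal symmetry $\Rightarrow$ local flatness'', which is the substantive part and is precisely what \cite[Propositions~5.10, 5.12 and 5.14]{FH} carry out. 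The Fano hypothesis is used only to guarantee that a general point $x$ enjoys the good VMRT picture (smoothness of the normalized VMRT, abundance of minimal rational curves through $x$, properness of $\mathcal{C}\to X$) underlying the whole argument.
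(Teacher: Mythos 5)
Your outline — filtering $\aut(\mathcal{C},x)$ by order of vanishing at $x$, injecting the graded pieces into $T_xX$, $\aut(\hat{\mathcal{C}_x})$ and its prolongations, truncating via the vanishing $\aut(\hat{\mathcal{C}_x})^{(2)}=0$, and treating the equality case by a flatness/rigidity argument — is exactly the mechanism behind the statement, and the paper gives no independent proof: it records the proposition as a combination of Propositions 5.10, 5.12 and 5.14 of \cite{FH}, which is precisely where you also locate the two delicate steps (the symbol analysis for a cone structure whose fibers vary with the base point, and the implication from maximal symmetry to local flatness, which is not a formal consequence and genuinely requires the machinery of \cite{FH}). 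So your proposal is correct in outline and follows essentially the same route as the paper, with the substantive content correctly attributed to \cite{FH} rather than re-proved.
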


\begin{example} \label{e.IHSS}
An irreducible Hermitian symmetric space (IHSS for short) is a rational homogeneous variety $G/P$ of Picard number one such that the isotropic representation of $P$ on $T_o (G/P)$ is irreducible.   The highest weight variety of  this representation is the VMRT $\mcC_o$ of $G/P$ at the base point $o$. The VMRT-structure on $G/P$ is locally flat and the equality in Proposition \ref{p.prolong} holds (see for example \cite{FH}). Furthermore, in this case we have $\mathfrak{aut}(\hat{\mathcal{C}_x})^{(1)} \simeq T^*_o(G/P)$.
The rank of $G/P$ is the least number $r$ such that a general point of $\BP T_o G/P$ is contained in the linear span of $r$ points on $\mcC_o$. In particular, $G/P$ is of rank 1 if and only if it is isomorphic to a projective space.
The following is the list of IHSS with their VMRT and ranks.

\begin{center}
%\begin{tabular*}{0.908\textwidth}{|c| c| c| c| c| c|}
\begin{tabular}{|c| c| c| c| c| c| c| }
\hline  IHSS  $G/P$ & ${\rm Gr}(a, a+b)$ & $D_n/P_n$   & $C_n/P_n$ &    $\mathbb{Q}^r$ & $E_6/P_1$ & $E_7/P_7 $ \\
\hline  VMRT $\mathcal{C}_o$   &  $\BP^{a-1} \times \BP^{b-1}$ &     ${\rm Gr}(2, n)$    &$\BP^{n-1}$  & $\mathbb{Q}^{r-2}$ & $D_5/P_5$ & $E_6/P_1$  \\
\hline  $\mathcal{C}_o \subset \BP T_o(G/P)$ & Segre & Pl\"ucker &second Veronese  & Hyperquadric  & Spinor  & Severi\\
\hline  rank of $G/P$ & {\rm min} $\{a, b\}$ & $[\frac{n}{2}]$ & $n$ &  2  & 2 & 3 \\
\hline
\end{tabular}
\end{center}
\end{example}

\begin{defi}\label{d.GStructure}
Let $M$ be an $m$-dimensional complex manifold and let $G \subset {\rm GL}(\mathbb{C}^m)$ be a complex Lie subgroup.

(i) The {\em frame bundle} of $M$ is the principal ${\rm GL}(\mathbb{C}^m)$-bundle $\mathcal{F}(M)$, whose fiber at a point $x\in M$ is $\mathcal{F}(M)_x = {\rm Isom}(\mathbb{C}^m, T_xM)$.

(ii) A {\em $G$-structure} on $M$ is a $G$-principal subbundle $\mathcal{G}$ of $\mathcal{F}(M)$.

(iii) A $G$-structure $\mathcal{G}$ on $M$ is said to be  {\em locally flat} if for $x \in M$, there exists an analytic open subset $U_x \subset M$ such that
the restricted $G$-structure $\mathcal{G}|_{U_x}$ is equivalent to the trivial $G$-structure on some analytic open subset of $\mathbb{C}^m$.
\end{defi}

Consider a uniruled projective manifold $X$ of dimension $m$ with a minimal rational component $\mathcal{K}$ of minimal rational curves.  Assume  there exists a subvariety
$S \subset \BP^{m-1}$ such that the VMRT $\mathcal{C}_x$ is projectively equivalent to $S \subset \BP^{m-1}$ for all $x$ in an analytic open subset $X^o$ of $X$. In this case, the VMRT structure $\mathcal{C} \subset \BP TX$ induces a $G$-structure $\mathcal{G}$ on $X^o$, given by $\mathcal{G}_x ={\rm Isom}(\hat{S}, \hat{\mathcal{C}_x})$, where $G$ is the linear automorphism group of $\hat{S} \subset \mathbb{C}^m$.  One notices that the local flatness of VMRT-structure and that of  its associated $G$-structure are equivalent.

\begin{example} \label{e.IHSS2}
Let $G/P$ be an IHSS of rank $\geq 2$ with $G = {\rm Aut}^0(G/P)$. Let $G_0 \subset {\rm GL}(T_o G/P)$ be the image of the representation of $P$.
Then $G/P$ carries naturally a $G_0$-structure $\mcG_o$ coming from the VMRT-structure, which is locally flat by Example \ref{e.IHSS}.  Let $\fg = \fg_{-1} \oplus \fg_0 \oplus \fg_1$ be the grading associated to $P$. It is known that $\fg_0$ is the Lie algebra of $G_0$, $\mathfrak{p} = \fg_{-1} \oplus \fg_0$,  $\fg_{-1} = \aut(\hat{\mcC}_o)^{(1)}$ and $\fg_1 \simeq T_o G/P$.
It follows from \cite[Proposition 5.14]{FH} that $\aut(\mcG_o, o) \simeq \fg = \fg_{-1} \oplus \fg_0 \oplus \fg_1$ and the set of vector fields vanishing at $o$ is isomorphic to $\mathfrak{p}$.
\end{example}

\begin{rmk} \label{r.LocFlatGStructure}
There is a cohomological way to detect the local flatness of a $G$-structure. To wit,
for a  $G$-structure $\mcG$, we can define certain vector-valued functions $c^k, k=0, 1,2, \cdots$ on $\mcG$, the vanishing of which implies the local flatness of $\mcG$ (cf. \cite{Gu}). The $G_0$-structure on an IHSS $G/P$ of rank $\geq 2$ is locally flat, and the corresponding functions $c^k$ vanish (cf. proof of Proposition 5.12 in \cite{FH}).
\end{rmk}

\subsection{Wonderful compactifications of simple groups}\label{s.wonderful compact}

Let us first recall some basic constructions and properties of wonderful compactifications of simple adjoint groups from \cite{BK} and \cite{dCP}.

Let $G$ be a simple linear algebraic group
of adjoint type and of rank $n$, and let $\fg$ be the
corresponding Lie algebra.  Fix a Borel subgroup
$B \subset G$ as well as a maximal torus $T \subset B$.
 We denote by $R$ the root system of $(G,T)$ and
by $R^+ \subset R$ the subset of positive roots consisting of
roots of $B$. The corresponding set of simple roots is denoted
by $\{ \alpha_1, \cdots, \alpha_n \}$, where we use Bourbaki's numbering order
for simple roots. The half-sum of positive
roots is denoted by $\boldsymbol \rho$.  Let $\boldsymbol \theta$ be the highest root of $R$.  We denote by $P_i \subset G$ the standard maximal parabolic subgroup corresponding to $\alpha_i$. Then $G/P_i$ is a rational homogeneous variety of Picard number one.

%The coroot of any $\alpha \in R$ is denoted by $\alpha^\vee$;
%this is a one-parameter subgroup of $T$.
%The coroots form the dual root system $R^\vee$. The pairing
%between characters and one-parameter subgroups is denoted by
%$\langle, \rangle$; we have
%$\langle \alpha, \alpha^\vee \rangle = 2$ for any $\alpha \in R$.

We denote by $\Lambda$ the weight lattice, with the submonoid
$\Lambda^+$ of dominant weights, and the fundamental weights
$\omega_1,\ldots,\omega_n$. For any $\lambda \in \Lambda^+$,
we denote by $V_\lambda$ the irreducible representation of
the simply-connected cover of $G$ with highest weight
$\lambda$. This gives a projective representation
\[ \varphi_\lambda : G \to \PGL(V_\lambda). \]
Moreover, the $G$-orbit of the highest weight line in $V_\lambda$
yields the unique closed orbit in the projectivization $\mbP V_\lambda$;
it is isomorphic to $G/P_\lambda$, where the parabolic subgroup
$P_\lambda$ only depends on the type of $\lambda$, i.e.,
the set of simple roots that are orthogonal to that weight.

Note that we have a natural open embedding ${\rm PGL}(V_\lambda) \subset \mbP \End V_\lambda$.
The closure of the image of $\varphi_\lambda$ in $\mbP \End V_\lambda$ will be denoted
by $X_\lambda$. This is a projective variety on which
$G \times G$ acts via its action on $\mbP \End V_\lambda$ by
left and right multiplication.

%Moreover, $X_\lambda$ contains
%a unique closed orbit of $G \times G$; it is isomorphic to
%$G/Q_\lambda \times G/P_\lambda$, where $Q_\lambda$ denotes
%the parabolic subgroup containing $T$ and opposite to $P_\lambda$.

When the dominant weight $\lambda$ is regular, $X_\lambda$
turns out to be smooth and independent of the choice of
$\lambda$; this defines the wonderful compactification
$X$ of $G$, which is sometimes denoted by $\bar{G}$. The identity component of $\Aut(X)$ is $G\times G$. The boundary $\partial X := X \setminus G$ is a union of $n$ smooth
irreducible divisors $D_1, \cdots, D_n$ with simple normal
crossings. The $G \times G$-orbits in $X$ are
indexed by the subsets of $\{1,\ldots, n \}$, by assigning
to each such subset $I $
the unique open orbit in the partial intersection
$D_I:=\cap_{i\in I} D_i$. The orbit closure $D_I$ is equipped with a $G\times G$-equivariant fibration $f_I: D_I\rightarrow G/P_\lambda^-\times G/P_\lambda$, where $\lambda:=\sum_{i\in I}\omega_i\in\Lambda^+$ and $P_\lambda^-$ is the opposite parabolic subgroup of $P_\lambda$.  The fiber of $f_I$ at the base point is isomorphic to the wonderful compactification of the adjoint group of $L_I:=P_\lambda\cap P_\lambda^-$ (a Levi subgroup of both).  In particular, there exists a unique closed $G \times G$-orbit $D_{1,2,\cdots, n}:=\cap_{i=1}^n D_i$, which is isomorphic to $G/B^- \times G/B$.
%
%In particular,
%the open orbit $X_0$ is $\msO_{\emptyset} = (G \times G)/ \diag(G)$,
%and the closed orbit is $\msO_{\{1,\cdots, n\}}$.
%Each orbit closure is equipped with a $G \times G$-equivariant
%fibration
%\[ f_I : \overline{\msO_I} \to G/Q_I \times G/P_I, \]
%where $P_I$ denotes the parabolic subgroup associated
%with the dominant weight $\sum_{i \in I} \omega_i$,
%and $Q_I$ stands for the opposite parabolic subgroup.
%The fiber of $f_I$ at the base point of $G/Q_I \times G/P_I$
%is isomorphic to the wonderful compactification of the
%adjoint group of $L_I := P_I \cap Q_I$ (a Levi subgroup of both).
%In particular, the closed orbit $\sO_{1,\ldots,n}$ is isomorphic to
%$G/B \times G/B$.

For an arbitrary $\lambda$, the variety $X_\lambda$ may be
singular. The homomorphism $\varphi_\lambda$ extends to a
$G \times G$-equivariant morphism $X \to X_\lambda$ that
we shall still denote by $\varphi_\lambda$. 
Recall that every nef line bundle on $X$ is globally generated.
The pull-backs
$\sL_X(\lambda) := \varphi_\lambda^* \sO_{\mbP \End V_\lambda}(1)$,
$\lambda \in \Lambda^+$, are exactly the globally generated
line bundles on $X$; moreover, $\sL_X(\lambda)$ is ample
if and only if $\lambda$ is regular. In particular,
$X$ admits a unique minimal ample line bundle, namely,
$\sL_X(\boldsymbol \rho)$. The assignment $\lambda \mapsto \sL_X(\lambda)$
extends to an isomorphism
$\Lambda \stackrel{\cong}{ \to} \Pic(X)$.

We shall index the boundary divisors so that
$\sO_X(D_i) = \sL_X(\alpha_i)$ for $i = 1, \ldots, n$.
 The anti-canonical bundle of $X$ is given by
$-K_X = \sL_X(2{\boldsymbol \rho}+ \sum_i \alpha_i)$, which is in particular ample, hence $X$ is Fano.
By \cite{BB96}, $X$ satisfies $H^1(X, T_X)=0$, hence it is locally rigid.

\begin{example}
If $G$ is of type $A_1$, i.e. $G=\PGL(\C^2)$,  its  wonderful compactification $X$ is the projective space $\BP(\End(\C^2))=\BP^3$. In this case, $\sL_X(\omega_1)=\mcO_{\BP^3}(1)$, $\mcO_X(D_1)=\mcO_{\BP^3}(2)$,  and $-K_X=\mcO_{\BP^3}(4)$. Furthermore, the projective space is known to be rigid (under K\"ahler deformation) by the characterization of Kobayashi-Ochiai (\cite{KO}).
%(in fact $\BP^n$ is known to have a unique K\'ahler structure by  Hirzebruch-Kodaira and Yau).
\end{example}

\subsection{Minimal rational curves on wonderful group compactifications}

For any $\alpha \in R$, we denote by $U_\alpha$ the corresponding
root subgroup of $G$ (with Lie algebra the root subspace
$\fg_\alpha \subset \fg$)
and by $C_\alpha$ the closure of $U_\alpha$ in $X$.  This gives a rational curve on $X$, which is in general not minimal.
The main result in \cite{BF15} shows that for the highest root $\boldsymbol \theta$, the deformations of $C_{\boldsymbol \theta}$ form the unique minimal rational component of $X$. More precisely, we have
\begin{thm}[\cite{BF15}]\label{t.VMRTWonderful}
Let $X$ be the wonderful compactification of a simple algebraic
group $G$ of adjoint type. Let $e \in G$ be the identity element and $\boldsymbol \theta$ the highest root.  Then

\item{\rm (i)} $C_{\boldsymbol \theta}$ is the unique $B$-stable irreducible curve on $X$ through $e$.

\item{\rm (ii)} There exists a unique family of minimal rational curves
$\sK$ on $X$, and it consists of deformations of $C_{\boldsymbol\theta}$. Moreover, $\sK_e$ is smooth and the normalization
map $\tau : \sK_e \to \sC_e$ is an isomorphism.

\item{\rm (iii)} $\sC_e$ is the unique closed $G$-orbit in $\bP \fg$,
if $G$ is not of type $A$.

\item{\rm (iv)} When $G$ is of type $A_n$,
so that $G = \PGL(V)$ for a vector space $V$ of dimension $n + 1$,
the VMRT $\sC_e$ is the image of $\bP V \times \bP V^*$
under the Segre embedding $\bP V \times \bP V^* \to \bP \End(V)$,
followed by the projection
$\bP \End(V) \dasharrow \bP(\End(V)/ \C \id) \simeq \bP \fg$.
\end{thm}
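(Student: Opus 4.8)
The plan is to work throughout at the identity element $e\in G\subset X$, which is a general point of $X$ because $G$ is the open $G\times G$-orbit; its isotropy group is the diagonal $\Delta G\cong G$, acting on $T_eX=T_eG=\fg$ by the adjoint representation. Hence for \emph{any} family of minimal rational curves on $X$ the VMRT $\sC_e\subset\mbP\fg$ is stable under $\mathrm{Ad}(G)$; as $\fg$ is an irreducible $G$-module with no $G$-fixed line, the unique closed orbit $\mbP\msO=\overline{G\cdot[\fg_{\boldsymbol\theta}]}$ (Example~\ref{e.contact}) is the orbit of smallest dimension, and every non-closed orbit strictly contains it in its closure. For (i): a $\Delta B$-stable irreducible curve through $e$ has a $\mathfrak b$-stable tangent line at $e$, and an inspection of weight components shows the only $\mathfrak b$-stable line of $\fg$ is $\fg_{\boldsymbol\theta}$. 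Since $\fg$ has trivial centre, $\fg^{\mathfrak b}=0$, so $e$ is isolated in $X^{\Delta B}$; therefore a $\Delta B$-stable curve through $e$ is almost homogeneous under $\Delta B$ and is determined by its tangent direction at $e$. As $C_{\boldsymbol\theta}=\overline{U_{\boldsymbol\theta}}$ is $\Delta B$-stable (because $\mathrm{Ad}(B)$ scales $\fg_{\boldsymbol\theta}$, so $bU_{\boldsymbol\theta}b^{-1}=U_{\boldsymbol\theta}$) with tangent $\fg_{\boldsymbol\theta}$ at $e$, this curve must be $C_{\boldsymbol\theta}$.

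For (ii), I would first prove that $C_{\boldsymbol\theta}\cong\mbP^1$ is a free rational curve, by computing the splitting type of $T_X|_{C_{\boldsymbol\theta}}$ from the $G\times G$-action and the local structure of $X$ at $\lim_{s\to\infty}u_{\boldsymbol\theta}(s)$; it comes out as $\mathcal O(2)\oplus\mathcal O(1)^{\oplus\delta}\oplus\mathcal O^{\oplus(\dim X-1-\delta)}$ with $\delta=-K_X\cdot C_{\boldsymbol\theta}-2$, so the deformations of $C_{\boldsymbol\theta}$ form an irreducible covering family $\sK$, and $[C_{\boldsymbol\theta}]$ is a smooth point of $\RatCurves^{\n}(X)$. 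To see $\sK$ is the unique minimal rational component, let $\sK'$ be any minimal rational component, take a general $C'\in\sK'_e$, and specialise it by a generic one-parameter subgroup $\lambda$ of the diagonal torus (which fixes $e$): $\lim_{t\to0}\lambda(t)\cdot C'$ lies in the projective variety $\sK'_e$, is irreducible by minimality, and is $\Delta T$-stable. A $\Delta T$-stable irreducible rational curve through $e$ has a $T$-eigenline as tangent: either some root space $\fg_\alpha$, forcing the curve to be $C_\alpha=\overline{U_\alpha}$, or a line in $\fh$, forcing it into the component $\overline T$ of $X^{\Delta T}$. Comparing anticanonical degrees — using $\sL_X(\lambda)\cdot C_\alpha=\langle\lambda,\boldsymbol\theta^\vee\rangle$ for long $\alpha$, a strictly larger value for short $\alpha$ (see the next paragraph), and the geometry of $\overline T$ — one finds that the minimal-degree curves among these all lie in the $\Delta G$-orbit of $C_{\boldsymbol\theta}$, so $[C_{\boldsymbol\theta}]\in\sK'$ and hence $\sK'=\sK$. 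The smoothness of $\sK_e$ at the general point $e$ is the general theory (\cite{Kollar,HM2,Kebekus}), and $\tau\colon\sK_e\to\sC_e$ is an isomorphism once $\sC_e$ is known to be smooth, which follows from (iii) and (iv).

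For (iii) and (iv) I would compute $d:=-K_X\cdot C_{\boldsymbol\theta}$. From $\varphi_\lambda(u_{\boldsymbol\theta}(s))=[\exp(s\,\varphi_\lambda(e_{\boldsymbol\theta}))]$ and the fact that, $\boldsymbol\theta$ being the highest root, the nilpotency index of $\varphi_\lambda(e_{\boldsymbol\theta})$ on $V_\lambda$ equals $\langle\lambda,\boldsymbol\theta^\vee\rangle+1$, one obtains $\sL_X(\lambda)\cdot C_{\boldsymbol\theta}=\langle\lambda,\boldsymbol\theta^\vee\rangle$, whence $d=\langle 2\boldsymbol\rho+\sum_i\alpha_i,\boldsymbol\theta^\vee\rangle$. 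Now $\sC_e$ is $\mathrm{Ad}(G)$-stable, closed, of dimension $d-2$, and contains $[\fg_{\boldsymbol\theta}]$, hence contains $\mbP\msO$. When $G$ is not of type $A$, the above value gives $d-2=\dim\mbP\msO$, and by the first paragraph a closed $\mathrm{Ad}(G)$-stable set of that dimension containing $\mbP\msO$ must equal $\mbP\msO$ — this is (iii). When $G=\PGL(V)$ is of type $A_n$, I would instead use the realisation of $X$ as the iterated blow-up of $\mbP\End(V)$ along the Segre variety $\mbP V\times\mbP V^{\ast}$ and its successive rank loci. Near $e=[\id]$, a point of full rank lying off every blow-up centre, $X$ is unchanged, and the minimal rational curves through $e$ are exactly the strict transforms of the lines through $[\id]$ that meet the rank loci with the expected multiplicities; such a line joins $[\id]$ to a point $[M]$ with $M$ a scalar plus a rank-one operator, i.e.\ with $[M]$ on the projected Segre $\Sigma$. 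Thus $\sK_e\cong\Sigma$, the map $\tau_e$ is the identity on $\Sigma$, and $\sC_e=\Sigma$, which is (iv); in particular $\sC_e$ is smooth ($\cong\mbP V\times\mbP V^{\ast}$ when $n\ge2$) and $\tau$ is an isomorphism, completing (ii).

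I expect the type $A$ case to be the main obstacle. There the VMRT is the \emph{projected} Segre $\Sigma$, which strictly contains the closed orbit $\mbP\msO\subsetneq\Sigma$, so the soft ``unique closed orbit'' argument of (iii) does not apply; one really needs the explicit blow-up model of $\bar{A}_n$ and a careful bookkeeping of which lines in $\mbP\End(V)$ have strict transforms of minimal anticanonical degree (equivalently, which meet the blow-up centres transversally and in the correct number of points). A secondary difficulty is to make the torus-specialisation step in (ii) fully rigorous — in particular to rule out, or to absorb into $\sK$, the rational curves of minimal degree contained in $\overline T$ — which is again cleanest via the explicit models available in the classical types.
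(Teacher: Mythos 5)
You should first be aware that the paper does not prove Theorem \ref{t.VMRTWonderful} at all: it is quoted from \cite{BF15}, so your sketch has to be measured against that argument. Your skeleton is broadly the right one (degree of $C_\alpha$ via the nilpotency index of $\varphi_\lambda(e_\alpha)$, the count $-K_X\cdot C_{\boldsymbol\theta}-2=\dim\mbP\msO$ outside type $A$, the ``unique closed $G$-orbit'' argument for (iii), the blow-up model of $\bar A_n$ for (iv)), and you correctly single out type $A$ as the hard case. The first genuine gap is in (i): from ``the only $\mathfrak{b}$-stable line in $\fg$ is $\fg_{\boldsymbol\theta}$'' and ``$e$ is isolated in $X^{\diag(B)}$'' you conclude that a $\diag(B)$-stable curve through $e$ ``is determined by its tangent direction at $e$''. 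That is precisely the point that needs proof: nothing yet prevents two distinct $B$-stable curves through $e$ from both having tangent cone supported on $[\fg_{\boldsymbol\theta}]$. The actual content of (i) is a classification of the one-dimensional $B$-conjugation orbits whose closure contains $e$: such a curve is $\overline{B\cdot c_0}$ with $c_0\in G$; conjugation-invariant regular functions force $c_0$ to be unipotent; writing $c_0=\exp(x)$, the condition $\dim T\cdot x\le 1$ forces the weights occurring in $x$ to lie in $\{0,\pm\alpha\}$ for a single root $\alpha$, nilpotency together with $e\in\overline{T\cdot c_0}$ forces $x\in\fg_{\pm\alpha}$, and invariance of the line $\C x$ under $\mathfrak{u}$ forces $\alpha=\boldsymbol\theta$. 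None of this is in your sketch, and without it (i) is unproved.

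The second, more serious gap is the uniqueness step in (ii). Degenerating a general member of $\sK'$ by a one-parameter subgroup of the diagonal torus only produces a $\diag(T)$-stable limit, and the possible limits are all root curves $C_\alpha$ (long or short) and curves inside $\bar T$; you then want to discard the unwanted ones by ``comparing anticanonical degrees''. But a minimal rational component is defined by properness of $\sK'_x$ at a general point, not by minimality of degree, so there is no a priori inequality between $\sL_X(\lambda)\cdot C'$ and $\sL_X(\lambda)\cdot C_\alpha$ for $\alpha$ short, nor any bound against curves in $\bar T$; worse, your intermediate claim is false in type $A$, where the member of $\sK$ through $e$ tangent to the direction of $\omega_1^\vee$ (the strict transform of the line $\{[\,\mathrm{id}+sE_{11}\,]\}$, which meets $Z_1$) lies in $\bar T$ and has the same degree as $C_{\boldsymbol\theta}$ without being a $\diag(G)$-translate of it. The way this step is actually closed — and the way the present paper uses the theorem, see the proof of Corollary \ref{c.minimal} via Remark 3.6 of \cite{BF15} — is to degenerate by the full diagonal Borel $B$ (a generic one-parameter subgroup of $T$ followed by successive root subgroups of $U$), each limit being taken inside the proper family $\sK'_e$, whose points correspond to irreducible generically reduced curves through the $B$-fixed point $e$; the resulting limit is then an irreducible $B$-stable curve through $e$, hence equal to $C_{\boldsymbol\theta}$ by (i), and freeness of $C_{\boldsymbol\theta}$ (your splitting-type computation) guarantees $[C_{\boldsymbol\theta}]$ lies on a unique component of $\RatCurves^\n(X)$, so $\sK'=\sK$. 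In short, you prove (i) but fail to use it at the one place where it is indispensable; with that repair, and with the type-$A$ bookkeeping you already acknowledge (identifying $\sK_e$ with strict transforms of lines through $[\mathrm{id}]$ meeting the rank-one locus, as recalled in Section 3.3), your outline does reconstruct the Brion--Fu proof.
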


%For convenience of discussion later, let us write down the VMRT explicitly.
%
%\begin{eqnarray}\label{eqn. VMRT information of wonderful compactification}
%\begin{tabular}{|c|c|c|c|c|}\hline
%Type of $G$ & VMRT & embedding & complete linear system? & $\tfg^{(1)}=0$? \\ \hline
% $A_1$ & $\mbP^2$ & $\mcO(1)$ & Yes & No \\ \hline
% $A_k, k\geq 2$ & $\mbP^k\times\mbP^k$ & $\mcO(1, 1)$ & of codimension 1 & {\color{red}To check} \\ \hline
% $B_k, {\color{red}k\geq 3}$ & $B_k/P_2$ & $\mcO(1)$ & Yes & Yes \\ \hline
% $C_k, {\color{red}k\geq 2}$ & $C_k/P_1$ & $\mcO(2)$ & Yes & No \\ \hline
% $D_k, k\geq 4$ & $D_4/P_2$ & $\mcO(1)$ & Yes & Yes \\ \hline
% $E_6$ & $E_6/P_2$ & $\mcO(1)$ & Yes & Yes \\ \hline
% $E_7$ & $E_7/P_1$ & $\mcO(1)$ & Yes & Yes \\ \hline
% $E_8$ & $E_8/P_8$ & $\mcO(1)$ & Yes & Yes \\ \hline
% $F_4$ & $F_4/P_1$ & $\mcO(1)$ & Yes & Yes \\ \hline
% $G_2$ & $G_2/P_2$ & $\mcO(1)$ & Yes & Yes \\ \hline
%\end{tabular}
%\end{eqnarray}

The following is a reformulation of Remark 3.6 in \cite{BF15}.

\begin{cor}\label{c.minimal}
For any irreducible curve $C\subseteq X$ such that $C\nsubseteq\partial X$ and for any nef line bundle $L$ on $X$, we have $L\cdot C_{\boldsymbol \theta} \leq L\cdot C$. If $L$ is moreover ample, then $L\cdot C_{\boldsymbol \theta} = L\cdot C$ if and only if $C\in\sK$.
\end{cor}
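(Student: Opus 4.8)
The plan is to extract both inequalities from the fact, recorded in Theorem \ref{t.VMRTWonderful}(i), that $C_{\boldsymbol\theta}$ is the unique $B$-stable irreducible curve through $e$, combined with the standard fact that a minimal rational curve computes the minimal positive intersection number with every nef (resp.\ ample) line bundle among curves in its deformation family. First I would recall that since $\sK$ is the \emph{unique} family of minimal rational curves on $X$ (Theorem \ref{t.VMRTWonderful}(ii)), any rational curve not contained in $\partial X$ which is not in $\sK$ has, for the minimal ample bundle $\sL_X(\boldsymbol\rho)$, strictly larger degree than a curve in $\sK$; this is essentially the definition of minimality of $\sK$ together with the general position of $e$. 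So the inequality $L\cdot C_{\boldsymbol\theta}\le L\cdot C$ for $L$ nef would follow once we know it for the ample generators of $\Pic(X)$, i.e.\ for $L=\sL_X(\lambda)$ with $\lambda$ regular dominant, and more generally for $L=\sL_X(\omega_i)$.

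The core computation is therefore to show $\sL_X(\omega_i)\cdot C_{\boldsymbol\theta}$ is minimal among $\sL_X(\omega_i)\cdot C$ over all irreducible $C\nsubseteq\partial X$. For this I would use the $G\times G$-action: given any irreducible curve $C$ with $C\nsubseteq \partial X$, a general translate $(g,g')\cdot C$ meets the open orbit $G$ and, after further translation, we may assume $C$ passes through $e$; since $\overline{NE}(X)$ is generated by classes of $B$-stable curves (a consequence of the Bialynicki--Birula decomposition associated to a generic one-parameter subgroup, as is standard for spherical varieties), and $C_{\boldsymbol\theta}$ is the unique $B$-stable curve through $e$ while the $B$-stable curves contained in $\partial X$ are the boundary strata — against which the globally generated $\sL_X(\lambda)$ are nef — one obtains that $[C_{\boldsymbol\theta}]$ realizes the minimal value of $\sL_X(\lambda)\cdot(-)$ on effective curve classes not lying in $\partial X$. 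Concretely, writing $[C]=a[C_{\boldsymbol\theta}]+\sum_j b_j[\Gamma_j]$ with $a\ge 1$ (because $C$ meets the open orbit, so its image in $X/\!\!\sim$ is nonconstant in the relevant fibration) and $b_j\ge 0$, $\Gamma_j$ boundary classes, nefness of $L$ gives $L\cdot C\ge a\,L\cdot C_{\boldsymbol\theta}\ge L\cdot C_{\boldsymbol\theta}$.

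For the equality statement when $L$ is ample: if $L\cdot C = L\cdot C_{\boldsymbol\theta}$ then in the decomposition above $a=1$ and $b_j\,L\cdot\Gamma_j=0$ for all $j$; ampleness of $L$ forces $b_j=0$, so $[C]=[C_{\boldsymbol\theta}]$. Then $C$ is an irreducible curve of minimal anticanonical degree (being in the minimal class), hence a minimal rational curve, so $C\in\sK$ by uniqueness of the minimal rational component (Theorem \ref{t.VMRTWonderful}(ii)); conversely every $C\in\sK$ is a deformation of $C_{\boldsymbol\theta}$, hence numerically equivalent to it, giving $L\cdot C = L\cdot C_{\boldsymbol\theta}$. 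I expect the main obstacle to be the cleanest justification that $\overline{NE}(X)$ is spanned by $B$-stable curves with $[C_{\boldsymbol\theta}]$ extremal in the precise sense needed, and the bookkeeping that a curve meeting the open orbit has multiplicity $a\ge 1$ on $[C_{\boldsymbol\theta}]$; since this is exactly the content of Remark 3.6 in \cite{BF15}, in the write-up I would simply cite that remark and present the deduction above as its reformulation.
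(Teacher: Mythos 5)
Your proposal is correct and takes essentially the same route as the paper: translate $C$ so that it passes through $e$, invoke Remark 3.6 of \cite{BF15} to replace $C$ by a rationally equivalent effective $B$-stable $1$-cycle $mC_{\boldsymbol\theta}+C'$ (using Theorem \ref{t.VMRTWonderful}(i) for the uniqueness of the $B$-stable irreducible curve through $e$), and then conclude by nefness, respectively ampleness, of $L$. The only cosmetic difference is that the paper takes the residual cycle $C'$ to be an effective $B$-stable cycle not containing $e$, rather than a combination of boundary classes as in your decomposition.
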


\begin{proof}
We may assume $C$ passes through the point $e \in G \subset X$. By Remark 3.6 of \cite{BF15}, $C$ is rationally equivalent to an effective $B$-stable $1$-cycle $C_1$ through $e$, which is equal to $m C_{\boldsymbol \theta} + C'$ for some positive integer $m$ and
some effective $B$-stable cycle $C'$ not containing $e$ by Theorem \ref{t.VMRTWonderful}(i).
 Since $L$ is nef, we have $L\cdot C =m L\cdot C_{\boldsymbol \theta} + L \cdot C'\geq mL\cdot C_{\boldsymbol \theta} \geq L\cdot C_{\boldsymbol \theta} $. When $L$ is moreover ample,  $L\cdot C'=0$ if and only if $C'=0$, completing the proof.
\end{proof}

When $\fg$ is not of type $A$, the VMRT of $X$ at a general point  is isomorphic to the unique closed $G$-orbit in $\BP \fg$ by Theorem \ref{t.VMRTWonderful}(iii).
 Let $\msO \subset \mfg$ be the minimal nilpotent orbit, then $\BP \msO \subset \BP \fg$ is the unique  closed $G$-orbit in $\BP \fg$.
We are now going to study some geometrical properties of $\msO$.
For $x \in \msO$, the tangent space $T_x\msO$ is naturally identified with $[\mfg, x]$.  We consider the following affine subvariety in $\wedge^2 \fg$:
$$
\msT_\msO := \{x \wedge [z, x] | x\in \msO, z\in \mfg\}  \subset \wedge^2 \mfg.
$$

\begin{lem}\label{l.nondegenerate}
The subvariety $\msT_\msO \subset \wedge^2 \mfg$ is linearly non-degenerate.
\end{lem}
\begin{proof}

Note that for $\mfg =\mathfrak{sl}_2$,  $\mbP \msO \subset \mbP \mfg$ is the conic curve in $\mbP^2$, whose variety of tangential lines is linearly non-degenerate.
Now consider the general case. For any $x \in \msO$,  there exists an $\mathfrak{sl}_2$ triplet $(x, y, h)$ by the Jacobson-Morozov theorem, namely the Lie sub algebra  $\mathfrak{l}:=\C \langle x, y, h \rangle$ is isomorphic to $\mathfrak{sl}_2$. 
 Let $\mathfrak{n} \subset \mathfrak{l}$ be the set of all non-zero nilpotent elements, then the corresponding subvariety $\msT_\mathfrak{n}:= \{x \wedge [z, x] | x\in \mathfrak{n}, z\in \mathfrak{l}\}  \subset \wedge^2 \mathfrak{l}$ associated to $\mathfrak{n}$ is linearly non-degenerate in $\wedge^2\mathfrak{l}$. 
 As a consequence,   the vector $x \wedge y$ is in the linear span of $\msT_\mathfrak{n}$, hence also in the linear span of $\msT_\msO \subset \wedge^2 \mfg$.  As $\msT_\msO$ is $G$-invariant, we get that $G \cdot (x \wedge y)$ is contained in the linear span of $\msT_\msO$.

On the other hand, $G \cdot (x, y) \subset \msO \times \msO$ is dense by \cite{KY} (p. 69).   In particular, $G \cdot (x \wedge y)$ is dense in $\{x \wedge x' | x, x'\in \msO\}$.
As $\msO \subset \mfg$ is linearly non-degenerate,  the set $\{x \wedge x' | x, x'\in \msO\}$ is linearly non-degenerate in $\wedge^2 \mfg$.  This implies that $G \cdot (x \wedge y)$ is linearly non-degenerate in $\wedge^2 \mfg$, concluding the proof.
\end{proof}

To conclude this section, we prove a geometrical property of the VMRT of wonderful compactifications.  For a smooth projective subvariety $Z \subset \BP V$, the variety of tangential lines of $Z$ is the subvariety $\mcT_Z \subset {\rm Gr}(2, V)  \subset \BP \wedge^2 V$ consisting of tangential lines of $Z$.
\begin{prop} \label{p.nondegenerate}
Let $Z \subset \BP \fg$ be the VMRT at a general point of the wonderful compactification $X$. Then the variety of tangential lines $\mcT_Z \subset \BP \wedge^2 \fg$ is linearly non-degenerate.
\end{prop}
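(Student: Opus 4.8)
The plan is to separate the statement along the dichotomy in Theorem \ref{t.VMRTWonderful}: either $\fg$ is not of type $A$, in which case the VMRT $Z$ equals $\BP\msO$, or $\fg$ is of type $A_n$, in which case $Z$ is the projection of a Segre variety. The two cases are of very different difficulty.

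For $\fg$ not of type $A$, I expect the claim to drop out of Lemma \ref{l.nondegenerate} with almost no extra work. Since $Z=\BP\msO$ is smooth and $T_x\msO=[\fg,x]$ for every $x\in\msO$, a tangent line of $Z$ at $[x]$ is spanned by $x$ and $[z,x]$ for some $z\in\fg$, so the affine cone over $\mcT_Z$ contains every decomposable bivector $x\wedge[z,x]$ with $x\in\msO$, $z\in\fg$. These are exactly the vectors comprising $\mcT_\msO$, which spans $\wedge^2\fg$ by Lemma \ref{l.nondegenerate}; hence $\mcT_Z$ is linearly non-degenerate.

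The substantive case is $\fg$ of type $A_n$ with $n\geq 2$, where $G=\PGL(V)$, $\dim V=n+1$, and, writing $W:=\End(V)=V\otimes V^*$ and $\fg=W/\C\id$, the VMRT $Z$ is the image of the Segre variety $\Sigma:=\BP V\times\BP V^*\subset\BP W$ under the linear projection $\pi$ from the point $[\id]$, by Theorem \ref{t.VMRTWonderful}(iv). I would first show that $\pi$ restricts to a closed embedding $\Sigma\hookrightarrow\BP\fg$ with image $Z$: injectivity holds because $\id$ has rank $n+1\geq 3$ whereas any nonzero linear combination of two rank-one matrices has rank at most $2$, so $[\id]$ lies on no secant line of $\Sigma$; and $\pi|_\Sigma$ is an immersion because the affine embedded tangent space to $\Sigma$ at a rank-one matrix $v\otimes\phi$ is $V\otimes\phi+v\otimes V^*$, which (in suitable coordinates) consists of matrices supported on one row and one column and hence does not contain $\id$. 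Consequently $Z\cong\Sigma$ is smooth, no tangent line of $\Sigma$ meets $[\id]$, and $\pi$ carries tangent lines of $\Sigma$ bijectively onto tangent lines of $Z$; in Plücker coordinates this identifies $\mcT_Z$ with the image of $\mcT_\Sigma$ under the surjection $\wedge^2 W\to\wedge^2\fg$ induced by $W\to\fg$. I would then check that $\mcT_\Sigma$ already spans $\wedge^2 W$: using the $\GL(V)$-decomposition $\wedge^2(V\otimes V^*)=(\wedge^2 V\otimes\Sym^2 V^*)\oplus(\Sym^2 V\otimes\wedge^2 V^*)$, the tangent-line bivectors of $\Sigma$ at $v\otimes\phi$ are, up to scalars, $(v\wedge v')\otimes(\phi\cdot\phi)$ and $(v\cdot v)\otimes(\phi\wedge\phi')$, and because the squares $\phi\cdot\phi$ span $\Sym^2 V^*$ and the squares $v\cdot v$ span $\Sym^2 V$, varying $v,v',\phi,\phi'$ shows these exhaust both summands. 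Combining the two steps, the span of the cone over $\mcT_Z$ is the image of the spanning set of $\wedge^2 W$ under a surjection onto $\wedge^2\fg$, so $\mcT_Z$ is non-degenerate.

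The hard part will be the first step of the type $A_n$ case: pinning down the linear projection $\pi$ carefully enough to know that it takes the tangential variety of $\Sigma$ onto that of $Z$ without collapsing any linear spans. By contrast the plethysm computation in the second step, and the entire non-type-$A$ case, are routine. I also note that the rank hypothesis $n\geq 2$ is genuinely used here (for $A_1$ the projection of $\Sigma$ from $[\id]$ is two-to-one onto a conic), which is harmless because $\bar G$ is $\BP^3$ when $G$ has rank one.
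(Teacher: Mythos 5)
Your argument is correct, and for $\fg$ not of type $A$ it coincides with the paper's: tangential lines of $\BP\msO$ have Pl\"ucker points $x\wedge[z,x]$, so Lemma \ref{l.nondegenerate} gives the span. Where you diverge is type $A_n$: you treat the projected Segre by hand, checking that projection from $[\id]$ embeds $\Sigma=\BP V\times\BP V^*$ (no secant or tangent line of $\Sigma$ meets $[\id]$ since those lie in the rank-$\le 2$ locus), that tangent lines therefore map to tangent lines with Pl\"ucker points transforming by $\wedge^2$ of the quotient $W\to\fg$, and that $\mcT_\Sigma$ already spans $\wedge^2 W=(\wedge^2V\otimes\Sym^2V^*)\oplus(\Sym^2V\otimes\wedge^2V^*)$ via the lines inside the two rulings. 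All of this is sound, but the paper bypasses it entirely with one observation that dissolves the step you flag as "the hard part": by Theorem \ref{t.VMRTWonderful} one has $\BP\msO\subset Z$ in \emph{all} cases --- in type $A$ the projectivized minimal orbit sits inside the projected Segre as the locus $\phi(v)=0$ --- and since a tangential line of a smooth subvariety is a tangential line of $Z$, Lemma \ref{l.nondegenerate} alone yields $\langle\hat\mcT_Z\rangle\supseteq\langle\mcT_\msO\rangle=\wedge^2\fg$ uniformly. So the paper's proof is a one-line reduction to the lemma, while yours gives extra (unneeded but correct) information in type $A$, namely that $Z\cong\Sigma$ is embedded by the projection and that the full tangential variety of the Segre spans $\wedge^2\End(V)$; your remark that $n\ge 2$ is needed for the projection argument is also accurate and harmless given the paper's standing rank assumption.
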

\begin{proof}
By Theorem \ref{t.VMRTWonderful}, we have  $\BP \msO \subset Z \subset \BP\fg$ in all cases. Then the variety of tangential lines $\mcT_Z \subset \BP \wedge^2 \fg$ is linearly non-degenerate by Lemma \ref{l.nondegenerate}.
\end{proof}

The relevance of this property to us is the following integrability result from \cite[Proposition 9]{HM98}.
\begin{prop} \label{p.integrable}
Let $M$ be a uniruled projective manifold and $\sK$ a family of minimal rational curves on $M$. Let $\sC_x \subset \BP T_xM$ be the VMRT at a general point and  let $W_x \subset \BP T_xM$ be its linear span.  Assume that the variety of tangential lines of $\sC_x \subset W_x$ is linearly non-degenerate in $\BP \wedge^2 W_x$.  Then the distribution $\mathcal{W}$ defined by $W_x$ is integrable on an open subset of $M$.
\end{prop}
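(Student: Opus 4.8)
The plan is to show that $\mathcal{W}$, which is a subbundle of $TM$ over a dense open subset $M^\circ$ (the locus where $\dim W_x$ is generic), has vanishing Frobenius obstruction at a general point; Frobenius' theorem then gives integrability on a dense open subset. Write $\widehat{W}_x\subseteq T_xM$ for the affine cone over $W_x$, so that $\mathcal{W}$ is the distribution with $\mathcal{W}_x=\widehat{W}_x$. The obstruction is the bundle map $\phi$ with $\phi_x(u\wedge v)=[\widetilde u,\widetilde v](x)\bmod\mathcal{W}_x$ for any local sections $\widetilde u,\widetilde v$ of $\mathcal{W}$ with $\widetilde u(x)=u,\ \widetilde v(x)=v$; a short Leibniz-rule computation shows this is independent of the chosen extensions. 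Since $\phi_x$ is linear on $\wedge^2\widehat{W}_x$ and, by hypothesis, the bivectors $\alpha\wedge\beta$ attached to the embedded tangent lines of $\sC_x\subset W_x$ (i.e. $[\alpha]\in\sC_x$ a smooth point and $\beta$ in the affine tangent space $T_\alpha\widehat{\sC}_x$) span $\wedge^2\widehat{W}_x$, it suffices to prove $\phi_x(\alpha\wedge\beta)=0$ for a general $x$ and all such $\alpha,\beta$.

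To this end I would exploit that minimal rational curves in the directions of $\sC_x$ behave like ``lines'' of the cone structure. Fix a general $x$ and a general $C_0\in\sK_x$ with normalisation $f_0\colon\BP^1\to M$, $f_0(0)=x$, $f_0'(0)=\alpha$. Since $C_0$ is minimal and $x$ is general, $f_0^*TM\cong\sO(2)\oplus\sO(1)^{\oplus p}\oplus\sO^{\oplus q}$ with $p=\dim\sC_x$, and the fibre at a general point of $\BP^1$ of the positive part $P_0:=\sO(2)\oplus\sO(1)^{\oplus p}$ is the affine tangent space of $\sC$ at the corresponding point. Because $\sC_y$ spans $W_y$, one has $T_\gamma\widehat{\sC}_y\subseteq\widehat{W}_y=\mathcal{W}_y$ for every smooth point $[\gamma]\in\sC_y$, hence $P_0$ is a subbundle of $f_0^*\mathcal{W}$. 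As $f_0^*TM$ is globally generated, $H^1(\BP^1,f_0^*TM)=0$, deformations of $C_0$ inside $\sK$ are unobstructed, and since $P_0$ is globally generated I may choose a one-parameter deformation $\{C_s\}_{s\in\Delta}\subset\sK$ whose deformation vector field along $C_0$ is a prescribed section $v\in H^0(\BP^1,P_0)$ with $v(0)=\beta$ (if $\beta\in\C\alpha$ then $\alpha\wedge\beta=0$ and there is nothing to prove, so assume $\beta\notin\C\alpha$). Put $F\colon\BP^1\times\Delta\to M$, $F(t,s)=f_s(t)$, and $\gamma(s):=f_s(0)$, so $\gamma'(0)=\beta$; then $\partial_tF(t,0)=f_0'(t)\in\widehat{\sC}_{f_0(t)}\subseteq\mathcal{W}$ and $\partial_sF(t,0)=v(t)\in P_0\subseteq\mathcal{W}$ along $C_0$, while $\partial_tF(0,s)=f_s'(0)\in\widehat{\sC}_{f_s(0)}\subseteq\mathcal{W}$ along $\gamma$.

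Now I compute $\phi_x(\alpha\wedge\beta)$. Choose local sections $\widetilde\xi,\widetilde\eta$ of $\mathcal{W}$ near $x$ with $\widetilde\xi|_{\gamma}=(s\mapsto f_s'(0))$ and $\widetilde\eta|_{C_0}=v$; this is possible because both prescribed sections are $\mathcal{W}$-valued along the respective curves, and then $\widetilde\xi(x)=\alpha$, $\widetilde\eta(x)=\beta$. Fix a torsion-free connection $\nabla$ on $M$. Since $\nabla_\alpha\widetilde\eta$ depends only on $\widetilde\eta=v$ along the $\alpha$-curve $C_0$ and $\nabla_\beta\widetilde\xi$ depends only on $\widetilde\xi$ along the $\beta$-curve $\gamma$, I obtain
\[ [\widetilde\xi,\widetilde\eta](x)=\nabla_\alpha\widetilde\eta-\nabla_\beta\widetilde\xi=\nabla_{\partial_t}(\partial_sF)|_{(0,0)}-\nabla_{\partial_s}(\partial_tF)|_{(0,0)}=dF([\partial_t,\partial_s])=0, \]
the last two equalities being the torsion-freeness of the pullback connection $F^*\nabla$ and $[\partial_t,\partial_s]=0$ on $\BP^1\times\Delta$. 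Hence $\phi_x(\alpha\wedge\beta)=0$, and as the tangent-line bivectors span $\wedge^2\widehat{W}_x$ we get $\phi_x\equiv0$, i.e. $[\mathcal{W},\mathcal{W}]\subseteq\mathcal{W}$ at the general point $x$, hence on a dense open subset of $M$; Frobenius' theorem yields integrability of $\mathcal{W}$ there.

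The hard part will be the geometric bookkeeping of the second paragraph: one must secure, simultaneously at a general point, that $f_0^*TM$ has the stated splitting with positive part equal to the affine tangent spaces of the VMRT, that $\sC_x$ is smooth and spans $W_x$ with the tangent-line bivectors spanning $\wedge^2\widehat{W}_x$, and---most delicately---that the deformation of $C_0$ inside $\sK$ can be chosen with deformation vector field lying in $P_0$ (so as to be $\mathcal{W}$-valued along $C_0$) while still realising an arbitrary prescribed $\beta\in T_\alpha\widehat{\sC}_x$. Granting these now-standard facts about minimal rational curves, everything else reduces to the torsion-free identity $\nabla_{\partial_t}\partial_sF=\nabla_{\partial_s}\partial_tF$.
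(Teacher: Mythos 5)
The paper gives no proof of this proposition at all: it is quoted verbatim from \cite[Proposition 9]{HM98}, and your argument is in substance a correct self-contained reconstruction of that cited proof — minimal rational curves and their first-order deformations are tangent to $\mathcal{W}$, a two-parameter family $F(t,s)$ plus equality of mixed second derivatives shows the Frobenius tensor kills the tangential bivectors $\alpha\wedge\beta$ with $[\alpha]\in\sC_x$ general and $\beta\in T_\alpha\widehat{\sC}_x$, and the non-degeneracy of the variety of tangential lines (whose general points are exactly such bivectors, so density and linearity of $\phi_x$ suffice) then forces $\phi_x\equiv 0$, whence integrability by Frobenius. The only wording to fix is ``fix a torsion-free connection $\nabla$ on $M$'': a global holomorphic torsion-free connection need not exist, but since the evaluation of $[\widetilde\xi,\widetilde\eta](x)$ is purely local you may simply work in a holomorphic coordinate chart at $x$ with the flat connection (or the Levi-Civita connection of any K\"ahler metric), after which your computation goes through unchanged, and the standard facts you invoke (the splitting $f_0^*T M\cong\sO(2)\oplus\sO(1)^{\oplus p}\oplus\sO^{\oplus q}$ with positive part equal to the affine tangent space of the VMRT, unobstructedness giving an arc of curves in $\sK$ with prescribed deformation field in the globally generated $P_0$) are indeed available for a standard curve through a general point.
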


%The following is from \cite[Proposition 11]{HM98}.
%\begin{prop} \label{p.Walgintegrable}
%Let $M$ be a uniruled quasi-projective manifold and $\sK$ a family of minimal rational curves on $M$. Assume that the distribution $\mathcal{W}$ defined by the linear spans of the VMRTs is integrable.
%Then there exists a closed subvariety $S(\mathcal{W}) \subsetneq M$   such that every leaf $F$ of  $\mathcal{W}$  is closed in $M \setminus S(\mathcal{W})$ and its topological closure $\bar{F}$ is a complex-analytic subvariety in $M$.   This induces a  meromorphic fibration  $M \dasharrow B$ whose generic fibers are closures of the leaves.
%\end{prop}

\section{Invariance of varieties of minimal rational tangents}

\subsection{Rigidity properties of Fano deformation}

Let $X$ be a normal projective variety. Consider the $\mbR$-space $N^1(X): = ({\rm Pic}(X)/\equiv)\otimes \mbR$, where $\equiv$ is the numerical equivalence.
The {\em nef cone} ${\rm Nef}(X) \subset N^1(X)$ is the closure of the cone spanned by ample classes.  The closure of the cone spanned by effective classes in $N^1(X)$ is the  {\em pseudo-effective cone} ${\rm PEff}(X) \subset N^1(X)$.  The {\em movable cone} ${\rm Mov}(X) \subset N^1(X)$ is the closure of  the cone spanned by  classes of divisors moving in a linear system with no fixed components.  The inclusion relation among these cones is given by
$$
{\rm Nef}(X)  \subset {\rm Mov}(X) \subset {\rm PEff}(X) \subset N^1(X).
$$

The {\em Mori cone} $\overline{{\rm NE}(X)} \subset N_1(X)$ is the dual of the nef cone ${\rm Nef}(X) \subset N^1(X)$. By Kleiman's criterion, the Mori cone $\overline{{\rm NE}(X)}$ is the closure of the cone spanned by classes of effective curves.

When $X$ is a smooth Fano variety, all these cones are rational polyhedral cones.
\begin{example} \label{e.cones}
Let $X$ be the wonderful compactification of a simple algebraic group $G$. The Picard group ${\rm Pic}(X)$ is identified with the weight lattice $\Lambda$ of $G$.
By \cite[Section 2]{Br07}, the extremal rays of the pseudo-effective cone ${\rm PEff}(X)$ are generated by simple roots, while those of the nef cone ${\rm Nef}(X)$ are generated by  fundamental weights. In particular, the nef cone ${\rm Nef}(X)$ is then identified with the positive Weyl chamber.
\end{example}

%{\color{blue}Remark: The two words "fundamental weights" and "simple roots" in Example \ref{e.cones} are switched. Besides, all the conclusions in Example \ref{e.cones} could already be deduced from Subsection 1.2, and thus we may regard \cite{Br07} as a supplementary reference.
%}

It turns out that these cones behave well under Fano deformation, which follows from a series of works of Siu, Wi\'sniewski and de Fernex-Hacon.  We refer to the survey  \cite{dFH} and the references therein for more details.

%{\color{blue}Remark: This is a general question including results appearing somewhere else. For the results such as Theorem \ref{t.InvCones}, do we need to cite the original reference or just a survey such as \cite{dFH}?}

\begin{thm}[\cite{dFH}] \label{t.InvCones}
Let $\pi: \mcX\rightarrow\Delta$ be a regular family of Fano manifolds.  Then
\begin{itemize}
\item[(i)] $N^1(\mcX/\Delta) \simeq N^1(\mcX_t) \simeq H^2(\mcX_t, \mbR)$  and ${\rm Pic}(\mcX/\Delta) \simeq {\rm Pic}(\mcX_t)$ for any $t \in \Delta$.
\item[(ii)] The nef cones, movable cones,  pseudo-effective cones are constant in the family under the isomorphism in (i).
\item[(iii)]For any $\mcL\in\Pic(\mcX/\Delta)$ and any $t\in\Delta$, the homomorphism $H^0(\mcX, \mcL)\rightarrow H^0(\mcX_t, \mcL|_{\mcX_t})$ is surjective, and thus $h^0(\mcX_t, \mcL|_{\mcX_t})=h^0(\mcX_0, \mcL|_{\mcX_0})$.
\end{itemize}
\end{thm}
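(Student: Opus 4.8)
\noindent\emph{Proof strategy.} The plan is to obtain the three assertions in the order (i), (iii), (ii), drawing on three inputs: Kodaira/Kawamata--Viehweg vanishing on the Fano fibres, triviality of the relevant cohomology and Picard sheaves over the Stein contractible base $\Delta$, and the birational boundedness results of Wi\'sniewski and of de Fernex--Hacon. For (i): since $-K_{\mcX_t}$ is ample, Kodaira vanishing gives $H^i(\mcX_t,\mcO_{\mcX_t})=0$ for all $i\ge 1$, so the exponential sequence yields $\Pic(\mcX_t)\xrightarrow{\sim}H^2(\mcX_t,\mbZ)$; as a Fano manifold is rationally connected hence simply connected, $H^2(\mcX_t,\mbZ)$ is torsion free, so $\Pic(\mcX_t)=\mathrm{NS}(\mcX_t)=H^2(\mcX_t,\mbZ)$ and $N^1(\mcX_t)=\Pic(\mcX_t)\otimes\mbR\cong H^2(\mcX_t,\mbR)$. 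Triviality of the local system $R^2\pi_*\mbZ$ over the contractible $\Delta$ makes this lattice canonically constant in $t$. Finally, a standard deformation argument using $H^1(\mcX_t,\mcO_{\mcX_t})=H^2(\mcX_t,\mcO_{\mcX_t})=0$ (which makes extension of line bundles from a fibre unobstructed and rigid) together with $\Pic(\Delta)=0$ shows that the restriction $\Pic(\mcX/\Delta)=\Pic(\mcX)\to\Pic(\mcX_t)$ is an isomorphism for every $t$, giving the asserted identifications after tensoring with $\mbR$.

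For (iii): by upper semicontinuity of cohomology, $h^0(\mcX_0,\mcL|_{\mcX_0})\ge h^0(\mcX_t,\mcL|_{\mcX_t})$ for $t$ near $0$; the reverse inequality is a Siu-type invariance statement for families of Fano manifolds, proved by de Fernex--Hacon via relative MMP and extension theorems, so $t\mapsto h^0(\mcX_t,\mcL|_{\mcX_t})$ is locally constant, hence constant. Once this $h^0$ is constant, Grauert's theorem makes $\pi_*\mcL$ locally free with $\pi_*\mcL\otimes k(t)\xrightarrow{\sim}H^0(\mcX_t,\mcL|_{\mcX_t})$; since $\Delta$ is Stein one has $H^0(\mcX,\mcL)=H^0(\Delta,\pi_*\mcL)$, which surjects onto each stalk quotient, giving the surjectivity claimed. (In the case where $\mcL$ is nef on the fibres — already the decisive one for applications in this paper — one can bypass the MMP input entirely: $\mcL|_{\mcX_t}-K_{\mcX_t}$ is then ample, so Kawamata--Viehweg gives $H^{\ge 1}(\mcX_t,\mcL|_{\mcX_t})=0$ and cohomology-and-base-change applies directly.)

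For (ii): work throughout via the identification of (i), and note it suffices to prove local constancy, so fix $0\in\Delta$. For the nef cone, openness of ampleness gives $\mathrm{Amp}(\mcX_0)\subseteq\mathrm{Amp}(\mcX_t)$, hence $\mathrm{Nef}(\mcX_0)\subseteq\mathrm{Nef}(\mcX_t)$ and dually $\NE(\mcX_t)\subseteq\NE(\mcX_0)$ for $t$ near $0$. The content is the reverse inclusion $\NE(\mcX_0)\subseteq\NE(\mcX_t)$: since $\mcX_0$ is Fano, $\NE(\mcX_0)$ is rational polyhedral and each extremal ray is generated by a rational curve of $(-K_{\mcX_0})$-degree $\le\dim\mcX_0+1$; these have bounded degree for the $\pi$-ample $-K_{\mcX/\Delta}$, hence move in a bounded family, and one shows each such extremal curve deforms to an effective $1$-cycle of the same numerical class on the nearby $\mcX_t$ — for a minimal free extremal curve through a general point the deformations out of the central fibre are unobstructed because $N_{\mcX_0/\mcX}\cong\mcO_{\mcX_0}$ makes $f^*T_{\mcX}$ globally generated, and the non-free extremal rays are handled by the relative cone theorem, i.e. deformation of Fano contractions (Wi\'sniewski). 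This forces $\NE(\mcX_0)=\NE(\mcX_t)$, hence the nef cones agree. The pseudo-effective cones then coincide at once: each is rational polyhedral, and by the constancy of $h^0$ from (iii) a class in $\Pic(\mcX/\Delta)$ is effective on $\mcX_0$ if and only if it is effective on $\mcX_t$, so the monoids of effective classes — hence their closures $\mathrm{PEff}$ — agree. Finally the movable cone, which lies between the two and is cut out within $\mathrm{PEff}$ by the condition that the fixed part of the associated linear system vanish, is controlled by the relative MMP (movable classes become semiample along a relative MMP, which runs in the family by Wi\'sniewski--de Fernex--Hacon), so it too is constant.

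The main obstacle is the ``wrong-direction'' specialisation in (ii): ampleness and effectivity are open conditions, so the naive semicontinuity only shows the nef (resp.\ pseudo-effective) cone of the central fibre is contained in (resp.\ contains) that of the general fibre, and could a priori be strictly smaller (resp.\ strictly larger). Ruling this out — that a class which is nef, or effective, on the general fibres remains so on the central one — is precisely where the genuinely birational-geometric input enters: boundedness of Fano extremal rational curves plus Wi\'sniewski's deformation theorem for Fano contractions for the nef/Mori statement, and Siu-type invariance of $h^0$ (de Fernex--Hacon) for the effective and movable statements. Everything else in the argument is formal.
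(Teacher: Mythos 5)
The paper does not prove Theorem \ref{t.InvCones} at all: it is imported verbatim from the survey of de Fernex--Hacon, whose proof rests on exactly the ingredients you invoke (Wi\'sniewski's deformation theorem for the Mori/nef cone of Fano families, Siu-type invariance of $h^0$ as established by de Fernex--Hacon via extension theorems and the relative MMP, and the deduction of the pseudo-effective and movable statements from these). Your sketch therefore follows essentially the same route as the cited source, and the formal parts you supply yourself --- (i) via Kodaira vanishing, simple connectedness and the exponential sequence, the nef case of (iii) via Kawamata--Viehweg and Grauert, and the derivation of $\mathrm{PEff}$-constancy from the invariance of $h^0$ --- are correctly assembled.
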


\subsection{Invariance of VMRT for general cases}\label{s.vmrt}

We are now going to study the behavior of VMRTs in a regular family.
Let $\pi: \mcX\rightarrow\Delta\ni 0$ be a regular family of  Fano varieties. 
 Let $\FRC(\mathcal{X})$ be the space of free rational curves on $\mathcal{X}$, see \cite[Section 2.3]{HM98} for the construction and properties of $\FRC(\mathcal{X})$. % It is known that the connected components of  $\FRC(\mathcal{X})$ are complex manifolds.  
 Let $C$ be a free rational curve on $\mcX$.  As $C$ is projective, it must be contained in a fiber $\mathcal{X}_t:=\pi^{-1}(t)$ for some $t\in \Delta$, hence it is free on $\mathcal{X}_t$. Conversely, given a free rational curve $f: \BP^1 \to  \mathcal{X}_t$ for some $t \in \Delta$, we have an exact sequence 
$$
0 \to f^*T_{\mcX_t} \to f^* T_\mcX \to f^* N_{\mcX_t | \mcX} \simeq f^* \mathcal{O}_{\mcX_t} \to 0.
$$
It follows that $f^*T_\mcX$ is again free. This implies that a free rational curve on $\mcX$ is a free rational curve in some fiber of $\mathcal{X}/\Delta$, namely we have
 $\FRC(\mathcal{X}/\Delta)=\FRC(\mathcal{X})$. Moreover, there is a natural map $\FRC(\mathcal{X}/\Delta)\rightarrow\Delta$ such that the fiber at a point $t\in\Delta$ is nothing else but $\FRC(\mathcal{X}_t)$. For each $x\in\mathcal{X}$, let $\FRC(x, \mathcal{X})$ be the space of free rational curves on $\mathcal{X}$ passing through the point $x$. Then $\FRC(x, \mathcal{X})$ is just $\FRC(x, \mathcal{X}_t)$, where $t=\pi(x)\in\Delta$.
Let $\mathcal{K}$ be an irreducible component of $\FRC(\mathcal{X}/\Delta)$. The fiber $\mathcal{K}^t$ of the family $\mathcal{K}/\Delta$ at any point $t\in\Delta$ is either empty or a union of finitely many components of $\FRC(\mathcal{X}_t)$. As in the absolute case, there is a universal family $\sU$ with projections $\upsilon : \sU \to \sK$ and $\mu : \sU \to \mcX$. Furthermore, $\upsilon$ is a $\bP^1$-bundle, and $\mu$ is a smooth dominant map.

We are very grateful for an anonymous referee for providing the following result which we have neglected in a previous version.
\begin{lem} \label{l.Mok}
Let $\pi: \mcX \to \Delta$ be a regular family. Choose a point $t_0 \in \Delta^*:= \Delta \setminus \{0\}$. Then the deformation of any minimal rational curve on $\mcX_{t_0}$ covers $\mcX$.
\end{lem}
\begin{proof}
Take  a general minimal rational curve $C$ on $\mcX_{t_0}$ and let $\mathcal{Q}$ be an irreducible component of the Barlet space of $\mcX$ containing the reduced irreducible cycle $[C]$ as a point. 
As $\pi: \mcX \to \Delta$ is projective, there exists a K\"ahler metric on $\mcX$ with K\"ahler class $\omega$. Then for any cycle $[D] \in \mathcal{Q}$, we have 
\begin{equation}\label{e.volume}
{\rm volume}(D, \omega) = {\rm volume}(C, \omega).
\end{equation}
Let $\rho: \mathcal{V} \to \mathcal{Q}, \mu: \mathcal{V} \to \mcX$ be the universal family. Then for any compact subset $\Gamma \subset \mcX$ and for any $x\in \Gamma$, $\mu^{-1}(x)$ consists of cycles in $\mathcal{Q}$ passing through $x$. From \eqref{e.volume}, it follows that $\mu^{-1}(\Gamma) \subset \mathcal{V}$ is compact. Recall that Bishop's Theorem affirms that on a Euclidean ball, any sequence of subvarieties with bounded volume contains a convergent subsequence, from which we deduce that $\mu: \mathcal{V} \to \mcX$ is proper.  As $C$ is general, it is free in $\mcX$, then $\mu(\mathcal{V})$ contains an open neighborhood of $C$ in $\mcX$. By the proper mapping theorem applied to $\mu$ we have $\mu(\mathcal{V}) =\mcX$.  This concludes the proof.
\end{proof}

From now on, let $\pi: \mcX\rightarrow\Delta\ni 0$ be a regular family of  Fano varieties such that $\mcX_t  \simeq  X$ for all $t \in \Delta^*$, where $X$ is the wonderful compactification of a simple algebraic group $G$ of adjoint type. Let $\sK$ be the irreducible component of $\FRC(\mcX/\Delta)$ which contains the unique minimal rational component of $\mcX_t$ for $t \neq 0$. %Denote by $\upsilon: \mcU \to \sK$ the universal family and by $\mu: \mcU \to \mcX$ the evaluation map. 
Let $\overline{\sK^t}$ be the Zariski closure in $\RatCurves^n(\mathcal{X}_t)$ of the fiber $\sK^t$ of $\sK/\Delta$ at the point $t\in\Delta$, then for $t\neq 0$, $\overline{\sK^t}$ is a minimal rational component of $\mcX_t$  and it contains $\sK^t$ as an open subset.

The following Lemma is well-known (see for example  \cite[Propositions 4 and 5]{HM98}), but for the reader's convenience, we recall the proof here.

\begin{lem}\label{l.Hom-scheme}
The morphism $\mu$ is smooth with connected fibers. For any $x_t$ in the open $G \times G$-orbit of $\mcX_t$ with $t\neq 0$, $\mcU_{x_t}$ is projective and the tangent map $\tau_{x_t}: \mcU_{x_t}\to \mcC_{x_t}$ is an isomorphism.
\end{lem}

\begin{proof}
Let $\sH$ be the set of the points $[f]$ in the relative Hom-scheme $\Hom^n_{\text{bir}}(\mbP^1, \mcX/\Delta)$ representing an element in $\mcK$, i.e. $f(\mathbb{P}^1)$ is free on $\mathcal{X}$. Then $\sH$ is smooth and the evaluation map $\bP^1\times\sH \to \mcX$ is also smooth. 
Furthermore, $\mcU=(\bP^1\times\sH)/\Aut(\bP^1)$ and $\mu: \mcU \to \mcX$ is smooth by \cite[Proposition 4]{HM98}. More precisely, given a point $z\in \mcU$ represented by $(p, [f])\in \bP^1\times\sH$, the fact that $H^1(\bP^1, N_{f(\mbP^1)/\mcX}\otimes\mfm_p)=0$ implies that $\mu$ is smooth at $z$, where $N_{f(\mbP^1)/\mcX}=f^*T\mcX/T\bP^1$ is the normal bundle of $f(\bP^1)$ in $\mcX$, and $\mfm_p=\mcO_{\mbP^1}(-1)$ is the maximal ideal of $p$ in $\mbP^1$.

Take $x_t$ in the open  $G \times G$-orbit of $\mcX_t$ with $t\neq 0$. Since any element in $\overline{\sK^t}_{x_t}$ represents a free rational curve, $\sK_{x_t}=\sK^t_{x_t}=(\overline{\sK^t})_{x_t}$ is projective. Then $\mcU_{x_t}$ is projective and it is a smooth projective variety. Note that the tangent morphism $\tau_{x_t}: \mcU_{x_t} \to \mcC_{x_t}$ is defined as follows: if $z\in\mcU_{x_t}$ is represented by $(p, [f])\in\bP^1\times\sH$, then $\tau_{x_t}(z)=[(df)_p(T_p\mbP^1)]\in\mcC_{x_t}$. Now $\mcU_{x_t}$ is the normalization of the VMRT $\mcC_{x_t}$, and the latter is smooth and connected by Theorem \ref{t.VMRTWonderful}. Then $\mcU_{x_t}\cong\mcC_{x_t}$ and it is connected. Since general fibers $\mcU_{x_t}$ of $\mu$ are connected, so are all fibers.
\end{proof}

Now we are going to show that the limit $\sK^0$ contains a minimal rational component of $\mcX_0$.
\begin{prop}\label{p.MRC}
\begin{itemize}
\item[(i)] The central fiber $\sK^0: = \{[C] \in \sK| C \subset \mcX_0\}$ of $\sK/\Delta$ is not empty.
\item[(ii)] There is a unique irreducible component of $\sK^0$ dominating $\mcX_0$, denoted by $\sK^{0, g}$. 
\item[(iii)]The family $\sK^{0, g}$ is a minimal rational component of $\mcX_0$.
\item[(iv)] Take a general point $x\in\mcX_0$. Then $\sK^{0, g}_x=\sK^0_x=(\overline{\sK^0})_x$ is projective, and $\mcU_x$ is a connected smooth projective variety.
\end{itemize}
\end{prop}

\begin{proof}
By Lemma \ref{l.Mok}, the deformation of a general minimal rational curve $C_t$ on $\mcX_t$ covers $\mcX$.
Take a family $\{C_t\}_{t\in \Delta^*}$ of such curves deforming to $C_0 \subset \mcX_0$ such that $C_0$ passes through a general point $x$ of $\mcX_0$. We claim that $C_0$ is a minimal rational curve on $\mcX_0$.

Take a relative ample line bundle $\mcL \in {\rm Pic}(\mcX/\Delta)$. For all $t$, the line bundle $\mcL_t:=\mcL|_{\mcX_t}$ is ample on $\mcX_t$ and $\mcL_t \cdot C_t = \mcL_0 \cdot C_0$.
Let $C'_0 \subset C_0$ be an irreducible reduced component of $C_0$ through $x$, then $C'_0$ is a free rational curve on $\mcX_0$ as $x \in \mcX_0$ is general.
It follows that $C'_0$ is also free on $\mcX$, hence it deforms to $C'_t$ on $\mcX_t$ such that
$$
\mcL_t \cdot C'_t = \mcL_0 \cdot C'_0 \leq \mcL_0 \cdot C_0 = \mcL_t \cdot C_t.
$$
By Corollary \ref{c.minimal}, $\mcL_t \cdot C'_t \geq \mcL_t \cdot C_t$, implying that $C_0 = C'_0$ is irreducible, reduced and free. In particular, (i) holds.

In a similar way, one shows that $C_0$ cannot break into a cycle  passing through $x$ with several components  having lower intersection numbers with $\mcL_0$, hence $C_0$ is a minimal rational curve. This shows that $\sK^0_x$ is proper.
It follows that $\sK^0_x=(\overline{\sK^0})_x$ is projective, and $\mcU_x$ is projective. By Lemma \ref{l.Hom-scheme}, $\mcU_x$ is a smooth connected projective variety. In particular, $\mcU_x$ and $\sK_x=\sK^0_x$ are irreducible, implying the existence and uniqueness of the irreducible component $\sK^{0, g}$ of $\sK^0$ dominating $\mcX_0$. Furthermore, this unique component satisfies that $\sK^{0, g}_x=\sK^0_x$ and thus is projective.
Since $C_0$ is free on $\mcX$, the corresponding point $[C_0]$ is a smooth point of $\FRC(\mcX/\Delta)$. Thus $\sK^{0, g}$ is an irreducible component of $\FRC(\mcX_0$), and it is a minimal rational component of $\mcX_0$.
\end{proof}

In the following when mentioning the VMRT of $\mcX_0$, we mean the one associated with the irreducible family of minimal rational curves $\sK^{0, g}$.

The first key step to the proof of Theorem \ref{t.main} is the following:
\begin{thm} \label{t.invVMRT}
Let $G$ be a simple linear  algebraic group of the adjoint type and let $X$ be its wonderful compactification. Let $\pi: \mcX \to \Delta$ be a regular family of Fano manifolds such that $\mcX_t \simeq X$ for all $t \neq 0$.  Let $x \in \mcX_0$ be a general point.  Then the VMRT of $\mcX_0$ at $x$ is projectively equivalent to  the VMRT of $X$ at a general point.
\end{thm}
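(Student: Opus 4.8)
The plan is to treat the type-$A$ case and the non-type-$A$ case separately, since the VMRT of $X$ has Picard number one exactly when $G$ is not of type $A$, and only in that case can one apply Theorem \ref{t.HMRigidity} directly. In both cases the starting point is Proposition \ref{p.MRC}, which gives us a family $\sK^0$ of minimal rational curves on $\mcX_0$ obtained by specializing the unique minimal rational component of $\mcX_t$. The first thing I would record is that the relative VMRT-structure $\sC \subset \mbP T_{\mcX/\Delta}$ assembles into a reasonable family: choosing a holomorphic section $\sigma \colon \Delta \to \mcX$ through general points, the normalizations $\sK_{\sigma(t)}$ (which are smooth for general $t$, and which we want to be smooth for $t=0$ as well, by genericity of $\sigma(0)$ in $\mcX_0$) fit together into a flat family of smooth projective varieties over a punctured disk, with all fibers over $\Delta^*$ isomorphic to the VMRT $\sC_e$ of $X$; after possibly shrinking and a base change one gets a limit $\sK_{\sigma(0)}$, which by upper-semicontinuity of dimension and properness maps finitely onto the VMRT $\mcC_{\sigma(0)}$ of $\mcX_0$.

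For $G$ not of type $A$, the VMRT $\sC_e = \mbP\msO$ is the closed $G$-orbit in $\mbP\fg$, a rational homogeneous variety of Picard number one, and by Theorem \ref{t.HMRigidity} it is rigid — provided it is not $B_3/P_2$, which is precisely the case $G = B_3$ that the theorem statement and the introduction explicitly set aside for the last section. So for $G \neq B_3$ (and not type $A$), the limit $\sK_{\sigma(0)}$ of the smooth family of normalized VMRTs is again isomorphic to $\mbP\msO$. To upgrade this abstract isomorphism to projective equivalence of the embedded VMRTs, I would argue that the embedding $\mcC_{\sigma(0)} \subset \mbP T_{\sigma(0)}\mcX_0$ is given by a complete linear system: the VMRT of $X$ at $e$ is projectively normal and linearly normal (it is $\mbP\msO$ in its minimal homogeneous embedding, or controlled by Example \ref{e.contact}), the relevant line bundle $\mcO(1)$ on the family of VMRTs is the one inducing the tautological embedding, and $h^0$ of this bundle is constant in the family by flatness; hence the limit embedding is also the minimal homogeneous embedding of $\mbP\msO$, forcing $\tau_x \colon \sK_x \to \mcC_x$ to be an isomorphism and $\mcC_x \subset \mbP T_x\mcX_0$ to be projectively equivalent to $\mbP\msO \subset \mbP\fg$. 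This is essentially the content of Proposition \ref{p.invVMRT} referenced in the introduction.

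For $G$ of type $A_n$, the VMRT $\sC_e$ is the projection of the Segre variety $\mbP V \times \mbP V^* \subset \mbP\End(V)$ from a general point, which is not homogeneous of Picard number one, so the rigidity theorem no longer applies. Here I would instead use the explicit description of $\bar{A}_n$ as an iterated blowup of $\mbP\End(\C^{n+1})$ along the strata of the "rank" stratification (the locus of matrices of rank $\le k$), as indicated in the introduction's sketch of Proposition \ref{p.invVMRTtypeA}: the birational morphism $\bar{A}_n \to \mbP\End(\C^{n+1})$ contracts boundary divisors, and by the invariance of Mori cones under Fano deformations (Theorem \ref{t.InvCones}), this contraction extends to a morphism $\mcX \to \mcZ$ over $\Delta$; identifying $\mcZ_0$ and controlling the exceptional data then lets one recognize $\mcX_0$'s VMRT as the same projected Segre variety. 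The main obstacle, and the place where the argument is genuinely delicate, is exactly this type-$A$ analysis: one must show that the limit of the iterated-blowup structure is again an iterated blowup with the same centers, which requires checking that the contracted divisors and their images behave as expected under the degeneration — the homogeneity-based shortcut is unavailable and one has to work with the projective geometry of the projected Segre variety and its secant/tangent structure by hand. (The case $G = B_3$, where even the Picard-one VMRT is non-rigid, is handled by an entirely different argument in the final section and is excluded from this theorem.)
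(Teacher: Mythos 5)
Your overall architecture (split into type $A$ versus the rest, use Theorem \ref{t.HMRigidity} for the Picard-number-one VMRTs, use the blowup model of $\bar{A}_n$ plus Theorem \ref{t.InvCones} for type $A$) matches the paper, but the crucial step in the main case is not proved and your replacement for it does not work. After you know that the normalized VMRT $\sK_x$ of $\mcX_0$ at a general point is again $\mbP\msO$ (this is the paper's Corollary \ref{c.Kx}), the remaining issue is whether the tangent map $\sK_x \to \mcC_x \subset \mbP T_x\mcX_0$ is given by the \emph{complete} linear system $|\mcO_{\mbP\msO}(1)|$ (resp.\ $|\mcO(2)|$ in type $C$), i.e.\ whether $\mcC_x$ is linearly non-degenerate in $\mbP T_x\mcX_0$. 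Your argument ``$h^0(\mcO(1))$ is constant by flatness, hence the limit embedding is the minimal homogeneous embedding'' is a non sequitur: the limit of a family of embeddings by complete linear systems can perfectly well be a map given by a proper subsystem, because the identification of the fixed ambient space $\mbP T_{\sigma(t)}\mcX_t$ with $\mbP H^0(\mcO(1))^*$ can degenerate as $t\to 0$ (a one-parameter family of embeddings can limit to a linear projection whose image spans only a hyperplane). So the possible degeneracy of $\mcC_x$ in $\mbP T_x\mcX_0$ is exactly what must be excluded, and your proposal contains no mechanism for doing so. The paper's Proposition \ref{p.invVMRT} does this with a genuinely different ingredient: the variety of tangential lines of $\mbP\msO\subset\mbP\fg$ is linearly non-degenerate (Lemma \ref{l.nondegenerate}, Proposition \ref{p.nondegenerate}), so if the span $W_x$ were proper, Proposition \ref{p.integrable} would make the distribution spanned by the VMRTs integrable, giving a fibration of $\mcX_0$ contracting all minimal rational curves; transporting the corresponding movable divisor back to $X$ by the invariance of cones (Theorem \ref{t.InvCones}) contradicts the fact that $X$ is chain-connected by minimal rational curves. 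None of this (nor any substitute) appears in your proposal.

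Two smaller points. In type $A$ you only restate the strategy; the actual content of Proposition \ref{p.invVMRTtypeA} — that $\mcK_x\simeq \mbP V\times\mbP V^*$ (via the product-rigidity result of \cite{Li}), that $\Phi_0$ is birational, that the specialized center $\mcB_0$ is irreducible and linearly non-degenerate (which again uses the pseudo-effective cone computation, Lemma \ref{l.ANondegenerate}), and the degree/secant-variety comparison identifying $f_x$ — is missing, and it is precisely where the work lies. Finally, $B_3$ is not excluded from the statement of Theorem \ref{t.invVMRT}; its proof is merely deferred to the last section (Proposition \ref{p.invVMRTB3}), so a complete proof of the theorem cannot simply set it aside.
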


The case of type $A$ will be proved in the next subsection (Proposition \ref{p.invVMRTtypeA}), by using the explicit construction of the wonderful compactification $\bar{A}_n$ by successive blowups, while we defer the case of type $B_3$ to the last section (Proposition \ref{p.invVMRTB3}), as its proof is more involved.  For all other types,  we will use the following result of deformation rigidity of rational homogeneous contact manifolds of Picard number one, which is a special case of Theorem \ref{t.HMRigidity}.

\begin{thm}[\cite{Hw97}]\label{t.FanoContact} \footnote{The case of $\fg=B_3$ was incorrectly included in the original statement in \cite{Hw97}.}
Let $\fg$ be a simple Lie algebra. Assume $\fg$ is neither  of type $A$ nor of type $B_3$. Let $\BP \msO \subset \BP \fg$ be the projectivized minimal nilpotent orbit,  which is a homogeneous contact manifold of Picard number one.
Let $\mcY\rightarrow\Delta$ be a smooth projective family such that $\mcY_t\cong \BP \msO$ for all $t\neq 0$. Then $\mcY_0\cong \BP \msO$.

%(i) If $\fg$ is not of type $B_3$, then $\mcY_0\cong \BP \msO$.

%(ii) If $\fg$ is of type $B_3$, then either $\mcY_0\cong S \cong B_3/P_2$ or $\mcY_0\cong\mathbb{X}$, where $\mathbb{X}$ is the horospherical variety $(G_2, \alpha_2, \alpha_1)$ described in \cite{Pas09}.
\end{thm}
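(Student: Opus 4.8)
The plan is to deduce the statement directly from Theorem~\ref{t.HMRigidity}, of which it is the special case concerning contact homogeneous spaces (the part originally due to \cite{Hw97}). First I would record that $\BP\msO$ is a rational homogeneous variety of Picard number one: since $\fg$ is simple and not of type $A$, the unique closed orbit $\BP\msO\subset\BP\fg$ is of the form $G/P_{\boldsymbol\theta}$, where $P_{\boldsymbol\theta}$ is the maximal parabolic attached to the highest root $\boldsymbol\theta$ (see Example~\ref{e.contact}). Next I would check that $\BP\msO\not\cong B_3/P_2$. By Example~\ref{e.contact}, $B_3/P_2$ is itself the projectivized minimal nilpotent orbit of the simple Lie algebra of type $B_3$ (the variety of lines on $\mathbb{Q}^5$, equivalently of isotropic $2$-planes in $\mathbb{C}^7$); since distinct simple types produce non-isomorphic projectivized minimal nilpotent orbits — an easy check on dimensions and automorphism groups — an isomorphism $\BP\msO\cong B_3/P_2$ would force $\fg$ to be of type $B_3$, which is excluded by hypothesis.

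Granting these two points, Theorem~\ref{t.HMRigidity} tells us that $\BP\msO$ is rigid in the sense of the Introduction. Since $\mcY\to\Delta$ is a smooth projective family over the connected base $\Delta$ whose fiber over each $t\neq 0$ is isomorphic to $\BP\msO$, applying rigidity of $\BP\msO$ to this family (with distinguished point any $t_0\in\Delta$, $t_0\neq 0$) gives $\mcY_t\cong\BP\msO$ for all $t\in\Delta$; in particular $\mcY_0\cong\BP\msO$. This finishes the proof once Theorem~\ref{t.HMRigidity} is granted.

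For completeness, here is how I would argue in the self-contained spirit of \cite{Hw97} (which predates the general Theorem~\ref{t.HMRigidity}), along the VMRT strategy used repeatedly in this paper. One first transports the unique minimal rational component of $\mcY_t\cong\BP\msO$ — the family of contact lines — to a minimal rational component on $\mcY_0$, exactly as in Proposition~\ref{p.MRC}. The key step is the invariance of the VMRT: along a general holomorphic section $\sigma\colon\Delta\to\mcY$ one gets a family of normalized VMRTs whose general member is the Legendrian highest weight variety of the isotropy representation on the contact hyperplane of $\BP\msO$ (Example~\ref{e.contact}), and one must show the central member is projectively equivalent to it, using the (linear) rigidity of this Legendrian variety as an embedded projective subvariety together with a semicontinuity argument. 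Once the VMRT of $\mcY_0$ at a general point is identified with that of $\BP\msO$, a Cartan--Fubini type recognition theorem reconstructs the isomorphism $\mcY_0\cong\BP\msO$. In this route the main obstacle is precisely the VMRT-invariance step; in the deduction above it is circumvented by invoking Theorem~\ref{t.HMRigidity}.
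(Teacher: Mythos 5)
Your deduction is correct and matches the paper exactly: the paper gives no independent proof of Theorem \ref{t.FanoContact}, merely observing (as you do) that $\BP\msO$ is a rational homogeneous space of Picard number one, not isomorphic to $B_3/P_2$ once type $B_3$ is excluded, so the statement is the contact special case of Theorem \ref{t.HMRigidity} cited from \cite{Hw97}. Your closing sketch of the VMRT-based argument of \cite{Hw97} is accurate but not needed once Theorem \ref{t.HMRigidity} is granted.
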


\begin{cor}\label{c.Kx}
Assume $\fg$ is neither of type $A$ nor of type $B_3$. Let $x \in \mcX_0$ be a general point. Then the normalized Chow space $\mcU_x$ of $\mcX_0$ at $x$ is isomorphic to $\BP \msO$.
%(ii) If $G=B_3$, then either $\mcK_x\cong\msK_s$ or $\mcK_x\cong\bX^5$.
\end{cor}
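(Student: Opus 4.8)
The plan is to combine Proposition \ref{p.MRC} (which produces a family $\sK^0$ of minimal rational curves on $\mcX_0$) with the deformation rigidity of $\BP\msO$ from Theorem \ref{t.FanoContact}, by running the whole VMRT construction in a relative setting over a section of the family. First I would pick a holomorphic section $\sigma: \Delta \to \mcX$ passing through general points $x_t := \sigma(t) \in \mcX_t$; this is possible after shrinking $\Delta$, since $\mcX \to \Delta$ is smooth and proper. For $t \neq 0$ the fiber is $\mcX_t \simeq X = \bar G$, whose unique family of minimal rational curves has a smooth $\sK_{x_t} \simeq \sC_{x_t} \simeq \BP\msO$ by Theorem \ref{t.VMRTWonderful}(ii),(iii) (using here that $\fg$ is not of type $A$, so $\sC_e = \BP\msO$, and not of type $B_3$). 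The relative Chow/$\RatCurves^{\n}$ construction along $\sigma$ then yields a family $\sK_\sigma \to \Delta$ whose fiber over $t \neq 0$ is $\sK_{x_t} \simeq \BP\msO$.

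The main point is that the central fiber of $\sK_\sigma$ is exactly (a union of components of) the normalized space $\sK_x$ of minimal rational curves of $\mcX_0$ through $x := x_0$, provided $x_0$ is general. This uses Proposition \ref{p.MRC}, which guarantees that $\sK^0$ is a genuine minimal rational component of $\mcX_0$, so that curves in $\sK$ through $x_0$ do not degenerate or acquire extra components in the limit; combined with the general-point smoothness statement recalled in Section 2.1 (via \cite{Kebekus}, \cite[II.3.11.5]{Kollar}), $\sK_{x_0}$ is smooth for $x_0$ general, and after shrinking the base and possibly restricting to the relevant component we get a \emph{smooth} proper family $\sK_\sigma \to \Delta$ whose general fiber is $\BP\msO$. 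Flatness of $\sK_\sigma/\Delta$ follows from properness over the smooth curve $\Delta$ together with constancy of the fiber dimension (the fibers all have dimension $\dim\BP\msO$, the central one because $\sK^0$ is a minimal component through a general point and its VMRT has the expected dimension bound).

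With a smooth projective family $\sK_\sigma \to \Delta$ in hand, all of whose fibers over $\Delta^*$ are isomorphic to the contact homogeneous manifold $\BP\msO$, Theorem \ref{t.FanoContact} applies verbatim and gives $(\sK_\sigma)_0 \cong \BP\msO$; since $(\sK_\sigma)_0$ is, up to taking the relevant component, the normalized Chow space $\sK_x$ of $\mcX_0$ at the general point $x$, this is exactly the assertion. The step I expect to be the main obstacle is the careful identification of the central fiber of the relative space $\sK_\sigma$ with $\sK_x$ — i.e.\ checking that no ``jumping'' occurs: that the limiting curves through $x_0$ remain irreducible minimal rational curves (handled by Proposition \ref{p.MRC}), that the relative $\RatCurves^\n$ space is smooth along the central fiber so that taking normalizations commutes with restriction to $t=0$, and that one has correctly isolated the component of $\sK_\sigma$ dominating $\Delta$ with general fiber $\BP\msO$. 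Once this bookkeeping is done, the rest is a direct appeal to Theorem \ref{t.FanoContact}.
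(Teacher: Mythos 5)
Your proposal is correct and follows essentially the same route as the paper: take a section through general points, observe that the normalized Chow spaces $\sK_{\sigma(t)}$ along it form a smooth projective family with fibers $\BP\msO$ for $t\neq 0$ (via Theorem \ref{t.VMRTWonderful} and Proposition \ref{p.MRC}), and apply the contact-manifold rigidity of Theorem \ref{t.FanoContact}. The extra bookkeeping you spell out (smoothness of $\sK_x$ at a general point, no jumping in the central fiber) is exactly what the paper's terser proof implicitly relies on.
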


\begin{proof}
Take a general section $\sigma: \Delta\rightarrow\mcX$ of $\pi$ passing through the general point $x$ in $\mcX_0$. Shrinking $\Delta$ if necessary, we can assume that $\sigma(t)\notin \partial \mcX_t$ for each $t\neq 0$.
 The normalized Chow spaces $\mcU_{\sigma(t)}$ along this section give a family of smooth projective varieties such that $\mcU_{\sigma(t)} \simeq \BP \msO$ for $t \neq 0$ by Theorem \ref{t.VMRTWonderful}. Now the claim follows from Theorem \ref{t.FanoContact}.
\end{proof}

\begin{prop}\label{p.invVMRT}
Assume $\fg$ is neither of type $A$ nor of type $B_3$. Let $x \in \mcX_0$ be a general point.  Then the VMRT of $\mcX_0$ at $x$ is projectively equivalent to $\BP \msO \subset \BP \fg$.
\end{prop}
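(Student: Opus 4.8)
The plan is to promote the abstract isomorphism $\sK_x \cong \BP\msO$ of Corollary \ref{c.Kx} to a statement about the tangent morphism. Recall that for $x$ general there is a birational morphism $\tau_x \colon \sK_x \to \mcC_x \subset \BP T_x\mcX_0$ onto the VMRT. Since $\fg$ is neither of type $A$ nor of type $B_3$, the variety $\BP\msO$ is either a rational homogeneous space of Picard number one or (in type $C$) a projective space, so $\Pic(\BP\msO) \cong \mbZ$; hence $\tau_x$ is induced by a base-point-free linear subsystem $V \subseteq H^0(\BP\msO, L_0)$ for a unique line bundle $L_0$, and $\mcC_x$ is the image of $\BP\msO$ under the complete morphism $\varphi_{|L_0|}$ followed by a linear projection $\pi_\Lambda$ whose center $\Lambda$ is disjoint from $\varphi_{|L_0|}(\BP\msO)$, since $\tau_x$ is a morphism. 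I want to show $L_0 = L$, the line bundle giving the minimal $G$-equivariant embedding $\BP\msO \hookrightarrow \BP\fg$, and that the projection $\pi_\Lambda$ is an isomorphism; since $H^0(\BP\msO, L) = \fg^*$ has dimension $\dim\fg = \dim\mcX_0 = \dim T_x\mcX_0$, this forces $V = \fg^*$ and hence $\mcC_x \subset \BP T_x\mcX_0$ projectively equivalent to $\BP\msO \subset \BP\fg$.

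The identification $L_0 = L$ I would get from a family argument along a general section. Choosing $\sigma \colon \Delta \to \mcX$ as in the proof of Corollary \ref{c.Kx}, the normalized VMRTs along $\sigma$ form a smooth proper family over $\Delta$ with every fibre isomorphic to $\BP\msO$ (over $\Delta^*$ by Theorem \ref{t.VMRTWonderful}, over $0$ by Corollary \ref{c.Kx}), carrying a relative tangent morphism into $\BP(\sigma^*T_{\mcX/\Delta})$. Pulling back the relative $\mcO(1)$ gives a line bundle $\mcM$ on the total space whose restriction to the fibre over $t$ is $\tau_{\sigma(t)}^*\mcO(1)$; by Theorem \ref{t.VMRTWonderful}(ii), for $t \neq 0$ this morphism is an isomorphism onto the minimally embedded $\BP\msO$, so the restriction is $L$. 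As the degree of $\mcM$ on the fibres is locally constant and $\Pic(\BP\msO) \cong \mbZ$, the restriction over $0$, namely $L_0$, is forced to be $L$ as well.

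It then remains to show that $\pi_\Lambda$ is an isomorphism, equivalently that $\mcC_x$ is linearly non-degenerate in $\BP T_x\mcX_0$; this is the crux, and I would argue by contradiction. Suppose $W_x := \langle\mcC_x\rangle \subsetneq \BP T_x\mcX_0$ for general $x$. First, the variety of tangential lines $\mcT_{\mcC_x} \subset \BP\wedge^2 W_x$ is linearly non-degenerate, because $\mcC_x = \pi_\Lambda(\BP\msO)$ is a linear projection of the minimally embedded adjoint variety: a general tangent line of $\mcC_x$ is the $\pi_\Lambda$-image of a tangent line of $\BP\msO$, and these images span $\wedge^2 W_x$ since $\mcT_\msO$ spans $\wedge^2\fg$ by Lemma \ref{l.nondegenerate} and the linear map $\wedge^2\fg \to \wedge^2 W_x$ induced by $\pi_\Lambda$ is surjective. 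By Proposition \ref{p.integrable}, the distribution $\mcW$ with fibre $W_x$ is integrable on a dense open subset of $\mcX_0$. But every minimal rational curve of $\sK^0$ is an integral curve of $\mcW$, since at a general point its tangent direction lies in $\mcC \subset \BP\mcW$; hence each such curve lies in a single leaf of $\mcW$, of dimension $\dim W_x < \dim\mcX_0$. Since $\mcX_0$ is a Fano manifold it is connected by chains of minimal rational curves of $\sK^0$, and such a chain through a general point stays inside the leaf through that point; this forces a general leaf to be dense in $\mcX_0$, a contradiction. Therefore $\mcC_x$ spans $\BP T_x\mcX_0$, so $\dim V = \dim T_x\mcX_0 = h^0(\BP\msO, L)$, whence $V = \fg^*$, $\tau_x$ realises the complete minimal embedding, and $\mcC_x \subset \BP T_x\mcX_0$ is projectively equivalent to $\BP\msO \subset \BP\fg$.

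I expect the main obstacle to be this last non-degeneracy step: excluding the possibility that the limiting VMRT is a proper linear projection of $\BP\msO$. This is exactly why the integrability criterion of Proposition \ref{p.integrable} and the non-degeneracy of the tangential-line variety in Lemma \ref{l.nondegenerate} (equivalently Proposition \ref{p.nondegenerate}) were isolated earlier; some care is also needed to make the rational-connectedness contradiction precise and to check that the family of normalized VMRTs along $\sigma$ is genuinely a smooth projective family, so that the degree of $\mcM$ on its fibres is locally constant.
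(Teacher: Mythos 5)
Your first half follows the paper: identifying the tangent map along a general section $\sigma$ as given by a sub-linear system of the minimal embedding bundle of $\BP\msO$, reducing everything to the linear non-degeneracy of $\mcC_x$ in $\BP T_x\mcX_0$, and then using Lemma \ref{l.nondegenerate} (equivalently Proposition \ref{p.nondegenerate}) together with Proposition \ref{p.integrable} to make the span distribution $\mcW$ integrable. The gap is in your final contradiction. You assert that ``since $\mcX_0$ is a Fano manifold it is connected by chains of minimal rational curves of $\sK^0$''. Fano-ness gives rational chain connectedness with respect to \emph{all} rational curves, not with respect to a fixed minimal family; $\mcX_0$ has Picard number $n\geq 2$, and the existence of a fibration $f:\mcX_0\dasharrow B$, $\dim B>0$, contracting every general member of $\sK^0$ is perfectly compatible with $\mcX_0$ being Fano (think of a $\mbP^1$-bundle with the family of fibres as the minimal family). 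In other words, $\sK^0$-chain connectedness of $\mcX_0$ is essentially equivalent to the non-degeneracy you are trying to prove, so the argument is circular exactly at the crux you yourself flagged. Nor can it be rescued by specializing chains from $\mcX_t$: limits of chains of minimal curves may break into rational curves that do not belong to $\sK^0$ and need not be tangent to $\mcW$, so they do not stay in leaves.

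The paper closes this gap by transporting the problem back to $X$, where chain connectedness \emph{is} available. If $W_x\neq \BP T_x\mcX_0$, the integrable distribution $\mcW$ induces a dominant rational fibration $f:\mcX_0\dasharrow B$ with $\dim B>0$ contracting general $\sK^0$-curves; pulling back an ample divisor gives a movable divisor $H$ on $\mcX_0$ with $H\cdot C=0$ for $C\in\sK^0$. By the invariance of movable cones under Fano deformation (Theorem \ref{t.InvCones}), this yields a movable divisor $H'$ on $X$ with $H'\cdot C_{\boldsymbol\theta}=0$, hence a rational fibration $f':X\dasharrow B'$ with $\dim B'>0$ contracting general minimal rational curves of $X$. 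On $X$ the VMRT at a general point is non-degenerate, so any two general points are connected by chains of minimal rational curves and are therefore identified by $f'$, forcing $\dim B'=0$, a contradiction. This transfer via Theorem \ref{t.InvCones} (or some equivalent way of importing information from the general fibre) is the ingredient missing from your write-up; without it the last step does not go through.
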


\begin{proof}
Take a general section $\sigma: \Delta\rightarrow\mcX$ of $\pi$ passing through the general point $x$ in $\mcX_0$ such that $\sigma(t)\notin \partial \mcX_t$ for each $t\neq 0$. Then for $t \neq 0$, the following map
$$
\mcU_{\sigma(t)} \simeq \mathcal{C}_{\sigma(t)} \hookrightarrow \BP T_{\sigma(t)} \mcX_t
$$
is induced by the linear system of $\0_{\BP \msO}(1):=\0_{\BP \fg}(1)|_{\BP \msO}$ (or $\0_{\BP \msO}(2)$ in the case of type $C$) under the identification $\mathcal{C}_{\sigma(t)} \simeq \BP \msO$.   This implies that the map
$\mcU_x \to \mathcal{C}_x \hookrightarrow \BP T_{x} \mcX_0$ is given by a sublinear system of $\0_{\BP \msO}(1)$ (or $\0_{\BP \msO}(2)$ in the case of type $C$).  Let $W_x \subset \BP T_{x} \mcX_0$ be the linear span of $\mathcal{C}_x$. Then $\mathcal{C}_x \subset W_x$ is a linear projection of $\BP \msO \subset \BP \fg$.  If $W_x=\BP T_{x} \mcX_0$,  then the sublinear system is complete and the claim follows. It remains to show that $W_x=\BP T_{x} \mcX_0$.

As the variety of tangential lines of  $\BP \msO \subset \BP \fg$ is linearly non-degenerate by Proposition \ref{p.nondegenerate}, so is the variety of tangential lines of $\mathcal{C}_x \subset W_x$.  By Proposition \ref{p.integrable}, the distribution  $\mathcal{W}$ defined by $W_x$ is integrable on an open subset of $\mcX_0$, which induces a dominant rational fibration $f: \mcX_0 \dasharrow B$. For $C\in\mcK^{0, g}$ general on $\mcX_0$, the projective curve $C$ is disjoint from the indeterminacy locus of $f$, and $f(C)$ is a single point.

If $W_x \neq \BP T_{x} \mcX_0$, then $\dim B >0$. Take an ample divisor $H_B$ on $B$, then $H:=f^* H_B$ is a movable divisor on $\mcX_0$ such that $H \cdot C =0$ for $C \in \sK_0$.  By Theorem \ref{t.InvCones},  this gives a rational fibration $f': X \dasharrow B'$ with $\dim B' >0$ such that the corresponding movable divisor $H'$ satisfies $H' \cdot C_{\boldsymbol\theta}=0$, i.e. $f'$ contracts general minimal rational curves on $X$.  This implies that for $x \in X$ general, all minimal rational curves through $x$ are contained in the fiber of $f'$ through $x$,  hence the VMRT $\mathcal{C}_x$ is contained in the tangent space of a fiber, which contradicts the fact that VMRT of $X$ is linearly non-degenerate.
% As the VMRT on $X$ is non-degenerate, any two general points are chain connected by minimal rational curves, hence they are contracted to the same point by $f'$. This implies $\dim B'=0$, a contradiction.
\end{proof}

%{\color{blue}Remark: In the proof of Proposition \ref{p.invVMRT}, the foliation induced integral meromorphic distribution $W$ is a priori NOT necessarily to be algebraic. Hence the words marked by underlines "\underline{$\cdots$}" may be not true. More precisely, $\eta: \mcX_0 \dasharrow B$ is just a dominant meromorphic fibration, and it is possible there does not exist ample divisor on $B$. On the other hand, $\eta$ factors through $\xi: \mcX_0\dasharrow\mcY_0$, where $\xi$ is the dominant rational fibration induced by the irreducible component of $\Chow(\mcX_0)$ corresponding to $\mcK^0\subset\RatCurves^\n(\mcX_0)$. Afterwards, replace $\eta: \mcX_0 \dasharrow B$ by $\xi: \mcX_0\dasharrow\mcY_0$ in the last paragraph of the proof and everything works now.
%}

\subsection{Invariance of VMRT for $A_n$}

Fix an integer $n\geq 2$.
  Let $V$ be a vector space of dimension $n+1$ and $G={\rm PGL}(V)$.  Let $Z = \mbP {\rm End}(V)$ and $Z_i \subset Z$ the locus of elements in ${\rm End}(V)$ of rank $\leq i$, where $i=1, \cdots, n$. Note that $Z_1 \simeq \mbP V \times \mbP V^*$ and the embedding $Z_1 \subset \mbP {\rm End}(V)$ is the Segre embedding.  The subvariety $Z_n$ is the determinant hypersurface, which is of degree $n+1$.

   The wonderful compactification $X$ of $G$ is given by the composition of  successive blowups $\phi: X \to Z$ along the strict transforms of $Z_i$, from the smallest $Z_1$ to the biggest $Z_{n-1}$.  We denote by $D_1, \cdots, D_{n-1}$ the exceptional divisors and by $D_n$ the strict transform of $Z_n$.  Let $\mcK^X$ be the family of minimal rational curves on $X$, whose members are strict transforms of lines in $Z$ intersecting $Z_1$.

 The following is straight-forward.
\begin{prop}\label{p.typeAComp}
(1) The secant variety of $Z_1$ is $Z_2$, which is different from $Z$ if $n \geq 2$.

(2) We have $H^2(X, \mbQ) = \oplus_{i=1}^n \mbQ [D_i]$.

(3) For any $C \in \mcK^X$, we have $D_1 \cdot C = D_n \cdot C =1$ and $D_j \cdot C=0$ for $j=2, \cdots, n-1$.

(4) The pseudo-effective cone ${\rm PEff}(X)=\sum_{i=1}^n\mbR^+ D_i$ is simplicial.

(5) The pull-back $\phi^* H $ of any hyperplane $H \subset Z$  is equal to $\sum_{i=1}^n (1-\frac{i}{n+1}) D_i$ in $H^2(X, \mbQ)$.
\end{prop}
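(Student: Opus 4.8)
The plan is to verify the five assertions one by one, most of which are elementary facts about the determinantal stratification of $\BP\End(V)$ and the behaviour of the iterated blowup $\phi: X\to Z$. For (1), I would recall that the secant variety of the Segre variety $Z_1 = \BP V\times\BP V^*$ consists of sums of at most two rank-one endomorphisms, which is exactly $Z_2$; and since $\dim Z_2 = 2n+1 < n^2+2n = \dim Z$ for $n\geq 2$ (indeed $Z_n$ already has codimension one), $Z_2\neq Z$. For (2), the Picard group computation is standard: $Z=\BP\End(V)$ has $H^2=\mbZ[\mcO(1)]$, and each blowup along a smooth (strict transform of a) center $Z_i$ adds one generator $[D_i]$ to $H^2$; together with the strict transform $D_n$ of the divisor $Z_n$ this gives $n$ classes, and since $X$ is the wonderful compactification its second Betti number equals $n=\rk G$, so these $n$ classes form a $\mbQ$-basis. (This also matches the description of $\Pic(X)\cong\Lambda$ and $\mcO_X(D_i)=\sL_X(\alpha_i)$ recalled in Section~\ref{s.wonderful compact}.)

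For (3), I would take a general member $C\in\mcK$, i.e.\ the strict transform of a general line $\ell\subset Z$ meeting $Z_1$ in one point. Such a general line meets $Z_1$ transversally in a single point and avoids $Z_2$ (hence all the smaller strata), so after the first blowup its strict transform meets $D_1$ once and is disjoint from $D_2,\ldots,D_{n-1}$; thus $D_1\cdot C=1$ and $D_j\cdot C=0$ for $2\le j\le n-1$. For $D_n$: a general line in $Z=\BP\End(V)$ meets the degree-$(n+1)$ hypersurface $Z_n$ in $n+1$ points, but a general line through a point of $Z_1$ is tangent to $Z_n$ to high order there (the point of $Z_1$ is a very singular point of $Z_n$, of multiplicity $n$), so the residual intersection with the strict transform $D_n$ is $n+1-n=1$. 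Making this multiplicity bookkeeping precise — that the local intersection multiplicity of a general line through $p\in Z_1$ with $Z_n$ at $p$ is exactly $n$ — is the one genuinely computational point; it follows from the fact that $\operatorname{mult}_p Z_n = n$ for $p$ of rank one and a general line through $p$ meets the tangent cone transversally. Alternatively, one can deduce (3) directly from Corollary~\ref{c.minimal} and the known intersection numbers $-K_X\cdot C_{\boldsymbol\theta}$ together with $-K_X = \sL_X(2\boldsymbol\rho+\sum_i\alpha_i)$, cross-checking with the basis of (2).

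For (4), simpliciality: by Example~\ref{e.cones} the extremal rays of $\mathrm{PEff}(X)$ are spanned by the simple roots $\alpha_i$, and $\mcO_X(D_i)=\sL_X(\alpha_i)$, so $\mathrm{PEff}(X)=\sum_{i=1}^n\mbR^+[D_i]$ with the $[D_i]$ a basis by (2); a cone generated by a basis is simplicial. For (5), writing $\phi^*H = \sum_{i=1}^n c_i [D_i]$ in the basis of (2), I would pin down the coefficients $c_i$ by intersecting with a family of test curves. Pairing with $C\in\mcK$ and using (3) gives $H\cdot\ell = c_1 + c_n$; more systematically, for each $j$ one chooses a line in $Z$ through a general point of $Z_j$ (transverse to $Z_j$, avoiding $Z_{j-1}$, and meeting $Z_n$ with the appropriate residual multiplicity), whose strict transform $C_j$ satisfies a computable intersection pattern with the $D_i$, yielding a triangular linear system for the $c_i$ whose solution is $c_i = 1 - \tfrac{i}{n+1}$. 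Equivalently and more cleanly: since $H\sim\mcO(1)$ and $Z_n\in|\mcO(n+1)|$, we have $(n+1)\phi^*H \sim \phi^*[Z_n] = D_n + \sum_{i=1}^{n-1} m_i D_i$ where $m_i = \operatorname{mult}_{\widetilde{Z_i}} \widetilde{Z_n} = n-i$ is the multiplicity of the strict transform of $Z_n$ along the $i$-th blowup center (the rank-$i$ locus has multiplicity $n-i$ in the determinant hypersurface), giving $\phi^*H = \tfrac{1}{n+1}\big(D_n + \sum_{i=1}^{n-1}(n-i)D_i\big) = \sum_{i=1}^n (1-\tfrac{i}{n+1})D_i$. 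I expect the main obstacle to be the verified bookkeeping of these multiplicities $\operatorname{mult}_{\widetilde{Z_i}}\widetilde{Z_n}$ under the successive blowups, i.e.\ checking that each blowup drops the multiplicity of the strict transform of the determinant hypersurface along the next stratum by exactly one; this is a local computation at a rank-$i$ point using the standard local normal form of the determinantal variety, and everything else is formal.
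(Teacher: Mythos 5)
The paper gives no argument for Proposition \ref{p.typeAComp} (it is stated as ``straight-forward''), so your write-up is judged on its own merits; your overall strategy is the natural one and parts (1)--(4) are essentially right, modulo two small slips. In (1), $\dim Z_2$ is $4n-1$ (the rank-$\le 2$ locus has affine dimension $2(2(n+1)-2)$), not $2n+1$; the conclusion $Z_2\neq Z$ for $n\geq 2$ is unaffected, and your parenthetical remark already gives the cleanest reason, since $Z_2\subseteq Z_n\subsetneq Z$. In (3), a line through a point $p\in Z_1$ cannot literally ``avoid $Z_2$'', because $Z_1\subset Z_2$; what you need is that the \emph{strict transform} of a general such line misses the strict transforms of $Z_2,\dots,Z_{n-1}$, which holds because the direction of a general line at $p$ lies outside the tangent cone of $Z_2$ at $p$ -- the same tangent-cone genericity you invoke for the $D_n\cdot C$ computation, where your value $\mathrm{mult}_p Z_n=n$ at a rank-one point is correct. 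Part (2) should be closed by noting that under $\Pic(X)\cong\Lambda$ the classes $D_i$ correspond to the linearly independent simple roots $\alpha_i$ (merely having $n$ classes in an $n$-dimensional space is not enough), but your parenthetical cross-check supplies exactly this, and (4) then follows as you say from Example \ref{e.cones}.

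The genuine error is in (5). The multiplicity of the determinantal hypersurface $Z_n$ along the rank-$i$ stratum is the corank $n+1-i$ (the local equation at a rank-$i$ point is, after row and column operations, the determinant of an $(n+1-i)\times(n+1-i)$ block), not $n-i$; indeed your own part (3) uses the value $n$ at rank-one points, which contradicts the formula $n-i$ at $i=1$. With $m_i=n-i$ the displayed identity is arithmetically false: it would give coefficient $\frac{n-1}{n+1}$ on $D_1$, not $1-\frac{1}{n+1}=\frac{n}{n+1}$. The correct bookkeeping is $\phi^*Z_n = D_n + \sum_{i=1}^{n-1}(n+1-i)D_i$, and since the $i$-th coefficient is computed at a generic point of $Z_i$, where all earlier blowups are isomorphisms, no statement about ``each blowup dropping the multiplicity of the strict transform by one'' is needed -- the coefficients are simply the coranks. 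Dividing $(n+1)\phi^*H\sim\phi^*Z_n$ by $n+1$ then yields $\phi^*H=\sum_{i=1}^n\bigl(1-\frac{i}{n+1}\bigr)D_i$ as claimed. Once this off-by-one is fixed, your proof of (5), and hence of the whole proposition, is complete.
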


We are grateful to a referee for providing the following lemma to improve our argument for Lemma \ref{l.K}.

\begin{lem}\label{l.trivialbundle}
Let $\phi: \mathcal{M}\rightarrow\Delta$ be a regular family of compact complex manifolds over the unit disk $\Delta$ and let $L$ be a line bundle over $\mathcal{M}$ such that, denoting $\mathcal{M}_t$ for $t\in\Delta$, the holomorphic line bundle $L|_{\mathcal{M}_t}$ is holomorphically trivial for $t\neq 0$, then $L$ is also holomorphically trivial on $\mathcal{M}_0$.
\end{lem}

\begin{proof}
Denote by $\mathcal{L}$ the sheaf of germs of holomorphic sections of $L$. By the hypothesis and the direct image theorem,  $\phi_*(\mathcal{L})$ is a locally free sheaf of rank $1$ on $\Delta$, hence $\phi_*(\mathcal{L})\cong\mathcal{O}_\Delta$. Let $s\in\Gamma(\Delta, \phi_*(\mathcal{L}))\cong\Gamma(\Delta, \mathcal{O}_\Delta)$ be a nowhere zero section on $\Delta$ and let $\xi$ be the corresponding section.
Then, for $t\neq 0$ we have $0\neq \xi|_{\mathcal{M}_t}=\Gamma(\mathcal{M}_t, L|_{\mathcal{M}_t})\cong\Gamma(\mathcal{M}_t, \mathcal{O}_{\mathcal{M}_t})$, hence $\xi|_{\mathcal{M}_t}$ is nowhere zero, while $\xi|_{\mathcal{M}_0}\in\Gamma(\mathcal{M}_0, L|_{\mathcal{M}_0})$ is not identically zero. As a consequence, the zero set of $\xi\in\Gamma(\mathcal{M}, \mathcal{L})$ is a nowhere dense subspace  of $\mathcal{M}_0$, thus of codimension $\geq 2$ in $\mathcal{M}$, hence empty, and we conclude that $L$ is holomorphically trivial on $\mathcal{M}$. In particular $L|_{\mathcal{M}_0}\cong\mathcal{O}_{\mathcal{M}_0}$, as desired.
\end{proof}

Let $\pi: \mcX\to \Delta$ be a Fano deformation of $X$ such that $\mcX_t \simeq X$ for all $t \neq 0$.  Let $\mcK$ and $\mathcal{U}$ be as in Section \ref{s.vmrt}. Then, for a general point $x \in \mcX_t$ ($t \neq 0$), we have $\mcU_x \xrightarrow{\simeq} \mcC_x \simeq \mbP V \times \mbP V^*$.  
We now extend this to the case of  $t=0$.
%The main purpose of this subsection is to show this holds also for $t=0$.

\begin{lem} \label{l.K}
For $x \in \mcX_0$ general, we have $\mcU_x \simeq \mbP V \times \mbP V^*$.
\end{lem}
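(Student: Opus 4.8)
The plan is to realize $\mcK_x$ as the central fibre of a smooth projective family over $\Delta$ whose general fibre is $\mbP V\times\mbP V^*$, to verify that this family consists of Fano manifolds, and then to invoke the rigidity of $\mbP V\times\mbP V^*$ under Fano deformations.

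First I would choose a general section $\sigma:\Delta\to\mcX$ of $\pi$ with $\sigma(0)=x$. Since $x$ is a general point of $\mcX_0$, for a general such $\sigma$ the point $\sigma(t)$ is a general point of $\mcX_t$ for every $t$, after shrinking $\Delta$. Taking the normalized spaces of minimal rational curves in $\mcK$ through the points $\sigma(t)$ yields, by the results of Kebekus and Kollár recalled in Section 2, a proper morphism $\varpi:\mcK_\sigma\to\Delta$ whose fibre $\mcK_{\sigma(t)}$ is the normalized VMRT-source of $\mcX_t$ at $\sigma(t)$; for $t\neq0$ it is isomorphic to $\mcC_{\sigma(t)}\simeq\mbP V\times\mbP V^*$ by Theorem \ref{t.VMRTWonderful} (this is where $n\geq2$ is used, so that $\tau$ is an isomorphism on the general fibre), and at $t=0$ it is $\mcK_x$. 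All fibres have the same dimension $2n$, because $-K_{\mcX_t}\cdot[C]$ is invariant in the family (Theorem \ref{t.InvCones}, Proposition \ref{p.MRC}); hence $\varpi$ is flat over the smooth curve $\Delta$, and as its fibres are smooth (Kollár's smoothness of the normalized spaces of rational curves through a general point) it is a smooth projective morphism.

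Next I would produce a relative polarization and pin down the anticanonical class. The tangent‑direction map gives a morphism $\tau:\mcK_\sigma\to\mbP(T_{\mcX/\Delta}|_{\sigma(\Delta)})$; set $\mcM:=\tau^*\mcO(1)$. For $t\neq0$, $\tau_t$ is the embedding of $\mbP V\times\mbP V^*$ by a sublinear system of $|\mcO(1,1)|$ (Theorem \ref{t.VMRTWonderful}(iv) together with (ii)), so $\mcM_t=\mcO_{\mbP V\times\mbP V^*}(1,1)$ and therefore $-K_{\mcK_{\sigma(t)}}=(n+1)\mcM_t$. Since $H^1(\mbP V\times\mbP V^*,\mcO)=0$ is a deformation invariant, the line bundle $(-K_{\mcK_\sigma/\Delta})\otimes\mcM^{-(n+1)}$ is trivial on the general fibre and hence on every fibre (its restriction to each fibre and the restriction of its inverse both carry a nonzero section, by semicontinuity of $h^0$); thus $-K_{\mcK_x}=(n+1)\mcM_0$. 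The key point is then that $\tau_0:\mcK_x\to\mcC_x$ is the normalization of the VMRT, in particular a finite morphism, so $\mcM_0=\tau_0^*\mcO(1)$ is ample; consequently $-K_{\mcK_x}$ is ample and $\mcK_x$ is a smooth Fano variety.

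Finally, $\varpi$ is now a family of Fano manifolds with general fibre $\mbP V\times\mbP V^*$. Since the projective space $\mbP^n$ is rigid and, by \cite{Li}, a product of Fano manifolds is rigid under Fano deformations exactly when each factor is, the product $\mbP V\times\mbP V^*$ is rigid under Fano deformations; hence $\mcK_x\simeq\mbP V\times\mbP V^*$, as claimed. I expect the main obstacle to be the technical bookkeeping ensuring that the normalized relative Chow space is genuinely a smooth (equivalently, flat with smooth fibres) family over all of $\Delta$ — which forces $\sigma$ to be chosen sufficiently general — together with the identification $-K_{\mcK_\sigma/\Delta}=(n+1)\mcM$; granting these, the rest of the argument is forced.
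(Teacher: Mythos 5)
Your proposal is correct and follows essentially the same route as the paper: both realize $\mcK_x$ as the central fibre of the family of normalized $\mcK_{\sigma(t)}$ along a section, pull back $\mcO(1)$ via the (finite) tangent map to get a fibrewise ample line bundle $\mcM$, propagate the relation $-K=(n+1)\mcM$ from the general fibres to deduce that $\mcK_x$ is Fano, and then conclude by the rigidity of $\mbP V\times\mbP V^*$ under Fano deformations from \cite{Li}. The only cosmetic difference is that the paper works with the universal family over the open set $\mcX^\circ$ and uses invariance of the class in $H^2$ of the fibres, whereas you restrict over the section and use semicontinuity of $h^0$ of the difference bundle and its inverse; both are standard and equivalent here.
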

\begin{proof}

Let $\mcX^\circ \subset \mcX$ be the open subset such that for $t \neq 0$, $\mcX^\circ_t$ is the open $G\times G$-orbit in $\mcX_t$, and for $x \in \mcX_0^\circ$, $\mcU_x$ is smooth and the map $\tau_x: \mcU_x \to \mcC_x$ is the normalization map.  Let $\mcU^\circ$ be the pre-image of $\mcX^\circ$ and let $\tau: \mcU^\circ \to \mcC^\circ \subset \mbP(T_{\mcX^\circ/\Delta})$ be the universal tangent map.   Let $\mcN$ be the pull-back to $\mcU^\circ$ of the relative line bundle $\0(1)$ on $\mbP(T_{\mcX^\circ/\Delta})$ via the map $\tau$.
As $\tau$ is a finite map (\cite{Kebekus}), $\mcN$ is relative ample for the map $\mcU^\circ \to \mcX^\circ$, i.e. for any $x \in \mcX^\circ$, the line bundle $\mcN|_{\mcU^\circ_x}$ is ample.

Consider the line bundle $\mcA = \mcN^{\otimes (n+1)} \otimes K_{\mcU^\circ/\mcX^\circ}$.   For $y \in \mcX^\circ_t$ with $t \neq 0$, $\tau_y: \mcU^\circ_y \to \mcC_y $ is an isomorphism and $\mcN|_{\mcU^\circ_y} \simeq \0(1,1)$ on $\mbP V \times \mbP V^*$, hence $\mcA|_{\mcU^\circ_y}$ is holomorphically trivial. By Lemma \ref{l.trivialbundle}, $\mcA|_{\mathcal{U}^\circ_x}$ is also holomorphically trivial, and thus  $K^{-1}_{\mcU^\circ_x} = \mcN^{\otimes (n+1)}|_{\mcU^\circ_x}$ is ample. Hence $\mcU_x$ is Fano.

Take a section $\sigma: \Delta \to \mcX^\circ$ through $x$, then $\mcU_{\sigma(t)}$ is a smooth Fano deformation of $\mbP V \times \mbP V^*$, hence $\mcU_x$ is itself isomorphic to $\mbP V \times \mbP V^*$ by \cite{Li}.
\end{proof}

 By Theorem \ref{t.InvCones}(i), there exists a line bundle $\mcL$ on $\mcX$ such that $\mcL_t$ is the pull-back line bundle $\phi^*(\0_Z(H))$ for $t\neq 0$, where $H$ is a hyperplane in $Z$.  Let $\mcZ = \mbP((\pi_* \mcL)^*) \simeq Z \times \Delta$ and let $\Phi: \mcX \dasharrow \mcZ$ be the rational map induced by the linear system $|\mcL|$. Then for $t \neq 0$, the map $\Phi_t$ coincides with $\phi$ and the indeterminacy locus $F$ of $\Phi$ is properly contained in $\mcX_0$. Note that $\Phi_0$ is induced from
 the linear system $|\mcL_0|:=|\mcL|_{\mcX_0}|$, hence by Theorem \ref{t.InvCones}(iii) the indeterminacy locus of $\Phi_0$ is $F$.

\begin{lem} \label{l.Phi0birational}
(i) For any divisor $D \in |\mcL_0|$ through a general point $x\in \mcX_0$, $D$ is smooth at $x$.

(ii) The rational map $\Phi_0: \mcX_0 \dasharrow \mcZ_0$ is birational.
\end{lem}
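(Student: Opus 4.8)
The plan is to derive both parts from the single observation that $\mcL_0$ is a nef and big line bundle on the Fano manifold $\mcX_0$ whose top self-intersection number equals $1$. First I would collect the numerical input. Since $\mcL_t=\phi^*\0_Z(H)$ is nef for $t\neq 0$, the invariance of the nef cone under Fano deformations (Theorem \ref{t.InvCones}(ii)) shows that $\mcL_0$ is nef; since $Z=\BP\End(V)$ is a projective space and $\phi$ is birational, $\mcL_0^{\dim\mcX_0}=\mcL_t^{\dim X}=\0_Z(H)^{\dim Z}=1$, so $\mcL_0$ is moreover big; and by Theorem \ref{t.InvCones}(iii) together with $\phi_*\0_X=\0_Z$ one gets $h^0(\mcX_0,\mcL_0)=h^0(X,\phi^*\0_Z(H))=h^0(Z,\0_Z(H))=\dim\mcX_0+1$. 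Recall also that $\mcZ_0\cong Z\cong\BP^{\dim\mcX_0}$ and that, by construction, $\Phi_0$ is the rational map attached to the complete linear system $|\mcL_0|$, so that every member of $|\mcL_0|$ is the pullback under $\Phi_0$ of a hyperplane in $\mcZ_0$.

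For part (ii): as $\mcX_0$ is Fano and $\mcL_0$ is nef, $\mcL_0-K_{\mcX_0}$ is ample, so the base-point-free theorem gives that $\mcL_0$ is semiample; being also big, it induces a birational morphism $g:\mcX_0\to Y$ onto a normal projective variety $Y$ with $\mcL_0=g^*A$ for an ample line bundle $A$ on $Y$. Then $A^{\dim Y}=\mcL_0^{\dim\mcX_0}=1$, and $g_*\0_{\mcX_0}=\0_Y$ gives $h^0(Y,A)=h^0(\mcX_0,\mcL_0)=\dim Y+1$. Thus $(Y,A)$ is a polarized variety of degree $1$ with $h^0(Y,A)=\dim Y+1$, and by the classification of polarized varieties of minimal degree it is $(\BP^{\dim\mcX_0},\0(1))$. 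In particular $A$ is very ample and $g^*:H^0(Y,A)\xrightarrow{\ \sim\ }H^0(\mcX_0,\mcL_0)$, so $|\mcL_0|=g^*|A|$ is base-point-free and $\Phi_0$ is identified with the birational morphism $g:\mcX_0\to Y\cong\BP^{\dim\mcX_0}\cong\mcZ_0$. This proves (ii), and shows in passing that the base locus $F$ of $|\mcL_0|$ is empty.

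Part (i) then follows formally. A birational morphism $\Phi_0:\mcX_0\to\mcZ_0$ with $\mcZ_0$ smooth is an isomorphism over a dense Zariski-open subset $\mcZ_0^{\circ}\subset\mcZ_0$, since its exceptional locus has image of codimension $\geq 2$; hence for a general point $x\in\mcX_0$ the morphism $\Phi_0$ is a local isomorphism near $x$. Any divisor $D\in|\mcL_0|$ with $x\in D$ equals $\Phi_0^{-1}(H')$ for a hyperplane $H'\subset\mcZ_0$ with $\Phi_0(x)\in H'$; since $H'$ is smooth and $\Phi_0$ is a local isomorphism at $x$, the divisor $D$ is smooth at $x$.

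I expect the genuinely delicate step to be the identification $(Y,A)\cong(\BP^{\dim\mcX_0},\0(1))$, i.e.\ ruling out that $\Phi_0$ contracts a positive-dimensional subvariety or has degree $>1$; this is exactly where one must combine the nef/big information with the precise value $\mcL_0^{\dim\mcX_0}=1$ (coming from $Z$ being a projective space) and the minimal-degree classification. An alternative, more hands-on route to (i) is also available: every minimal rational curve $C$ of $\mcX_0$ satisfies $\mcL_0\cdot C=1$, so $\Phi_0$ maps such curves onto lines, and since the VMRT of $\mcX_0$ spans $\BP T_x\mcX_0$ at a general $x$ (its secant variety being a proper subvariety, by Proposition \ref{p.typeAComp}(1)), the differential of $\Phi_0$ has full rank there, which gives (i); the degree count $\mcL_0^{\dim\mcX_0}=1$ then upgrades this to (ii).
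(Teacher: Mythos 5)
Your argument is correct, but it takes a genuinely different route from the paper's. The paper proves (i) first, by degenerating the line in $\mcZ_t$ through $\Phi(\sigma_1(t))$ and $\Phi(\sigma_2(t))$ to a $1$-cycle $C_0$ on $\mcX_0$ with $\mcL_0\cdot C_0=1$, and using Corollary \ref{c.minimal} together with the nefness of $D$ (Theorem \ref{t.InvCones}(ii)) to show that the component through $x$ meets $D$ with multiplicity one, whence $D$ is smooth at $x$; it then deduces that $\Phi_0$ is dominant from (i) plus linear non-degeneracy of the image (Theorem \ref{t.InvCones}(iii)), and gets birationality via the graph closure and connectedness of fibers. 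You instead prove (ii) directly: $\mcL_0$ is nef by Theorem \ref{t.InvCones}(ii), big since $\mcL_0^{\dim\mcX_0}=1$, semiample by Kawamata base-point-freeness on the Fano manifold $\mcX_0$, and the resulting birational contraction $(Y,A)$ has $A^{\dim Y}=1$ and $h^0(Y,A)=\dim Y+1$, i.e.\ $\Delta$-genus zero, so it must be $(\BP^{\dim\mcX_0},\0(1))$; (i) then drops out because $\Phi_0$ is a local isomorphism at a general point and every member of $|\mcL_0|$ is the pullback of a hyperplane. Your route even yields a stronger conclusion than the lemma --- $|\mcL_0|$ is base-point free and $\Phi_0$ is a morphism --- at the price of invoking two substantial external results (Kawamata's base-point-free theorem and Fujita's classification of polarized varieties of $\Delta$-genus zero), whereas the paper stays within the degeneration-of-curves toolkit it has already set up. One point of precision: the ``classification of varieties of minimal degree'' concerns embedded (very ample) polarizations, while a priori you only know $A$ is ample; the correct citation is Fujita's theorem that $\Delta(Y,A)=0$ forces $A$ to be very ample, after which degree one forces $(\BP^N,\0(1))$. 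Finally, your closing ``more hands-on'' alternative for (i) is not yet a proof: at this stage nothing guarantees that the VMRT of $\mcX_0$ spans $\BP T_x\mcX_0$ (Proposition \ref{p.typeAComp}(1) is a statement about $Z_1\subset Z$, and the spanning of $\mcC_x$ is essentially what Proposition \ref{p.invVMRTtypeA} later establishes), and even granting it, a linear subspace can avoid the affine cone of a spanning subvariety, so full rank of $d_x\Phi_0$ does not follow; since this is explicitly an aside, it does not affect the validity of your main argument.
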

\begin{proof}

(i) Fix a point $y \in \mcX_0 \setminus D$. Take two sections $\sigma_i: \Delta \to \mcX$ such that $\sigma_1(0)=x$ and $\sigma_2(0)=y$. Let ${\mathbf l}_t$ be the line in $\mcZ_t$ joining the two points $\Phi(\sigma_1(t))$ and $\Phi(\sigma_2(t))$ and let $C_t$ be the strict transform of ${\mathbf l}_t$ for $t \neq 0$. Denote by $C_0$ the limit cycle of $C_t$ on $\mcX_0$, which joins $x$ to $y$.
As $\mcL \cdot C_t=1$ for $t \neq 0$, we have  $\mcL \cdot C_0 = 1$.  Let $C_0 = C_0^1 \cup \cdots \cup C_0^k$ be the irreducible components, with $x \in C_0^1$.
There exists a component, say $C_0^i$, such that $C_0^i \nsubseteq D$ and $C_0^i \cap D \neq \emptyset$, which implies that $D \cdot C_0^i \geq 1$.  As $D$ is nef by Theorem \ref{t.InvCones}(ii) and as $D\cdot C_0=1$, we have $D\cdot C_0^i=1$ and $D \cdot C_0^j = 0$ for all $j\neq i$. As $C_0^1$ passes through a general point $x$ of $\mcX_0$, it is free on $\mcX_0$ and deforms to a free curve $C_t^1$ on $\mcX_t$ with $t\neq 0$. By Corollary \ref{c.minimal}, we have $D\cdot C_0^1=\mcL_t\cdot C_t^1\geq\mcL_t\cdot C_{\boldsymbol \theta}=1$. Hence $C_0^1 = C_0^i$ and $D \cdot C_0^1=1$, implying $D$ is smooth at $x$.

(ii) Let us first show that $\Phi_0$ is dominant. If not, then the closure $Z_0$ of $\Phi(\mcX_0)$ is properly contained in $\mcZ_0$. The embedded tangent space $T_zZ_0$ at $z=\Phi_0(x) \in Z_0$ is contained in some hyperplane $H$ of $\mcZ_0$. By Theorem \ref{t.InvCones}(iii), $Z_0$ is linearly non-degenerate in $\mcZ_0$. Then $D_H:= \Phi_0^{-1}(H)$ is a divisor of $\mcX_0$ that is singular at $x$. This contradicts the claim (i), hence $\Phi_0$ is dominant.

 Let $\mcY$ be the graph closure of $\Phi$ and $\mcX \xleftarrow{\Psi_1} \mcY \xrightarrow{\Psi_2} \mcZ$ the two projections.  Let $Y_0 \subset \mcY_0$ be the strict transform of $\mcX_0$.  As $\Psi_2$ is projective and dominant, it is surjective. In particular, $\Psi_{2,0}: \mcY_0 \to \mcZ_0$ is surjective.  As $\Psi_2$ is birational and $\mcZ$ is smooth, $\Psi_2$ has connected fibers. This implies that $\Psi_{2,0}: \mcY_0 \to \mcZ_0$ is birational.  As a consequence, $\Phi_0: \mcX_0 \dasharrow \mcZ_0$ is birational.
\end{proof}

 Let $\mcB \subset \mcZ$ be the irreducible subvariety such that $\mcB_t = Z_1$ under the isomorphism $\mcZ_t \simeq Z$ for all $t \neq 0$.
 Similarly, we denote by $\mcD_i \subset \mcX$ the irreducible divisor corresponding to $D_i$ on $X$. Then similar results as those in Proposition \ref{p.typeAComp} hold for $\mcD_i$ and also for the specialization $\mcD_{i,0}:=\mcD_i|_{\mcX_0}$ to $t=0$.

\begin{lem}
We have ${\rm PEff}(\mcX_0)=\sum_{i=1}^n\mbR^+\mcD_{i, 0}$. The divisor $\mcD_{1,0}$ is irreducible, and $C\cdot\mcD_{1,0}=1$ for $[C] \in\mcK^0$ on $\mcX_0$.
\end{lem}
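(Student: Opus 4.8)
The plan is to deduce the pseudo-effective cone statement directly from the invariance of cones (Theorem \ref{t.InvCones}), to obtain the intersection number from numerical invariance in the family, and then to bootstrap these into the irreducibility of $\mcD_{1,0}$. For the first point, I would use Theorem \ref{t.InvCones}(i) to identify $N^1(\mcX_0)\simeq N^1(\mcX/\Delta)\simeq N^1(\mcX_t)$, under which $[\mcD_{i,0}]$ corresponds to $[\mcD_i|_{\mcX_t}]=[D_i]$. Since each prime divisor $\mcD_i$ dominates $\Delta$, we have $\mcX_0\not\subseteq\mcD_i$, so $\mcD_{i,0}$ is a nonzero effective divisor. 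By Theorem \ref{t.InvCones}(ii) together with Proposition \ref{p.typeAComp}(2) and (4), $\mathrm{PEff}(\mcX_0)$ equals $\mathrm{PEff}(X)=\sum_{i=1}^n\mbR^+[D_i]$, that is, $\mathrm{PEff}(\mcX_0)=\sum_{i=1}^n\mbR^+[\mcD_{i,0}]$; moreover this cone is simplicial, so $[\mcD_{1,0}],\dots,[\mcD_{n,0}]$ form a basis of $N^1(\mcX_0)$.

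Next I would compute $\mcD_{1,0}\cdot C$ for $[C]\in\mcK^0$. Such a curve $C$ lies in the irreducible, hence connected, component $\mcK$ of the relative Chow scheme of $\mcX/\Delta$, so as a cycle in $\mcX$ it is numerically equivalent to a minimal rational curve $C_t$ on $\mcX_t$ for $t\neq 0$. Since $\mcX_0\not\subseteq\mcD_1$, this gives $\mcD_{1,0}\cdot C=\mcD_1\cdot C=\mcD_1\cdot C_t=D_1\cdot C_t=1$ by Proposition \ref{p.typeAComp}(3); the same argument also yields $\mcD_{n,0}\cdot C=1$ and $\mcD_{j,0}\cdot C=0$ for $2\le j\le n-1$.

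For the irreducibility of $\mcD_{1,0}$, I would write $\mcD_{1,0}=\sum_j a_j E_j$ with the $E_j$ distinct prime divisors and $a_j\ge 1$. Each $[E_j]$ is a nonzero effective class, hence $[E_j]=\sum_i b_{ji}[\mcD_{i,0}]$ with $b_{ji}\ge 0$; comparing coefficients in $\sum_j a_j[E_j]=[\mcD_{1,0}]$ forces $b_{ji}=0$ for $i\ge 2$, so $[E_j]=b_{j1}[\mcD_{1,0}]$ with $b_{j1}>0$. Now take $C\in\mcK^0$ general and passing through a general point of $\mcX_0$, so that $C$ is contained in no $E_j$; then $E_j\cdot C=b_{j1}(\mcD_{1,0}\cdot C)=b_{j1}$ is a positive integer, while $\sum_j a_j(E_j\cdot C)=\mcD_{1,0}\cdot C=1$. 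This is possible only if there is a single component $E_1$, with $a_1=1$ and $E_1\cdot C=1$. Hence $\mcD_{1,0}=E_1$ is reduced and irreducible.

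The first two steps are routine once Theorem \ref{t.InvCones} and Proposition \ref{p.typeAComp} are available. The step I expect to need the most care is the last: one must ensure that the general minimal rational curve meets every prime component of $\mcD_{1,0}$ properly, so that each $E_j\cdot C$ is a nonnegative integer, and then exploit the strict positivity $b_{j1}>0$ forced by simpliciality — it is this combination of integrality and positivity, rather than any genuinely hard geometric input, that rules out a reducible or non-reduced specialization of $\mcD_1$.
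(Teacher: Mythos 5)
Your proposal is correct and follows essentially the same route as the paper: invariance of pseudo-effective cones (Theorem \ref{t.InvCones}) plus the simplicial cone of Proposition \ref{p.typeAComp}(4) to pin down $\mathrm{PEff}(\mcX_0)$ and force every component of $\mcD_{1,0}$ onto the ray $\mbR^+\mcD_{1,0}$, the constancy of intersection numbers in the family to get $C\cdot\mcD_{1,0}=1$, and then positivity plus integrality of $E_j\cdot C$ to rule out a reducible or non-reduced $\mcD_{1,0}$. Your write-up just spells out the coefficient comparison and the genericity of $C$ more explicitly than the paper does.
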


\begin{proof}
By Proposition \ref{p.typeAComp}(3), we have $C\cdot\mcD_{1,0}=1$ for $C\in\mcK^0$ on $\mcX_0$. Since pseudo-effective cones are invariant under Fano deformation by Theorem \ref{t.InvCones}, we know from Proposition \ref{p.typeAComp}(4) that the pseudo-effective cone ${\rm PEff}(\mcX_0)=\sum_{i=1}^n\mbR^+D_{i, 0}$ with $\mbR^+\mcD_{1, 0}$ being an extremal ray, and each irreducible component $E$ of $\mcD_{1, 0}$ satisfies that $E\in\mbR^+\mcD_{1, 0}$, implying that $C\cdot E\geq 1$. Hence $\mcD_{1,0}=E$ and it is irreducible.
\end{proof}

 \begin{lem} \label{l.ANondegenerate}
 The subvariety $\mcB_0 \subset \mcZ_0$ is linearly non-degenerate.
 \end{lem}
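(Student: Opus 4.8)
Here $\mcB_0$ denotes the fibre over $0$ of the irreducible subvariety $\mcB\subset\mcZ$, and $\mcZ_0\simeq Z=\BP\,{\rm End}(V)$. The plan is to argue by contradiction, using the invariance of the pseudo-effective cone under Fano deformations together with a computation in the weight lattice of $G$.

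First I would record the geometric description of $\mcB$. On each fibre $\mcX_t\simeq X$ with $t\neq 0$, the boundary divisor $\mcD_{1,t}=D_1$ is the exceptional divisor of the first blow-up $\Bl_{Z_1}Z$ in the construction of $X$, so $\Phi_t(\mcD_{1,t})=\phi(D_1)=Z_1=\mcB_t$. Since $\mcD_1$ is irreducible and dominates $\Delta$, this identifies $\mcB$ with $\overline{\Phi(\mcD_1)}$; taking the part lying over $0$ and using that the indeterminacy locus $F$ of $\Phi_0$ has codimension $\geq 2$ in $\mcX_0$ (because $\Phi_0$ is birational between smooth projective varieties, Lemma \ref{l.Phi0birational}), one obtains $\Phi_0(\mcD_{1,0}\setminus F)\subseteq \mcB_0$.

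Next, suppose for contradiction that $\mcB_0$ were linearly degenerate, contained in some hyperplane $H\subset\mcZ_0$. Pulling $H$ back along the linear system $|\mcL_0|$ defining $\Phi_0$, the closure $D:=\overline{\Phi_0^{-1}(H)}$ is the divisor of zeros of a section of $\mcL_0$ (no component of it can lie in the codimension-$\geq 2$ set $F$), hence $D\in|\mcL_0|$ and $[D]=[\mcL_0]$ in ${\rm Pic}(\mcX_0)$. By the inclusion above, $\mcD_{1,0}\setminus F\subseteq \Phi_0^{-1}(H)$, so $\mcD_{1,0}\subseteq D$; as $\mcD_{1,0}$ is an irreducible divisor (by the previous lemma), it is a component of $D$, and therefore $[\mcL_0]-[\mcD_{1,0}]$ is pseudo-effective on $\mcX_0$. (If instead $\mcD_{1,0}\subseteq F$, then $\mcD_{1,0}$ is a fixed component of $|\mcL_0|$ and the same conclusion $[\mcL_0]-[\mcD_{1,0}]\in{\rm PEff}(\mcX_0)$ holds directly.)

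Finally I would derive the contradiction in the weight lattice. Under the identification ${\rm Pic}(\mcX_0)\simeq{\rm Pic}(X)=\Lambda$ of Theorem \ref{t.InvCones}(i) one has $[\mcD_{1,0}]=[D_1]=\alpha_1$, while Proposition \ref{p.typeAComp}(5) together with the standard type-$A_n$ identity $\omega_1=\sum_{i=1}^{n}\frac{n+1-i}{n+1}\alpha_i$ gives $[\mcL_0]=[\phi^*\0_Z(1)]=\omega_1$. By Theorem \ref{t.InvCones}(ii) and Example \ref{e.cones} (equivalently, by the previous lemma), ${\rm PEff}(\mcX_0)=\sum_{i=1}^{n}\mbR^{+}\alpha_i$. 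But
$$
\omega_1-\alpha_1=-\frac{1}{n+1}\alpha_1+\sum_{i=2}^{n}\frac{n+1-i}{n+1}\alpha_i,
$$
whose coefficient along $\alpha_1$ is negative; since the $\alpha_i$ form a basis of $\Lambda\otimes\mbR$, this class does not lie in $\sum_{i=1}^{n}\mbR^{+}\alpha_i={\rm PEff}(\mcX_0)$, contradicting the previous paragraph. Hence $\mcB_0$ is linearly non-degenerate. The only delicate point is the bookkeeping in the second and third steps — pinning down the flat limit $\mcB_0$ precisely enough to know it contains $\Phi_0(\mcD_{1,0})$ — but this becomes routine once $\mcB$ is recognized as $\overline{\Phi(\mcD_1)}$ and $\Phi_0$ is known to be birational; after that the whole argument reduces to the purely root-theoretic fact that $\omega_1-\alpha_1$ is not a nonnegative combination of simple roots.
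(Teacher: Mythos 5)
Your proof is correct and follows essentially the same route as the paper's: assume $\mcB_0$ lies in a hyperplane $H$, use that $\Phi_0$ maps $\mcD_{1,0}$ (generically) into $\mcB_0\subset H$ to produce an effective divisor in the class $[\mcL_0]$ containing the prime divisor $\mcD_{1,0}$, and then contradict the expression of $[\mcL_0]=[\phi^*H]$ in the basis of boundary classes using the invariance of the simplicial pseudo-effective cone. The only differences are cosmetic and harmless: the paper pins down the coefficients by intersecting with a general minimal rational curve and contradicts the positive $D_n$-coefficient of $\phi^*H$, whereas you contradict the negative $\alpha_1$-coefficient of $\omega_1-\alpha_1$ directly; also your assertion that $D=\overline{\Phi_0^{-1}(H)}$ itself lies in $|\mcL_0|$ tacitly assumes $|\mcL_0|$ has no fixed component, but all the argument needs (and what your own fallback already supplies) is that $[\mcL_0]-[\mcD_{1,0}]$ is effective.
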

 \begin{proof}
 Assume that $\mcB_0$ is contained in a hyperplane $H$ of $\mcZ_0$. Note that $\Phi_0$ is the limit of $\Phi_t$, which is defined outside $F$. The map $\Phi_0$ sends the irreducible divisor $\mcD_{1, 0}$ to $\mcB_0$, as so is $\Phi_t$. Then the pull-back $\Phi_0^* H$ can be written as $k\mcD_{1,0} + E$, where $k\geq 1$ and $E$ is an effective divisor. Take a general member $[C] \in\mcK^{0, g}$ on $\mcX_0$.  As $C \cdot \Phi_0^* H =1$ and $C \cdot E \geq 0$, we have $k=1$ and $C \cdot E=0$. By Theorem \ref{t.InvCones}(ii) and Proposition \ref{p.typeAComp}(4), we have $E \in \oplus_{i=2}^{n-1} \mbQ[\mcD_{i,0}]$, implying that $\Phi_0^* H \in \oplus_{i=1}^{n-1} \mbQ[\mcD_{i, 0}]$. This contradicts the conclusion (5) of Proposition \ref{p.typeAComp}.
 \end{proof}

% \begin{lem}
% Assume that $\mcB_0 \subset \mcZ_0$ is linearly non-degenerate. Then for $x \in \mcX_0$ general, its VMRT $\mcC_x \subset \mbP T_x \mcX_0$ is projectively equivalent to $Z_1 \subset Z$.
%
% %For $x \in \mcX_0$ a general point, let $z = \Phi_0(x)\in \mcZ_0$.  Let $p_z: \mcZ_0 \dasharrow \mbP T_z \mcZ_0$ be the projection from $z$.  Then the map
% %$\Phi_0: \mcX_0 \dasharrow \mcZ_0$ induces a projective equivalence between $\mcC_x \subset \mbP T_x \mcX_0$ and $p_z(\mcB_0) \subset \mbP T_z \mcZ_0$.
% \end{lem}

  \begin{prop}\label{p.invVMRTtypeA}
Assume $\fg=\mathfrak{sl}(V)$ for a vector space $V$ of dimension $n+1\geq 3$. Let $x \in \mcX_0$ be a general point.  Then the VMRT of $\mcX_0$ at $x$ is projectively equivalent to the projection from a general point of the Segre embedding
$\mbP V \times \mbP V^* \subset \mbP(V \otimes V^*)$.
\end{prop}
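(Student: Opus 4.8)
We outline the argument; the plan is to transport the question to $Z=\mbP\End(V)$ through the birational map $\Phi_0$, matching minimal rational curves of $\mcX_0$ through a general point with lines of $Z$ meeting the flat limit $\mcB_0$ of the Segre variety $Z_1$. By Lemma~\ref{l.Phi0birational}(ii), $\Phi_0\colon\mcX_0\dasharrow\mcZ_0$ is birational, and since $\mcZ\simeq Z\times\Delta$ we have $\mcZ_0\simeq Z=\mbP\End(V)$. For a general $x\in\mcX_0$ we may assume $x\notin F$ and that $\Phi_0$ is a local biholomorphism near $x$; set $z:=\Phi_0(x)$, a general point of $Z$, and let $\mbP(d\Phi_0)_x\colon\mbP T_x\mcX_0\xrightarrow{\ \sim\ }\mbP T_zZ$ be the induced projective isomorphism. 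By Lemma~\ref{l.K}, $\mcK_x\simeq\mbP V\times\mbP V^{*}$, of dimension $2n$.

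The first step is to analyse the images of minimal rational curves. For a general $C\in\mcK_x^{0}$ we have $C\cap F=\emptyset$ (since $F$ has codimension $\geq 2$ and a general minimal rational curve through a general point avoids it), so $\Phi_0|_C$ is a morphism of degree $\mcL_0\cdot C=\mcL_t\cdot C_t=\phi^{*}\0_Z(1)\cdot C_t=1$ for $t\neq 0$, and it is non-constant because $\Phi_0$ is a local isomorphism at $x\in C$; hence $\Phi_0(C)$ is a line of $Z$ through $z$. Since $\mcD_{1,0}\cdot C=1$ by the preceding lemma and $\Phi_0(\mcD_{1,0})=\mcB_0$, this line meets $\mcB_0$, and $\mcB_0$ has dimension $2n$ as a flat limit of $Z_1\simeq\mbP V\times\mbP V^{*}$. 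Thus $C\mapsto\Phi_0(C)$ gives a rational map $f\colon\mcK_x^{0}\dasharrow L_z$ onto the variety $L_z$ of lines through $z$ meeting $\mcB_0$, which has dimension $2n$ (the map $\mcB_0\dasharrow L_z$, $w\mapsto\overline{zw}$, being generically finite). To see that $f$ is dominant, recall that by \cite{Kebekus} $\tau_x$ is a finite morphism onto $\mcC_x$; writing $\pi_z\colon Z\dasharrow\mbP T_zZ$ for the linear projection from $z$, the composite $\mbP(d\Phi_0)_x\circ\tau_x$ sends $C$ to the tangent direction at $z$ of $\Phi_0(C)$, that is, to $\pi_z$ of the point of $\mcB_0$ lying on $\Phi_0(C)$, hence it factors through $f$; being finite it contracts no positive-dimensional subvariety, so neither does $f$, and a generically finite rational map between irreducible varieties of the same dimension $2n$ is dominant. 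It follows that $\mbP(d\Phi_0)_x(\mcC_x)=\overline{\pi_z(\mcB_0)}$, so $\mcC_x$ is projectively equivalent to the linear projection of $\mcB_0$ from $z$.

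It then remains to identify $\mcB_0$ with the Segre variety. As a flat limit of $Z_1=\mbP V\times\mbP V^{*}\subset\mbP(V\otimes V^{*})$ it is an irreducible, reduced, $2n$-dimensional subvariety of the same degree, and it is linearly non-degenerate by Lemma~\ref{l.ANondegenerate}; using $n\geq 2$ together with the projective rigidity of the Segre embedding one gets $\mcB_0\simeq_{\mathrm{proj}}Z_1$. Since $Z_2=\mathrm{Sec}(Z_1)\neq Z$ for $n\geq 2$ by Proposition~\ref{p.typeAComp}(1), the general point $z$ lies off $Z_2$, so $\overline{\pi_z(\mcB_0)}$ is the projection of the Segre variety $\mbP V\times\mbP V^{*}$ from a general point of $\mbP(V\otimes V^{*})$; all such projections are projectively equivalent (as $\GL(V)\times\GL(V)$ acts transitively on the full-rank locus) and hence equivalent to the VMRT of $X$ at a general point described in Theorem~\ref{t.VMRTWonderful}(iv), which proves the proposition. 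The step I expect to be the real obstacle is this identification of $\mcB_0$: equivalently, after exhibiting $\mcC_x$ — via a section $\sigma$ through $x$ and Lemma~\ref{l.K} — as a codimension-one linear projection of the Segre, one must rule out that the centre of projection degenerates onto $Z_2$; this is precisely where the non-degeneracy of $\mcB_0$ (Lemma~\ref{l.ANondegenerate}) and the hypothesis $n\geq 2$ are indispensable, whereas the reduction to $Z$ and the matching of curves with lines are comparatively soft. (A clean sub-step: $\tau_x$ being a morphism already forces the centre of projection off $Z_1$ itself, so the delicate case is the intermediate locus $Z_2\setminus Z_1$.)
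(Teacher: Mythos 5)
Your reduction to $Z=\mbP\End(V)$, the matching of minimal rational curves through $x$ with lines through $z=\Phi_0(x)$ meeting $\mcB_0$, and the conclusion that, under $d_x\Phi_0$, the VMRT $\mcC_x$ becomes the projection $\overline{p_z(\mcB_0)}$ all run parallel to the paper's argument and are essentially sound. The genuine gap is the step you yourself flag as the obstacle: the identification $\mcB_0\simeq_{\mathrm{proj}}Z_1$. You assert it by invoking ``the projective rigidity of the Segre embedding'' for a flat limit that is ``irreducible, reduced, of the same dimension and degree, and non-degenerate,'' but no such rigidity theorem is available (and none is used in the paper): flat limits of an embedded projective variety need not be reduced or irreducible in the first place --- irreducibility of $\mcB_0$ is something the paper has to \emph{prove}, via the bijectivity of $f_x\colon\mcC_x\to p_z(\mcB_0)$ obtained from injectivity together with the dimension and degree count --- and embedded degenerations of Segre-type varieties do occur (this very phenomenon, $\mbP^1\times\mbQ^1$ degenerating to $S_{1,3}$, is what makes $B_3$ exceptional in this paper). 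Moreover, Lemma \ref{l.ANondegenerate} and the hypothesis $n\geq 2$ do not play the role you assign to them (keeping the centre of projection off $Z_2$); in the paper the non-degeneracy of $\mcB_0$ is used for a different and decisive purpose.

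What actually closes the gap in the paper is a mechanism your sketch never deploys, although you quote its main ingredient, Lemma \ref{l.K}: along a general section $\sigma$ through $x$ one considers the composites $g_t\colon\mcK_{\sigma(t)}\to\mcC_{\sigma(t)}\to\mcB_t\subset\mcZ_t$; for $t\neq 0$ this is the Segre embedding of $\mbP V\times\mbP V^*$, i.e.\ it is given by the complete linear system $|\0(1,1)|$, so the limit map $g_0$ on $\mcK_x\simeq\mbP V\times\mbP V^*$ (Lemma \ref{l.K}) is given by a linear \emph{subsystem} of $|\0(1,1)|$; the linear non-degeneracy of $\mcB_0$ (Lemma \ref{l.ANondegenerate}) then forces this subsystem to be complete, hence $g_0$ is the Segre embedding, $\mcB_0$ is projectively equivalent to $Z_1$, and both $\tau_x$ and $f_x$ are isomorphisms. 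Without this argument, or a genuine substitute for your unproved rigidity claim, the proof is incomplete.
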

 \begin{proof}
 Let $z = \Phi_0(x)\in \mcZ_0$ and  $p_z: \mcZ_0 \dasharrow \mbP T_z \mcZ_0$ the projection from $z$.
 Note that the secant variety of $\mcB_0$ is contained in the limit of secant varieties of $\mcB_t$, which is not the full space by Proposition \ref{p.typeAComp}(1).  We may assume that $z$ is not in the secant variety of $\mcB_0$.

 Take a curve $[C]\in \mcK_x$ and deform it to $C_t \subset \mcX_t$,  then the image $\Phi_0(C)$ is the limit of $\Phi_t(C_t)$, the latter being  lines in $\mcZ_t$ meeting $\mcB_t$. This implies that $\Phi_0(C)$ is again a line through $z$ in $\mcZ_0$ meeting $\mcB_0$.  As $z$ is not in the secant variety of $\mcB_0$, this line intersects $\mcB_0$ at a unique point, which gives a map $\mcU_x=\mcK^n_x\to \mcK_x \to \mcB_0$.
 Note that  the line $\Phi_0(C)$ is uniquely determined by its tangent direction at $z$, hence this induces a map $f_x: \mcC_x \to p_z(\mcB_0)$ which fits into the following commutative diagram:
 $$\begin{CD}
 \mcC_x @>f_x>> p_z(\mcB_0) \\
 @VVV    @VVV \\
 \mbP T_x\mcX_0 @>d_x\Phi_0>\simeq>  \mbP T_z\mcZ_0
 \end{CD}
 $$

 Note that $f_x$ is injective and  $\mcC_x$ has the same dimension as $\mcB_0$, hence the image $f_x(\mcC_x)$ is an irreducible component of $p_z(\mcB_0)$.  On the other hand, the degree of $\mcC_x$ is the same as that of $Z_1 \subset Z$, which is also the same as the degree of  $\mcB_t$ for all $t$. Hence $f_x(\mcC_x)$ has the same degree as that of $\mcB_0$, which implies that $f_x$ is also surjective, hence $f_x$ is bijective and $\mcB_0$ is irreducible.

 Take a general section $\sigma: \Delta \to \mcX$ through $x$ and
 consider the composition map $g_t:  \mcU_{\sigma(t)} \to \mcC_{\sigma(t)}\to \mcB_t \to \mcZ_t$. For $t \neq 0$, this is induced from the line bundle $\0(1,1)$ on $\mbP V \times \mbP V^*$. This implies that the central map $g_0$ is induced from a linear subsystem (say $L$) of $\0(1,1)$.
 As $\mcB_0$ is linearly non-degenerate in $\mcZ_0$ by Lemma \ref{l.ANondegenerate}, the linear system $L$ must be the complete linear system of $\0(1,1)$.  This implies that $g_0$ is the Segre embedding of $\mcU_x$, hence the maps $\tau_x$ and $f_x$ are both isomorphisms, concluding the proof.
 \end{proof}

\section{Rigidity under Fano deformation}

\subsection{Equivariant compactifications of vector groups}

A vector group of dimension $g$ is the additive group $\mathbb{G}_a^g$.   An equivariant compactification of $\mathbb{G}_a^g$ is
a smooth projective $\mathbb{G}_a^g$-variety $Y$  which admits an open $\mathbb{G}_a^g$-orbit $O$ isomorphic to $\mathbb{G}_a^g$.
The boundary $\partial Y = Y \setminus O$ is a union of irreducible reduced divisors $\cup_j E_j$.

Typical examples of equivariant compactifications of $\mathbb{G}_a^g$ are $\mathbb{P}^g$ and its blowup  along a smooth subvariety contained in a hyperplane.
It turns out that $\mathbb{P}^g$ can even have infinitely many different $\mathbb{G}_a^g$-equivariant compactification structures as soon as $g \geq 6$ (\cite{HT}).
As easily seen, there exists a unique  equivariant compactification of $\mathbb{G}_a$, which is given by $\mathbb{P}^1$.

\begin{example} \label{e.SEC_surface}
Consider the case  $g=2$. Let $Y$ be an equivariant compactification of $\mathbb{G}_a^2$ and let $f: Y \to Y_{min}$ be the natural birational morphism to the minimal model $Y_{min}$ of $Y$.

(i)  By \cite[Proposition 5.1]{HT}, $Y_{min}$ is an equivariant compactification of $\mathbb{G}_a^2$ and   the map $f$ is  $\mathbb{G}_a^2$-equivariant,  which is the composition of successive blowups along  $\mathbb{G}_a^2$-fixed points.

(ii) By \cite[Proposition 5.2]{HT}, $Y_{min}$ is isomorphic to $\mathbb{P}^2$ or a Hirzebruch surface $F_k:=\mbP_{\mbP^1}(\0 \oplus \0(k))$ with $k \geq 0$.

(iii) The plane $\mathbb{P}^2$ admits two different equivariant compactification structures and the  boundary for both is a line.

(iv) The product $\mathbb{P}^1 \times \mathbb{P}^1$ admits a unique equivariant compactification structure, induced from the one on  each $\mathbb{P}^1$ by \cite[Proposition 5.5]{HT}. The boundary divisor consists of two components isomorphic to $\mbP^1$.

(v) The Hirzebruch surface $F_k$ with $k >0$ admits two different equivariant compactification structures by \cite[Proposition 5.5]{HT} and the boundary divisor consists of a fiber and a section.
\end{example}

The following result is from \cite[Theorem 2.5, Theorem 2.7]{HT}.

\begin{prop}\label{p.SEC_HT}
Let $Y$ be an equivariant compactification of $\mathbb{G}_a^g$ with boundary divisors $\cup_{i=1}^l E_i$.   Then

(i)  the Picard group of $Y$ is freely generated by $E_1, \cdots, E_l$  and the pseudo-effective cone ${\rm PEff}(Y)$ is given by $\oplus_{i=1}^l \mathbb{R}_{\geq 0} E_i$;

(ii) the anti-canonical divisor of $Y$ is given by $-K_Y = \sum_{i=1}^l a_i E_i$ with $a_i \geq 2$ for all $i$.
\end{prop}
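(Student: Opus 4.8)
The statement collects two facts about a smooth equivariant compactification $Y$ of $\mathbb{G}_a^g$ with boundary $\partial Y = \cup_{i=1}^l E_i$: that $\mathrm{Pic}(Y)$ is free on the $E_i$ with $\mathrm{PEff}(Y) = \oplus_i \mathbb{R}_{\geq 0} E_i$, and that $-K_Y = \sum_i a_i E_i$ with every $a_i \geq 2$. Since this is quoted verbatim from \cite[Theorem 2.5, Theorem 2.7]{HT}, the natural plan is to reproduce the short arguments behind those statements rather than invent anything new. The starting point is that the open orbit $O \simeq \mathbb{G}_a^g \simeq \mathbb{A}^g$ is an affine space, so $\mathrm{Pic}(O) = 0$ and every invertible function on $O$ is a nonzero constant (as $O$ is a unipotent group, hence has trivial character group). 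Consequently every divisor class on $Y$ has a representative supported on $\partial Y$, so the classes $[E_1], \ldots, [E_l]$ generate $\mathrm{Pic}(Y)$; and $\mathcal{O}^*(O) = \mathbb{C}^*$ forces the only relations $\sum n_i E_i \sim 0$ to come from principal divisors $\mathrm{div}(\phi)$ with $\phi \in \mathbb{C}(Y)^*$ regular and nowhere vanishing on $O$, i.e. $\phi$ constant, i.e. $n_i = 0$ for all $i$. This gives freeness.

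For the pseudo-effective cone: any effective divisor $D$ on $Y$, after subtracting its boundary part, restricts to an effective divisor on the affine space $O$, which is therefore principal, $D|_O = \mathrm{div}(\phi)$ for some regular $\phi$ on $O$; but the $\mathbb{G}_a^g$-action moves $D|_O$ around, and combined with the fact that a $\mathbb{G}_a^g$-homogeneous effective divisor on $\mathbb{A}^g$ is $0$, one concludes $D|_O = 0$, so $D$ is already supported on $\partial Y$ and hence lies in $\oplus_i \mathbb{R}_{\geq 0}[E_i]$. (One should be slightly careful here and argue via semicontinuity/translation-invariance of the class rather than of the divisor itself: the class $[D]$ is $\mathbb{G}_a^g$-invariant since $\mathbb{G}_a^g$ is connected, so $D$ is linearly equivalent to a boundary divisor with nonnegative coefficients.) The reverse inclusion $\oplus_i \mathbb{R}_{\geq 0}[E_i] \subseteq \mathrm{PEff}(Y)$ is immediate.

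For the anticanonical bound: write $-K_Y = \sum_i a_i E_i$, which is possible since $K_Y$ is supported on $\partial Y$ by the Picard computation — indeed $\Omega^g_Y$ has a rational section given by the translation-invariant top form $dx_1 \wedge \cdots \wedge dx_g$ on $O \simeq \mathbb{A}^g$, which is regular and nowhere zero on $O$, so $K_Y$ is an integral combination of the $E_i$. The coefficients are $a_i = 1 + \mathrm{ord}_{E_i}(dx_1 \wedge \cdots \wedge dx_g)$ up to sign conventions, and one shows $a_i \geq 2$ by a local computation near a general point of $E_i$: the $\mathbb{G}_a^g$-action degenerates along $E_i$, and in suitable local coordinates $(u, t)$ with $E_i = \{t = 0\}$ the flow of at least one generator of $\mathrm{Lie}(\mathbb{G}_a^g)$ acts on $O$ in a way that forces the invariant form to vanish to order $\geq 1$ along $t=0$, i.e. $a_i \geq 2$; this is exactly the content of \cite[Theorem 2.7]{HT}, whose proof analyzes the vector fields generating the action near the boundary. \emph{The main obstacle} is this last local analysis of the order of vanishing: everything else is essentially formal manipulation of the affine-space structure of the open orbit, but the bound $a_i \geq 2$ genuinely uses that $O$ is an orbit of a \emph{unipotent} group (the analogous statement fails for tori, where $\mathbb{P}^g$ with the torus action has a boundary hyperplane with $a_i = 1$), so one must invoke the structure of $\mathbb{G}_a^g$-actions near the boundary as in \cite{HT}; in a self-contained writeup I would simply cite \cite[Theorem 2.5, Theorem 2.7]{HT} for this step.
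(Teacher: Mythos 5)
The paper does not actually prove this proposition: it is quoted directly from Hassett--Tschinkel, \cite[Theorems 2.5 and 2.7]{HT}, so your decision to reproduce their arguments (and, for the bound $a_i\geq 2$, ultimately to cite \cite{HT}) is consistent with what the paper does. Your treatment of freeness of $\Pic(Y)$ is correct: $\Pic(\mathbb{A}^g)=0$ gives generation by the $E_i$, and the fact that units on $\mathbb{A}^g$ are constants kills all relations.

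There is, however, a genuine gap in your argument for ${\rm PEff}(Y)=\oplus_i\mathbb{R}_{\geq 0}E_i$. Your first formulation (``$D|_O=0$, so $D$ is already supported on $\partial Y$'') is false as stated: a general hyperplane in $\mathbb{P}^g$ is effective and meets the open orbit. Your parenthetical repair is also insufficient: the $\mathbb{G}_a^g$-invariance of the \emph{class} $[D]$ (true, since the group is connected) does not by itself produce an effective representative supported on the boundary, and ``so $D$ is linearly equivalent to a boundary divisor with nonnegative coefficients'' is exactly the statement to be proved. The missing ingredient is the fixed-vector argument for unipotent groups: $\mathbb{G}_a^g$ acts linearly on $H^0(Y,\mathcal{O}_Y(D))\neq 0$, and a unipotent group acting on a nonzero vector space has a nonzero invariant vector; the corresponding invariant effective divisor $D'\sim D$ cannot meet the open orbit (a nonempty invariant closed subset of a homogeneous space is everything), hence $D'=\sum_i n_iE_i$ with $n_i\geq 0$. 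This is where unipotence enters part (i), just as it enters part (ii). For (ii), your description of the mechanism is vague but you correctly identify that it uses unipotence and you defer to \cite[Theorem 2.7]{HT}; for the record, the point there is that the generic stabilizer in $\mathbb{G}_a^g$ of a point of $E_i$ is positive-dimensional and, having no nontrivial characters, acts trivially on the normal line to $E_i$, which forces the invariant section $\partial_1\wedge\cdots\wedge\partial_g$ of $-K_Y$ to vanish to order at least $2$ along $E_i$ (tangency of the fields to $E_i$ alone only gives order $\geq 1$, which is what happens for tori).
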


As a consequence, we get the following elementary result, which will be useful later.
\begin{lem}\label{p.vector compact surface}
Let $Y$ be a smooth projective surface which is an equivariant compactification of the vector group $\mathbb{G}_a^2$ and $O \subset Y$ the open $\mathbb{G}_a^2$-orbit. Let $A\subset\Aut(Y)$ be a finite subgroup such that $A\cdot O= O$. Let $S$ be the set of irreducible components of the boundary $\partial Y:=Y\setminus O$. If $S$ consists of two $A$-orbits, then there exists an irreducible component of $\partial Y$ which is stabilized by $A$.
\end{lem}

\begin{proof}
By Example \ref{e.SEC_surface}, $Y$ is a successive blowup of $Y_{min}$ along $\mathbb{G}_a^2$-fixed points, namely
$Y=Y_\ell \xrightarrow{f_\ell} Y_{\ell-1} \to \cdots \to Y_1 \xrightarrow{f_1} Y_{min}=Y_0$, where $f_i: Y_i \to Y_{i-1}$ is the blowup along a $\mathbb{G}_a^2$-fixed point $y_{i-1} \in Y_{i-1}$. Let us denote by $E_i \subset Y_i$ the exceptional fiber of $f_i$.  By abuse of notation, we also denote by
$E_i$ its strict transformation in $Y_j$ for $j \geq i$.
There are three possibilities for $Y_{min}$, namely $\mathbb{P}^2$, $\mathbb{P}^1 \times \mathbb{P}^1$ or a  Hirzebruch surface $F_k$ (with $k > 0$).

Write $H$ (resp. $H_1, H_2$) for irreducible components of the boundary divisor of $\mbP^2$ (resp. $\mathbb{P}^1 \times \mathbb{P}^1$ and $F_k$) which contain the fixed point set. Then the boundary set of
$Y$ consists of $H$ (resp. $H_1, H_2$) and the exceptional divisors $E_j$.
 As $-K_Y$ is $A$-invariant and the boundary divisor set $S$ has only two $A$-orbits,  there are at most  two coefficients in the expression of $-K_Y$  by Proposition \ref{p.SEC_HT}.

Assume $Y_{min} = \mbP^2$. The boundary of $Y_{min}$ is a line $H$, which is also the fixed point set of the $\mathbb{G}_a^2$-action.  By abuse of notation, we denote by $H$ its strict transform to any $Y_j$.
As the blowup point $y_0$ lies on  $H$, we have $-K_{Y_1} = 3 H + 2 E_1$.  The expression of $-K_Y$ implies that each blowup center $y_i$ is on $H$ but not in any other irreducible component of $E_i$, since otherwise there would be at least three different coefficients in the expression of $-K_Y$.
It follows that $-K_{Y} = 3H +2 \sum_i E_i$, and thus $H$ is the $A$-stable irreducible component.

Suppose now $Y_{min}=\mbP^1\times\mbP^1$. The boundary of $Y_{min}$ has two irreducible components $H_1, H_2$, which are  fibers of two $\mbP^1$-fibrations $Y_{min}\rightarrow\mbP^1$ respectively. Note that $-K_{Y_{min}}=2H_1 + 2 H_2$ and $y_0=H_1 \cap H_2$ is the unique $\mathbb{G}_a^2$-fixed point on $Y_{min}$.
Then $f_1$ must be the blowup at $y_0$, hence  $-K_{Y_1}=3 E_1+2H_1+ 2H_2$. The expression of $-K_Y$ implies that each blowup center $y_i$ is in $E_1$  but not in any other irreducible component of $\partial Y_i$. Hence $-K_Y=3E_1+2 (H_1 + H_2+ \sum_{j\geq 2} E_j)$, and $E_1$ is the $A$-stable irreducible component.

Now assume  $Y_{min}=F_k$ with $k\geq 1$. The boundary of $Y_{min}$ has two irreducible components $H_1, H_2$,  which are respectively a fiber and the minimal section of the $\mbP^1$-fibration $F_k\to \mbP^1$. Note that  $-K_{Y_{min}}=(k+2)H_1 +2 H_2$ and all $\mathbb{G}_a^2$-fixed points lie in $H_1$. The expression of $-K_Y$ implies that $k=1$ and each blowup center $y_i$ belongs to $H_1$  but not in any other irreducible component of $\partial Y_i$. This gives that $-K_Y=3 H_1+2 (\sum_{j} E_j + H_2)$, hence $H_1$ is the $A$-stable irreducible component.
\end{proof}

\begin{rmk}
(i) It can be seen from the proof of Lemma \ref{p.vector compact surface} that if the action of $A$ on $S$ is transitive, then $Y$ is isomorphic to $\mbP^2$ or $\mbP^1\times\mbP^1$.

(ii) For $G$ of type $G_2, F_4$ or $E_8$,  if $\bar{G}$ was  not rigid, then we will show  via case-by-case arguments that  there would exist a surface $Y$ with an irreducible component of $\partial Y$ stabilized by $A$, contradicting Lemma  \ref{p.vector compact surface}.
\end{rmk}

\begin{prop} \label{p.SECFlat}
Let $Y$ be a smooth uniruled projective variety of dimension $d$.

(i) If $Y$ is an equivariant compactification of $\mathbb{G}_a^d$, then the VMRT structure is locally flat.

(ii) Assume that (a) $Y$ is of Picard number one; (b) the VMRT structure is locally flat; and (c) the VMRT at a general point is smooth irreducible and linearly non-degenerate. Then $Y$ is an equivariant compactification of $\mathbb{G}_a^d$.

(iii) Take a general point $y\in Y$. Assume that (a) the VMRT $\mcC_y$ is smooth irreducible and linearly non-degenerate; (b) the linear space $\mathfrak{aut}(\hat{\mathcal{C}_y})^{(1)}=0$; and  (c)
$\dim \aut(Y) =  \dim Y + \dim \mathfrak{aut}(\hat{\mathcal{C}_y})$.  Then $Y$ is an equivariant compactification of $\mathbb{G}_a^d$.
\end{prop}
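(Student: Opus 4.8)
The plan is to prove all three parts through one common device and then handle the parts' differences. If the VMRT structure $\mathcal{C}$ on $Y$ is locally flat, then around a general point $y$ there are holomorphic coordinates $x_1,\dots,x_d$ on an analytic neighbourhood $U$ in which $\mathcal{C}|_U$ is the constant family $U\times S$; the coordinate fields $\partial/\partial x_1,\dots,\partial/\partial x_d$ then commute and preserve $\mathcal{C}$, so their germs at $y$ lie in $\mathfrak{aut}(\mathcal{C},y)$. Suppose these germs extend to \emph{global} holomorphic vector fields $v_1,\dots,v_d$ on $Y$. Being vector fields on a projective variety they are complete; they commute globally (the bracket $[v_i,v_j]$ is a global field vanishing on $U$, hence on $Y$); and they are $\mathbb{C}$-linearly independent at $y$. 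Integrating, we obtain a homomorphism $\mathbb{G}_a^d\to{\rm Aut}^0(Y)$, $(t_i)\mapsto\exp(\sum_i t_iv_i)$, whose image $A$ — acting near $y$ by the translations of $\mathbb{C}^d$ — is a vector group of dimension $d$, so $A\cong\mathbb{G}_a^d$ (cf.\ \cite{HT}). Since $v_1(y),\dots,v_d(y)$ span $T_yY$, the orbit $A\cdot y$ is open, and the stabiliser of $y$, a finite subgroup of the torsion-free group $\mathbb{G}_a^d$, is trivial; thus $Y$ has an open $\mathbb{G}_a^d$-orbit isomorphic to $\mathbb{G}_a^d$, i.e.\ $Y$ is an equivariant compactification of $\mathbb{G}_a^d$.

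Part (i) is this device run in reverse. The given action embeds $\mathbb{G}_a^d$ in ${\rm Aut}^0(Y)$, so its Lie algebra gives commuting global fields $v_1,\dots,v_d$, linearly independent along the open orbit $O\cong\mathbb{G}_a^d$, preserving $\mathcal{C}$ because ${\rm Aut}^0(Y)$ preserves the VMRT structure. In coordinates on $O\cong\mathbb{C}^d$ in which $v_i=\partial/\partial x_i$, the trivialisation $\mathbb{P}TO\simeq O\times\mathbb{P}^{d-1}$ carries $\mathcal{C}|_O$ to $O\times S$ by invariance under the flows of the $v_i$, where $S$ is the image of $\mathcal{C}_{x_0}$ for a fixed $x_0\in O$; restricting to a ball about a general point of $O$ gives the local flatness of $\mathcal{C}$.

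For part (iii), the extension of the germs $\partial/\partial x_i$ is automatic. By hypothesis~(b) and Proposition~\ref{p.prolong}, $\dim\mathfrak{aut}(\mathcal{C},y)\le\dim Y+\dim\mathfrak{aut}(\hat{\mathcal{C}_y})$; combined with the natural injection $\mathfrak{aut}(Y)\hookrightarrow\mathfrak{aut}(\mathcal{C},y)$ (germ at $y$; injective since $Y$ is connected) and hypothesis~(c), this forces equality throughout, so $\mathfrak{aut}(Y)=\mathfrak{aut}(\mathcal{C},y)$ and, by the equality clause of Proposition~\ref{p.prolong}, $\mathcal{C}$ is locally flat. Hence the germs at $y$ of the flat coordinate fields lie in $\mathfrak{aut}(\mathcal{C},y)=\mathfrak{aut}(Y)$, so extend globally, and the device above applies. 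For part (ii), local flatness is given but the dimension count is not; instead the extension of the germs $\partial/\partial x_i$ to global vector fields is supplied by the infinitesimal Cartan--Fubini type extension theorem for Fano manifolds of Picard number one with non-degenerate VMRT (Hwang--Mok; see \cite{FH}), after which the device applies. Alternatively one may invoke the recognition of such a $Y$ as the standard flat model attached to $S=\mathcal{C}_y$ (which exists since $\mathfrak{aut}(\hat{S})^{(2)}=0$), whose open cell is the vector group $\exp(T_yY)\cong\mathbb{G}_a^d$.

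The main obstacle is the global extension of the local symmetry fields in part~(ii): for (iii) it drops out of the numerical hypotheses via Proposition~\ref{p.prolong}, but for (ii) it genuinely rests on the extension theory for Picard-number-one Fanos with non-degenerate VMRT. The remaining ingredients — local flatness of the VMRT structure of an additive compactification, and the passage from a commuting frame of complete vector fields to a $\mathbb{G}_a^d$-structure with open orbit $\cong\mathbb{G}_a^d$ — are routine. One should also note that Proposition~\ref{p.prolong} is stated for Fano varieties, so in applying (iii) one uses that the variety in question is Fano.
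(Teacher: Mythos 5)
Parts (i) and the dimension-count half of (iii) are fine and agree with the paper: the $\mathbb{G}_a^d$-action trivializes $\mcC$ over the open orbit, and hypotheses (b), (c) of (iii) together with Proposition \ref{p.prolong} and the injection $\aut(Y)\hookrightarrow\aut(\mcC,y)$ force equality, hence local flatness and $\aut(Y)=\aut(\mcC,y)$. The genuine gap is in your ``common device''. From commuting global vector fields $v_1,\dots,v_d$ that restrict to a flat coordinate frame near $y$ you only get a \emph{holomorphic} homomorphism $(\mathbb{C}^d,+)\to\Aut^0(Y)$; its image need not be a vector group, and the stabilizer of $y$ is only a \emph{discrete} subgroup $\Gamma\subset\mathbb{C}^d$, not a finite one, so torsion-freeness gives nothing: the orbit is $\mathbb{C}^d/\Gamma$ and can be $(\mathbb{C}^*)^k\times\mathbb{C}^{d-k}$. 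Concretely, on $\mathbb{P}^1$ the germ of $z\partial_z$ at $z=1$ is a straightened coordinate field $\partial/\partial w$ (with $w=\log z$), yet its flow orbit is $\mathbb{C}^*$ with stabilizer $2\pi i\mathbb{Z}$; similarly on a toric variety the commuting fields $z_i\partial_{z_i}$ are independent at a general point but integrate to a torus, not to $\mathbb{G}_a^d$. So ``commuting frame of complete fields $\Rightarrow$ open orbit $\cong\mathbb{G}_a^d$'' is false as stated, and this is exactly the step you declare routine.

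The paper closes this gap by algebraicity, not by integration alone: from $\aut(Y)=\aut(\mcC,y)=\mathbb{C}^d\oplus\aut(\hat{\mcC}_y)$ (\cite[Proposition 5.14]{FH}, using $\aut(\hat{\mcC}_y)^{(1)}=0$), the summand $\mathbb{C}^d$ is an abelian ideal acting nilpotently in the adjoint representation; looking at $\Aut(Y)\to{\rm GL}(\aut(Y))$ one sees the adjoint group of $\Aut^0(Y)$ is $\mathbb{G}_a^d\rtimes\Aut^0(\hat{\mcC}_y)$, so the connected subgroup with Lie algebra $\mathbb{C}^d$ is an \emph{algebraic unipotent} subgroup isomorphic to $\mathbb{G}_a^d$. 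For an algebraic $\mathbb{G}_a^d$-action the isotropy group at $y$ is a zero-dimensional algebraic, hence finite, subgroup of a unipotent group in characteristic zero, hence trivial, and only then is the open orbit isomorphic to $\mathbb{C}^d$. Your argument needs this (or an equivalent algebraization step) inserted; the purely analytic flow argument does not suffice. For (ii) the paper simply cites \cite[Proposition 6.13]{FH}; your alternative route via Cartan--Fubini extension plus the device inherits the same gap, while invoking that proposition directly is fine.
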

\begin{proof}
(i) Let $O \subset Y$ be the open $\mathbb{G}_a^d$-orbit, which is isomorphic to $\mathbb{C}^d$. As the $\mathbb{G}_a^d$-action preserves the VMRT structure $\mcC \subset \BP TY$, its action on $O$ trivializes $\mcC|_O$ as a trivial subbundle of
$\BP TY|_O \simeq O \times \BP^{d-1}$, hence the VMRT structure is locally flat.

(ii) This follows from \cite[Proposition 6.13]{FH}.

(iii) As ${\rm Aut}^0(Y)$ preserves the VMRT structure on $Y$, we have $\aut(Y) \subset \aut(\mcC, y)$.  The assumptions (b) and (c) imply that the equality in Proposition \ref{p.prolong} holds, hence $Y$ has locally flat VMRT structure and $\aut(Y) = \aut(\mcC, y)$.
By \cite[Proposition 5.14]{FH}, we have $\aut(Y)=\aut(\mcC, y) = \mathbb{C}^d \rtimes \mathfrak{aut}(\hat{\mathcal{C}_y})$. By considering the natural representation $\Aut(Y)\to {\rm GL}(H^0(Y, T_Y))={\rm GL}(\aut(Y))$, we know that the adjoint group of $\Aut^0(Y)$ is $\mathbb{G}_a^d\rtimes \Aut^0(\hat{\mathcal{C}_y})$. It gives rise to a subgroup $\mathbb{G}_a^d$ of $\Aut^0(Y)$, and the orbit $\mathbb{G}_a^d \cdot y$ is isomorphic to $\mathbb{C}^d$, which is open (and hence dense) in $Y$ by dimension reason.
\end{proof}

\subsection{Rigidity for general cases}
We start with the following result, whose proof is similar to \cite[Proposition 3.5]{Park}.
\begin{lem}\label{l.IsoFlat}
Assume $\fg$ is not of type $C$.  Let $\pi: \mcX\rightarrow\Delta\ni 0$ be a  regular family of Fano varieties such that $\mcX_t  \cong X$ for $t \neq 0$, where $X$ is the wonderful compactification of the simple algebraic group $G$ of adjoint type. Then either $\mcX_0 \cong X$ or  the VMRT-structure on $\mcX_0$ is locally flat.
\end{lem}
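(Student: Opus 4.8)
The plan is to run the cohomological argument of \cite{Park}. Its crux is that $h^1(\mcX_0,T_{\mcX_0})$ is forced to equal $0$ or $1$: in the first case $\mcX_0$ is locally rigid, so after shrinking $\Delta$ the family $\pi$ is analytically trivial and $\mcX_0\cong\mcX_t\cong X$; in the second case the prolongation bound of Proposition \ref{p.prolong}, applied to $\mcX_0$, is forced to be an equality, which by that proposition means precisely that the VMRT-structure on $\mcX_0$ is locally flat.

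First I would express $h^1(\mcX_0,T_{\mcX_0})$ through $h^0$. Write $d=\dim X=\dim G$. Each fibre $\mcX_t$ is Fano, so from $T_{\mcX_t}\cong\Omega^{d-1}_{\mcX_t}\otimes(-K_{\mcX_t})$ and Nakano vanishing one gets $H^i(\mcX_t,T_{\mcX_t})=0$ for all $i\ge 2$ and all $t\in\Delta$. For $t\ne 0$ there is in addition $H^1(\mcX_t,T_{\mcX_t})=H^1(X,T_X)=0$ by \cite{BB96}, and $h^0(\mcX_t,T_{\mcX_t})=\dim(G\times G)=2\dim G$ since $\Aut^0(X)=G\times G$; hence $\chi(\mcX_t,T_{\mcX_t})=2\dim G$. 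As $\chi$ is constant in the flat family, $\chi(\mcX_0,T_{\mcX_0})=2\dim G$, and with the vanishing for $i\ge 2$ this yields
$$
h^1(\mcX_0,T_{\mcX_0})=h^0(\mcX_0,T_{\mcX_0})-2\dim G .
$$

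Next I would bound $h^0(\mcX_0,T_{\mcX_0})$ via the VMRT. By Theorem \ref{t.invVMRT}, the VMRT $\mcC_x\subset\BP T_x\mcX_0$ at a general point $x$, formed with respect to the minimal rational family $\sK^0$ of Proposition \ref{p.MRC}, is projectively equivalent to the VMRT $\mcC$ of $X$ at a general point; for $\fg$ not of type $C$ this $\mcC\subset\BP\fg$ is smooth, irreducible and linearly non-degenerate --- it equals $\BP\msO$ when $\fg$ is not of type $A$, and it is the image of the Segre embedding $\BP V\times\BP V^\ast\hookrightarrow\BP\End(V)$ followed by the projection from $[\id]$ when $\fg=\fsl(V)$. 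Thus $\mcX_0$ satisfies the hypotheses of Proposition \ref{p.prolong}, and since $\Aut^0(\mcX_0)$ preserves $\sK^0$, hence the VMRT-structure, $\aut(\mcX_0)\subseteq\aut(\mcC,x)$, so
$$
h^0(\mcX_0,T_{\mcX_0})\ \le\ \dim\mcX_0+\dim\aut(\hat{\mcC_x})+\dim\aut(\hat{\mcC_x})^{(1)} ,
$$
with equality here implying that the inequality of Proposition \ref{p.prolong} is an equality, hence that the VMRT-structure on $\mcX_0$ is locally flat. The arithmetic input needed is that, for the VMRT $\mcC$ of $X$ with $\fg$ not of type $C$, one has $\aut(\hat{\mcC})=\fg\oplus\C\,\id$ (the adjoint action together with the Euler field), of dimension $\dim\fg+1$, and $\aut(\hat{\mcC})^{(1)}=0$, both in the $\BP\msO$ case and in the projected-Segre case. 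Granting this, $h^0(\mcX_0,T_{\mcX_0})\le\dim G+\dim\fg+1=2\dim G+1$.

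Combining the two displays forces $h^1(\mcX_0,T_{\mcX_0})\in\{0,1\}$, and the two alternatives in the statement correspond to the two values, as explained above. The step I expect to be the main obstacle is the arithmetic input of the previous paragraph, especially the vanishing $\aut(\hat{\mcC})^{(1)}=0$ when $\fg$ is of type $A$, where $\mcC$ is only a linear projection of the Segre variety rather than a homogeneous projective subvariety and its infinitesimal linear automorphisms and first prolongation have to be worked out directly. This is also precisely where the hypothesis $\fg\ne C_n$ is used: the VMRT of $\bar C_n$ is the quadratic Veronese $v_2(\BP^{2n-1})\subset\BP\fg$, whose affine cone has infinitesimal automorphism algebra $\fgl_{2n}$ and nonzero first prolongation $\cong T^*_o(C_n/P_n)$ (Example \ref{e.IHSS}), so the bound $2\dim G+1$ fails there and type $C$ requires a separate treatment.
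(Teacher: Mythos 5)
Your proposal is correct and takes essentially the same route as the paper's proof: Nakano vanishing together with the invariance of $\chi(T)$ and $H^1(X,T_X)=0$ gives $h^1(\mcX_0,T_{\mcX_0})=h^0(\mcX_0,T_{\mcX_0})-2\dim\fg$, and then Theorem \ref{t.invVMRT} plus the prolongation bound of Proposition \ref{p.prolong} forces $h^1\in\{0,1\}$, with the two values yielding the two alternatives. The one input you flag as the main obstacle, namely $\aut(\hat{\mcC_x})\simeq\fg\oplus\C$ and $\aut(\hat{\mcC_x})^{(1)}=0$ for $\fg$ not of type $C$ (including the projected Segre of type $A$), is exactly what the paper imports from the proof of Proposition 6.1 in \cite{BF15}, so no new computation is required at that step.
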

\begin{proof}
As $\mcX_t$ is Fano for all $t \in \Delta$, 
 we have  $H^q(\mcX_t, T_{\mcX_t})=0$ for all $q \geq 2$ and $t \in \Delta$ by the Akizuki-Nakano vanishing theorem (\cite{AN}). As a consequence, we have that $\chi(\mcX_t, T_{\mcX_t}) = h^0(\mcX_t, T_{\mcX_t}) - h^1(\mcX_t, T_{\mcX_t})$ for all $t \in \Delta$.   Since $\chi(\mcX_t, T_{\mcX_t})$ is invariant under deformations, and $H^1(X, T_X)=0$ by \cite[Proposition 4.2]{BB96}, we have
$$
h^0(\mcX_0, T_{\mcX_0}) - h^1(\mcX_0, T_{\mcX_0}) = h^0(X, T_X) - h^1(X, T_X) = h^0(X, T_X) = 2 \dim \fg.
$$

This gives that $h^1(\mcX_0, T_{\mcX_0}) = h^0(\mcX_0, T_{\mcX_0}) - 2 \dim \fg = \dim \mathfrak{aut}(\mcX_0) - 2 \dim \fg.$
Note that by Theorem \ref{t.invVMRT} (with the case of $B_3$ to be proved in Section 6), the VMRT $\mathcal{C}_x \subset \BP T_x \mcX_0$ is projectively equivalent to the VMRT of $X$ at a general point.  If $\fg$ is not of type $C$, then  $\mathfrak{aut}(\hat{\mathcal{C}_x}) \simeq \fg \oplus \mathbb{C}$ and $\mathfrak{aut}(\hat{\mathcal{C}_x})^{(1)}=0$ by \cite[Proof of Proposition 6.1]{BF15}.
Then we know from Proposition \ref{p.prolong} that $$\dim \mathfrak{aut}(\mcX_0) \leq \dim \fg + \dim \mathfrak{aut}(\hat{\mathcal{C}_x})+\dim\mathfrak{aut}(\hat{\mathcal{C}_x})^{(1)} = 2 \dim \fg +1.$$

We deduce that $h^1(\mcX_0, T_{\mcX_0}) \leq 1$.  If $h^1(\mcX_0, T_{\mcX_0})=0$, then $\mcX_0$ is locally rigid, hence it is isomorphic to $\mcX_t$ for $t$ small enough, namely $\mcX_0 \simeq X$.  If $h^1(\mcX_0, T_{\mcX_0})=1$, then the equality in Proposition \ref{p.prolong} holds, which implies that the VMRT-structure on $\mcX_0$ is locally flat.
\end{proof}
\begin{rmk} \label{r.Fanoness}
(i) When $G$ is of type $C_n$,  the VMRT of $\bar{G}$ is isomorphic to the second Veronese embedding of $\mbP^{2n-1}$, hence
$\mathfrak{aut}(\hat{\mathcal{C}_x}) \simeq \mathfrak{sl}_{2n} \oplus \mathbb{C}$ which is much bigger than $\mathfrak{sp}_{2n} \oplus \mathbb{C}$. This is why the type $C$ case must be treated separately.

(ii) In the precedent proof, it is crucial that $\mcX_t$ is  Fano for all $t$. This is another reason (apart from Theorem \ref{t.InvCones}) that we only deal with rigidity under Fano deformation in this paper. 
\end{rmk}

By applying Proposition \ref{p.SECFlat}, we have the following

\begin{cor}\label{c.equiv}
 In the setting of Lemma \ref{l.IsoFlat}, if $\mcX_0$ is not isomorphic to $X$, then $\mathfrak{aut}(\mcX_0) \simeq \mathbb{C}^g\rtimes (\fg \oplus \mathbb{C})$ and $\mcX_0$ is an equivariant compactification of $\mathbb{G}_a^g$, where $g=\dim \fg$.
\end{cor}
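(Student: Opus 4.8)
The plan is to deduce the corollary directly from Lemma~\ref{l.IsoFlat}, Theorem~\ref{t.invVMRT} and Proposition~\ref{p.SECFlat}(iii), by verifying the hypotheses of the latter for $Y=\mcX_0$ and $d=g:=\dim\fg$. Since $\mcX_0$ is assumed not isomorphic to $X$, the proof of Lemma~\ref{l.IsoFlat} forces $h^1(\mcX_0,T_{\mcX_0})=1$ (it shows $h^1\leq 1$, with $h^1=0$ implying $\mcX_0\simeq X$); combined with the deformation-invariant identity $h^0(\mcX_0,T_{\mcX_0})-h^1(\mcX_0,T_{\mcX_0})=2\dim\fg$ this gives $\dim\mathfrak{aut}(\mcX_0)=h^0(\mcX_0,T_{\mcX_0})=2\dim\fg+1$. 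As $\dim\mcX_0=\dim X=\dim\fg=g$, this reads $\dim\mathfrak{aut}(\mcX_0)=\dim\mcX_0+(\dim\fg+1)$, which is exactly hypothesis~(c) of Proposition~\ref{p.SECFlat}(iii) once we pin down $\dim\mathfrak{aut}(\hat{\mcC_x})$.

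Next I would invoke Theorem~\ref{t.invVMRT}: for $x\in\mcX_0$ general the VMRT $\mcC_x\subset\BP T_x\mcX_0$ (with respect to the family $\sK^0$ of Proposition~\ref{p.MRC}) is projectively equivalent to the VMRT of $X$ at a general point. For $\fg$ not of type $C$ this variety is smooth, irreducible and linearly non-degenerate: when $\fg$ is not of type $A$ it is the homogeneous variety $\BP\msO\subset\BP\fg$, which is non-degenerate because $\msO$ spans the irreducible adjoint representation $\fg$; when $\fg$ is of type $A$ it is the projection of the Segre variety $\BP V\times\BP V^*\subset\BP\End(V)$ from $[\id]$, which lies off the secant variety $Z_2$ of $Z_1$ (as $\id$ has rank $n+1\geq 3$, using Proposition~\ref{p.typeAComp}(1)), so the projection restricts to an isomorphism onto a non-degenerate subvariety of $\BP\fg$. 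Moreover, as already used in the proof of Lemma~\ref{l.IsoFlat}, one has $\mathfrak{aut}(\hat{\mcC_x})\simeq\fg\oplus\C$ and $\mathfrak{aut}(\hat{\mcC_x})^{(1)}=0$ by \cite[Proof of Proposition 6.1]{BF15}; in particular $\dim\mathfrak{aut}(\hat{\mcC_x})=\dim\fg+1$, so hypotheses~(a), (b) and (c) of Proposition~\ref{p.SECFlat}(iii) all hold, and that proposition yields that $\mcX_0$ is an equivariant compactification of $\mathbb{G}_a^g$.

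Finally, the precise Lie-algebra description follows by re-reading the proof of Proposition~\ref{p.SECFlat}(iii): under these hypotheses the equality in Proposition~\ref{p.prolong} is attained, hence $\mathfrak{aut}(\mcX_0)=\mathfrak{aut}(\mcC,x)$, and by \cite[Proposition 5.14]{FH} this Lie algebra equals $\mathbb{C}^g\sd\mathfrak{aut}(\hat{\mcC_x})=\mathbb{C}^g\sd(\fg\oplus\C)$, where $\mathbb{C}^g$ is the Lie algebra of the open $\mathbb{G}_a^g$-orbit. I do not expect a genuine obstacle here: the corollary is essentially a bookkeeping assembly of earlier results. The only point requiring a moment of care is the smoothness and non-degeneracy of the VMRT of $X$ in the type $A$ case, which, as indicated above, reduces to the observation that the projection center $[\id]$ lies off the secant variety of the Segre variety since $\id$ has full rank.
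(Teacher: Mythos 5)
Your proposal is correct and follows essentially the same route as the paper, which deduces the corollary by combining Lemma \ref{l.IsoFlat} (forcing $h^1(\mcX_0,T_{\mcX_0})=1$, hence $\dim\mathfrak{aut}(\mcX_0)=2\dim\fg+1$) with Theorem \ref{t.invVMRT}, the data $\mathfrak{aut}(\hat{\mcC_x})\simeq\fg\oplus\C$, $\mathfrak{aut}(\hat{\mcC_x})^{(1)}=0$ from \cite{BF15}, and Proposition \ref{p.SECFlat}(iii) together with \cite[Proposition 5.14]{FH} for the semidirect-product description. Your extra care about smoothness and non-degeneracy of the projected Segre in type $A$ is a welcome but unproblematic detail already implicit in the paper's use of Proposition \ref{p.prolong}.
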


\begin{thm}\label{t.rigidity}
  Let $\pi: \mcX\rightarrow\Delta\ni 0$ be a  regular family of Fano varieties such that $\mcX_t  \cong X$ for $t \neq 0$, where $X$ is the wonderful compactification of a simple algebraic group $G$ of adjoint type.  Assume that $\fg$ is different from the following:  $C_n, G_2, F_4, E_8$.
  Then $\mcX_0 \cong X$.
\end{thm}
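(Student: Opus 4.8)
The strategy is to suppose $\mcX_0 \not\cong X$ and derive a contradiction of a purely combinatorial nature. Since $\fg$ is not of type $C$, Corollary~\ref{c.equiv} applies and tells us that $\mcX_0$ is a smooth equivariant compactification of the vector group $\mathbb{G}_a^g$, with $g = \dim \fg$; let $E_1,\dots,E_l$ be its prime boundary divisors. The first step I would take is to feed this into Proposition~\ref{p.SEC_HT}(i): the classes $[E_1],\dots,[E_l]$ then form a $\mathbb{Z}$-basis of $\Pic(\mcX_0)$ and ${\rm PEff}(\mcX_0) = \bigoplus_{i=1}^l \mathbb{R}_{\geq 0}[E_i]$. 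In particular $l$ equals the Picard number, the pseudo-effective cone of $\mcX_0$ is (full-dimensional) simplicial, and each $[E_i]$ is the primitive lattice point on the extremal ray it spans.

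Next I would transport this along the canonical isomorphism $\Pic(X) \xrightarrow{\ \sim\ } \Pic(\mcX_0)$ of Theorem~\ref{t.InvCones}(i), which by Theorem~\ref{t.InvCones}(ii) identifies ${\rm PEff}(X)$ with ${\rm PEff}(\mcX_0)$. Consequently ${\rm PEff}(X)$ is a full-dimensional simplicial cone whose primitive extremal generators constitute a $\mathbb{Z}$-basis of $\Pic(X)$. On the other hand, $\Pic(X)$ is canonically the weight lattice $\Lambda$ of $G$ with $[D_i]$ corresponding to $\alpha_i$ (since $\sO_X(D_i)=\sL_X(\alpha_i)$), and by Example~\ref{e.cones} the extremal rays of ${\rm PEff}(X)$ are precisely $\mathbb{R}_{\geq 0}\alpha_1,\dots,\mathbb{R}_{\geq 0}\alpha_n$. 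A full-dimensional simplicial cone determines its set of extremal rays, hence their primitive generators; matching the two descriptions, these primitive generators are exactly the primitive lattice multiples $\beta_i$ of the simple roots $\alpha_i$ inside $\Lambda$, and $\{\beta_1,\dots,\beta_n\}$ is a $\mathbb{Z}$-basis of $\Lambda$. Since in fact every simple root $\alpha_i$ is primitive in $\Lambda$ for the types occurring here (the corresponding column of the Cartan matrix has content $1$), we get $\beta_i=\alpha_i$, so $\{\alpha_1,\dots,\alpha_n\}$ is a $\mathbb{Z}$-basis of $\Lambda$, i.e.\ the root lattice $Q$ of $G$ equals $\Lambda$.

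Finally I would invoke the classification of centers of simple groups: $[\Lambda:Q]$ is the order of the center of the simply connected cover of $G$, and it is trivial exactly for the simple types $G_2$, $F_4$ and $E_8$. As $\fg$ is, by hypothesis, none of $C_n$, $G_2$, $F_4$, $E_8$, this is a contradiction, and hence $\mcX_0 \cong X$.

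I do not expect a genuine obstacle in this argument: the heavy lifting (invariance of the VMRT via Theorem~\ref{t.invVMRT}, the prolongation bound of Proposition~\ref{p.prolong}, and the resulting alternative packaged in Corollary~\ref{c.equiv}) has already been done, and what remains is the bookkeeping above together with the invariance of pseudo-effective cones from Theorem~\ref{t.InvCones}. The only delicate point worth double-checking is the primitivity of each simple root inside the weight lattice: this does fail for $B_2$, but $B_2\cong C_2$ is excluded, and it holds for $A_n$, $B_n$ with $n\ge 3$, $D_n$, $E_6$ and $E_7$, which covers all the remaining cases.
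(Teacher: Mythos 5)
Your proposal is correct and follows essentially the same route as the paper: reduce to the equivariant-compactification alternative via Corollary \ref{c.equiv}, use Proposition \ref{p.SEC_HT} plus the invariance of Picard groups and pseudo-effective cones (Theorem \ref{t.InvCones}, Example \ref{e.cones}) to conclude that the simple roots would form a $\mathbb{Z}$-basis of the weight lattice, and contradict this with the classification of types with trivial weight/root index. The only difference is cosmetic: the paper matches extremal generators by restricting relative divisors $\mathcal{D}_i$ to the central fiber and forcing the coefficients $a_i=1$ by primitivity, whereas you match primitive generators of the extremal rays abstractly — the same primitivity fact (valid outside type $C$) does the work in both versions.
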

\begin{proof}
Let $n$ be the rank of $G$, which is assumed to be at least 2.
By Corollary \ref{c.equiv}, we may assume that $\mcX_0$ is an equivariant compactification of the vector group $\mathbb{G}_a^g$.
 Its boundary is given by $\partial \mcX_0 = \cup_{i=1}^n E_i$.  By Proposition \ref{p.SEC_HT},  the Picard group of $\mcX_0$ is freely generated by $E_1, \cdots, E_n$  and the pseudo-effective cone ${\rm PEff}(\mcX_0)$ is given by $\oplus_{i} \mathbb{R}_{\geq 0} E_i$.  By Example \ref{e.cones}, ${\rm PEff}(X)$ is generated by the boundary divisors $D_i$.  By Theorem \ref{t.InvCones}, the pseudo-effective cones are invariant, hence ${\rm PEff}(\mcX_0) = {\rm PEff}(X).$

Let $\mathcal{D}_i \in {\rm Pic}(\mcX/\Delta) \simeq {\rm Pic}(\mcX_t)$ be the divisor such that $\mathcal{D}_i|_{\mcX_t}= D_i$ for $t \neq 0$.  As $D_i$ and $E_i$ are in the extremal rays of the pseudo-effective cones, we have  $\mathcal{D}_i|_{\mcX_0} = a_i E_i$ for some $a_i \in \mathbb{N}$, up to re-ordering $E_i$. On the other hand,  when $\fg$ is not of type $C_n$,  $\alpha_i$ is primitive in the weight lattice.  The latter is identified with $\Pic(X)$ by Example \ref{e.cones},  hence the divisor $D_i = \sL_X(\alpha_i) \in {\rm Pic}(X)$ is primitive.  This forces $a_i=1$ for all $i$.

As ${\rm Pic}(\mcX_0) = \oplus_{i=1}^n \mathbb{Z} E_i$, through the isomorphisms ${\rm Pic}(\mathcal{X}_0)\simeq {\rm Pic}(\mathcal{X}/\Delta) \simeq {\rm Pic}(X)$ (cf. Theorem \ref{t.InvCones}), we have  ${\rm Pic}(\mathcal{X}/\Delta) = \oplus_{i=1}^n \mathbb{Z} \mathcal{D}_i$ and ${\rm Pic}(X) = \oplus_{i=1}^n \mathbb{Z} D_i$. Hence
${\rm Pic}(X)$ is the same as the root lattice of $G$.
This shows that the weight lattice equals to the root lattice for $G$, which is true if and only if $G$ is of type $G_2, F_4$ or $E_8$.
Hence if $G$ is not $G_2, F_4$ or $E_8$, then this leads to a contradiction, which concludes the proof.
\end{proof}

\subsection{Families of quasi-homogeneous submanifolds} \label{s.limit}

Fix a simple adjoint linear algebraic group $G$ of dimension $g$ and of rank $n$, which is assumed to be  different from $C_n$.
Let $\pi: \mcX \to \Delta$ be a  regular family of Fano manifolds such that $\mcX_t \simeq \bar{G}$ for all $t \neq 0$.  We assume that $\mcX_0$ is not isomorphic to $\bar{G}$.  By Corollary \ref{c.equiv}, $\mcX_0$ is an equivariant compactification of the vector group $\mathbb{G}_a^g$ and
$\mathfrak{aut}(\mcX_0) \simeq \mathbb{C}^g\rtimes (\fg \oplus \mathbb{C})$.

Fix a section $\sigma: \Delta \to \mcX$ such that $x_t:=\sigma(t)$ is a general point in $\mcX_t$ for all $t$.
For $t\in \Delta$, we define $j_t^0: \mathfrak{aut}(\mcX_t) = H^0(\mcX_t, T_{\mcX_t}) \to T_{x_t}\mcX_t$ by sending a global vector field on $\mcX_t$ to its value at the point $x_t$.  Define the jet map  $j_t^1: {\rm Ker}(j_t^0) \to \mathfrak{gl}(T_{x_t} \mcX_t)$ to be the isotropic representation.   The following is immediate from the construction and from our previous discussions.
\begin{lem} \label{l.jet}
(i) For all $t$, the map $\varphi_t: \mathfrak{aut}(\mcX_t) \to \mathfrak{aut}(\mcC_t, x_t)$ is an isomorphism of Lie algebras, where $\mcC_t$ is the VMRT structure on $\mcX_t$.

(ii) For all $t$, the map $j_t^0$ is surjective and the map $j_t^1$ is injective.

(iii)  For $t \neq 0$, we have ${\rm Aut}^0(\mcX_t) \simeq G \times G$, $\mathfrak{aut}(\mcX_t) \simeq \fg \oplus \fg$  and ${\rm Ker}(j_t^0) \simeq {\rm diag}(\fg)$.  The map $j_t^1$ is the adjoint representation of $\fg$.

(iv) For $t=0$, we have ${\rm Aut}^0(\mcX_0) \simeq \mathbb{G}_a^g \rtimes (G \times \mathbb{C}^*)$, $\mathfrak{aut}(\mcX_0) \simeq \mathbb{C}^g \oplus (\fg \oplus \mathbb{C})$  and ${\rm Ker}(j_t^0) \simeq \fg \oplus \mathbb{C}$. The map $j_t^1$ is the adjoint representation on $\fg$ (by identifying $\C^g$ with $\fg$ as modules) and the dilation representation on $\mathbb{C}$.
\end{lem}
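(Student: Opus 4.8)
The plan is to verify each of the four parts by unwinding the constructions, using the results already established. First I would prove (i): by Example \ref{e.contact} and Theorem \ref{t.VMRTWonderful}, for $t\neq 0$ the VMRT of $\mcX_t\simeq\bar G$ at the general point $x_t$ is smooth, irreducible and non-degenerate (it is $\BP\msO$, or the projected Segre variety in type $A$; non-degeneracy in type $A$ follows since the projection point is general), and the same holds for $t=0$ by Theorem \ref{t.invVMRT}. The natural inclusion $\mathfrak{aut}(\mcX_t)\hookrightarrow\mathfrak{aut}(\mcC_t,x_t)$ was noted before Proposition \ref{p.prolong}. That this is onto is exactly the equality case content: for $\fg$ not of type $C$ we have $\mathfrak{aut}(\hat{\mcC}_{x_t})\simeq\fg\oplus\mathbb C$ and $\mathfrak{aut}(\hat{\mcC}_{x_t})^{(1)}=0$ by \cite[Proof of Proposition 6.1]{BF15}, so Proposition \ref{p.prolong} gives $\dim\mathfrak{aut}(\mcC_t,x_t)\leq\dim\mcX_t+\dim\mathfrak{aut}(\hat{\mcC}_{x_t})=2\dim\fg+1$; on the other hand $\dim\mathfrak{aut}(\mcX_t)=2\dim\fg$ for $t\neq 0$ (it is $\fg\oplus\fg$) and $=2\dim\fg+1$ for $t=0$ by Corollary \ref{c.equiv}. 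Thus for $t=0$ equality forces $\varphi_0$ to be an isomorphism, while for $t\neq 0$ one must argue the extra dimension is genuinely absent, i.e. $\mathfrak{aut}(\mcC_t,x_t)$ has dimension $2\dim\fg$ and not $2\dim\fg+1$ — this is the one point needing care, and it follows because $\mcX_t\simeq\bar G$ has $H^1(\mcX_t,T_{\mcX_t})=0$, so the VMRT structure on $\bar G$ is \emph{not} locally flat (it would be, by the equality case, if $\dim\mathfrak{aut}(\mcC_t,x_t)=2\dim\fg+1$); hence strict inequality holds and $\varphi_t$ is an isomorphism. Actually, more directly: $\mathfrak{aut}(\mcC_t,x_t)\supseteq\mathfrak{aut}(\mcX_t)=\fg\oplus\fg$, and if it were strictly larger it would be locally flat, contradicting $H^1(\bar G,T_{\bar G})=0$ via Kodaira–Spencer; so equality. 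In all cases the map is a Lie algebra homomorphism because it is induced by restriction of vector fields.

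Next, part (ii): surjectivity of $j_t^0$ says that $\mathfrak{aut}(\mcX_t)$ spans $T_{x_t}\mcX_t$ at the general point, which holds because $\mcX_t$ contains a dense orbit of $\Aut^0(\mcX_t)$ — for $t\neq 0$ this is the open $G\times G$-orbit in $\bar G$, and for $t=0$ it is the open $\mathbb G_a^g$-orbit by Corollary \ref{c.equiv}; in both cases $x_t$ is general, hence in the open orbit. Injectivity of $j_t^1$ on $\Ker(j_t^0)$ is the statement that a global vector field vanishing at $x_t$ to first order vanishes identically; this is the standard fact that $\Aut^0$ acts on $\mcX_t$ with the property that the isotropy at a general point of a quasi-homogeneous variety acts faithfully on the tangent space — for $t\neq 0$ it follows from Example \ref{e.IHSS2}-type reasoning (the isotropy of $\bar G$ at $e$, namely $\diag(G)$, acts on $T_e\bar G\simeq\fg$ via the adjoint representation, which is faithful since $G$ is adjoint), and for $t=0$ from the explicit description $\mathfrak{aut}(\mcX_0)\simeq\mathbb C^g\sd(\fg\oplus\mathbb C)$ in Corollary \ref{c.equiv}, where the isotropy subalgebra $\fg\oplus\mathbb C$ of a general point of the $\mathbb G_a^g$-orbit acts on $\C^g\simeq T_{x_0}\mcX_0$ by the adjoint representation of $\fg$ together with dilation by $\mathbb C$, which is again faithful.

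For parts (iii) and (iv) I would simply assemble the identifications. For (iii): $\Aut^0(\bar G)=G\times G$ is recalled in Section \ref{s.wonderful compact}; at the identity $e\in G\subset\bar G$ the stabilizer in $G\times G$ is $\diag(G)$, so $\Ker(j_t^0)\simeq\diag(\fg)$; and the isotropy action of $\diag(G)$ on $T_e\bar G$, which is identified with $\fg$ via the open orbit $G\subset\bar G$ (where $\diag(G)$ acts by conjugation), is exactly the adjoint representation — so $j_t^1=\ad$. For (iv): $\Aut^0(\mcX_0)\simeq\mathbb G_a^g\sd(G\times\mathbb C^*)$ and $\mathfrak{aut}(\mcX_0)\simeq\mathbb C^g\oplus(\fg\oplus\mathbb C)$ are from Corollary \ref{c.equiv} together with Proposition \ref{p.SECFlat}(iii), whose proof identifies $\Aut^0$ of a locally flat quasi-homogeneous variety as the affine group $\mathbb G_a^d\sd\Aut^0(\hat{\mcC}_y)$; here $\Aut^0(\hat{\mcC}_{x_0})$ has Lie algebra $\fg\oplus\mathbb C$ (the $\mathbb C$ being scalar homotheties, which preserve the cone $\hat{\mcC}_{x_0}$). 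A general point $x_0$ of the open $\mathbb G_a^g$-orbit has stabilizer subgroup with Lie algebra $\fg\oplus\mathbb C$ (after translating the general point to the origin by the vector group, which is what identifies $\C^g$ with $\fg$ as $\fg$-modules), so $\Ker(j_0^0)\simeq\fg\oplus\mathbb C$; the induced $j_0^1$ is then the linear action of $\fg\oplus\mathbb C\subset\mathfrak{gl}(\C^g)$, i.e. $\ad$ on the $\fg$ factor and dilation on the $\mathbb C$ factor. The main obstacle in the whole argument is the one flagged above in part (i) — pinning down that $\mathfrak{aut}(\mcC_t,x_t)$ has dimension exactly $2\dim\fg$ (not $2\dim\fg+1$) for $t\neq 0$, equivalently that $\bar G$ does not have locally flat VMRT structure — but this is forced by $H^1(\bar G,T_{\bar G})=0$ from \cite[Proposition 4.2]{BB96} combined with the equality criterion in Proposition \ref{p.prolong} and the computation of $\chi(T)$ as in Lemma \ref{l.IsoFlat}; everything else is bookkeeping.
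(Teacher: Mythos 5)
Your assembly of parts (ii)--(iv), and of part (i) at $t=0$, is essentially what the paper intends (it gives no proof at all, declaring the lemma ``immediate from the construction and from our previous discussions'', i.e.\ from Corollary \ref{c.equiv}, the identification $\Aut^0(\bar G)=G\times G$, and the explicit isotropy computations you carry out). The genuine gap is precisely the step you flag as the main obstacle: proving $\dim\mathfrak{aut}(\mcC_t,x_t)=2\dim\fg$ for $t\neq 0$, i.e.\ that the VMRT structure of $\bar G$ is not locally flat. Your justification --- ``if it were locally flat then $\dim\mathfrak{aut}(\mcC_t,x_t)=2\dim\fg+1$, contradicting $H^1(\bar G,T_{\bar G})=0$ via Kodaira--Spencer'' --- is a non sequitur. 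Elements of $\mathfrak{aut}(\mcC_t,x_t)$ are \emph{germs} of vector fields preserving the VMRT structure near $x_t$; they need not extend to global sections of $T_{\bar G}$, so local flatness gives no lower bound on $h^0(\bar G,T_{\bar G})$ and says nothing about $h^1(\bar G,T_{\bar G})$. Indeed the implication ``locally flat VMRT structure $\Rightarrow$ not locally rigid'' is false: by the paper's own discussion, $\bar C_n$ carries a locally flat VMRT structure and nevertheless has $H^1=0$ by \cite{BB96}. The argument in Lemma \ref{l.IsoFlat} only runs in the opposite direction ($h^0$ maximal $\Rightarrow$ equality in Proposition \ref{p.prolong} $\Rightarrow$ flat) and cannot be reversed, because $\mathfrak{aut}(\mcX_t)\subset\mathfrak{aut}(\mcC_t,x_t)$ may a priori be a proper inclusion --- which is exactly what you are trying to rule out.

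To close the gap, argue structurally rather than cohomologically: if the VMRT structure on $\mcX_t\simeq\bar G$ ($t\neq0$, $\fg$ not of type $C$) were locally flat, then by \cite[Proposition 5.14]{FH} (as invoked in the proof of Proposition \ref{p.SECFlat}(iii)) one would have $\mathfrak{aut}(\mcC_t,x_t)\simeq\C^g\rtimes(\fg\oplus\C)$, a Lie algebra whose solvable radical contains $\C^g$ and the dilation factor and whose Levi quotient is $\fg$. But $\mathfrak{aut}(\mcC_t,x_t)$ contains $\mathfrak{aut}(\mcX_t)\simeq\fg\oplus\fg$, a semisimple subalgebra of dimension $2\dim\fg$; any semisimple subalgebra meets the radical trivially, hence would inject into the Levi quotient $\fg$ of dimension $\dim\fg$ --- a contradiction. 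Therefore the structure is not locally flat, the inequality of Proposition \ref{p.prolong} is strict, and $\fg\oplus\fg\subseteq\mathfrak{aut}(\mcC_t,x_t)$ with $\dim\mathfrak{aut}(\mcC_t,x_t)\leq 2\dim\fg$ forces $\varphi_t$ to be an isomorphism. A minor further remark: the ``standard fact'' you invoke in (ii), that the isotropy algebra at a general point of a quasi-homogeneous variety acts faithfully on the tangent space, is false in general (for $\BP^n$ the nilradical $\fg_1$ of the isotropy acts trivially on $T_x\BP^n$); this is harmless here only because you immediately replace it by the correct case-by-case verifications ($\diag(\fg)$ acting on $T_e\bar G\simeq\fg$ by $\mathrm{ad}$, faithful since $\fg$ is simple and $G$ adjoint, and the affine isotropy $\fg\oplus\C\subset\mathfrak{gl}(\C^g)$ at $t=0$), which is what the paper relies on.
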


Let $\fg_{ad}$, $\fg_l$ and $\fg_r$ be the Lie algebras of $\diag(G)$, $G\times e$ and $e\times G$ respectively. In particular, the Lie algebra of $G\times G$ can be written as $\fg_l\oplus\fg_r$ as a $\fg_{ad}$-module.
Let $\mcV =\pi_* T_{\mcX/\Delta}$, which is a vector bundle over $\Delta$ such that $\mcV_t \simeq \mathfrak{aut}(\mcX_t)$ for $t\neq 0$.  Let $\mcW \subset \mcV$ be the subbundle such that $\mcW_t \simeq {\rm Ker}(j_t^0) \simeq \fg_{ad}$ for $t\neq 0$.
\begin{lem}\label{l.isotropic vector fields}
(i) The vector bundle $\mcW$ is isomorphic to the trivial bundle with fiber $\fg_{ad}$.

(ii) For all $t\in \Delta$, the fiber $\mcV_t$ is a $\fg_{ad}$-module, which is isomorphic to $\fg_l \oplus \fg_r$.
\end{lem}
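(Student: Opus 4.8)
The plan is to reduce both statements to one point: identifying the special fibre $\mcW_0$ as an abstract Lie algebra, namely showing $\mcW_0\cong\fg$. Once that is known, (i) falls out of the rigidity of semisimple Lie algebras and (ii) is a short representation-theoretic computation inside $\mathfrak{aut}(\mcX_0)$. First I would set up the bookkeeping around $t=0$. The fibrewise bracket of vertical vector fields makes $\mcV=\pi_*T_{\mcX/\Delta}$ a sheaf of $\mcO_\Delta$-Lie algebras, and $\mcW$ is a subsheaf closed under this bracket: this is clear over $\Delta^*$, where $\mcW_t\cong\mathrm{Ker}(j_t^0)\cong\mathrm{diag}(\fg)$ is a subalgebra of $\mcV_t\cong\fg\oplus\fg$, and it propagates to $t=0$ because $\mcW$ is a subbundle, so $\mcV/\mcW$ is torsion free and a section of $\mcV$ lying in $\mcW$ off the origin lies in $\mcW$ everywhere. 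Next, the natural map $\mcV_0\to H^0(\mcX_0,T_{\mcX_0})=\mathfrak{aut}(\mcX_0)$ is an injective Lie-algebra homomorphism: it is a homomorphism because restricting a vertical vector field to the fibre $\mcX_0$ commutes with the bracket, and it is injective because a local section of $\mcV$ vanishing identically on $\mcX_0$ is divisible by the coordinate on $\Delta$, hence dies in $\mcV_0$. Finally, writing $\mcW=\ker(\mathrm{ev}_\sigma\colon\mcV\to\sigma^*T_{\mcX/\Delta})$ and noting that $\mathrm{ev}_\sigma$ has locally free image on the disk, restriction of the exact sequence $0\to\mcW\to\mcV\to\mathrm{im}(\mathrm{ev}_\sigma)\to0$ to $t=0$ identifies $\mcW_0$, inside $\mathfrak{aut}(\mcX_0)$, with the space of vector fields in $\mcV_0$ vanishing at $x_0$; by Lemma \ref{l.jet}(iv) this puts $\mcW_0$ inside $\mathrm{Ker}(j_0^0)=\fg\oplus\mathbb{C}$, with $\fg$ the Levi factor and $\mathbb{C}$ central in this subalgebra.

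The heart of the matter, and the one step I expect to be genuinely delicate, is to deduce from this that $\mcW_0=\fg$. I would use the structural fact that a simple Lie algebra of rank $\geq 2$ has no proper subalgebra of codimension $\leq1$ (every proper subalgebra sits in a maximal one, and by the classification of maximal subalgebras these all have codimension $\geq2$; concretely, the smallest projective homogeneous space $G/P$ already has dimension $\geq2$). Let $p\colon\fg\oplus\mathbb{C}\to\fg$ be the projection. Since $\mcW_0$ is a $(\dim\fg)$-dimensional subalgebra, $p(\mcW_0)$ is a subalgebra of $\fg$ of dimension $\geq\dim\fg-1$, hence $p(\mcW_0)=\fg$; for dimension reasons $p|_{\mcW_0}$ is then an isomorphism, so $\mcW_0$ is the graph of a linear map $\fg\to\mathbb{C}$, which must vanish on $[\fg,\fg]=\fg$. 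Therefore $\mcW_0=\fg\oplus0$ is exactly the Levi factor of $\mathfrak{aut}(\mcX_0)$, and in particular $\mcW_0\cong\fg$ as Lie algebras.

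Now (i) follows: $\mcW$ is a bundle of Lie algebras over the Stein, contractible disk $\Delta$ with every fibre isomorphic to the rigid algebra $\fg$ (rigidity being Whitehead's $H^2(\fg,\fg)=0$), so by Grauert's Oka principle $\mcW\cong\fg_{ad}\otimes_{\mathbb{C}}\mcO_\Delta$ as sheaves of $\mcO_\Delta$-Lie algebras, and the adjoint action of $\mcW$ on $\mcV$ is the asserted $\fg_{ad}$-module structure. For (ii), the case $t\neq0$ is immediate from Lemma \ref{l.jet}(iii), since $\mathrm{diag}(\fg)$ acts on $\mathrm{Lie}(G\times G)$ by the adjoint action on each factor, i.e. as $\fg_l\oplus\fg_r$ by definition. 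For $t=0$, compatibility of the brackets shows that $\mcV_0$, viewed inside $\mathfrak{aut}(\mcX_0)$, is stable under $\mathrm{ad}_{\mcW_0}=\mathrm{ad}_\fg$, hence is a $\fg_{ad}$-submodule of codimension $1$; by Lemma \ref{l.jet}(iv), $\mathfrak{aut}(\mcX_0)\cong\fg_{ad}\oplus\fg_{ad}\oplus\mathbb{C}_{\mathrm{triv}}$ as $\fg_{ad}$-modules (identifying $\mathbb{C}^g$ with $\fg$, and using that $\mathbb{C}^*$ commutes with $G$), and since $\fg_{ad}$ is irreducible of dimension $\geq2$, complete reducibility together with Schur's lemma leaves $\fg_{ad}\oplus\fg_{ad}$ as the unique submodule of dimension $2\dim\fg$. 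Hence $\mcV_0\cong\fg_l\oplus\fg_r$ as $\fg_{ad}$-modules, which is (ii).
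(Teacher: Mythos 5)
Your proof is correct, but it reaches the conclusion by a genuinely different mechanism than the paper, chiefly in part (i) and in how the special fibre $\mcW_0$ is identified. The paper never passes through the abstract Lie-algebra structure of $\mcW_0$: it fixes a holomorphic family of isomorphisms $\xi_t: T_{x_t}\mcX_t\to\fg$ carrying $\hat{\mcC}_{x_t}$ to $\hat{\mcC}_e$, composes with the isotropy maps $j_t^1$ (injective by Lemma \ref{l.jet}(ii)), notes that for $t\neq 0$ the image of $\mcW_t$ is exactly the fixed subspace $\fg_{ad}\subset\mathfrak{gl}(\fg)$, and concludes by continuity plus a dimension count that the same holds at $t=0$; the composition $\zeta_t\circ j_t^1$ is then itself the trivialization. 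This buys an explicit frame of $\mcW$ compatible with the isotropy action on the VMRT-normalized tangent spaces, which is what gets reused later (e.g.\ in Propositions \ref{p.limit} and \ref{p.family of surfaces}). You instead pin down $\mcW_0$ inside $\mathrm{Ker}(j_0^0)\cong\fg\oplus\mathbb{C}$ by pure Lie theory --- a $g$-dimensional subalgebra must be the Levi factor, since a simple Lie algebra of rank $\geq 2$ has no subalgebra of codimension $\leq 1$ and $[\fg,\fg]=\fg$ kills any graph over the central $\mathbb{C}$ --- and then trivialize $\mcW$ as a bundle of Lie algebras via rigidity of $\fg$ and Grauert's Oka principle over the contractible Stein disk. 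This is softer (no choice of VMRT-compatible frames is needed, only Lemma \ref{l.jet}(iv)), at the price of a less explicit trivialization and an appeal to Oka-principle machinery; it is fully adequate for the statement as written. Your part (ii) is essentially the paper's own argument: complete reducibility of $\mathfrak{aut}(\mcX_0)$ as a module over the Levi and a dimension count showing the unique $2g$-dimensional submodule is the adjoint isotypic part $\fg_{ad}\oplus\fg_{ad}$. Two small remarks: in locating $\mcW_0$ you only ever need the inclusion $\mcW_0\subseteq\mathrm{Ker}(j_0^0)$ (the asserted equality with the fields of $\mcV_0$ vanishing at $x_0$ would require surjectivity of $\mathrm{ev}_\sigma$ at $t=0$, which you neither prove nor use); and your justification of the codimension fact only treats parabolic maximal subalgebras, so you should also dispose of the reductive case (immediate, since a reductive-in-$\fg$ subalgebra of codimension one would yield a one-dimensional ideal), or simply quote the classical fact that $\mathfrak{sl}_2$ is the only simple Lie algebra possessing a codimension-one subalgebra.
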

\begin{proof}
(i) Fix a holomorphic family of linear isomorphisms $\xi_t: T_{x_t} \mcX_t \to \fg$ sending $\hat{\mcC}_{x_t}$ to the variety $\hat{\mcC}_e$ constructed in Theorem \ref{t.VMRTWonderful}. By Lemma \ref{l.jet}, the composition ${\rm Ker}(j_t^0) \xrightarrow{j_t^1} \mathfrak{gl}(T_{x_t} \mcX_t) \xrightarrow{\zeta_t=(\xi_t)_*} \mathfrak{gl}(\fg)$ is injective for all $t\in \Delta$.  For $t \neq 0$, we have
$\zeta_t \circ j_t^1(\mcW_t) = \fg_{ad}$, hence by continuity, we have $\zeta_0 \circ j_0^1(\mcW_0) \subseteq \fg_{ad}$. As both sides have the same dimension, we have the equality.  Hence the map $\zeta_t \circ j_t^1$ gives the trivialization of the bundle $\mcW$.

(ii) For all $t\in \Delta$, $\mcW_t \simeq \fg_{ad}$ is a Lie subalgebra of $\mcV_t$.  For $t\neq 0$, $\mcV_t \simeq \fg \oplus \fg$ as $\fg_{ad}$-modules.
Note that $\mathfrak{aut}(\mcX_0)$ is completely reducible as a $\fg_{ad}$-module, with irreducible factors $\mathbb{C}^g, \fg_{ad}$ and $\mathbb{C}$.  By dimension reason, we have $\mcV_0 \simeq \mathbb{C}^g \rtimes \fg_{ad}$, which is isomorphic to $\fg_l \oplus \fg_r$ as $\fg_{ad}$-modules.
\end{proof}

Let $\tilde{G}$ be the simply-connected cover of $G$. By  Lemma \ref{l.jet}, ${\rm Ker}(j_t^0)$ contains $\fg$ for all $t$, which shows that
 $\tilde{G}$ acts on $\mcX$ fixing the section $\sigma(\Delta)$. As the center of $\tilde{G}$ acts trivially on $\mcX_t$ for $t\neq 0$, so is on $\mcX_0$ by continuity. Then we have an action of $G$ on $\mcX$ fixing the section $\sigma(\Delta)$. Let $T \subset G$ be a maximal torus and let $\mathfrak{h} \subset \fg$ be  its Lie algebra. Then $T$ acts on $\mcX/\Delta$.  Let $\mcY$ be the connected component of the fixed locus $\mcX^T$ which contains $\sigma(\Delta)$. Then the map $\mcY \to \Delta$ is a regular family of  projective varieties by Fogarty (\cite[Theorem 5.4]{F}, see also \cite[Theorem 13.1]{M}).

 For a Lie algebra $\fg$, we denote by $\mathbb{G}_a(\fg)$ the vector group associated to $\fg$ (viewed as a vector space). In particular, for a Lie subalgebra
 $\fh \subset \fg$, we have $\mathbb{G}_a(\fh) \subset \mathbb{G}_a(\fg)$.  The adjoint action of $T$ on $\mcY$ is trivial by construction.

\begin{prop} \label{p.limit}
(1) For $t \neq 0$, the subgroup $T\times T$ of $\Aut^0(\mcX_t)=G\times G$ stabilizes $\mcY_t$. The left $T$-action gives rise to an equivariant open embedding $T\simeq T\cdot x_t\subset\mcY_t$. Furthermore, $\mcY_t = \bar{T}$ is the toric variety with fan consisting of Weyl chambers of $G$ and their faces.

(2) For $t = 0$, the subgroup $\mbG_a(\mfh)\rtimes T$ of $\Aut^0(\mcX_0)=\mbG_a^g\rtimes (G\times\C^*)$ stabilizes $\mcY_0$. The $\mbG_a(\mfh)$-action gives rise to an equivariant open embedding $\mfh\simeq\mbG_a(\mfh)\cdot x_0\subset\mcY_0$. In particular, $\mcY_0$ is an equivariant compactification of the vector group $\mathbb{G}_a(\mfh)$.

(3) There is an action of the Weyl group $W(G)$ on the family $\mcY/\Delta$, which is induced by the $G$-action on the family $\mcX/\Delta$ fixing the section $\sigma(\Delta)$. Furthermore, the action of $W(G)$ on $T\subset \mcY_t$ with $t\neq 0$ and that on $\mathfrak{h}\subset \mcY_0$ are the natural ones.
\end{prop}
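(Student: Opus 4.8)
The plan is to analyze the three assertions in parallel, exploiting the explicit Lie-theoretic description of the automorphism groups from Lemma \ref{l.jet} and Lemma \ref{l.isotropic vector fields}, together with the Bia{\l}ynicki-Birula construction of $\mcY$ as the $T$-fixed component through $\sigma(\Delta)$.

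First I would settle (1). For $t\neq 0$, $\Aut^0(\mcX_t)=G\times G$ with $G$ acting on $\mcX_t\simeq\bar G$ by left and right multiplication, and $x_t$ is a general (hence open-orbit) point, so we may identify the open orbit with $G$ itself and $x_t$ with the identity $e$. The $T\times T$-action stabilizes the fixed locus of the left $T$-action, and the connected component $\mcY_t$ of $\mcX_t^T$ through $e$ contains the orbit $T\cdot e\simeq T$; since $T$ is a maximal torus, $\mathfrak h=\Lie(T)$ is a Cartan subalgebra and $T\cdot e$ is open in $\mcY_t$ by a dimension count (the fixed locus of $T$ acting on the tangent space $\fg$ of $\bar G$ at $e$ is exactly $\mathfrak h$, using that the VMRT-structure identifies $T_e\bar G$ with $\fg_{ad}$ as $T$-modules). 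Thus $\mcY_t$ is a $T$-equivariant compactification of $T$, i.e. a complete toric variety. To identify it as the toric variety whose fan is the Weyl chamber decomposition, I would recall the standard fact (De Concini--Procesi) that the closure $\bar T$ of a maximal torus in $\bar G$ is precisely this toric variety: its fan is the fan of Weyl chambers and their faces, because the boundary divisors $D_i$ cut out the walls $\alpha_i=0$ and the $G\times G$-orbit structure restricts to the toric orbit structure on $\bar T$. So $\mcY_t=\bar T$ as claimed.

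Next, (2). By Corollary \ref{c.equiv}, $\Aut^0(\mcX_0)=\mathbb{G}_a^g\rtimes(G\times\mathbb{C}^*)$ with $\mathbb{G}_a^g=\mathbb{G}_a(\fg)$ acting as the open-orbit translations and $G$ acting by the adjoint action on $\fg\simeq\mathbb{C}^g$ (Lemma \ref{l.jet}(iv), Lemma \ref{l.isotropic vector fields}). The subgroup $T\subset G$ normalizes $\mathbb{G}_a(\mathfrak h)\subset\mathbb{G}_a(\fg)$ because $\mathfrak h$ is $T$-stable under the adjoint action (indeed $T$ acts trivially on $\mathfrak h$), so $\mathbb{G}_a(\mathfrak h)\rtimes T$ is a subgroup and it stabilizes $\mcX_0^T$, hence its component $\mcY_0$ through $x_0$. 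The $T$-fixed locus inside the tangent space $\fg_{ad}$ at $x_0$ is exactly $\mathfrak h$, so $\dim\mcY_0=\dim\mathfrak h=n$ and the $\mathbb{G}_a(\mathfrak h)$-orbit of $x_0$ is open in $\mcY_0$; it is isomorphic to $\mathbb{G}_a(\mathfrak h)\simeq\mathfrak h$ as a $\mathbb{G}_a(\mathfrak h)$-variety since the stabilizer of $x_0$ in $\mathbb{G}_a(\fg)$ is trivial. Thus $\mcY_0$ is a smooth projective equivariant compactification of $\mathbb{G}_a(\mathfrak h)$.

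Finally, (3). Since $N_G(T)$ acts on $\mcX$ fixing $\sigma(\Delta)$ and normalizes $T$, it permutes the components of $\mcX^T$ and in particular sends $\mcY=\mcX^T\text{-component through }\sigma(\Delta)$ to itself (the component is characterized as the one containing the section); the centralizer $Z_G(T)=T$ acts trivially on $\mcY$ by construction (the adjoint $T$-action on $\mcY$ is trivial), so the action factors through $W(G)=N_G(T)/T$, giving the required $W(G)$-action on $\mcY/\Delta$. For $t\neq 0$ this is the standard Weyl-group action on $\bar T\supset T$, and for $t=0$ it is the action on $\mcY_0\supset\mathfrak h$ induced by the adjoint action of $W(G)$ on $\mathfrak h$ — both statements follow by taking the limit of the $t\neq 0$ description along the section, using that the identifications $T_{x_t}\mcX_t\simeq\fg$ with their $W(G)$-equivariant structure vary holomorphically (Lemma \ref{l.isotropic vector fields}(i)).

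The main obstacle I anticipate is (2): one must be careful that the BB-component $\mcY_0$ is genuinely an equivariant \emph{compactification} — i.e. that the open $\mathbb{G}_a(\mathfrak h)$-orbit is dense and that $\mcY_0$ is smooth of the correct dimension $n$. This rests on two points: first, that the $T$-fixed subspace of $T_{x_0}\mcX_0$ is exactly $\mathfrak h$, which uses the invariance of VMRT (Theorem \ref{t.invVMRT}) to identify $T_{x_0}\mcX_0$ with $\fg_{ad}$ as a $T$-module; and second, that $\mcY_0$ is connected and reduced of dimension $n$, which is the content of the Bia{\l}ynicki-Birula theorem applied fiberwise (smoothness of $\mcX^T$ over $\Delta$) combined with upper semicontinuity of fiber dimension forcing $\dim\mcY_0=\dim\mcY_t=n$. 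Everything else is bookkeeping with the semidirect product structures supplied by Lemma \ref{l.jet}.
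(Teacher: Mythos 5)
Your proposal is correct in substance, and (1) and (3) run close to the paper's own argument; the genuine divergence is in part (2). The paper proves (2) by a family/limit argument: it introduces the subbundle $\mcU\subset\pi_*T_{\mcX/\Delta}$ whose fibre for $t\neq 0$ is $\mfh_l\oplus\mfh_r$ (the vector fields generating the $T\times T$-action on $\mcY_t$), shows using triviality of these $T$-modules together with Lemmas \ref{l.jet}(iv) and \ref{l.isotropic vector fields} that $\mcU_0=\mfh\rtimes\mfh_{ad}$, and transfers the $\mcU_t$-stability of $\mcY_t$ to $t=0$ by continuity, so the $\mbG_a(\mfh)$-action on $\mcY_0$ appears as the degeneration of the torus action on nearby fibres. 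You instead work entirely on the central fibre: inside $\Aut^0(\mcX_0)=\mbG_a^g\rtimes(G\times\C^*)$ (Corollary \ref{c.equiv}, Lemma \ref{l.jet}(iv)) the subgroup $\mbG_a(\mfh)$ commutes with $T$, hence preserves $\mcX_0^{T}$ and its component $\mcY_0$, and the orbit of $x_0$ is open by the tangent-space count $(T_{x_0}\mcX_0)^{T}=\mfh$; this is shorter and perfectly valid, while the paper's version has the advantage of exhibiting the $\mbG_a(\mfh)$-action as the literal limit of the $\mfh_l\oplus\mfh_r$-action, which makes the later family compatibilities (e.g.\ Proposition \ref{p.family of surfaces}) transparent. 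In (3) you also obtain $N_G(T)$-stability of $\mcY$ directly (it permutes the components of $\mcX^{T}$ and fixes the section), where the paper argues by continuity from $\mcY_t=\bar T$; for the identification of the $W(G)$-action on $\mfh\subset\mcY_0$, your appeal to a limit of identifications can be replaced by the paper's cleaner citation of Lemma \ref{l.jet}(iv) (the $N_G(T)$-action on the open orbit $\mfg\simeq\mbG_a^g\subset\mcX_0$ is the adjoint one), which you have in effect already used in (2). One mis-statement to correct: in (1) you speak of the fixed locus of the \emph{left} $T$-action, but the $T$-action defining $\mcY$ is the one fixing the section, i.e.\ conjugation (the diagonal $T\subset T\times T$) on $\mcX_t\simeq\bar G$; left translation does not fix $e$ at all. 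Your subsequent computation (adjoint action on $T_e\bar G$ with fixed subspace $\mfh$) shows you are in fact using the conjugation action, so this is a slip of wording rather than a gap.
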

\begin{proof}
(1) For $t\neq 0$, the fiber $\mcX_t$ is an equivariant compactification of $G$. The intersection $G\cap\mcY_t$ is equal to the subgroup $T\subset G$, and it is stable under the $T\times T$-action. Then the left $T$-action gives rise to an equivariant embedding $T\simeq T\cdot x_t\subset\mcY_t$. The description of the fan of $\mcY_t=\bar{T}$ follows from \cite[Lemma 6.1.6]{BK}.

(2) Denote by $\mfh_l\subset\mfg_l$, $\mfh_r\subset\mfg_r$ and $\mfh_{ad}\subset\mfg_{ad}$ the Lie subalgebras identified with $\mfh\subset\mfg$. Let $\mcU$ be the vector subbundle of $\mcV:=\pi_*T_{\mcX/\Delta}$ such that $\mcU_t=\mfh_l\oplus\mfh_r$ for $t\neq 0$. Since $\mcU_t$ is  a trivial $T$-module of dimension $2\dim\mfh$ for each $t\neq 0$, so is $\mcU_0$. Then by Lemma \ref{l.jet}(iv) and Lemma \ref{l.isotropic vector fields}, we have $\mcU_0=\mfh\rtimes\mfh_{ad}$. When $t\neq 0$, the manifold $\mcY_t$ is stable under the holomorphic vector fields given by elements in $\mcU_t$. By continuity, the same holds for $t=0$. In particular, $\mathbb{G}_a(\mfh)\subset\Aut^0(\mcY_0)$ and $\mfh\cong \mathbb{G}_a(\mfh) \cdot x_0\subset\mcY_0$. %, where $\mfh_v\subset\mfg_v$ are vector groups associated with $\mfh\subset\mfg$.

(3) Let $N_{G}(T)$ be the normalizer of $T$ in $G$, which stabilizes $\mcY_t=\bar{T}$ for each $t\neq 0$. By continuity, $N_{G}(T)$ stabilizes $\mcY$. The torus $T$ acts trivially on an open subset of $\mcY$, hence it acts trivially on $\mcY$. This gives rise to the action of $W(G)$ on $\mcY/\Delta$. When $t\neq 0$, the action of $N_{G}(T)$ on the open orbit $G\subset\mcX_t$ is by inner automorphisms, and thus the induced action of $W(G)$ on $T=G\cap\mcY_t$ is the natural one. By Lemma \ref{l.jet}(iv), the action of $N_{G}(T)$ on the open orbit $\mfg\simeq \mbG_a^g\subset\mcX_t$ is via the adjoint representation. Then the action of $W(G)$ on $\mfh=\mfg\cap\mcY_0$ is the natural one.
\end{proof}

\begin{example}\label{e.Weylchamberfan}
Given a Weyl chamber $\mfC$ of $G$, denote by $\alpha_1,\ldots,\alpha_n$ the associated simple roots. By \cite[Lemma 6.1.6]{BK}, there is a unique $T$-stable affine space $\mbA^n$ in $\bar{T}$ whose fan consists of $\mfC$ and all its faces. The inclusion $T\subset\mbA^n$ is given by $t\mapsto(\alpha_1(t),\ldots,\alpha_n(t))$.
\end{example}

Now we have the following analogue of Proposition \ref{p.limit}.

\begin{prop}\label{p.family of surfaces}
Suppose there is a homomorphism of algebraic groups $\iota: G'\rightarrow G$ such that $G'$ is the product of a torus and a finite number of general linear groups, and the image $\iota(G')$ contains the maximal torus $T$ of $G$. Then the $G$-action on the family $\mcX/\Delta$ fixing the section $\sigma(\Delta)$ induces an action of $G'$ on this family. Let $\mcY'$ be the connected component of the fixed locus $\mcX^{G'}$ containing the section $\sigma(\Delta)$, which is a subfamily of the family $\mcY/\Delta$ given in Proposition \ref{p.limit}. Then we have the following conclusions.

(i) The morphism $\pi': \mcY'\rightarrow\Delta$ is smooth and projective.

(ii) The identity component $T'$ of the centralizer of $\iota(G')$ in $G$ is a subtorus of $T$.

(iii) For $t \neq 0$, the left $T$-action on $\mcY_t$ induces an equivariant compactification $T'\simeq T'\cdot x_t\subset\mcY'_t$.

(iv) For $t = 0$, the $\mbG_a(\mfh)$-action on $\mcY_0$ induces an equivariant compactification $\mbG_a(\mfh')\simeq\mbG_a(\mfh')\cdot x_0\subset\mcY'_0$, where $\mfh'$ is the Lie algebra of $T'$.

(v) Let $W_{T'}$ be the stabilizer of $T'$ under the $W(G)$-action on $T$. Then the $W(G)$-action on the family $\mcY/\Delta$ induces an action of $W_{T'}$ on the family $\mcY'/\Delta$. Furthermore, the $W_{T'}$-action on $T'\subset \mcY'_t$ with $t\neq 0$ and that on $\mfh'\simeq\mbG_a(\mfh')\subset \mcY_0$ are the natural ones.
\end{prop}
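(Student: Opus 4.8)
The plan is to transcribe the argument of Proposition \ref{p.limit} to the group $G'$, replacing the classical Bia\l ynicki--Birula theorem used there by its generalization Theorem \ref{t.BB deomp generalized}. First, composing the $G$-action on $\mcX/\Delta$ fixing $\sigma(\Delta)$ with $\iota\colon G'\to G$ gives a $G'$-action on $\mcX/\Delta$ fixing $\sigma(\Delta)$; since this action factors through $\iota(G')$ we have $\mcX^{G'}=\mcX^{\iota(G')}$, and because $T\subseteq\iota(G')$ it follows that $\mcX^{G'}\subseteq\mcX^{T}$, so the connected component $\mcY'$ of $\mcX^{G'}$ through $\sigma(\Delta)$ is contained in $\mcY$; this is the inclusion of families asserted in the statement. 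For (i): the total space $\mcX$ is smooth (a smooth morphism over a smooth base), so $\mcX^{G'}$ is smooth by Theorem \ref{t.BB deomp generalized}, hence $\mcY'$ is smooth and irreducible; the projection $\pi'\colon\mcY'\to\Delta$ is surjective since it contains a section, hence flat (a dominant morphism from a smooth variety to a smooth curve); its fibres $\mcY'_t$, being open and closed in $\mcX^{G'}_t$, which is again smooth by Theorem \ref{t.BB deomp generalized} applied to $\mcX_t$, are smooth; a flat morphism with smooth fibres is smooth, so $\pi'$ is smooth, and it is projective since $\pi$ is.

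Next I identify the fibres by the explicit description of the $G$-action in the open orbits. For $t\neq 0$, by Lemma \ref{l.jet}(iii) the group $G$ acts on $\mcX_t\simeq\bar G$ by $x_t$-twisted conjugation through $\iota(G')$ (it fixes the general point $x_t$ of the open $G\times G$-orbit $G$), so the $G'$-fixed points in that orbit form the subset $Z_G(\iota(G'))\cdot x_t$, a finite union of left $T'$-cosets, where $T'$ is the identity component of $Z_G(\iota(G'))$. Since $T\subseteq\iota(G')$ and $Z_G(T)=T$ for the maximal torus $T$ of the reductive group $G$, we get $Z_G(\iota(G'))\subseteq T$, so $T'$ is a subtorus of $T$ with Lie algebra $\mfh'\subseteq\mfh$; this is (ii). By smoothness of $\mcX^{G'}_t$ its irreducible components are pairwise disjoint, so the connected component through $x_t$ is the closure of the single coset $T'\cdot x_t$; the left $T'$-action (a subgroup of the left $T$-action) commutes with the $G'$-action, hence preserves this closure and realizes it with open dense orbit $T'\cdot x_t$, i.e.\ as an equivariant toric compactification $T'\simeq T'\cdot x_t\subset\mcY'_t$ inside $\mcY_t=\bar T$, which is (iii). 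For $t=0$, by Lemma \ref{l.jet}(iv) the group $G$ acts on the open orbit $\mathbb{G}_a^g\simeq\mfg$ by the affine-linearised adjoint representation through $\iota(G')$ fixing $x_0$, so the $G'$-fixed locus there is the single affine subspace $x_0+\mfg^{\iota(G')}=x_0+\mfh'$, a $\mathbb{G}_a(\mfh')$-torsor; as before its closure is a whole connected component of the smooth variety $\mcX^{G'}_0$ and is an equivariant compactification $\mathbb{G}_a(\mfh')\simeq\mathbb{G}_a(\mfh')\cdot x_0\subset\mcY'_0$ inside $\mcY_0$, which is (iv).

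To conclude that $\mcY'_t$ equals this connected component for \emph{every} $t$, note that over $t=0$ the open-orbit part of $\mcX^{G'}_0$ is the single affine subspace $x_0+\mfh'$, because the vector group $\mathbb{G}_a^g$ has no nontrivial finite subgroups; hence $\mcY'_0$ is connected, and then the upper semicontinuity of $t\mapsto h^0(\mcY'_t,\mcO)$ in the smooth proper family $\mcY'/\Delta$ forces $\mcY'_t$ to be connected, so $\mcY'_t$ is the connected component of $\mcX^{G'}_t$ through $x_t$ and $n':=\dim\mcY'_t=\dim T'$. For (v), let $w\in W_{T'}$ and pick a representative $\dot w\in N_G(T)$. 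Acting through the $G$-action, $\dot w$ fixes $\sigma(\Delta)$ and sends $\mcX^{G'}=\mcX^{\iota(G')}$ to $\mcX^{\dot w\iota(G')\dot w^{-1}}$; the connected component of the latter through $\sigma(\Delta)$ lies in $\mcX^{T}$ (as $\dot w\iota(G')\dot w^{-1}\supseteq\dot w T\dot w^{-1}=T$), hence in $\mcY$, and its open-orbit part over $t\neq 0$ (resp.\ $t=0$) is $(\dot w T'\dot w^{-1})\cdot x_t=T'\cdot x_t$ (resp.\ $x_0+\mathrm{Ad}(\dot w)\mfh'=x_0+\mfh'$) because $w$ stabilises $T'$; so $\dot w$ preserves $\mcY'_t$ for all $t$, hence preserves $\mcY'$. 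The resulting action is independent of the choice of $\dot w$ since $T$ acts trivially on $\mcY$ by Proposition \ref{p.limit}(3), giving a $W_{T'}$-action on $\mcY'/\Delta$; it is the restriction of the $W(G)$-action on $\mcY/\Delta$, which by Proposition \ref{p.limit}(3) is the natural action on $T\subset\mcY_t$ and on $\mfh\subset\mcY_0$, so the induced action is the natural one on $T'\subset\mcY'_t$ and on $\mfh'\subset\mcY'_0$.

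The main difficulty I expect is in (i) together with the connectedness of the fibres: Theorem \ref{t.BB deomp generalized} supplies the smoothness of $\mcX^{G'}$ and of each $\mcX^{G'}_t$, but one still has to combine this with the fibrewise analysis of the open-orbit part of $\mcX^{G'}_t$ -- a finite union of $T'$-cosets for $t\neq 0$, but a \emph{single} affine subspace for $t=0$ -- in order to rule out the fibre of $\mcY'$ acquiring extra components or changing dimension. Once this is in place, the remaining assertions are a routine transcription of Proposition \ref{p.limit}.
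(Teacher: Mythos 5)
Your route is essentially the paper's own: smoothness of the fixed locus from Theorem \ref{t.BB deomp generalized}, the inclusion $C_G(\iota(G'))\subset C_G(T)$ with $C_G(T)^0=T$ for (ii), identification of the $G'$-fixed points inside the open orbits (conjugation by $\iota(G')$ for $t\neq 0$, the adjoint representation for $t=0$, both via Lemma \ref{l.jet}), and then a transcription of Proposition \ref{p.limit} for (iii)--(v); the paper compresses the last part into the observation that the relative dimension of $\mcY'/\Delta$ equals $\dim T'$. Most of your details are fine and in fact more explicit than the published proof.

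There is, however, a genuine gap at the step ``hence $\mcY'_0$ is connected''. What you have established is that the $G'$-fixed locus inside the \emph{open orbit} of $\mcX_0$ is the single affine subspace $x_0+\mfh'$, which only identifies the connected component of $\mcX_0^{G'}$ through $x_0$ with $\overline{\mbG_a(\mfh')\cdot x_0}$. It does not rule out that the fibre $\mcY'_0=\mcY'\cap\mcX_0$ contains, in addition, connected components of $\mcX_0^{G'}$ lying entirely in the boundary $\partial\mcX_0$ (the fixed locus $\mcX_0^{G'}$ may well have such boundary components, e.g.\ coming from $G'$-fixed points in boundary orbits); whether such a component lies in the same connected component $\mcY'$ of the total fixed locus as the section is precisely what must be excluded, and your argument does not address it. The same point is used implicitly again in (v), where ``$\dot w$ preserves $\mcY'_t$'' presupposes that $\mcY'_t$ is exactly the component of $\mcX_t^{G'}$ through $x_t$ (also, your semicontinuity remark only controls $t$ near $0$). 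The fix is standard and already available from your part (i): $\pi'$ is a proper holomorphic submersion with connected total space over the simply connected disk $\Delta$, so the number of connected components of the fibres is locally constant (Ehresmann), and the space of fibre components (equivalently, the Stein factorization of $\pi'$, which is finite \'etale over $\Delta$) is a trivial covering of $\Delta$; connectedness of $\mcY'$ then forces every fibre $\mcY'_t$ to be connected. With this in place, $\mcY'_t$ is the component of $\mcX_t^{G'}$ through $x_t$ for every $t$, and the rest of your argument, including (v), goes through; note that the paper's proof does not spell this point out either, although the statement requires it.
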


\begin{proof}
Since $T\subset\iota(G')$ and the identity component of the centralizer $C_G(T)$ is $T$ itself, we know that $T'$ is a subtorus of $T$. By definition, $\mcY'$ is a closed subvariety of $\mcY$ and thus $\pi'$ is projective. The smoothness of $\pi'$ follows from  Fogarty (\cite[Theorem 5.4]{F}, see also \cite[Theorem 13.1]{M}). Note that $T\subset\iota(G')$, $\mcY'\subset\mcY$ and the relative dimension of $\mcY'/\Delta$ is the same as $\dim T'$. Then the rest follows from Proposition \ref{p.limit}.
\end{proof}

\subsection{Rigidity of $\bar{G}_2$, $\bar{F}_4$ and $\bar{E}_8$}\label{s.exceptional}

Given a root $\alpha$ of $G$, we denote by $\mfg_\alpha$ and $\mfh_{\alpha^\vee}$ the 1-dimensional root subspace and coroot subspace of $\mfg$. Given a coweight $\omega^\vee$ of $G$, say $\omega^\vee: \C\rightarrow\mfh$, denote by $\mfh_{\omega^\vee}$ the corresponding 1-dimensional subspace of $\mfh$. In particular, $\mfh_{\alpha^\vee}$ and $\mfh_{\omega^\vee}$ are Lie algebras of 1-dimensional torus subgroups of the fixed maximal torus $T$, and $\mfg_\alpha$ is the Lie algebra of a connected subgroup of $G$ which is isomorphic to the 1-dimensional additive group $\mathbb{G}_a$.

%In the section, $G$ is assumed to be of types $E_8$, $F_4$ and $G_2$.

%Let $T'$ be the 2-dimensional torus associated with Lie algebra $\mfh':=\mfh_{\omega_1^\vee}\oplus\mfh_{\omega_n^\vee}\subset\mfh$, where $\omega_i^\vee$ is the $i$-th coweight. Denote by $\bar{T'}$ the closure of $T'$ in the wonderful compactification $\bar{G}$ of $G$.

%\begin{prop}\label{p.G'T'}
%There is a homomorphism of connected reductive groups $j: G'\rightarrow G$ such that the semisimple part of $G'$ is the product of general linear groups, and the centralizer $C_{G}(G')$ is the 2-dimensional torus $T'$ associated with Lie algebra $\mfh':=\mfh_{\omega_1^\vee}\oplus\mfh_{\omega_n^\vee}\subset\mfh$.
%\end{prop}

%Recall that $T'$ is the connected torus subgroup of $T$ associated with the Lie subalgebra $\mfh':=\mfh_{\omega_1^\vee}\oplus\mfh_{\omega_n^\vee}\subset\mfh$. Let $\bar{T'}$ be the closure of $T'$ in the wonderful compactification $\bar{G}$ of $G$. For $t\neq 0$, $\mcY'_t\simeq\bar{T'}$ by Proposition \ref{p.family of surfaces}.

We need the following technical result, which will be proven in the following subsections.

\begin{prop}\label{p.toric surface E8F4G2}
Let $n$ be the rank of $G$, and suppose $G$ is $G_2$, $F_4$ or $E_8$. Then there exists an algebraic group $G'$ which is a torus or a product of general linear groups satisfying the following properties:

(i) There exists a homomorphism of algebraic groups $\iota: G'\rightarrow G$ such that (a)  $\iota(G')$ contains the maximal torus $T$; (b)  the identity component $T'$ of the centralizer $C_G(\iota(G'))$ is the 2-dimensional torus with Lie algebra $\mfh':=\mfh_{\omega_1^\vee}\oplus\mfh_{\omega_n^\vee}$.

(ii) Let $\bar{T'}$ be the closure of $T'$ in the wonderful compactification $\bar{G}$. Then the Picard number $\rho'$ of the toric surface $\bar{T'}$ is equal to 10 (in $G_2$ case) or 6 (in $F_4$ and $E_8$ cases).

(iii) There is a subgroup $W'$ of $W_{T'}$ of order $\rho'+2$ such that its action on the set of irreducible components of the boundary divisor $\partial{\bar{T'}}:=\bar{T'}\setminus T'$ has two orbits.
\end{prop}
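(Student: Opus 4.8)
The plan is to produce, in each of the three cases, a homomorphism $\iota\colon G'\to G$ with $G'$ a product of general linear groups, then to compute the fan of the toric surface $\bar{T'}$, and finally to exhibit $W'$. The starting observation is: if $\Psi\subset R(G)$ is a closed subsystem and $M\subset G$ denotes the connected reductive subgroup of maximal rank generated by $T$ and the root subgroups $U_{\pm\beta}$ ($\beta\in\Psi$), then $C_G(M)\subset C_G(T)=T$ and $C_G(M)^\circ=\{t\in T:\beta(t)=1\text{ for all }\beta\in\Psi\}^\circ$; this torus is $T'$ precisely when $\mathbb{Q}\langle\Psi\rangle=\mathbb{Q}\langle\alpha_2,\dots,\alpha_{n-1}\rangle$, in which case $\mathrm{Lie}(T')=\{h\in\mfh:\alpha_i(h)=0,\ 2\le i\le n-1\}=\mfh_{\omega_1^\vee}\oplus\mfh_{\omega_n^\vee}$ since $\alpha_i(\omega_j^\vee)=\delta_{ij}$. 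Moreover $M$ is the image of a homomorphism from a product of general linear groups (mapping the maximal torus onto $T$) exactly when $\Psi$ is a disjoint union of type-$A$ subsystems, because then the simply connected cover of $[M,M]$ is a product of groups $\SL_k$ and one can spread $T$ out over the $\GL_2$ and $\GL_1$ factors. So everything reduces to choosing such a $\Psi$.

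First I would treat the construction of $\iota$, hence (i). For $G_2$ (where $n=2$) one is forced to take $\Psi=\varnothing$, so $M=T$, $G'=\GL_1^2$ and $T'=T$; (i) is then immediate. For $F_4$ the \emph{middle} Levi $L_{\{2,3\}}$ has semisimple part of type $B_2$, and inside it $\Psi=\{\pm\alpha_2,\pm(\alpha_2+2\alpha_3)\}$ is a closed subsystem of type $A_1\times A_1$ with $\mathbb{Q}\langle\Psi\rangle=\mathbb{Q}\langle\alpha_2,\alpha_3\rangle$; thus $M$ is isogenous to $\SL_2\times\SL_2\times T'$ and is the image of a homomorphism from $G':=\GL_2\times\GL_2\times\GL_1^{k}$. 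For $E_8$ the middle Levi $L_{\{2,\dots,7\}}$ has type $D_6$, inside which one chooses six mutually orthogonal roots $\Psi\cong (A_1)^{6}$ (a closed subsystem spanning the $D_6$-plane), so $M$ is isogenous to $(\SL_2)^6\times T'$ and is the image of a homomorphism from $G':=\GL_2^{6}\times\GL_1^{k}$. In each case the $\GL_1$-factors are used to arrange that the maximal torus of $G'$ surjects onto $T$, so that $\iota(G')=M\supseteq T$ and $C_G(\iota(G'))^\circ=\{t:\alpha_i(t)=1,\ 2\le i\le n-1\}^\circ=T'$, giving (i).

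Next I would compute the fan of $\bar{T'}$, hence (ii). By Proposition \ref{p.limit}(1) the toric variety $\bar T\subset\bar G$ has fan the Weyl chambers of $G$ and their faces, and by Proposition \ref{p.family of surfaces} the surface $\bar{T'}$ is the closure of $T'$ in $\bar T$; since $\mfh'=\mfh_{\omega_1^\vee}\oplus\mfh_{\omega_n^\vee}$ is cut out of $\mfh$ by the walls $\alpha_i=0$ ($2\le i\le n-1$), the fan of $\bar{T'}$ is the complete fan in $\mfh'$ obtained by restricting the Weyl reflection arrangement to $\mfh'$. The hyperplanes $H_\beta$ with $\beta$ a root of the middle Levi contain $\mfh'$ and drop out; every other $H_\beta$ meets $\mfh'$ in a line through the origin, and a central arrangement of $m$ distinct such lines yields a complete fan with $2m$ rays, hence a smooth toric surface of Picard number $2m-2$. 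It remains to count distinct restricted hyperplanes: for $G_2$ all six positive roots give distinct lines (here $\mfh'=\mfh$), so $m=6$ and $\rho'=10$; for $F_4$ and for $E_8$ a direct inspection of the root system shows that the roots outside the middle-Levi span restrict to $\mfh'$ in exactly four proportionality classes — equivalently, the relative root system of the middle parabolic has type $B_2$ — so $m=4$ and $\rho'=6$. This count is the main computational point of the proposition.

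Finally I would produce $W'$, hence (iii). Because $W_L=\langle s_2,\dots,s_{n-1}\rangle$ fixes $\mfh'$ pointwise, the $W_{T'}$-action on $\mfh'$ factors through the finite reflection group $\bar W$ generated by the reflections in the $m$ restricted hyperplanes, so $\bar W\cong W(G_2)$ (order $12$) for $G_2$ and $\bar W\cong W(B_2)$ (order $8$) for $F_4,E_8$; in particular $|\bar W|=\rho'+2$. The group $\bar W$ permutes the $\rho'+2$ rays of the fan of $\bar{T'}$ — i.e.\ the prime components of $\partial\bar{T'}$ — with exactly two orbits: the rays perpendicular to the long restricted roots and those perpendicular to the short ones (for $G_2$ these are the two $W(G_2)$-orbits of $\omega_1^\vee$ and $\omega_2^\vee$, each of size $6$; for $W(B_2)$, two orbits of size $4$). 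I then take $W'\le W_{T'}$ of order $\rho'+2$ mapping onto $\bar W$: for $G_2$ simply $W'=W_{T'}=W(G)$, and for $F_4,E_8$ one checks directly — using that $W_{T'}\twoheadrightarrow\bar W$ has kernel $W_L$ — that $W_{T'}$ contains a subgroup isomorphic to $\bar W\cong W(B_2)$; as $W'$ acts on $\partial\bar{T'}$ through its image in $\bar W$, this yields the two orbits and proves (iii). The hard part will be the root-theoretic count of the third step, namely identifying the relative root system of the middle parabolic of $F_4$ and of $E_8$ as type $B_2$ (hence $\rho'=6$), together with the finite-group verification that the desired $W'$ actually sits inside $W_{T'}$; the $G_2$ case is essentially immediate since there $T'=T$, $G'=\GL_1^2$ and $W_{T'}=W(G)$.
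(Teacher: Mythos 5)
Your overall architecture (an explicit $G'$ with $\iota(G')\supseteq T$, then the fan of $\bar{T'}$, then an explicit $W'$) parallels the paper's, your $G_2$ case is the same as the paper's, and your predicted combinatorics in (ii)--(iii) (a $B_2$-shaped restricted fan with $\rho'=6$ for $F_4,E_8$, $\rho'=10$ for $G_2$, two orbits of rays) agree with the paper's answers. The genuine gap is in (i), at the sentence ``one can spread $T$ out over the $\GL_2$ and $\GL_1$ factors'': for your choices of $\Psi$ this step fails, and it is not a formality. Suppose $\iota\colon\prod_i\GL_{n_i}\times(\C^*)^k\to G$ has image $M=\langle T,\,U_{\pm\beta}:\beta\in\Psi\rangle$ with $\Psi$ pairwise orthogonal roots. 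For each $\beta\in\Psi$ the subgroup $G_\beta=\langle U_{\pm\beta}\rangle$ is isomorphic to $\SL_2$, because $\beta^\vee\notin 2X_*(T)$ (true in $F_4$ for $\beta=e_2\pm e_3$ and in $E_8$ for $\beta=e_i\pm e_j$); since a torus factor has abelian image and the derived image of a $\GL_{n_i}$ is a central quotient of $\SL_{n_i}$, covering $G_\beta$ forces some factor to be a $\GL_2$ whose $\SL_2$ maps isomorphically onto $G_\beta$. The image of that factor's center commutes with the images of all factors, hence with all of $M$, so it is a connected subgroup of $Z(M)$; as $T\subseteq M$ gives $Z(M)\subseteq T$, it lies in $Z(M)^\circ=T'$, i.e.\ it is $\mu(\C^*)$ for some $\mu\in X_*(T')$. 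The relation $-I\in\SL_2\cap Z(\GL_2)$ then forces $\mu(-1)=\beta^\vee(-1)$, i.e.\ $\mu\equiv\beta^\vee \bmod 2X_*(T)$. This is impossible: for $F_4$, $X_*(T)$ is the $D_4$-lattice, every element of $X_*(T')=\mbZ(e_1+e_4)\oplus\mbZ(2e_4)$ has vanishing $e_2,e_3$-coordinates, $\beta^\vee=e_2\pm e_3$ has odd ones, and $2X_*(T)$ has only even coordinates; for $E_8$, every element of $2X_*(T)$ has all coordinates of equal parity, while $\mu-\beta^\vee$ has mixed parity. So for your $\Psi$ no such $G'$ exists, in $F_4$ and in $E_8$ alike; in particular ``$\Psi$ a disjoint union of type-$A$ subsystems'' is not a sufficient criterion. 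Moreover, in $F_4$ the only closed subsystems spanning $\mbQ\langle\alpha_2,\alpha_3\rangle$ are the long $A_1\times A_1$ and the full $B_2$, so within your reduction the $F_4$ case cannot be rescued by another choice of $\Psi$. This integrality problem is precisely what the paper's Lemmas \ref{l.G'F4case} and \ref{l.G'E8case} are built to address: there the $\mathfrak{gl}_2$-, resp.\ $\mathfrak{gl}(\C^4)$-subalgebras are engineered with prescribed central directions $\mfh_{\omega_1^\vee},\mfh_{\omega_4^\vee}$, resp.\ $\mfh_{\omega_1^\vee},\mfh_{\omega_5^\vee}$, and checked at the group level through explicit representations; for $E_8$ the semisimple parts are two commuting $A_3$'s, not six orthogonal $A_1$'s.

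A secondary point: in (ii) you assert that the fan of $\bar{T'}$ is the restriction of the Weyl-chamber fan to $\mfh'$. The closure of a subtorus in a toric variety need not be normal and its fan is not in general the naive restriction, so this requires proof; the paper supplies it in Proposition \ref{p.F4 fan} by computing $\bar{T'}\cap\bar{T}^{\aff}$ to be an affine plane in the chart of Example \ref{e.Weylchamberfan} and then checking that the $W'$-translates of the corresponding cone cover $\mbR\omega_1^\vee+\mbR\omega_n^\vee$ (Lemma \ref{l.F4 group W'}); the same lemma exhibits the subgroup $W'$ which you only promise to ``check directly''. Your wall count and the two ray-orbits are the right combinatorics, but as written (ii) and (iii) rest on the unproved fan identification, and (i) --- the input needed even to form the family $\mcY'/\Delta$ via Proposition \ref{p.family of surfaces} --- is the step that breaks.
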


We need the following elementary result on finite group actions.

\begin{lem}\label{l.finite group action}
Let $Y$ be a smooth projective variety, and let $A\subset\Aut(Y)$ be a finite subgroup. Suppose the Picard group $\Pic(Y)$  is generated by  divisors $D_{i, j} (1 \leq i \leq m, 1 \leq j \leq k_i)$, where for each $i$,  the set $\{D_{i, 1},\ldots, D_{i, k_i}\}$ is an $A$-orbit. Then $\Pic(Y)^A \otimes \mathbb{Q}$ is generated by $D_i:=\sum_{j=1}^{k_i}D_{i, j}, i=1, \cdots, m$.
\end{lem}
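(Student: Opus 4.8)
The plan is to run the standard averaging argument for a finite group acting on a $\mathbb{Q}$-vector space. Set $N := \Pic(Y)\otimes\mathbb{Q}$, on which $A$ acts linearly. Since $A$ is finite, the Reynolds operator $e := \frac{1}{|A|}\sum_{a\in A} a$ is a well-defined $\mathbb{Q}$-linear endomorphism of $N$, and one checks at once that $e^2 = e$ and that $e$ restricts to the identity on the subspace $N^A$ of $A$-invariants; hence $e$ is a projection of $N$ onto $N^A$. In particular, if a set of vectors spans $N$, then their images under $e$ span $N^A = e(N)$. By hypothesis the classes $D_{i,j}$ ($1\le i\le m$, $1\le j\le k_i$) span $N$, so $N^A$ is spanned by the classes $e(D_{i,j})$.

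The next step is to evaluate $e(D_{i,j})$. Fix $i$ and $j$; by the orbit--stabilizer theorem the stabilizer $A_{i,j} = \{a\in A : a\cdot D_{i,j}=D_{i,j}\}$ has order $|A|/k_i$, since the $A$-orbit of $D_{i,j}$ is exactly $\{D_{i,1},\dots,D_{i,k_i}\}$. Partitioning $A$ into the $k_i$ left cosets of $A_{i,j}$, in the sum $\sum_{a\in A} a\cdot D_{i,j}$ each class $D_{i,\ell}$ occurs precisely $|A_{i,j}| = |A|/k_i$ times, so
$$e(D_{i,j}) = \frac{1}{|A|}\sum_{a\in A} a\cdot D_{i,j} = \frac{1}{|A|}\cdot\frac{|A|}{k_i}\sum_{\ell=1}^{k_i} D_{i,\ell} = \frac{1}{k_i}\,D_i.$$
Therefore $N^A = (\Pic(Y)\otimes\mathbb{Q})^A$ is spanned by the vectors $\tfrac{1}{k_i}D_i$, equivalently by $D_1,\dots,D_m$. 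Since each $D_i$ is a sum over an entire $A$-orbit of divisor classes it lies in $\Pic(Y)^A$, so these generators already sit inside $\Pic(Y)^A\otimes\mathbb{Q}$; combined with the obvious inclusion $\Pic(Y)^A\otimes\mathbb{Q}\subseteq (\Pic(Y)\otimes\mathbb{Q})^A$ this forces equality, and the lemma follows.

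I do not expect any genuine obstacle here: the whole argument is formal finite-group representation theory over $\mathbb{Q}$, and the only point that deserves an explicit sentence is the identification $\Pic(Y)^A\otimes\mathbb{Q} = (\Pic(Y)\otimes\mathbb{Q})^A$, which is immediate once one notes that the spanning set $\{D_i\}$ of the larger space consists of elements of $\Pic(Y)^A$.
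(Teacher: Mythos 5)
Your proposal is correct and is essentially the paper's argument: both rest on averaging over $A$ together with the computation $\sum_{a\in A}a\cdot D_{i,j}=\frac{|A|}{k_i}D_i$, the only cosmetic difference being that you apply the Reynolds projector to the generators $D_{i,j}$, while the paper averages an arbitrary $A$-invariant class $\mcO(D)$ directly and then divides by $|A|$ in $\Pic(Y)\otimes\mathbb{Q}$.
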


\begin{proof}
The inclusion $\sum_{i=1}^m\mbZ D_i\subset\Pic(Y)^A$ is straight-forward. Any element in  $\Pic(Y)$ can be written as $\mcO(D)$ with $D=\sum_{i=1}^m\sum_{j=1}^{k_i}c_{i, j} D_{i, j}$ for some $c_{i, j}\in\mbZ$. Since $\sum_{a\in A}a\cdot D_{i, j}=\frac{|A|}{k_i}D_i\in\mbZ D_i$, we have $\sum_{a\in A} a\cdot D=\sum_{i=1}^m\sum_{j=1}^{k_i}\frac{c_{i, j}|A|}{k_i}D_i\in\sum_{i=1}^m\mbZ D_i$. If $\mcO(D)\in\Pic(Y)^A$, then $\mcO(|A|D) = \mcO(\sum_{a\in A}a\cdot D)$, which implies that $\mcO(D) \in \sum_i \mathbb{Z} \frac{1}{|A|} D_i$.
\end{proof}

Take the reductive group $G'$, the torus $T'$ and the finite group $W'$ as in Proposition \ref{p.toric surface E8F4G2}. Define the regular family $\mcY'/\Delta$ as in Proposition \ref{p.family of surfaces}, which is the connected component of $\mcX^{G'}$ containing the section $\sigma(\Delta)$.

\begin{prop}\label{p.W' action on Y'0}
Suppose $G$ is $G_2$, $F_4$ or $E_8$. Then the set of irreducible components of the boundary $\partial\mcY'_0:=\mcY'_0\setminus\mfh'$ consists of two $W'$-orbits, and each orbit contains at least two elements.
\end{prop}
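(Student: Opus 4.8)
The plan is to compare the $W'$-invariant parts of the Picard groups of the central and general fibres of the family $\mcY'/\Delta$, and to read off the combinatorics from a divisibility constraint. Recall from Proposition \ref{p.family of surfaces}(iv) that $\mcY'_0$ is an equivariant compactification of the vector group $\mathbb{G}_a(\mfh')$ with $\dim\mfh'=2$, and from Proposition \ref{p.family of surfaces}(v) that $W'\subset W_{T'}$ acts on $\mcY'_0$, acting on the open orbit $\mfh'\simeq\mathbb{G}_a(\mfh')\cdot x_0$ by the natural (linear) representation; in particular $W'$ stabilises the open orbit and hence permutes the prime components of $\partial\mcY'_0$. Write $m$ for the number of $W'$-orbits on this set. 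By Proposition \ref{p.SEC_HT}(i), $\Pic(\mcY'_0)$ is free on these components, so their number equals $\rho(\mcY'_0)$. Likewise, for $t\neq 0$ we have $\mcY'_t\cong\bar{T'}$ (the closure of $T'$ in $\mcX_t\cong\bar{G}$), a smooth complete toric surface with $\rho'+2$ prime boundary divisors, on which $W'$ acts with exactly two orbits and by the natural action, by Proposition \ref{p.toric surface E8F4G2}(iii) and Proposition \ref{p.family of surfaces}(v).

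First I would establish the comparison $\dim\Pic(\mcY'_0)^{W'}_\mbQ=\dim\Pic(\bar{T'})^{W'}_\mbQ$ and the equality $\rho(\mcY'_0)=\rho'$. Since $\pi':\mcY'\to\Delta$ is a smooth projective morphism over the contractible disc (Proposition \ref{p.family of surfaces}(i), which rests on Theorem \ref{t.BB deomp generalized}), it is a topological fibre bundle, and the $W(G)$-action on the total space $\mcY$ (hence the $W'$-action on $\mcY'$) makes it a $W'$-equivariant one; therefore $H^2(\mcY'_0,\mbQ)\cong H^2(\mcY'_t,\mbQ)$ as $W'$-modules. Both fibres are smooth projective rational surfaces: $\mcY'_t\cong\bar{T'}$ is toric, and $\mcY'_0$ contains the dense open $\mathbb{G}_a(\mfh')\cong\mathbb{C}^2$. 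Hence in both cases $h^{2,0}=0$, so the cycle class map identifies $\Pic(-)_\mbQ$ with $H^2(-,\mbQ)$ $W'$-equivariantly. This yields $\rho(\mcY'_0)=b_2(\mcY'_0)=b_2(\mcY'_t)=\rho(\bar{T'})=\rho'$ and $\dim\Pic(\mcY'_0)^{W'}_\mbQ=\dim\Pic(\bar{T'})^{W'}_\mbQ$.

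Next I would compute both invariant dimensions. On $\mcY'_0$ the $\rho'$ prime boundary components form a $\mbZ$-basis of $\Pic(\mcY'_0)$, split into $m$ $W'$-orbits; the $m$ orbit sums are sums of basis vectors over pairwise disjoint index sets, hence linearly independent, and by Lemma \ref{l.finite group action} they generate $\Pic(\mcY'_0)^{W'}_\mbQ$, so $\dim\Pic(\mcY'_0)^{W'}_\mbQ=m$. On $\bar{T'}$ the prime boundary divisors generate $\Pic(\bar{T'})$ and form two $W'$-orbits, so Lemma \ref{l.finite group action} gives $\dim\Pic(\bar{T'})^{W'}_\mbQ\leq 2$. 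Combining with the previous paragraph, $m\leq 2$. Now the divisibility argument finishes the proof, using $|W'|=\rho'+2$ and $\rho'\in\{6,10\}$ from Proposition \ref{p.toric surface E8F4G2}. If $m=1$, all $\rho'$ components lie in one $W'$-orbit, so $\rho'\mid|W'|=\rho'+2$, i.e.\ $\rho'\mid 2$, contradicting $\rho'\geq 6$; hence $m=2$. If one of the two orbits were a singleton, the other would have $\rho'-1$ elements, and $\rho'-1\mid|W'|=\rho'+2$ would force $\rho'-1\mid 3$, i.e.\ $\rho'\in\{2,4\}$, again a contradiction; hence each orbit has at least two elements, which is the assertion of the proposition. (Concretely this pins the orbit sizes to $\{2,4\}$ for $F_4,E_8$ and $\{4,6\}$ for $G_2$, though only $m=2$ and ``$\geq 2$'' are needed.)

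The step requiring the most care is the identification $\dim\Pic(\mcY'_0)^{W'}_\mbQ=\dim\Pic(\bar{T'})^{W'}_\mbQ$: since $\mcY'/\Delta$ is not a Fano family, one cannot invoke Theorem \ref{t.InvCones}, and one must instead use genuinely that $\pi'$ is a smooth projective morphism (so that the $W'$-representation on $H^2$ of the fibres is locally constant by Ehresmann) together with the rationality of the fibres (so that $H^2=\Pic_\mbQ$). All the deeper input --- the precise geometry of the toric surface $\bar{T'}$, its Picard number, the subgroup $W'$, and the two-orbit decomposition of its boundary --- is already packaged in Proposition \ref{p.toric surface E8F4G2}, so beyond this cohomological comparison the argument is essentially bookkeeping. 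This proposition, combined with Proposition \ref{p.vector compact surface}, then yields the desired contradiction, since a two-orbit decomposition with each orbit of size $\geq 2$ leaves no $W'$-stable boundary component.
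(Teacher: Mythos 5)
Your proof is correct and takes essentially the same route as the paper's: bound $\dim\Pic(\bar{T'})^{W'}_\mbQ\le 2$ via Lemma \ref{l.finite group action}, identify it with $\Pic(\mcY'_0)^{W'}_\mbQ$ through the family $\mcY'/\Delta$, use the free generation of $\Pic(\mcY'_0)$ by the boundary components of the vector-group compactification, and finish with the divisibility constraint coming from $|W'|=\rho'+2$ and $\rho'\in\{6,10\}$. The only difference is presentational: you spell out the Ehresmann/rational-surface argument behind the identification of the invariant Picard groups (and state $\rho(\mcY'_0)=\rho'$ explicitly), which the paper asserts in a single line, and you phrase the final counting by excluding $m=1$ and singleton orbits rather than listing the admissible orbit sizes.
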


\begin{proof}
Since the $W'$-action on the set of irreducible components of $\partial{\bar{T'}}$ has two orbits, the rank of $\Pic(\bar{T'})^{W'}$ is at most two by Lemma \ref{l.finite group action}. The $W'$-action on the family $\mcY'/\Delta$ induces identifications $\Pic(\bar{T'})^{W'}=\Pic(\mcY'/\Delta)^{W'}=\Pic(\mcY'_0)^{W'}$.

Since  $\mcY'_0$ is an equivariant compactification of a vector group, the Picard group of $\mcY'_0$ is freely generated by elements in $S$, where $S$ is the set of irreducible components of the boundary divisor $\partial\mcY'_0:=\mcY'_0\setminus\mfh'$. By Lemma \ref{l.finite group action} again, $S$ consists of one or two $W'$-orbits. Now the Picard number of $\mcY'_0$ is $\rho'\in\{6, 10\}$ and the order of $W'$ is $\rho'+2$. Since $\rho'$ does not divide $\rho'+2$, the set $S$ is not $W'$-transitive. In the case $\rho'=6$ (resp. $\rho'=10$), the set $S$ consists of two orbits of cardinalities 2 and 4 (resp. 4 and 6) respectively.
\end{proof}

Now we can prove the rigidity under Fano deformation of $G_2, F_4$ and $E_8$.
\begin{thm}\label{t.rigidity_EFG}
Assume $G$ is  $G_2, F_4$ or $E_8$ and $X$ its wonderful compactification.
Let $\pi: \mcX\rightarrow\Delta\ni 0$ be a  regular family of Fano varieties such that $\mcX_t  \cong X$ for $t \neq 0$.
Then $\mcX_0 \cong X$.
\end{thm}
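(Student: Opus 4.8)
The plan is to run the argument by contradiction, exactly in the spirit of Theorem \ref{t.rigidity}: assume $\mcX_0 \not\cong X$, so that by Corollary \ref{c.equiv} the central fiber $\mcX_0$ is an equivariant compactification of the vector group $\mathbb{G}_a^g$ with $g = \dim\fg$, and derive a contradiction from Proposition \ref{p.W' action on Y'0}. First I would invoke Proposition \ref{p.toric surface E8F4G2} to produce the reductive group $G'$ (a torus or a product of general linear groups), the homomorphism $\iota: G' \to G$ whose image contains $T$, the two-dimensional subtorus $T'$ with $\Lie(T') = \mfh' = \mfh_{\omega_1^\vee}\oplus\mfh_{\omega_n^\vee}$, and the finite subgroup $W' \subset W_{T'}$ of order $\rho'+2$ (where $\rho' = 10$ for $G_2$ and $\rho' = 6$ for $F_4, E_8$) acting on the set of boundary components of $\bar{T'}$ with two orbits.

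Next I would apply Proposition \ref{p.family of surfaces} to this $G'$: the $G$-action on $\mcX/\Delta$ fixing $\sigma(\Delta)$ restricts to a $G'$-action, and the connected component $\mcY'$ of $\mcX^{G'}$ through $\sigma(\Delta)$ gives a smooth projective family $\pi': \mcY' \to \Delta$ of surfaces, with $\mcY'_t \cong \bar{T'}$ for $t \neq 0$ and $\mcY'_0$ an equivariant compactification of $\mathbb{G}_a(\mfh') = \mathbb{G}_a^2$ by Proposition \ref{p.family of surfaces}(iii)(iv). Moreover $W_{T'}$, hence $W'$, acts on the family $\mcY'/\Delta$ stabilizing the open orbit of each fiber (Proposition \ref{p.family of surfaces}(v)). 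By Proposition \ref{p.W' action on Y'0}, the set $S$ of irreducible components of $\partial\mcY'_0 = \mcY'_0 \setminus \mfh'$ consists of exactly two $W'$-orbits, each of cardinality at least two.

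Now I would derive the contradiction using Proposition \ref{p.vector compact surface}: set $Y = \mcY'_0$, a smooth projective surface which is an equivariant compactification of $\mathbb{G}_a^2$ with open orbit $O = \mathbb{G}_a(\mfh')\cdot x_0$, and take $A = W' \subset \Aut(Y)$, a finite subgroup with $A \cdot O = O$ since $W'$ stabilizes the open orbit of $\mcY'_0$. Since $S$ has two $A$-orbits, Proposition \ref{p.vector compact surface} forces the existence of an irreducible component $E$ of $\partial Y$ stabilized by all of $A = W'$. But then the $W'$-orbit of $E$ is a singleton, contradicting the assertion in Proposition \ref{p.W' action on Y'0} that each of the two $W'$-orbits in $S$ has cardinality at least two. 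This contradiction shows $\mcX_0 \cong X$, completing the proof.

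The one point requiring care is verifying that the hypotheses of Proposition \ref{p.vector compact surface} are met, i.e. that $\mcY'_0$ is genuinely a smooth \emph{surface} equivariantly compactifying $\mathbb{G}_a^2$ and that $W'$ is a \emph{finite} subgroup of $\Aut(\mcY'_0)$ preserving the open orbit; all of this is packaged into Propositions \ref{p.family of surfaces} and \ref{p.W' action on Y'0}, so the main obstacle is really upstream, in establishing Proposition \ref{p.toric surface E8F4G2} — the explicit root-theoretic computation of $\rho'$ and the construction of $W'$ with the stated orbit structure on boundary components — but that is deferred to the following subsections and may be assumed here. Given those inputs, the proof of Theorem \ref{t.rigidity_EFG} is the short combinatorial argument above.
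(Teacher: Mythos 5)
Your proposal is correct and follows essentially the same route as the paper's own proof: contradiction via Corollary \ref{c.equiv}, then Propositions \ref{p.toric surface E8F4G2}, \ref{p.family of surfaces}, \ref{p.W' action on Y'0}, and finally Proposition \ref{p.vector compact surface}. You merely spell out explicitly the step the paper leaves implicit, namely that a $W'$-stabilized boundary component would be a singleton orbit, contradicting the two-orbits-of-size-at-least-two conclusion of Proposition \ref{p.W' action on Y'0}.
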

\begin{proof}
Assume $\mcX_0$ is not isomorphic to $X$, then by Corollary \ref{c.equiv}, $\mcX_0$ is an equivariant compactification of $\mathbb{G}_a^g$.
Propositions  \ref{p.family of surfaces} and \ref{p.toric surface E8F4G2} give a family of smooth projective surfaces $\mathcal{Y}'/\Delta$  whose general fibers are toric.
Now Proposition \ref{p.W' action on Y'0} implies that the central fiber is an equivariant compactification of $\mathbb{G}_a^2$ whose boundary consists of two $W'$-orbits and each orbit contains two elements. This contradicts Lemma  \ref{p.vector compact surface},  concluding  the proof.
\end{proof}

Now we prove Proposition \ref{p.toric surface E8F4G2} through a  case-by-case argument.

\subsubsection{The case of $G_2$}
Let $e_1, e_2$ and $e_3$ be an orthonormal basis of the inner product space $\mbR^3$. The short roots of $G_2$ are $e_i-e_j$, $i\neq j$, and the long roots are $\pm(e_1+e_2+e_3-3e_k)$, $k=1,2,3$. The simple roots (in Bourbaki's numbering order) are $\alpha_1=e_1-e_2$ and $\alpha_2=-e_1+2e_2-e_3$. The fundamental coweights are $\omega_1^\vee=e_1-e_3$ and $\omega_2^\vee=\frac{1}{3}(e_1+e_2-2e_3)$. The Weyl group $W(G_2)=\langle-id\rangle\times\mathfrak{G}_3$, where $\mathfrak{G}_3$ is the group of permutations of the set $\{e_1, e_2, e_3\}$.  The action of the Weyl group $W(G_2)$ on fundamental coweights is given by:  $W(G_2)\cdot\omega_1^\vee=\{e_i-e_j\mid i\neq j\}$ and $W(G_2)\cdot\omega_2^\vee=\{\pm\frac{1}{3}(e_1+e_2+e_3-3e_k)\mid k=1,2,3\}$.

In this case, take $G'=T'$ to be the maximal torus $T$ and let $W'$  be the whole Weyl group $W(G_2)$ (of order 12). By Proposition \ref{p.limit}, the fan of $\bar{T}$ consists of the Weyl chambers and their faces. Hence $\bar{T}$ is a toric surface of Picard number $12-2=10$.  Now consider the action of $W'$ on the boundary divisors.  An irreducible component of the boundary $\partial\bar{T}$ corresponds to a unique ray of the fan, and this ray is generated by a unique element in $W(G_2)\cdot\{\omega_1^\vee, \omega_2^\vee\}$.  It follows that $W'$ acts on the irreducible components of $\partial{\bar{T'}}$ with two orbits.

\subsubsection{The case of $F_4$}
Let us first recall some  facts on $F_4$ from \cite{A} (page 93-94). Let $e_1, e_2, e_3, e_4$ be an orthonormal basis of the inner product  space $\mbR^4$, then the 24 long roots of $F_4$ are $\pm e_i \pm e_j (i< j)$ while the 24 short roots are $\pm e_i, \frac{1}{2}(\pm e_1 \pm e_2 \pm e_3 \pm e_4)$.
The simple roots (in Bourbaki's numbering order) are $\alpha_1=e_1-e_2$, $\alpha_2=e_2-e_3$, $\alpha_3=e_3$ and $\alpha_4=\frac{1}{2}(-e_1-e_2-e_3+e_4)$. The fundamental coweights are $\omega_1^\vee=e_1+e_4$, $\omega_2^\vee=e_1+e_2+2e_4$, $\omega_3^\vee=e_1+e_2+e_3+3e_4$ and $\omega_4^\vee=2e_4$.

The root system of $F_4$ contains a root subsystem of type $D_4$ such that simple roots of $D_4$ are given by (in Bourbaki's numbering order):  $e_1-e_2, e_2-e_3, e_3-e_4, e_3+e_4$.  In particular, the long roots of $F_4$ are exactly the roots of $D_4$.
The Weyl group $W(F_4) \simeq W(D_4) \rtimes \mathfrak{S}_3$, where $\mathfrak{S}_3$ is the permutation group of 3 letters, being the symmetries of Dynkin diagram of $D_4$. Furthermore, the Weyl group $W(D_4) = H_3 \rtimes \mathfrak{S}_4$, where $\mathfrak{G}_4$ is the group of permutations of the set $\{e_1, e_2, e_3, e_4\}$, and $H_3$ is the kernel of the map $\{\pm 1\}^4 \to \{\pm 1\}$ given by
$(\varepsilon_1, \varepsilon_2, \varepsilon_3, \varepsilon_4) \mapsto \Pi_i \varepsilon_i$. Recall that $\mfg_{\alpha_i}\oplus\mfg_{-\alpha_i}\oplus\mfh_{\alpha_i^\vee}$ is an $\mathfrak{sl}_2$ Lie subalgebra of $\fg$.

\begin{lem}\label{l.G'F4case}
(i) The Lie subalgebra $\mathfrak{gl}_2(\alpha_i):=\mfg_{\alpha_i}\oplus\mfg_{-\alpha_i}\oplus\mfh_{\alpha_i^\vee}\oplus\mfh_{\omega_{i+1}^\vee}$ of $\mfg$ is isomorphic to $\mathfrak{gl}(\C^2)$ for $i\in\{0, 3\}$, where $\alpha_0:=\alpha_2+\alpha_3=e_2$ is a short root of $F_4$.

(ii) There is a homomorphism of algebraic groups $\iota: G':=\rm{GL}(\C^2)\times \rm{GL}(\C^2)\rightarrow G$ such that the Lie algebra of $\iota(G')$ is $\mfg':=\mathfrak{gl}_2(\alpha_0)\oplus \mathfrak{gl}_2(\alpha_3)$.

(iii) We have $\mfh \subset \mfg'$, hence $\iota(G')$ contains a maximal torus of $G$.  Furthermore we have $\mfg^{\mfg'}=\mfh_{\omega_1^\vee}\oplus\mfh_{\omega_4^\vee}$, where
 $\mfg^{\mfg'}:=\{v\in\mfg\mid [v, \mfg']=0\}$.
\end{lem}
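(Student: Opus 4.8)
The plan is to realise $\mfg'$ as the Lie algebra of a pair of commuting copies of ${\rm GL}(\C^2)$ inside $G$, and then to read off the centralizer from orthogonality in the root system; throughout I use that for a root $\alpha$ the triple $\mfg_\alpha\oplus\mfg_{-\alpha}\oplus\mfh_{\alpha^\vee}$ is a standard $\mathfrak{sl}(\C^2)$-triple, that $[\mfg_\alpha,\mfg_\beta]=\mfg_{\alpha+\beta}$ is nonzero exactly when $\alpha\neq-\beta$ and $\alpha+\beta\in R$, and that $[v,\mfg_\alpha]=\alpha(v)\,\mfg_\alpha$ for $v\in\mfh$. For part (i), $\mathfrak{gl}_2(\alpha_i)$ is the sum of the $\mathfrak{sl}_2$-triple of $\alpha_i$ with the line $\mfh_{\omega_{i+1}^\vee}$, so it suffices to check that $\alpha_i(\omega_{i+1}^\vee)=0$ (then the extra line centralizes the triple) and that $\omega_{i+1}^\vee$ is not proportional to $\alpha_i^\vee$ (then the sum is direct); in the stated coordinates these reduce to $\langle e_2,e_1+e_4\rangle=0$, $\langle e_3,2e_4\rangle=0$ and the obvious independence of the two coweights, whence $\mathfrak{gl}_2(\alpha_i)=\mathfrak{sl}(\C^2)\oplus\C\cong\mathfrak{gl}(\C^2)$ with the $\C$ central.

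For part (ii), the crux is that $\mathfrak{gl}_2(\alpha_0)$ and $\mathfrak{gl}_2(\alpha_3)$ commute in $\mfg$. This is a finite check in the $F_4$ root system: the two roots underlying the $\mathfrak{sl}_2$-triples must span an $A_1\times A_1$ subsystem, i.e.\ neither their sum nor their difference is a root (whence the triples commute); the central line of each factor must annihilate the root of the other factor (whence it centralizes the other triple); and the two central lines commute automatically inside the abelian $\mfh$. Granting this, for each of the two roots $\beta$ one has a homomorphism ${\rm GL}(\C^2)\to G$ which extends the ${\rm SL}_2$ attached to $\beta$ and sends the scalar matrices to a suitably chosen one-parameter subgroup of $T$, arranged so that its image has Lie algebra $\mathfrak{gl}_2(\beta)$. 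Since the two images are connected with commuting Lie algebras they commute, so the two homomorphisms combine into $\iota:{\rm GL}(\C^2)\times{\rm GL}(\C^2)\to G$ with ${\rm Lie}(\iota(G'))=\mathfrak{gl}_2(\alpha_0)\oplus\mathfrak{gl}_2(\alpha_3)=\mfg'$.

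For part (iii), the four Cartan lines $\mfh_{\alpha_0^\vee}$, $\mfh_{\omega_1^\vee}$, $\mfh_{\alpha_3^\vee}$, $\mfh_{\omega_4^\vee}$ of the two $\mathfrak{gl}_2$-factors form a basis of $\mfh$ (a rank-$4$ coordinate check), so $\mfh\subset\mfg'={\rm Lie}(\iota(G'))$, and since $\iota(G')$ is connected this forces $T\subseteq\iota(G')$. As $\mfg'\supseteq\mfh$, any element centralizing $\mfg'$ lies in $\mfg^\mfh=\mfh$, and $v\in\mfh$ centralizes $\mfg'$ precisely when $\alpha_0(v)=\alpha_3(v)=0$; the common zero set of these two linear forms on $\mfh$ is the plane spanned by $\omega_1^\vee$ and $\omega_4^\vee$, giving $\mfg^{\mfg'}=\mfh_{\omega_1^\vee}\oplus\mfh_{\omega_4^\vee}$.

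The main obstacle is the root-system bookkeeping inside part (ii): one must choose the roots underlying the two $\mathfrak{gl}_2$-factors so that they really span an $A_1\times A_1$ rather than a $B_2$ subsystem — the non-simply-laced subtlety, which also dictates which coroots occur — and so that the attached coweights exhaust the remaining two dimensions of $\mfh$. With the correct roots and coweights in hand, the two computations in part (i), the assembly of $\iota$ out of two copies of ${\rm GL}_2$, and the orthogonality computation in part (iii) are all routine.
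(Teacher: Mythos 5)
The genuine gap is in part (ii), exactly at the step you label a routine finite check and then ``grant''. With the paper's coordinates, $\alpha_0=e_2$ and $\alpha_3=e_3$ are orthogonal short roots, but they are \emph{not} strongly orthogonal: both $\alpha_0+\alpha_3=e_2+e_3$ and $\alpha_0-\alpha_3=e_2-e_3=\alpha_2$ are (long) roots of $F_4$, so $\{\pm e_2,\pm e_3,\pm e_2\pm e_3\}$ is a $B_2$-subsystem — precisely the failure mode you warn about in your closing paragraph but never test. Consequently $[\mfg_{\alpha_0},\mfg_{\alpha_3}]=\mfg_{e_2+e_3}\neq 0$ (the $e_2$-string through $e_3$ has length three), so the two $\mathfrak{sl}_2$-triples do not commute, your criterion ``neither sum nor difference is a root'' is violated, and your argument does not even show that $\mfg'$ is closed under the bracket; the assembly of $\iota$ therefore does not go through as described. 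You should also be aware that the paper's own proof of (ii) deduces the commutation solely from $\langle\alpha_0,\alpha_3\rangle=0$, which runs into the same non-simply-laced subtlety, so this is genuinely the crux: a repair requires either a strongly orthogonal pair of roots (e.g. the long roots $e_2\pm e_3$, with the Cartan lines adjusted so that they still span $\mfh$ and have common kernel $\mfh_{\omega_1^\vee}\oplus\mfh_{\omega_4^\vee}$) or some further argument — it cannot be waved through as bookkeeping.

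On the remaining parts: your (iii) is essentially the paper's argument and is fine, and your abstract identification $\mathfrak{sl}_2\oplus\C\cong\mathfrak{gl}(\C^2)$ in (i) is correct, though the paper proceeds differently — it exhibits $\mathfrak{gl}_2(\alpha_0)$ via the adjoint action of a $B_2$-subalgebra with simple roots $\{\alpha_1,\alpha_0\}$ on $\mfg_{\alpha_1}\oplus\mfg_{\alpha_1+\alpha_0}\oplus\mfg_{\alpha_1+2\alpha_0}$ (the representation $\Sym^2\C^2$) and $\mathfrak{gl}_2(\alpha_3)$ via an $A_2$-subalgebra with simple roots $\{\alpha_3,\alpha_4\}$ acting on $\mfg_{\alpha_4}\oplus\mfg_{\alpha_3+\alpha_4}$. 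These explicit low-dimensional representations are also what furnish the homomorphisms $\iota_0,\iota_3$ from ${\rm GL}(\C^2)$ at the group level; your version, which extends the ${\rm SL}_2$ of $\beta$ by a ``suitably chosen'' central one-parameter subgroup, leaves unaddressed the compatibility needed for the map to factor through ${\rm GL}(\C^2)$ (the element $(-I,-1)$ of ${\rm SL}_2\times\C^*$ must act trivially). That is a secondary issue, but since the later application needs $G'$ to be a product of general linear groups, it should not be left implicit.
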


\begin{proof}
(i) By considering the Cartan pairings, there is a Lie subalgebra of type $B_2$ such that $\{\alpha_1,\alpha_0\}$ is an ordered set of simple roots, and the restriction of the adjoint representation induces $\mathfrak{gl}_2(\alpha_0)\subset \mathfrak{gl}(\mfg_{\alpha_1}\oplus\mfg_{\alpha_1+\alpha_0}\oplus\mfg_{\alpha_1+2\alpha_0})$, that is the irreducible representation $\Sym^2(\C^2)$ of $\mathfrak{gl}(\C^2)$. Similarly, there is a Lie subalgebra of type $A_2$ such that $\{\alpha_3, \alpha_4\}$ is a set of simple roots, whose adjoint representation induces an isomorphism $\mathfrak{gl}_2(\alpha_3)\simeq  \mathfrak{gl}(\mfg_{\alpha_4}\oplus\mfg_{\alpha_3+\alpha_4})$.

(ii) For $i\in\{0, 3\}$, the adjoint representation of $\mfg$ gives rise to the homomorphism $\iota_i: \rm{GL}(\C^2)\rightarrow G$ of finite kernel such that the Lie algebra of $\iota_i(\rm{GL}(\C^2))$ is $\mathfrak{gl}_2(\alpha_i)$. Since the Cartan pairing $\langle\alpha_0, \alpha_3\rangle=0$, we have $[\mathfrak{gl}_2(\alpha_0), \mathfrak{gl}_2(\alpha_3)]=0$ and thus $\mfg'$ is a Lie subalgebra of $\mfg$. Now $\iota:  G' \ni (a, b) \mapsto\iota_0(a)\cdot\iota_3(b)\in G$ is the required homomorphism.

(iii) Since the coweights $\omega_1^\vee$, $\omega_4^\vee$, $\alpha_0^\vee$ and $\alpha_3^\vee$ are linearly independent, we have $\mfh=\mfh_{\omega_1^\vee}\oplus\mfh_{\omega_4^\vee}\oplus\mfh_{\alpha_0^\vee}\oplus\mfh_{\alpha_3^\vee}$, implying that $\mfh\subset\mfg'$ and $\mfg^{\mfg'}\subset\mfg^{\mfh}=\mfh$. The roots $\alpha_i:\mfh\rightarrow\C$ satisfy that $\mfh_{\omega_1^\vee}\oplus\mfh_{\omega_4^\vee}=\ker\alpha_2\cap\ker\alpha_3=\ker\alpha_0\cap\ker\alpha_3$. It follows that $\mfg^{\mfg'}=\mfh_{\omega_1^\vee}\oplus\mfh_{\omega_4^\vee}$.
\end{proof}

%In this case, the Lie algebra of the subtorus $T'\subset T$ is $\mfh':=\mfh_{\omega_1^\vee}\oplus\mfh_{\omega_4^\vee}\subset\mfh$.

\begin{lem}\label{l.F4 group W'}
Let $\epsilon_k$ be the linear transformation on $\mbR^4$ such that $\epsilon_k(e_i)=(-1)^{\delta_{ik}}e_i$, and let $(e_i, e_j)$ be the linear transformation on $\mbR^4$ induced by permuting  $e_i$ and $e_j$.

(i) Let $W'$ be the subgroup of $W(F_4)$ generated by $\tau_1:=(e_1, e_4)$ and $\tau_2:=\epsilon_1\epsilon_2$. Then $W'$ is of order 8.

(ii) The sublattice $\mbZ\omega_1^\vee\oplus\mbZ\omega_4^\vee$ of the coweight lattice of $F_4$ is stable under the action of $W'$. Moreover, $W'\cdot\omega_1^\vee=\{\pm e_1\pm e_4\}$ and $W'\cdot\omega_4^\vee=\{\pm 2e_1, \pm 2e_4\}$.

(iii) Let $\mbF'$ be the fan in $\mbR\omega_1^\vee\oplus\mbR\omega_4^\vee$ whose rays are those generated by $\pm(2e_1)$, $\pm(2e_4)$, $\pm e_1\pm e_4$ respectively. Then $\{a\cdot(\mbR^+\omega_1^\vee+\mbR^+\omega_4^\vee)\mid a\in W'\}$ is the set of maximal cones.
\end{lem}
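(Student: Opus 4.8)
The plan is to reduce the entire statement to an explicit two–dimensional computation in the plane $P:=\mbR e_1\oplus\mbR e_4$. Observe first that each of $\tau_1,\tau_2$ preserves $P$ and fixes $e_3$; moreover $\tau_1$ fixes $e_2$ while $\tau_2$ fixes $e_4$. All three assertions then become statements about the group $W'|_P$ of linear automorphisms of $P$ and its action on the rank–two lattice $\mbZ\omega_1^\vee\oplus\mbZ\omega_4^\vee\subset P$.

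For (i) I would first record that $W'\subset W(F_4)$: the transposition $\tau_1=(e_1,e_4)$ lies in the permutation subgroup $\mathfrak{S}_4\subset W(D_4)$, and $\tau_2=\epsilon_1\epsilon_2$ is a product of two sign changes, hence lies in the even–sign–change subgroup $H_3\subset W(D_4)$; since $W(D_4)\subset W(F_4)$, both belong to $W(F_4)$. Next I would compute the product directly: $\tau_1\tau_2$ sends $e_1\mapsto -e_4$, $e_4\mapsto e_1$, $e_2\mapsto -e_2$, $e_3\mapsto e_3$, so $(\tau_1\tau_2)^2$ negates $e_1$ and $e_4$ and fixes $e_2,e_3$; in particular $\tau_1\tau_2$ has order exactly $4$. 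Combined with $\tau_1^2=\tau_2^2=1$ and the relation $\tau_1(\tau_1\tau_2)\tau_1^{-1}=\tau_2\tau_1=(\tau_1\tau_2)^{-1}$, this exhibits $W'$ as a dihedral group of order $8$; the eight elements $(\tau_1\tau_2)^j$ and $\tau_1(\tau_1\tau_2)^j$ ($0\le j\le 3$) are pairwise distinct, for instance because $\tau_1$ fixes $e_2$ whereas $(\tau_1\tau_2)^{\pm 1}$ do not.

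For (ii), writing $(x,y)$ for $xe_1+ye_4\in P$, the restrictions are $\tau_1\colon(x,y)\mapsto(y,x)$ and $\tau_2\colon(x,y)\mapsto(-x,y)$, so $W'|_P$ is precisely the full dihedral symmetry group of order $8$ of the integral square lattice of $P$. Since $\omega_1^\vee=e_1+e_4\leftrightarrow(1,1)$ and $\omega_4^\vee=2e_4\leftrightarrow(0,2)$ are linearly independent and $e_1-e_4=\omega_1^\vee-\omega_4^\vee$, one checks that $\mbZ\omega_1^\vee\oplus\mbZ\omega_4^\vee=\{xe_1+ye_4\mid x+y\in 2\mbZ\}$, which is patently $W'$–stable. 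A short orbit computation then gives $W'\cdot\omega_1^\vee=\{\pm e_1\pm e_4\}$ (the four vectors of minimal nonzero norm in this lattice) and $W'\cdot\omega_4^\vee=\{\pm 2e_1,\pm 2e_4\}$ (the next shell), which proves (ii).

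Finally, for (iii): the eight rays of $\mbF'$, read cyclically around the origin of $P$, are generated by $2e_1,\ e_1+e_4,\ 2e_4,\ -e_1+e_4,\ -2e_1,\ -e_1-e_4,\ -2e_4,\ e_1-e_4$, i.e.\ the two $W'$–orbits $\mbR^+(W'\cdot\omega_4^\vee)$ and $\mbR^+(W'\cdot\omega_1^\vee)$ from (ii), interleaved at $45^\circ$. The cone $\sigma_0:=\mbR^+\omega_1^\vee+\mbR^+\omega_4^\vee$ is exactly the sector bounded by the two consecutive rays through $e_1+e_4$ and $2e_4$, and $W'$, being the full symmetry group of this configuration, permutes the eight such sectors; any element stabilizing $\sigma_0$ must fix both of its bounding rays, hence equals the identity, so the action of $W'$ on the eight maximal cones is simply transitive. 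Therefore $\{a\cdot\sigma_0\mid a\in W'\}$ is precisely the set of maximal cones of $\mbF'$. The only point demanding care throughout is the bookkeeping of composition conventions and of which coordinate each generator moves; there is no conceptual obstacle, and (iii) in particular is immediate once (i) and (ii) are in place.
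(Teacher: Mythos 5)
Your proposal is correct, and it follows essentially the same route as the paper, whose proof is simply a direct calculation listing the eight elements of $W'$ and declaring (ii) and (iii) straightforward; you carry out the same elementary verification, merely organized via the dihedral presentation $\tau_1^2=\tau_2^2=1$, $(\tau_1\tau_2)^4=1$ and the restriction to the plane $\mbR e_1\oplus\mbR e_4$. All the individual computations (the action of $\tau_1\tau_2$, the checkerboard description of $\mbZ\omega_1^\vee\oplus\mbZ\omega_4^\vee$, the two orbits, and the simply transitive action on the eight sectors) check out.
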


\begin{proof}
A direct calculation shows that $W'=\{id, \tau_1, \tau_2, \tau_{2, 1}, \tau_{1, 2}, \tau_{1, 2, 1}, \tau_{2, 1, 2}, \tau_{2, 1, 2, 1}\}$, where $\tau_{i_1, i_2,\ldots, i_k}=\tau_{i_1}\circ\tau_{i_2}\circ\cdots\circ\tau_{i_k}$. The other claims are straight-forward.
\end{proof}

\begin{rmk}
In the setting of Lemma \ref{l.F4 group W'}, we can identify $\mbZ\omega_1^\vee\oplus\mbZ\omega_4^\vee$ with the coweight lattice of $B_2$ such that $\omega_1^\vee$ and $\omega_4^\vee$ are the first and second fundamental coweights of $B_2$, and $W'$ coincides with the Weyl group of $B_2$.  The fan $\mbF'$ is in fact the fan consisting of Weyl chambers of $B_2$ and their faces.  The associated toric surface has 8 boundary divisors, hence it is of Picard number 6.
\end{rmk}

\begin{prop}\label{p.F4 fan}
Let $T'$ be the subtorus of $T$ with Lie algebra $\mfh':=\mfh_{\omega_1^\vee}\oplus\mfh_{\omega_4^\vee}\subset\mfh$, and $\bar{T'}$ its closure in the wonderful compactifcation $\bar{F_4}$. Then the group $W'$ stabilizes the subtorus $T'\subset T$, and the fan of the toric surface $\bar{T'}$ is exactly $\mbF'$.
\end{prop}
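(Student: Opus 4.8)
The plan is to identify $\bar{T'}$ with the closure of the subtorus $T'$ inside $\bar{T}$, and then to read off its fan by slicing the Weyl-chamber fan of $F_4$ with the plane $\mfh'_\mbR=\mbR\omega_1^\vee\oplus\mbR\omega_4^\vee$. First I would check that $W'$ stabilizes $T'$. Working in the cocharacter lattice $N:=X_*(T)$ of the maximal torus of the adjoint group $F_4$ (which equals the coweight lattice, hence here the coroot lattice), the subtorus $T'$ corresponds to $N':=N\cap\mfh'_\mbR$. By Lemma \ref{l.G'F4case}(iii) we have $\mfh'_\mbR=\{x_2=x_3=0\}\subset\mbR^4$, and with $\omega_1^\vee=e_1+e_4$, $\omega_4^\vee=2e_4$ a direct computation gives $N'=\mbZ\omega_1^\vee\oplus\mbZ\omega_4^\vee$; in particular $N'$ is saturated in $N$. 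Since $W'\subset W(F_4)$ preserves $N$ and, by Lemma \ref{l.F4 group W'}(ii), preserves $\mbZ\omega_1^\vee\oplus\mbZ\omega_4^\vee$ and hence its real span $\mfh'_\mbR$, it preserves $N'$, so $W'$ stabilizes $T'$; that is, $W'\subseteq W_{T'}$.

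Next, by Proposition \ref{p.limit}(1) (via \cite[Lemma 6.1.6]{BK}) $\bar{T}$ is the smooth complete toric variety whose fan $\Sigma$ in $\mfh_\mbR$ is made of the Weyl chambers of $F_4$ and their faces. As $\bar{T'}$ is the closure in $\bar{T}$ of the subtorus $T'=T_{N'}$ and $N'$ is saturated, the standard description of closures of subtori identifies $\bar{T'}$ with the toric surface of the fan $\Sigma':=\{\sigma\cap\mfh'_\mbR\mid\sigma\in\Sigma\}$ in $\mfh'_\mbR$. Its maximal cones are the two-dimensional cones $\sigma\cap\mfh'_\mbR$, which are exactly the chambers of the central line arrangement cut out on $\mfh'_\mbR\cong\mbR^2$ by the traces of the root hyperplanes of $F_4$. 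Restricting each of the $48$ roots to $\mfh'_\mbR=\mbR e_1\oplus\mbR e_4$, the $8$ roots of the subsystem $\langle\alpha_2,\alpha_3\rangle$ restrict to $0$, while the remaining $40$ restrict to nonzero functionals whose kernels are the four lines $\{x_1=0\}$, $\{x_4=0\}$, $\{x_1=x_4\}$ and $\{x_1=-x_4\}$. Hence the arrangement consists precisely of these four lines, has $8$ chambers, and the rays of $\Sigma'$ are those generated by $\pm2e_1$, $\pm2e_4$ and $\pm e_1\pm e_4$ — exactly the rays of $\mbF'$.

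Finally I would match the maximal cones. Since $\alpha_2$ and $\alpha_3$ vanish on $\mfh'_\mbR$ and $\alpha_i(\omega_j^\vee)=\delta_{ij}$, the closed dominant Weyl chamber meets $\mfh'_\mbR$ in $\{a\omega_1^\vee+b\omega_4^\vee\mid a,b\ge0\}=\mbR^+\omega_1^\vee+\mbR^+\omega_4^\vee$, which is one of the maximal cones of $\Sigma'$. The group $W'$ permutes the root hyperplanes of $F_4$ and preserves $\mfh'_\mbR$, hence permutes the four lines of the arrangement and thus the $8$ chambers; by the Remark after Lemma \ref{l.F4 group W'} it acts there as the Weyl group of $B_2$ with those four lines as mirrors, hence simply transitively on the chambers. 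Therefore the $8$ maximal cones of $\Sigma'$ are precisely $\{a(\mbR^+\omega_1^\vee+\mbR^+\omega_4^\vee)\mid a\in W'\}$, and together with the description of the rays this yields $\Sigma'=\mbF'$, as asserted.

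I expect the main obstacle to be the slicing computation of the second paragraph: one has to verify that no root hyperplane contributes a fifth line on $\mfh'_\mbR$, and that the cones $\sigma\cap\mfh'_\mbR$ genuinely assemble into the complete two-dimensional fan $\Sigma'$ rather than degenerating. A secondary point is to make sure the toric fact used — that the fan of the closure of a saturated subtorus is obtained by intersecting the ambient cones with the linear span of the subtorus's cocharacters — is correctly invoked, which is why the saturation of $N'$ was recorded in the first step.
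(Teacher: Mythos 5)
Your overall route is the one the paper takes: work inside the plane $\mfh'_\mbR=\mbR\omega_1^\vee\oplus\mbR\omega_4^\vee$, note that $W'$ preserves the lattice $\mbZ\omega_1^\vee\oplus\mbZ\omega_4^\vee$ and hence $T'$, and recover the fan from the Weyl-chamber fan of $\bar{T}$ together with the $W'$-action. Your slicing computations are correct: the roots outside the $\langle\alpha_2,\alpha_3\rangle$-subsystem restrict to functionals whose kernels are exactly the four lines $\{x_1=0\}$, $\{x_4=0\}$, $\{x_1=\pm x_4\}$, the dominant chamber meets $\mfh'_\mbR$ in $\cone\{\omega_1^\vee,\omega_4^\vee\}$, and $W'$ (the $B_2$ Weyl group of that arrangement, as in the remark after Lemma \ref{l.F4 group W'}) permutes the eight chambers simply transitively.

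The genuine gap is the sentence invoking ``the standard description of closures of subtori.'' It is not true in general that the closure of a subtorus $T_{N'}\subset T_N$ in a toric variety $X(\Sigma)$ is the toric variety of the slice fan $\{\sigma\cap N'_\mbR\mid\sigma\in\Sigma\}$: what is true is that this toric variety is the \emph{normalization} of the closure, and the closure itself can fail to be normal even when $N'$ is saturated and $X(\Sigma)$ is smooth. For instance the closure of $\{(t^2,t^3)\}\subset(\C^*)^2$ in $\mbA^2$ is the cuspidal cubic, although $\mbZ(2,3)$ is saturated in $\mbZ^2$; so the saturation of $N'$, which is the only safeguard you record, does not suffice. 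To speak of ``the fan of $\bar{T'}$'' you must first know $\bar{T'}$ is a normal $T'$-toric surface, and this is exactly the content the paper supplies by an explicit chart computation: in the affine chart $\bar{T}^{\aff}\simeq\mbA^4$ of the dominant Weyl chamber, with coordinates $(\alpha_1(t),\ldots,\alpha_4(t))$ as in Example \ref{e.Weylchamberfan}, the characters $\alpha_2,\alpha_3$ are trivial on $T'$ while $\alpha_1,\alpha_4$ restrict to a basis of $X^*(T')$ dual to $(\omega_1^\vee,\omega_4^\vee)$, so $\bar{T'}\cap\bar{T}^{\aff}$ is an affine plane $\mbA^2$ whose fan is $\cone\{\omega_1^\vee,\omega_4^\vee\}$ and its faces; translating by $W'$ and using that the cones $W'\cdot\cone\{\omega_1^\vee,\omega_4^\vee\}$ cover $\mfh'_\mbR$ (so the union of these open charts is a complete, hence closed, open subvariety of the irreducible $\bar{T'}$) yields both normality (indeed smoothness) and the identification of the fan with $\mbF'$. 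Once you insert this chart argument, your proof is complete, and your slice-arrangement analysis becomes a consistency check rather than the load-bearing step.
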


\begin{proof}
The subtorus $T'\subset T$ is stabilized by $W'$, since so is the lattice $\mbZ\omega_1^\vee\oplus\mbZ\omega_4^\vee$. Now $\bar{T'}$ is a closed subvariety of the toric variety $\bar{T}$, and the fan of $\bar{T}$ consists of Weyl chambers of $F_4$ and their faces by Proposition \ref{p.limit}(1). The open subset $\bar{T}^{\aff}$ of $\bar{T}$ associated with the positive Weyl chamber $\sum_{i=1}^4\mbR^+\omega_i^\vee$ is isomorphic to the affine space $\mbA^4$. More precisely, we define $ T\rightarrow\mbA^4$ by sending $t\in T$ to $(\alpha_1(t),\ldots, \alpha_4(t))\in\mbA^4$, which makes $\mbA^4$ a toric variety associated with the fan consisting of the positive Weyl chamber $\sum_{i=1}^4\mbR^+\omega_i^\vee$ and its faces, see Example \ref{e.Weylchamberfan}. Hence, $\bar{T'}^{\aff}:=\bar{T'}\cap\bar{T}^{\aff}$ is an affine plane $\mbA^2=\{(v_1, 0, 0, v_4)\in\mbA^4\mid v_1, v_4\in\C\}\subset\mbA^4=\bar{T}^{\aff}$. Furthermore, the fan of $\bar{T'}^{\aff}$ consists of the positive cone $\mbR^+\omega_1^\vee+\mbR^+\omega_4^\vee$ and its faces. Take any $w\in W'$. Then $w\cdot\bar{T'}^{\aff}$ is a $T'$-stable open subset of $\bar{T'}$, whose fan consists of the cone $w\cdot(\mbR^+\omega_1^\vee+\mbR^+\omega_4^\vee)$. By Lemma \ref{l.F4 group W'}, the cones $W'\cdot(\mbR^+\omega_1^\vee+\mbR^+\omega_4^\vee)$ cover the space $\mbR\omega_1^\vee+\mbR\omega_4^\vee$, and thus $W'\cdot\bar{T'}^{\aff}$ gives rise to an open covering of $\bar{T'}$, completing the proof.
\end{proof}

Now Proposition \ref{p.toric surface E8F4G2} for type $F_4$ follows from  Lemma \ref{l.G'F4case}, Lemma \ref{l.F4 group W'} and Proposition \ref{p.F4 fan}.

\subsubsection{The case of $E_8$}
Let us first recall some facts on $E_8$ from \cite[Page 56-59]{A}. Let $e_1,\ldots,e_8$ be an orthonormal basis of the inner product space $\mbR^8$. A set of simple roots of $E_8$ in Bourbaki's numbering order can be given as follows: $\alpha_1=\frac{1}{2}\sum_{i=1}^2 e_i-\frac{1}{2}\sum_{j=3}^8e_j$, $\alpha_2=e_2+e_3$, $\alpha_k=-e_{k-1}+e_k$ for $3\leq k\leq 8$. The roots of $E_8$ are of form $\pm e_i\pm e_j$ with $i\neq j$ or of the form $\frac{1}{2}\sum_{k=1}^8\pm e_k$ with an even number of $-$ signs. There are 112 roots of first form, and 128 roots of second form. The fundamental dominant coweights of $E_8$ are $\omega_1^\vee=2e_1$, $\omega_2^\vee=\frac{5}{2}e_1+\frac{1}{2}\sum_{j=2}^8e_j$, $\omega_3^\vee=\frac{7}{2}e_1-\frac{1}{2}e_2+\frac{1}{2}\sum_{j=3}^8e_j$, $\omega_k^\vee=(9-k)e_1+\sum_{j=k}^8e_j$ for $4\leq k\leq 8$. The Weyl group of $E_8$ is $W(E_8)=\{a\in SO(\mbR^8)\mid a(R)=R\}$ and it is of order $2^{14}\cdot 3^5\cdot 5^2\cdot 7$, where $R$ is the set of roots of $E_8$. The action of $W(E_8)$ on $R$ is transitive.

\begin{lem}\label{l.G'E8case}
(i) There is a Lie subalgebra $\mff_1\subset\mfg$ of type $A_3$ such that $\{\alpha_3,\alpha_4,\alpha_2\}$ is the ordered set of simple roots, and $\mfg'_1:=\mff_1\oplus\mfh_{\omega_1^\vee}$ is a Lie algebra isomorphic to $\mathfrak{gl}(\C^4)$.

(ii) There is a Lie subalgebra $\mff_2\subset\mfg$ of type $A_3$ such that $\{\alpha_0,\alpha_6,\alpha_7\}$ is the ordered set of simple roots, and $\mfg'_2:=\mff_2\oplus\mfh_{\omega_5^\vee}$ is a Lie algebra isomorphic to $\mathfrak{gl}(\C^4)$, where $\alpha_0:=-\sum_{i=2}^7\alpha_i-\sum_{j=4}^6\alpha_j=-e_6-e_7$ is a root of $E_8$.

(iii) There is a homomorphism of algebraic groups $\iota: G':=\rm{GL}(\C^4)\times {\rm GL}(\C^4)\rightarrow G$ such that the Lie algebra of $\iota(G')$ is $\mfg':=\mfg'_1\oplus\mfg'_2$.

(iv)  We have $\mfh \subset \mfg'$, hence $\iota(G')$ contains a maximal torus of $G$.  Furthermore we have $\mfg^{\mfg'}=\mfh_{\omega_1^\vee}\oplus\mfh_{\omega_8^\vee}$, where
 $\mfg^{\mfg'}:=\{v\in\mfg\mid [v, \mfg']=0\}$.
\end{lem}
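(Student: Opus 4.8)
The plan is to follow the blueprint of Lemma \ref{l.G'F4case} for $F_4$, with subsystems of type $A_3$ and the group $\GL(\C^4)$ replacing those of type $A_2$ and $\GL(\C^2)$, carrying out every verification inside the root system of $E_8$ written in the orthonormal coordinates $e_1,\dots,e_8$.

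For (i): from the Dynkin diagram of $E_8$, the roots $\{\alpha_3,\alpha_4,\alpha_2\}$ form a chain of type $A_3$ with $\alpha_4$ the middle node, so the subalgebra $\mff_1$ they generate is isomorphic to $\mathfrak{sl}_4$. I would then exhibit the faithful $4$-dimensional $\mff_1$-module $M_1:=\mfg_{\alpha_1}\oplus\mfg_{\alpha_1+\alpha_3}\oplus\mfg_{\alpha_1+\alpha_3+\alpha_4}\oplus\mfg_{\alpha_1+\alpha_3+\alpha_4+\alpha_2}$ (all four labels are roots, their successive differences being the simple roots $\alpha_3,\alpha_4,\alpha_2$ of $\mff_1$, so $M_1$ is irreducible of dimension $4$) and observe that $\mfh_{\omega_1^\vee}$ acts on each summand of $M_1$ by the scalar $\langle\alpha_1,\omega_1^\vee\rangle=1$, since $\omega_1^\vee$ is the fundamental coweight dual to $\alpha_1$ and hence orthogonal to $\alpha_2,\alpha_3,\alpha_4$. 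Thus the adjoint action identifies $\mfg'_1=\mff_1\oplus\mfh_{\omega_1^\vee}$ with $\mathfrak{sl}(M_1)\oplus\C\,\mathrm{id}_{M_1}=\mathfrak{gl}(M_1)\cong\mathfrak{gl}(\C^4)$. For (ii): one checks that $\alpha_0=-e_6-e_7$ is a root and that $\{\alpha_0,\alpha_6,\alpha_7\}$ has the Cartan matrix of $A_3$ (chain $\alpha_0-\alpha_6-\alpha_7$), so $\mff_2\cong\mathfrak{sl}_4$. In contrast to (i), the extra torus line does not centralize $\mff_2$: one computes $\langle\alpha_0,\omega_5^\vee\rangle=-2\neq 0$, so $\mfh_{\omega_5^\vee}$ merely normalizes $\mff_2$ (acting on each of its root spaces by a scalar). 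I would then argue abstractly: $\mfg'_2=\mff_2\oplus\mfh_{\omega_5^\vee}$ is a Lie subalgebra in which $\mff_2\cong\mathfrak{sl}_4$ is a semisimple ideal of codimension one, and because every derivation of $\mathfrak{sl}_4$ is inner, the kernel of the adjoint map $\mfg'_2\to\mathrm{Der}(\mff_2)$ is a central line complementary to $\mff_2$ (nonzero because $\omega_5^\vee\notin\mff_2$, visible from its $e_1$-coordinate, while $\mff_2\cap\mfh$ is spanned by $\alpha_0^\vee,\alpha_6^\vee,\alpha_7^\vee$); hence $\mfg'_2\cong\mathfrak{sl}_4\oplus\C\cong\mathfrak{gl}(\C^4)$.

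For (iii): the roots of $\mff_1$ are supported on the coordinates $e_2,e_3,e_4$ and those of $\mff_2$ on $e_5,e_6,e_7$, so for a root $\beta$ of $\mff_1$ and a root $\gamma$ of $\mff_2$ the sum $\beta+\gamma$ has disjoint support of size $4$ and is therefore not a root of $E_8$, giving $[\mfg_\beta,\mfg_\gamma]=0$ and $\langle\beta,\gamma\rangle=0$; moreover $\mfh_{\omega_1^\vee}=\C e_1$ is annihilated by the roots of $\mff_2$ and $\mfh_{\omega_5^\vee}$ by the roots of $\mff_1$. Hence $\mfg':=\mfg'_1\oplus\mfg'_2$ is a Lie subalgebra isomorphic to $\mathfrak{gl}(\C^4)\oplus\mathfrak{gl}(\C^4)$, and exactly as in Lemma \ref{l.G'F4case}(ii) the adjoint representation of $\mfg$ gives homomorphisms $\iota_a:\GL(\C^4)\to G$ of finite kernel with $\mathrm{Lie}(\iota_a(\GL(\C^4)))=\mfg'_a$; since the images commute, $(a,b)\mapsto\iota_1(a)\iota_2(b)$ is the required $\iota:G'=\GL(\C^4)\times\GL(\C^4)\to G$. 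For (iv): $\mfg'\cap\mfh$ contains the coroots $\alpha_2^\vee,\alpha_3^\vee,\alpha_4^\vee,\alpha_0^\vee,\alpha_6^\vee,\alpha_7^\vee$ (spanning the coordinates $e_2,\dots,e_7$) together with $\omega_1^\vee=2e_1$ and $\omega_5^\vee=4e_1+e_5+e_6+e_7+e_8$ (contributing $e_1$ and $e_8$), so $\mfg'\cap\mfh=\mfh$ and $\iota(G')\supseteq T$. Consequently $\mfg^{\mfg'}\subseteq\mfg^{\mfh}=\mfh$, and $h=\sum_i c_ie_i\in\mfh$ centralizes $\mfg'$ iff it is annihilated by $\alpha_2,\alpha_3,\alpha_4$ and by $\alpha_0,\alpha_6,\alpha_7$, which forces $c_2=\dots=c_7=0$; therefore $\mfg^{\mfg'}=\C e_1\oplus\C e_8$, which equals $\mfh_{\omega_1^\vee}\oplus\mfh_{\omega_8^\vee}$ since $\omega_8^\vee=e_1+e_8$.

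Apart from this essentially routine bookkeeping, the one step requiring genuine care is part (ii): because $\mfh_{\omega_5^\vee}$ does not centralize $\mff_2$ — unlike $\mfh_{\omega_1^\vee}$ versus $\mff_1$, and unlike the $F_4$ case — the isomorphism $\mfg'_2\cong\mathfrak{gl}(\C^4)$ cannot be read off from a weight-string module and must be extracted from the absence of outer derivations of $\mathfrak{sl}_4$, together with the preliminary check that $\alpha_0$ is indeed a root completing $\{\alpha_6,\alpha_7\}$ to a subsystem of type $A_3$.
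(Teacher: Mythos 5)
Your proof is correct, and parts (i), (iii), (iv) run essentially along the paper's lines: (i) uses the very same $4$-dimensional space of root vectors $\mfg_{\alpha_1+\cdots}$ (the paper reaches it through the $A_4$ subalgebra with simple roots $\{\alpha_1,\alpha_3,\alpha_4,\alpha_2\}$), and (iii), (iv) are the same commutation-of-supports and coweight/coroot bookkeeping. The genuine divergence is in (ii). The paper realizes $\mfg'_2$ concretely: it takes the $D_4$ subalgebra with simple roots $\{\alpha_0,\alpha_6,\alpha_5,\alpha_7\}$, lets $\mfg'_2$ act by the adjoint representation on the $6$-dimensional sum of root spaces with $\alpha_5$-coefficient one, and identifies this with $\mathfrak{gl}(\C^4)$ acting on $\wedge^2\C^4$, so the isomorphism $\mfg'_2\simeq\mathfrak{gl}(\C^4)$ comes with an explicit module. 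You instead argue abstractly: $\mff_2\simeq\mathfrak{sl}_4$ is a codimension-one ideal of $\mfg'_2$, and since $\mathfrak{sl}_4$ has only inner derivations the kernel of $\mfg'_2\to{\rm Der}(\mff_2)$ is a central line complementary to $\mff_2$, whence $\mfg'_2\simeq\mathfrak{sl}_4\oplus\C\simeq\mathfrak{gl}(\C^4)$. Your route is more elementary (no $D_4$ or $\wedge^2$ module needed) and it correctly flags the point the paper's concrete realization quietly sidesteps, namely that $\alpha_0(\omega_5^\vee)=-2\neq 0$, so $\mfh_{\omega_5^\vee}$ normalizes but does not centralize $\mff_2$ and the weight-string trick of (i) is unavailable; the price is that you get only an abstract isomorphism, so for (iii) you must (as you do) fall back on the same unproved-but-routine assertion as the paper, that the adjoint action of $\mfg'_i$ on $\mfg$ integrates to a homomorphism $\GL(\C^4)\to G$ with finite kernel — a step for which the paper's explicit module realization is somewhat better adapted. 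Both arguments are sound and at the paper's level of rigor.
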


\begin{proof}
(i) By considering Cartan pairings, there is a Lie subalgebra $\mfl_1\subset\mfg$ of type $A_4$ such that $\{\alpha_1,\alpha_3,\alpha_4,\alpha_2\}$ is the ordered set of simple roots. Let $V$ be the 4-dimensional subspace of $\mfl_1$ generated by root spaces $\mfg_\beta$ for
$$\beta \in \{ \alpha_1, \alpha_1+\alpha_3, \alpha_1+\alpha_3+\alpha_4, \alpha_1+\alpha_2+\alpha_3+\alpha_4\}.$$ The restriction of the adjoint representation of $\mfl_1$ induces $\mfg'_1=\mathfrak{gl}(V)$, and $\mff_1=\mathfrak{sl}(V)$.

(ii) By considering Cartan pairings, there is a Lie subalgebra $\mfl_2\subset\mfg$ of type $D_4$ such that $\{\alpha_0,\alpha_6,\alpha_5,\alpha_7\}$ is the ordered set of simple roots. Let $V$ be the 6-dimensional subspace of $\mfl_2$ generated by root spaces $\mfg_\beta$  for
$$\beta \in \{ \alpha_5, \alpha_5+\alpha_6, \alpha_5+\alpha_6+\alpha_7, \alpha_5+\alpha_6+\alpha_0, \alpha_5+\alpha_6+\alpha_7+\alpha_0, \alpha_5+2\alpha_6+\alpha_7+\alpha_0\}.$$
The restriction of the adjoint representation of $\mfl_2$ induces $\mfg'_2\subset \mathfrak{gl}(V)$, identified with the representation $\wedge^2\C^4$ of $\mathfrak{gl}(\C^4)$. Moreover, $\mff_2=[\mfg'_2, \mfg'_2]\simeq \mathfrak{sl}(\C^4)$.

(iii) For $i\in\{1,2\}$, the adjoint representation of $\mfg$ gives rise to the homomorphism $\iota_i: {\rm GL}(\C^4)\rightarrow G$ of finite kernel such that the Lie algebra of $\iota_i({\rm GL}(\C^4))$ is $\mfg'_i$. Since $[\mfg'_1, \mfg'_2]=0$ and $\mfg'_1\cap\mfg'_2=0$, the space $\mfg':=\mfg'_1\oplus\mfg'_2$ is a Lie subalgebra of $\mfg$. Now $\iota: (a, b)\in G'\mapsto\iota_1(a)\cdot\iota_2(b)\in G$ is the required homomorphism.

(iv) Since the coweights $\omega_1^\vee$, $\omega_5^\vee$, and $\alpha_j^\vee$, $j\in J:=\{0, 2,3,4,6,7\}$, are linearly independent, we have $\mfh=\mfh_{\omega_1^\vee}\oplus\mfh_{\omega_5^\vee}\oplus(\oplus_{j\in J}\mfh_{\alpha_j^\vee})$, implying that $\mfh\subset\mfg'$ and $\mfg^{\mfg'}\subset\mfg^{\mfh}=\mfh$. The roots $\alpha_i:\mfh\rightarrow\C$ satisfy that $\mfh_{\omega_1^\vee}\oplus\mfh_{\omega_8^\vee}=\cap_{j=2}^7\ker\alpha_j=\cap_{j\in J}\ker\alpha_j$, implying the conclusion.
\end{proof}

As in the $F_4$ case, define $\tau_1:=(e_1, e_8)$ and $\tau_2:=\epsilon_1\epsilon_2$ and  $W'$ the finite subgroup of $W(E_8)$ generated by $\tau_1$ and $\tau_2$.  The analogue statements of Lemma \ref{l.F4 group W'} and Proposition \ref{p.F4 fan} hold for $E_8$-case, which  proves Proposition \ref{p.toric surface E8F4G2} for the $E_8$ case. Note that in this case, the toric surface $\bar{T}'$ is the same as that for $F_4$, which is in fact the toric surface with fan consisting of Weyl chambers of $B_2$ and their faces.

\section{Rigidity of the wonderful compactification of $C_n$}

\subsection{A recap on spherical varieties} \label{subsection. Spherical varities}

Wonderful compactifications of simple algebraic groups of adjoint type are spherical varieties. In the following we recall some basic notions and results on spherical varieties. One can consult surveys of Knop \cite{Kn} or Perrin \cite{Pe} for more details.

Let $L$ be a connected reductive group, and let $B \subset L$ be a Borel subgroup. For a closed subgroup $H \subset L$, the homogeneous variety $L/H$ is said to be spherical if it admits an open $B$-orbit. The $B$-stable prime divisors on $L/H$ are called colors of $L/H$, the set of which is denoted by $\mfD(L/H)$.
Let $\C(L/H)^{(B)}$ be the set of nonzero rational functions $f$ on $L/H$ such that there exists a character $\chi_f$ on $B$ satisfying $b\cdot f=\chi_f(b)f$ for all $b\in B$. The constant functions form a subgroup $\C^*$ of $\C(L/H)^{(B)}$, and the quotient group $\Lambda(L/H):=\C(L/H)^{(B)}/\C^*$ is called the weight lattice of $L/H$. In what follows, by a valuation on $L/H$, saying $\nu$,  we always mean it is a valuation $\nu: \C(L/H)^*\rightarrow\mathbb{Q}$ taking values in $\mathbb{Q}$ and vanishing on $\mathbb{C}^*$. Given a valuation $\nu$ on $L/H$, we can define $\varrho_\nu: \Lambda(L/H)\rightarrow\mbQ$ by sending $f$ to $\nu(f)$. This induces a map $\varrho$ from the set of valuations on $\C(L/H)$ to $\Lambda^{\vee}_{\mbQ}(L/H):=\Hom(\Lambda(L/H), \mbZ)\otimes\mbQ$. The set of images of all $L$-invariant valuations on $L/H$ is called the valuation cone of $L/H$ and denoted by $\mcV(L/H)\subset\Lambda^{\vee}_{\mbQ}(L/H)$.

A colored cone is a pair $(\mfC, \mfD)$ such that $\mfD\subset\mfD(L/H)$, $\mfC$ is a cone in $\Lambda^{\vee}_{\mbQ}(L/H)$ generated by $\varrho(\mfD)$ and finitely many elements in $\mathcal{V}(L/H)$ satisfying $\mfC^o\cap\mcV(L/H)\neq\emptyset$, where $\mfC^o$ denotes the relative interior  of the cone $\mfC$.   The pair $(\mfC_0, \mfD_0)$ is called a colored face of $(\mfC, \mfD)$ if $\mfC_0$ is a face of $\mfC$, $\mfC_0^o\cap\mcV(L/H)\neq\emptyset$ and $\mfD_0=\mfD\cap\varrho^{-1}(\mfC_0)$. A colored fan $\mbF$ is a nonempty finite set of colored cones such that colored faces of colored cones in $\mbF$ also belong to $\mathbb{F}$, and $\mfC_1^o\cap\mfC_2^o\cap\mcV(L/H)=\emptyset$ for pairwise different colored cones $(\mfC_1, \mfD_1)$ and $(\mfC_2, \mfD_2)$ in $\mbF$. The colored fan $\mbF$ is called strictly convex if $(0, \emptyset)\in\mbF$.

An $L$-spherical variety is a normal variety $X$ which admits an $L$-equivariant open embedding $L/H\subset X$ for a homogeneous spherical variety $L/H$.  Sometimes we call such $X$ a spherical $L/H$-embedding.  In this case, $B$-stable prime divisors on $X$ consist of closures of $B$-stable prime divisors on $L/H$ and those contained in the boundary $\partial X:=X\setminus L/H$.
There are finitely many $L$-orbits on $X$.
Given an $L$-orbit $Y$ in $X$, we can define the associated colored cone $(\mfC_Y, \mfD_Y)$ such that $\mfC_Y$ is the cone in $\Lambda^{\vee}_{\mbQ}(L/H)$ generated by those $\varrho(D)$, where $D$ runs over the set of $B$-stable prime divisors on $X$ containing $Y$, and $\mfD_Y$ is the set of $D\in\mfD(L/H)$ such that its closure satisfies $\bar{D}\supset Y$.
The set of colored cones associated to all $L$-orbits on $X$ is a strictly convex colored fan in $\Lambda^{\vee}_{\mbQ}(L/H)$, which is called the colored fan of $X$, and denoted by $\mbF_X$. Given two $L$-orbits $Y$ and $Z$ on $X$, then $\bar{Y}\subset\bar{Z}$ if and only if $(\mfC_Z, \mfD_Z)$ is a colored face of $(\mfC_Y, \mfD_Y)$. In particular, $Y=Z$ if and only if $(\mfC_Y, \mfD_Y)=(\mfC_Z, \mfD_Z)$. For the convenience of discussions, we also write $\mfC_{\bar{Y}}:=\mfC_Y$ and $\mfD_{\bar{Y}}:=\mfD_Y$ for the closure $\bar{Y}$ of an orbit $Y$.

Here we summarize some of the basic results of the theory of  spherical varieties.
\begin{prop}\label{p.BasicsSV}
Let $L/H$ be a homogeneous spherical variety.

(i)  Given a strictly convex colored fan $\mbF$ in $\Lambda^{\vee}_{\mbQ}(L/H)$, there is a unique (up to $L$-equivariant isomorphisms) spherical $L/H$-embedding $X$ such that $\mbF_X=\mbF$.

(ii) Let $X$ be a smooth $L/H$-embedding, and let $S_Y$ be the set of $B$-stable prime divisors containing the orbit $Y$ on $X$. Then $\{\varrho(\nu_D)\mid D\in S_Y\}$ is a linearly independent set in $\Lambda^{\vee}_{\mbQ}(L/H)$.

(iii) A spherical $L/H$-embedding  $X$ is complete if and only if $\mcV(L/H)$ is contained in the union of  colored cones of $X$.

(iv) The Chow group $A^1(X)$ of a spherical variety $X$  is generated by $B$-stable prime divisors.  The rational equivalences are generated by relations $\Div(f)=0$, for $f\in\C(L/H)^{(B)}$.

(v) The cycle map $A^1(X)\rightarrow H^2(X, \mbZ)$ of a smooth projective spherical variety $X$ is an isomorphism.

(vi) The anti-canonical divisor of a spherical $L/H$-embedding $X$ is given by $-K_X=\sum_{D\in\mfD(L/H)}m_D D+\sum_{j=1}^n D_j,$ where the coefficients $m_D$ only depend on the open orbit $L/H$, and $D_1,\ldots, D_n$ are the prime boundary divisors.
\end{prop}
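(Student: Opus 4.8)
The plan is to treat Proposition~\ref{p.BasicsSV} as a dictionary of classical facts from the Luna--Vust theory of spherical embeddings and to supply, item by item, the precise reference together with a one-line check that its hypotheses hold in the cases we need; the only genuine care required is to fix one set of conventions for colored fans, valuation cones, and the pairing $\varrho$ at the outset (we adopt those of \cite{Kn} and \cite{Pe}) and then to remain consistent with them throughout.

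For (i) and (iii) I would cite directly the Luna--Vust classification as presented in Knop's survey \cite{Kn} (see also \cite{Pe}): the assignment $X \mapsto \mbF_X$ is a bijection between isomorphism classes of spherical $L/H$-embeddings and strictly convex colored fans in $\Lambda^\vee_\mbQ(L/H)$, and under this bijection $X$ is complete exactly when the valuation cone $\mcV(L/H)$ is contained in the union of the cones of $\mbF_X$. For (ii) I would invoke the local structure theorem: near a point of an orbit $Y$ the spherical variety is, up to an \'etale neighbourhood and a product with an affine space, the toric variety attached to the cone $\mfC_Y$, so that smoothness forces $\mfC_Y$ to be a unimodular cone; this is exactly the assertion that the rays $\varrho(\nu_D)$, $D \in S_Y$, form part of a lattice basis, hence are linearly independent. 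This is the smoothness criterion of Brion and Pauer, recorded in \cite{Pe}.

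For (iv) I would use the computation of the Chow groups of a spherical variety: since the big $B$-orbit of $X$ is an affine space whose complement is a union of $B$-stable prime divisors, $A^1(X)$ is generated by those divisors, and the relations are precisely the divisors of $B$-eigenfunctions $f \in \C(L/H)^{(B)}$; see \cite{Pe} and the references therein. For (v), a smooth projective spherical variety carries a cellular algebraic decomposition --- take the Bia{\l}ynicki--Birula decomposition associated to a one-parameter subgroup of the maximal torus in general position, whose fixed locus is finite because $X$ is spherical --- so that the cycle map $A^*(X) \to H^{2*}(X,\mbZ)$ is an isomorphism and $X$ has no odd cohomology; this is again in \cite{Kn} and \cite{Pe}. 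Finally for (vi) I would cite Brion's formula for the anticanonical class (see \cite{Pe}): every prime boundary divisor occurs with coefficient one, while a color $D$ occurs with a coefficient $m_D$ computed from the action of $B$ on the tangent space of the open $B$-orbit, and therefore depends only on $L/H$ and not on the chosen embedding.

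The one point to watch --- the ``main obstacle'', such as it is --- is bookkeeping rather than mathematics: the standard sources normalize the valuation pairing and the colors with differing sign conventions and with differing notions of ``arithmetic'' versus ``geometric'' valuation, and two of the statements, namely (ii) and (v), require smoothness (and (v) also projectivity). So the write-up should begin by pinning down conventions, should flag these smoothness and projectivity hypotheses explicitly, and should then quote each of (i)--(vi) in the exact form in which it appears in \cite{Kn} or \cite{Pe}, noting that all of these hypotheses are satisfied by the wonderful compactifications and the auxiliary spherical embeddings used later in the paper.
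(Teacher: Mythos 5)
Your proposal matches the paper's treatment: the paper states Proposition \ref{p.BasicsSV} as a summary of standard facts from the Luna--Vust theory, giving no proof beyond pointing the reader to the surveys \cite{Kn} and \cite{Pe}, which is exactly the route you take (with somewhat more detail on smoothness via the unimodularity of colored cones and on the cycle map via a cellular decomposition). Your cautionary remarks about sign conventions and the smoothness/projectivity hypotheses in (ii) and (v) are sensible and consistent with how the proposition is used later in the paper.
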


Take a spherical $L/H$-embedding $X$ and a spherical $L/H'$-embedding $X'$ with $H\subset H'$. The natural morphism $\psi: L/H\rightarrow L/H'$ induces a homomorphism $\Lambda(L/H')\rightarrow\Lambda(L/H)$, and thus a homomorphism $\psi_*: \Lambda^{\vee}_{\mbQ}(L/H)\rightarrow\Lambda^{\vee}_{\mbQ}(L/H')$. We say $\psi_*$ sends $\mbF_X$ to $\mbF_{X'}$ if given any $(\mfC, \mfD)\in\mbF_X$ there exists $(\mfC', \mfD')\in\mbF_{X'}$ such that $\psi_*(\mfC)\subset\mfC'$, and each element $D\in\mfD$ either dominates $L/H'$ or satisfies $\psi_*(D)\in\mfD'$.
\begin{prop} \label{p.ExtensionSV}
The morphism $\psi: L/H\rightarrow L/H'$ can be extended to an $L$-equivariant morphism $\Psi: X\rightarrow X'$ if and only if $\psi_*$ sends
$\mbF_X$ to $\mbF_{X'}$.
\end{prop}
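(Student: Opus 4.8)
The plan is to treat the two implications separately: necessity by a direct divisor-theoretic computation with the maps $\varrho$, and sufficiency by reducing to the case of simple embeddings (those with a unique closed orbit) and invoking the local structure theory of Luna--Vust/Knop.

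For necessity, suppose $\Psi\colon X\to X'$ is an $L$-equivariant extension of $\psi$. Fix $(\mfC,\mfD)\in\mbF_X$, corresponding to an orbit $Y\subset X$, and put $Y':=\Psi(Y)$, an orbit of $X'$ with colored cone $(\mfC',\mfD')$. Since $\Psi$ is $B$-equivariant, for every $B$-stable prime divisor $D$ of $X$ the image $\Psi(D)$ is $B$-stable, and the standard computation with $\psi$ (the canonical quotient $L/H\to L/H'$) shows that either $D$ dominates $X'$, in which case $\psi_*(\varrho(\nu_D))=0$, or $\overline{\Psi(D)}=:D'$ is a $B$-stable prime divisor of $X'$ and $\psi_*(\varrho(\nu_D))=e_D\,\varrho(\nu_{D'})$ with $e_D\in\mbZ_{\geq 1}$. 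If moreover $Y\subseteq D$ then $Y'=\Psi(Y)\subseteq D'$, so $\varrho(\nu_{D'})\in\mfC'$. As $\mfC$ is generated by the $\varrho(\nu_D)$ over $B$-stable prime divisors $D\supseteq Y$, we get $\psi_*(\mfC)\subseteq\mfC'$. Applying the same dichotomy to a color $D\in\mfD$, now viewed inside $L/H$ where $\Psi$ restricts to $\psi$, shows $D$ either dominates $L/H'$ or $\psi_*(D):=\overline{\psi(D)}$ is a color of $L/H'$ whose closure contains $Y'$, i.e.\ lies in $\mfD'$. Hence $\psi_*$ sends $\mbF_X$ to $\mbF_{X'}$.

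For sufficiency, assume $\psi_*$ sends $\mbF_X$ to $\mbF_{X'}$. First reduce to $X$ simple: writing $X=\bigcup_{Y_0}X_{Y_0}$ over the closed $L$-orbits $Y_0$, where $X_{Y_0}=\{x\in X:\ Y_0\subseteq\overline{L\cdot x}\}$ is open, $L$-stable and simple with colored fan the set of colored faces of $(\mfC_{Y_0},\mfD_{Y_0})$ (a subfan of $\mbF_X$), the hypothesis restricts to each $X_{Y_0}$. Extensions $\Psi_{Y_0}\colon X_{Y_0}\to X'$ of $\psi$ automatically agree on overlaps, since two $L$-morphisms to the separated variety $X'$ that coincide on the dense orbit $L/H$ are equal; thus they glue to $\Psi\colon X\to X'$. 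So assume $X$ simple, with closed orbit $Y$ and colored data $(\mfC,\mfD)$. Pick $(\mfC',\mfD')\in\mbF_{X'}$ with $\psi_*(\mfC)\subseteq\mfC'$ and with the colors in $\mfD$ dominating $L/H'$ or landing in $\mfD'$; by Proposition \ref{p.BasicsSV} it corresponds to an orbit $Y'$, and replacing $X'$ by the open $L$-subvariety $X'_{Y'}$ we may also assume $X'$ simple, with closed orbit $Y'$ and data $(\mfC',\mfD')$.

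Now invoke the local structure theorem for simple spherical embeddings: with $P=P(\mfD)\supseteq B$ there is a $P$-stable affine open $X_0\subseteq X$ meeting $Y$ with $X=L\cdot X_0$, whose coordinate ring is the sum of the $B$-eigenspaces $\C(L/H)^{(B)}_\lambda$ over the weights $\lambda\in\Lambda(L/H)$ that are nonnegative on $\mfC$ and on $\varrho(\nu_D)$ for the $B$-stable prime divisors $D\notin\mfD$ disjoint from $X_0$; likewise for $X'_0\subseteq X'$. For $0\neq f\in\C[X'_0]$, the divisor of $\psi^*f$ on $X$ is obtained from that of $f$ by applying $\psi_*$ on valuation space together with the color correspondence, so the inclusion $\psi_*(\mfC)\subseteq\mfC'$ and the color compatibility force $\psi^*f\in\C[X_0]$. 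Hence $\psi$ restricts to a morphism $X_0\to X'_0$, and spreading out by the $L$-action yields the $L$-equivariant $\Psi\colon X=L\cdot X_0\to L\cdot X'_0=X'$, well defined because on the dense set $L/H$ it must coincide with $\psi$. The main obstacle is precisely the bookkeeping in this last paragraph: pinning down the $B$-chart $X_0$ of a simple embedding and the description of $\C[X_0]$ as the span of $B$-eigenspaces cut out by the colored cone $\mfC$ and the colors $\mfD$, and checking that ``$\psi_*$ sends $\mbF_X$ to $\mbF_{X'}$'' is exactly equivalent to ``$\psi^*\C[X'_0]\subseteq\C[X_0]$''. This is where the Luna--Vust theory does the real work; the reductions and the gluing are formal.
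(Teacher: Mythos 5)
You should note at the outset that the paper does not prove Proposition \ref{p.ExtensionSV} at all: it appears in the recap of spherical varieties and is quoted from the surveys of Knop and Perrin (it is the extension criterion, Theorem 4.1, of Knop's survey), so there is no internal argument to compare with, and your proposal has to stand on its own. Judged that way, the necessity half contains a genuine gap. Your dichotomy --- that every $B$-stable prime divisor $D$ of $X$ either dominates $X'$, in which case $\psi_*(\varrho(\nu_D))=0$, or has image whose closure $D'$ is a $B$-stable prime divisor of $X'$ with $\psi_*(\varrho(\nu_D))=e_D\,\varrho(\nu_{D'})$ --- is false for $L$-stable boundary divisors: an equivariant morphism of embeddings can contract a boundary divisor to a subvariety of codimension $\geq 2$. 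This happens exactly for the morphisms this proposition is applied to in the paper: $\phi:\bar{C}_n\to Z$ of Proposition \ref{p.LGEtendsion} contracts $D_2,\ldots,D_n$ onto the strata $Z_2,\ldots,Z_n$ of codimension $k^2$ (Lemma \ref{l.Exc}), and there $\varrho(\nu_{D_k})=-\omega_k^\vee$ is not a positive multiple of $\varrho$ of any single $B$-stable prime divisor of $Z$; the membership $\varrho(\nu_{D_k})\in\mfC_{Z_k}$ is instead checked in the proof of Proposition \ref{p.LGtoWC} via the relation $\varrho(D_k)=k\varrho(D_1)+\sum_{j=1}^{k-1}(k-j)\varrho(D(\omega_j))$. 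For such $D$ your argument produces nothing. The standard repair is valuation-theoretic: the restriction $\nu'$ of $\nu_D$ to $\C(L/H')$ is an $L$-invariant valuation whose center in $X'$ is $\overline{\Psi(D)}\supseteq Y'$, and one then needs the Luna--Vust fact that an invariant valuation has center containing the orbit $Y'$ if and only if $\varrho(\nu')\in\mfC_{Y'}$ --- an ingredient you neither state nor prove. (For colors your dichotomy is fine, since the fibres of $\psi$ are equidimensional, so the image of a divisor of $L/H$ has codimension at most one in $L/H'$.)

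For sufficiency, the reduction to simple embeddings and the gluing over closed orbits are correct and, as you say, formal; but the substantive steps --- affineness of the canonical $B$-chart $X_0$, the description of $\C[X_0]$ in terms of the colored data (note that a rational $B$-module need not be spanned by $B$-eigenvectors, so even reducing the containment $\psi^*\C[X'_0]\subseteq\C[X_0]$ to a statement about semiinvariants requires an argument), and the equivalence of the colored-fan condition with that containment --- are precisely the content of the Luna--Vust/Knop theorem you invoke. Since the paper itself only cites the surveys, leaning on that reference is legitimate, but then your text is an outline plus a citation rather than a proof, and the half you do attempt from scratch is the one with the concrete error identified above.
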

%The following two lemmas can be found in \cite{Br07} and \cite{BK} respectively.

%\begin{lem}\cite[Example 2.1.3]{Br07}
%The weight lattice of the homogeneous space $G=(G\times G)/\diag(G)$ is $\Lambda(G)=\oplus_{i=1}^n\mbZ\alpha_i$. The dual lattice $\Lambda^{\vee}(G)=\oplus_{i=1}^n\mbZ\omega^{\vee}_i$, satisfying $(\omega^{\vee}_i, \alpha_j)=\delta_{ij}$, is identified with the weight lattice $\Lambda$ of the group $G$. The boundary divisor $D_i$ associated with root $\alpha_i$ satisfies that $\varrho(D_i)=-\omega^{\vee}_i$ and color $D(\omega_j)$ associated with $\omega_j$ satisfies that $\varrho(D(\omega_j))=\alpha^{\vee}_j$. Here $\alpha^{\vee}_1,\ldots,\alpha^{\vee}_n$ is the set of simple roots of the dual root system $S^\vee$ of the root system $S$, and $\omega^{\vee}_1,\ldots, \omega^{\vee}_n$ is the set of fundamental dominant weights of $S^\vee$. The valuation cone of $G$ is the negative Weyl chamber, i.e. $\mcV(G)=\cone\{-\omega^{\vee}_1,\ldots,-\omega^{\vee}_n\}$. The colored cone of the orbit closure $D_I:=\cap_{i\in I}D_I$ is $(\cone\{-\omega_i\mid i\in I\}, \emptyset)$.
%\end{lem}

%\begin{lem}\cite[Proposition 6.1.11]{BK}
%The restriction of the color $D(\omega_k)$ on the closed orbit $D_S:=\cap_{i=1}^n D_i= G/B^-\times G/B\subset\mbP(V_\lambda^*\otimes V_\lambda)$ is the Cartier divisor $\mcO(-w_0\omega_k, \omega_k)$, where $\lambda$ is a regular dominant weight, and $w_0$ is the longest element in the Weyl group of $G$.
%\end{lem}

\begin{example}\cite[Example 2.1.3]{Br07}\label{e.colored cones}
Let $X$ be the wonderful compactification of a simple linear algebraic group $G$ of adjoint type and of rank $n$. It is a $(G\times G)$-spherical variety with open orbit being $O=(G\times G)/\diag(G) \simeq G$. The weight lattice of $O$ as a homogeneous spherical variety coincides with the root lattice of $G$, i.e. $\Lambda(O)=\oplus_{i=1}^n\mbZ\alpha_i$, where $\alpha_1, \ldots, \alpha_n$ is the set of simple roots. The dual lattice $\Lambda^{\vee}(O)=\oplus_{i=1}^n\mbZ\omega^{\vee}_i$ with $(\omega^{\vee}_i, \alpha_j)=\delta_{ij}$ is identified with the coweight lattice of $G$. Recall that $\Pic(X)$ is identified with the weight lattice of $G$. The boundary divisor $D_i$ associated to the root $\alpha_i$ (i.e. $D_i=\alpha_i\in\Pic(X)$) satisfies $\varrho(D_i)=-\omega^{\vee}_i$ and  the color $D(\omega_j)$ associated to $\omega_j$ (i.e. $D(\omega_j)=\omega_j\in\Pic(X)$) satisfies $\varrho(D(\omega_j))=\alpha^{\vee}_j$. Let $R$ be the root system of $G$. Then $\alpha^{\vee}_1,\ldots,\alpha^{\vee}_n$ is the set of simple roots of the dual root system $R^{\vee}$ of $R$, and $\omega^{\vee}_1,\ldots, \omega^{\vee}_n$ is the set of fundamental dominant weights of $R^{\vee}$. The valuation cone of $O$ is the negative Weyl chamber, i.e. $\mcV(O)=\cone\{-\omega^{\vee}_1,\ldots,-\omega^{\vee}_n\}$. The colored fan $\mbF_X$ of $X$ consists of all colored faces of $(\mcV(O), \emptyset)$. The colored cone of the open orbit $D_I^o$ of $D_I:=\cap_{i\in I}D_i$ is $(\cone\{-\omega_i\mid i\in I\}, \emptyset)$. In this case, the anti-canonical divisor $-K_X$ is given by $-K_X =2\sum_{i=1}^n D(\omega_i)+\sum_{j=1}^n D_j$.
\end{example}

\begin{lem}\cite[Propositon 6.1.11]{BK}\label{l.divDR}
Let $X$ be the wonderful compactification of a simple linear algebraic group $G$ of adjoint type. The restriction of the color $D(\omega_k)$ to the closed orbit $D_{1,\ldots,n}:=\cap_{i=1}^n D_i= G/B^-\times G/B$ is the Cartier divisor $\mcO(-w_0\omega_k, \omega_k)$, where $w_0$ is the longest element in the Weyl group of $G$.
\end{lem}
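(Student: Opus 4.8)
The plan is to realize the color $D(\omega_k)$ as the divisor of an explicit $B\times B$-eigensection of $\sL_X(\omega_k)$, restrict that section to the closed orbit $Z:=D_{1,\ldots,n}\cong G/B^-\times G/B$, and read off $\sL_X(\omega_k)|_Z$ from its $B\times B$-weight. Recall $\sL_X(\omega_k)=\varphi_{\omega_k}^*\sO_{\mbP\End V_{\omega_k}}(1)$ and $H^0(\mbP\End V_{\omega_k},\sO(1))=\End(V_{\omega_k})^*\cong V_{\omega_k}^*\otimes V_{\omega_k}$ as a $G\times G$-module. The tensor $\xi^+\otimes v^+$, where $v^+$ is a highest weight vector of $V_{\omega_k}$ and $\xi^+$ a highest weight vector of $V_{\omega_k}^*$, is a $B\times B$-eigenvector; the associated linear form on $\End(V_{\omega_k})$ is the matrix coefficient $A\mapsto\langle Av^+,\xi^+\rangle$. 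First I would check that its pullback $s_k\in H^0(X,\sL_X(\omega_k))$ is a $B\times B$-eigensection whose divisor is $D(\omega_k)$: on the open orbit $G$ the zero locus of $g\mapsto\langle g\cdot v^+,\xi^+\rangle$ is the complement of the big cell $Bw_0B$, i.e.\ the union of the $\overline{Bw_0s_iB}$, and one isolates the component through the generic point of $\overline{Bw_0s_kB}$ by comparing with the values $\varrho(D(\omega_j))=\alpha_j^\vee$ recorded in Example~\ref{e.colored cones}, or by a direct $\SL_2$-computation inside the root subgroup $U_{\alpha_k}$.

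Next I would restrict to $Z$. Since the colored cone of the closed orbit carries no colors (Example~\ref{e.colored cones}), no color contains $Z$; hence $Z\not\subset D(\omega_k)$ and $s_k|_Z\neq 0$ is a nonzero $B\times B$-eigensection of the globally generated line bundle $\sL_X(\omega_k)|_Z$ on $G/B^-\times G/B$. Geometrically, $\varphi_{\omega_k}$ carries $Z$ onto the closed $G\times G$-orbit of $X_{\omega_k}$, the locus of rank-one tensors, which is the Segre image of $\mbP V_{\omega_k}\times\mbP V_{\omega_k}^*$; restricted to $Z=D_{1,\ldots,n}$ this is, after the standard identifications, the product of the two projections to $G/P_{\omega_k}$ and $G/P_{\omega_k}^-$ followed by the Segre map. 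Pulling $\sO(1)$ back through Segre gives $\sO_{\mbP V_{\omega_k}}(1)\boxtimes\sO_{\mbP V_{\omega_k}^*}(1)$, and pulling these further back to $G/B$ and $G/B^-$ yields the ample generators twisted respectively by the highest weight $\omega_k$ of $V_{\omega_k}$ and the highest weight $-w_0\omega_k$ of $V_{\omega_k}^*$; equivalently, the $B\times B$-weight of $s_k|_Z$ read off from the matrix-coefficient formula is $(-w_0\omega_k,\omega_k)$, and on $G/B^-\times G/B$ a globally generated bundle admitting a $B\times B$-eigensection of that weight is precisely $\mcO(-w_0\omega_k,\omega_k)$. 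This is the assertion.

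The only genuine difficulty is making the conventions cohere: which copy of $G$ acts on which slot of the rank-one tensor, whether the factor written $G/B^-$ corresponds to $V_{\omega_k}$ or to $V_{\omega_k}^*$, and the normalizations of $\Pic(G/B)\cong\Lambda$ and $\Pic(X)\cong\Lambda$ — these are exactly what place the twist by $-w_0$ in the first coordinate rather than the second. I would double-check against two special cases. For $G=\PGL(\C^2)$ one has $X=\mbP^3$, $Z$ is the Segre quadric, the big cell $Bw_0B$ is the non-vanishing locus of the lower-left matrix entry $c$, so $D(\omega_1)=\{c=0\}$ restricts to $\mcO(1,1)=\mcO(-w_0\omega_1,\omega_1)$ since $-w_0\omega_1=\omega_1$. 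And summing the formula over $k$ together with $\sum_j D_j|_Z$ should reproduce $-K_X|_Z$; by adjunction on the normal-crossings intersection $Z=\bigcap_j D_j$ this equals $-K_Z+\sum_j D_j|_Z=\mcO(2\boldsymbol\rho,2\boldsymbol\rho)+\sum_j D_j|_Z$, consistent with $-K_X=\sL_X(2\boldsymbol\rho+\sum_j\alpha_j)$ and $\sum_k(-w_0\omega_k)=-w_0\boldsymbol\rho=\boldsymbol\rho$.
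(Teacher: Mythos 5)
The paper does not actually prove this lemma --- it is quoted verbatim from [BK, Proposition~6.1.11] --- and your argument is in substance the standard derivation behind that citation: identify $\sO_X(D(\omega_k))$ with $\sL_X(\omega_k)=\varphi_{\omega_k}^*\sO(1)$ and compute the restriction to the closed orbit through the explicit behaviour of $\varphi_{\omega_k}$ on $D_{1,\ldots,n}$. So the approach is sound; let me only flag three points of detail. First, the zero locus on $G$ of the matrix coefficient $g\mapsto\langle gv^+,\xi^+\rangle$ is exactly $\overline{Bw_0s_kB}$, not the whole complement of the big cell: the coefficient is nonzero on $BwB$ precisely when $w\omega_k=w_0\omega_k$, and among the divisorial cells $Bw_0s_jB$ only $j=k$ violates this; so no ``isolating a component'' is needed, but the claim as written is off. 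Second, the inference ``$Z\not\subset D(\omega_k)$, hence $s_k|_Z\neq 0$'' silently uses that $\mathrm{div}_X(s_k)$ contains no boundary divisor $D_j$ (all of which contain $Z=\cap_j D_j$); this is true but needs a word --- either via the class bookkeeping $D(\omega_k)=\omega_k$, $D_j=\alpha_j$ in $\Pic(X)\cong\Lambda$ together with linear independence of the simple roots (which also forces multiplicity one along the color), or directly by evaluating the linear form on the closed orbit of $X_{\omega_k}$: at a rank-one tensor $gv^+\otimes h\xi^+$ it equals $\xi^+(gv^+)\cdot(h\xi^+)(v^+)$, which is not identically zero. In fact your second, purely geometric computation of $\sL_X(\omega_k)|_Z$ via the Segre factorization already suffices and bypasses the eigensection entirely; just note that the closed orbit of $X_{\omega_k}$ is the sub-Segre $(G/P_{\omega_k})\times(G/P_{\omega_k}^-)$ inside the rank-one locus, not the whole rank-one locus. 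Third, be aware that neither of your sanity checks actually pins down which factor receives the $-w_0$ twist: for $A_1$ one has $-w_0\omega_1=\omega_1$, and $\sum_k(-w_0\omega_k)=\boldsymbol\rho=\sum_k\omega_k$, so both checks are symmetric in the two factors; the asymmetric placement must be traced through the convention identifying $\cap_{i=1}^nD_i$ with $G/B^-\times G/B$ (which copy of $G$ acts on $V_{\omega_k}$ and which on $V_{\omega_k}^*$), exactly as you anticipate.
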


%\begin{proof}
%Take any dominant weight $\lambda$, and let $V_\lambda$ be the irreducible representation of $G$ with highest weight $\lambda$. The closure $X_\lambda\subset\mbP\End(V_\lambda)$ of the projective representation $G\subset\PGL(V_\lambda)$ has a unique closed $(G\times G)$-orbit $G/P_\lambda\times G/P_\lambda$. The pulling-back of $\mcO_{\mbP\End(V_\lambda)}(1)$ through the $(G\times G)$-equivariant morphism $X\rightarrow X_\lambda\subset\mbP\End(V_\lambda)$ is $\mcO_X(D_i)$. Its restriction on $D_S$ coincides with the pulling-back of $\mcO_{\mbP\End(V_\lambda)}(1)$ through the $(G\times G)$-equivariant morphism $D_S=G/B\times G/B\rightarrow G/P_\lambda\times G/P_\lambda\subset\mbP\End(V_\lambda)$.
%\end{proof}

\subsection{Lagrangian Grassmannians} \label{s.LG}

Let $(W, \Omega)$ be a symplectic vector space of dimension $2n$.  Take two copies $(W_i, \Omega_i), i=1,2,$ of $(W, \Omega)$ and endow the vector space $W_1 \oplus W_2$ the symplectic form given by $p_1^* \Omega_1 - p_2^* \Omega_2$, where $p_i: W_1 \oplus W_2 \to W_i$ is the natural $i$-th  projection.  Let ${\rm LG}(2n, W_1 \oplus W_2)$ be the Lagrangian Grassmannian associated to the symplectic vector space $W_1 \oplus W_2$, which parameterizes Lagrangian subspaces in $W_1 \oplus W_2$. Note that ${\rm LG}(2n, W_1 \oplus W_2)$ is a smooth projective $\Sp(2n) \times \Sp(2n)$-equivariant compactification of $\Sp(2n)$ (embedded by sending each $g \in \Sp(2n)$ to its graph). In particular, we have  $\dim {\rm LG}(2n, W_1 \oplus W_2) =\dim \Sp(2n)= n (2n+1)$. We will identify $W_i$ with the $i$-th summand in $W_1 \oplus W_2$.

\begin{lem} \label{l.equaldim}
Let $[V] \in {\rm LG}(2n, W_1 \oplus W_2)$, then $\dim V \cap W_1 = \dim V \cap W_2$.
\end{lem}
\begin{proof}
Let $V_i = V \cap W_i$. Note that $V_1$ is the kernel of $p_2|_V: V \to W_2$.
Let $I_2= p_2(V)$ be the image, then $\dim I_2 = \dim V/V_1$, which gives that $\dim W_2 \cap I_2^\perp = \dim V_1$.
As $V \subset W_1 \oplus I_2$, we get
$V=V^\perp \supset W_1^\perp \cap I_2^\perp = W_2 \cap I_2^\perp, $
which gives $V_2 = V \cap W_2 \supset W_2 \cap I_2^\perp$.  This implies that $\dim V_2 \geq \dim V_1$.  By  symmetry, we  have $\dim V_1 = \dim V_2$.
\end{proof}

For any $0 \leq k \leq n$, we denote by $\IG(k, W_i)$ the $k$-th isotropic Grassmannian in $W_i$, which parameterizes $k$-dimensional isotropic linear subspaces in $W_i$.
Define
$$\LG(k):=\{ [V] \in {\rm LG}(2n, W_1 \oplus W_2) | \dim V \cap W_1 = \dim V \cap W_2 = k\}.$$
Consider the map $\pi_k: \LG(k) \to \IG(k, W_1) \times \IG(k, W_2)$ defined by $[V] \mapsto ([V \cap W_1], [V \cap W_2])$, which is an $\Sp(W_1) \times \Sp(W_2)$-homogeneous fibration.
\begin{lem}\label{l.OrbitLG(k)}
\begin{itemize}
\item[(i)] The fiber of $\pi_k$ is isomorphic to $\Sp(\mathbb{C}^{2n-2k})$.
\item[(ii)] The variety $\LG(k)$ is an $\Sp(W_1) \times \Sp(W_2)$-orbit and ${\rm LG}(2n, W_1 \oplus W_2) = \bigsqcup_{k=0}^n \LG(k)$ is the decomposition of ${\rm LG}(2n, W_1 \oplus W_2)$ into  $\Sp(W_1) \times \Sp(W_2)$-orbits.
\end{itemize}
\end{lem}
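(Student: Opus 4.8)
The plan is to prove (i) by reducing the description of a fiber of $\pi_k$ to symplectic linear algebra on a symplectic reduction, and then to deduce (ii) from (i) via an equivariance argument. Write $\omega = p_1^*\Omega_1 - p_2^*\Omega_2$ for the form on $W_1\oplus W_2$, let $\perp_i$ denote orthogonality in $W_i$ with respect to $\Omega_i$, and let $\perp$ denote orthogonality in $W_1\oplus W_2$ with respect to $\omega$. For (i) I would fix $([A_1],[A_2])\in\IG(k,W_1)\times\IG(k,W_2)$ and analyze the fiber $\pi_k^{-1}([A_1],[A_2])$, whose points are the Lagrangians $V\subset W_1\oplus W_2$ with $V\cap W_1 = A_1$ and $V\cap W_2 = A_2$. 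From the elementary identities $A_1^\perp = A_1^{\perp_1}\oplus W_2$ and $A_2^\perp = W_1\oplus A_2^{\perp_2}$ one gets $(A_1\oplus A_2)^\perp = A_1^{\perp_1}\oplus A_2^{\perp_2}=:U$, and since $A_1\oplus A_2\subseteq V = V^\perp$ this forces $A_1\oplus A_2\subseteq V\subseteq U$. A short computation identifies the radical of $\omega|_U$ with $A_1\oplus A_2$, so $U/(A_1\oplus A_2)$ is a symplectic space of dimension $4n-4k$, canonically $\bar W_1\oplus\bar W_2$ where $\bar W_i := A_i^{\perp_i}/A_i$ is the symplectic reduction (dimension $2n-2k$) with reduced form $\bar\Omega_i$ and the form on the sum is $\bar p_1^*\bar\Omega_1 - \bar p_2^*\bar\Omega_2$. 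Then $\bar V := V/(A_1\oplus A_2)$ is a Lagrangian subspace; a brief diagram chase shows that $V\cap W_i = A_i$ is equivalent to $\bar V\cap\bar W_i = 0$ (for both $i$), and conversely every Lagrangian $\bar V$ meeting both $\bar W_i$ trivially pulls back to such a $V$, so the fiber is isomorphic to the variety of such $\bar V$. Finally, such a $\bar V$ projects isomorphically onto each $\bar W_i$ by dimension count, hence is the graph of a symplectic isomorphism $\bar W_1\to\bar W_2$; the variety of these is (non-canonically) isomorphic to $\Sp(\bar W_1)\cong\Sp(\C^{2n-2k})$, proving (i).

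For (ii), the decomposition $\LGW = \bigsqcup_{k=0}^n\LG(k)$ is immediate: for any $[V]$ the subspace $V\cap W_i$ is $\Omega_i$-isotropic, so $\dim(V\cap W_i)\le n$, and by Lemma \ref{l.equaldim} the two intersection dimensions agree, giving a well-defined $k\in\{0,\dots,n\}$; moreover each $\LG(k)$ is nonempty, as one may take $\bar V$ to be the graph of an arbitrary symplectic isomorphism of the reductions. To see that each $\LG(k)$ is a single orbit, I would use that $\Sp(W_1)\times\Sp(W_2)$ acts transitively on $\IG(k,W_1)\times\IG(k,W_2)$ by Witt's theorem and that $\pi_k$ is $\Sp(W_1)\times\Sp(W_2)$-equivariant; it then suffices to show that the stabilizer $P_1\times P_2$ of $([A_1],[A_2])$, a product of parabolic subgroups, acts transitively on the fiber. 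Since $P_i$ preserves $A_i^{\perp_i}$ and $A_i$, it acts on $\bar W_i$ through a surjection $P_i\twoheadrightarrow\Sp(\bar W_i)$ (the projection onto the symplectic factor of a Levi of $P_i$); under the identification of the fiber with symplectic isomorphisms $f:\bar W_1\to\bar W_2$, this action reads $f\mapsto\bar g_2\,f\,\bar g_1^{-1}$, which is visibly transitive (given $f,f'$ take $\bar g_1 = \mathrm{id}$ and $\bar g_2 = f'f^{-1}\in\Sp(\bar W_2)$). Hence $\LG(k)$ is a single $\Sp(W_1)\times\Sp(W_2)$-orbit, and combined with the disjoint decomposition this is the orbit decomposition of $\LGW$.

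The only delicate point — and the place I would be most careful — is the passage to the symplectic reduction in (i): one must reduce with respect to $\omega$ rather than $\Omega_i$, correctly pin down the radical of $\omega|_U$, and verify the equivalence $V\cap W_i = A_i \Leftrightarrow \bar V\cap\bar W_i = 0$ in both directions. Everything else is routine symplectic linear algebra together with the standard structure of parabolic subgroups of $\Sp_{2n}$ (the shape of their Levi decomposition), which makes the fiber-transitivity in (ii) transparent.
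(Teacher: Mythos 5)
Your proposal is correct and follows essentially the same route as the paper's proof: both identify the fiber of $\pi_k$ via the symplectic reductions $\bar W_i = A_i^{\perp_i}/A_i$, realize a Lagrangian meeting both $\bar W_i$ trivially as the graph of a symplectomorphism $\bar W_1 \to \bar W_2$, and obtain homogeneity of $\LG(k)$ from equivariance of $\pi_k$ together with the surjection of the stabilizer of $([A_1],[A_2])$ onto $\Sp(\bar W_i)$. Your write-up merely spells out some steps the paper leaves implicit (the radical computation, the equivalence $V\cap W_i=A_i\Leftrightarrow\bar V\cap\bar W_i=0$, and the explicit transitive action $f\mapsto\bar g_2 f\bar g_1^{-1}$ on the fiber).
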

\begin{proof}
Consider first the case $k=0$. Any element $[V] \in \LG(0)$ is the graph of a unique linear isomorphism $\ell_V: W_1 \to W_2$.  Note that $V$ is Lagrangian if and only if $\ell_V$ is a  symplectomorphism, which shows that
$\LG(0) \simeq \Sp(W)$.

In general, fix $[V_i] \in \IG(k, W_i)$.  Let $\bar{W_i} = V_i^{\perp, W_i}/V_i$, which is a symplectic vector space of dimension $2n-2k$.  Any element $[V] \in \pi_k^{-1}([V_1], [V_2])$ is uniquely determined by
$\bar{V}:=V/(V_1 \oplus V_2)$. Note that $V$ is Lagrangian if and only if so is $\bar{V} \subset \bar{W_1} \oplus \bar{W_2}$, while the latter satisfies $\bar{V} \cap \bar{W_i} = \{0\}$.  This shows that $\pi_k^{-1}([V_1], [V_2]) \simeq \Sp(\bar{W_i}),$ proving (i).  To prove (ii), we note that the stabilizer in $\Sp(W_1) \times \Sp(W_2)$ of $([V_1], [V_2])$ surjects  to $\Sp(\bar{W_1})$, hence $\LG(k)$ is  $\Sp(W_1) \times \Sp(W_2)$-homogeneous.
\end{proof}

Recall that $\dim \IG(k, W) = \frac{k(4n+1-3k)}{2}$ and $\dim \Sp(2n)= 2n^2+n$, which implies immediately the following
\begin{cor}\label{c.dimLG}
\begin{itemize}
\item[(a)] We have $\dim \LG(k) = (2n^2+n)-k^2$, i.e. ${\rm codim}(\LG(k)) = k^2$.
\item[(b)] The subvariety $\LG(0) \subset {\rm LG}(2n, W_1 \oplus W_2)$ is open and the closure of $\LG(1)$ is the unique $\Sp(W_1) \times \Sp(W_2)$-stable prime divisor. The unique closed $\Sp(W_1) \times \Sp(W_2)$-orbit of ${\rm LG}(2n, W_1 \oplus W_2)$ is $\LG(n)$, which is isomorphic to $\LG(n, W_1) \times \LG(n, W_2)$.
\end{itemize}
\end{cor}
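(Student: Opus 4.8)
The plan is to read everything off Lemma~\ref{l.OrbitLG(k)}, which presents $\LG(k)$ as the total space of the $\Sp(W_1)\times\Sp(W_2)$-equivariant fibration $\pi_k\colon\LG(k)\to\IG(k,W_1)\times\IG(k,W_2)$ whose fibers are all isomorphic to $\Sp(\C^{2n-2k})$, together with the decomposition $\LGW=\bigsqcup_{k=0}^n\LG(k)$ into finitely many orbits. For part (a), equidimensionality of the fibers of $\pi_k$ gives $\dim\LG(k)=2\dim\IG(k,W)+\dim\Sp(2n-2k)$; substituting $\dim\IG(k,W)=\tfrac{k(4n+1-3k)}{2}$ and $\dim\Sp(2m)=2m^2+m$ and expanding yields $\dim\LG(k)=2n^2+n-k^2$, and since $\dim\LGW=n(2n+1)=2n^2+n$ this is exactly $\codim\LG(k)=k^2$. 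This step is routine bookkeeping.

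For the first assertion of (b), $\LG(0)$ is an orbit of dimension $\dim\LGW$ inside the irreducible variety $\LGW$, hence dense, hence open (a dense orbit is always open); equivalently its complement $\bigsqcup_{k\ge1}\LG(k)$ is a finite union of orbits of strictly smaller dimension and so is closed. For the second assertion, $\overline{\LG(1)}$ is an orbit closure, hence irreducible and $\Sp(W_1)\times\Sp(W_2)$-stable, and by (a) it has codimension $1$, so it is an $\Sp(W_1)\times\Sp(W_2)$-stable prime divisor. Conversely, any such divisor $D$ is irreducible and is a union of orbits, hence equals the closure $\overline{\LG(k)}$ of its dense orbit; then $\codim D=k^2$ by (a), and $\codim D=1$ forces $k=1$, so $D=\overline{\LG(1)}$. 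This gives uniqueness.

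For the closed orbit, take $k=n$ in Lemma~\ref{l.OrbitLG(k)}(i): the fibers of $\pi_n$ are $\Sp(\C^0)=\{\mathrm{pt}\}$, and $\pi_n$ is surjective (send $([V_1],[V_2])$ to $[V_1\oplus V_2]$), hence bijective. Being a bijective equivariant morphism of $\Sp(W_1)\times\Sp(W_2)$-homogeneous spaces in characteristic $0$, it is an isomorphism, and since a maximal isotropic subspace of $W_i$ is Lagrangian we get $\LG(n)\cong\LG(n,W_1)\times\LG(n,W_2)$, which is projective; thus $\LG(n)$ is a complete, hence closed, subvariety of $\LGW$. For uniqueness of the closed orbit, note that a closed orbit must be complete, being closed in the complete variety $\LGW$; but for $k<n$ the orbit $\LG(k)$ contains, as a closed subvariety, any fiber of $\pi_k$, which is isomorphic to the affine algebraic group $\Sp(\C^{2n-2k})$ of positive dimension $2(n-k)^2+(n-k)>0$, and a positive-dimensional affine variety is never complete. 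Hence $\LG(k)$ is not closed for $k<n$, and $\LG(n)$ is the unique closed $\Sp(W_1)\times\Sp(W_2)$-orbit.

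The only place needing a little care is the uniqueness of the closed orbit, and the argument above (closedness $\Rightarrow$ completeness, obstructed by the affine fibers of $\pi_k$) handles it cleanly; all remaining points are immediate from Lemma~\ref{l.OrbitLG(k)} and the dimension formulas. If one also wanted the full poset of orbit closures one could observe that $[V]\mapsto\dim(V\cap W_1)$ is upper semicontinuous, so $Z_k:=\{[V]:\dim(V\cap W_1)\ge k\}$ is closed with $\LG(k)=Z_k\setminus Z_{k+1}$ (using Lemma~\ref{l.equaldim}), and an incidence-variety computation shows $\dim Z_k=2n^2+n-k^2=\dim\LG(k)$ with $Z_k$ irreducible, whence $\overline{\LG(k)}=Z_k$; but this refinement is not needed for the corollary.
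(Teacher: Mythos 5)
Your proposal is correct and follows essentially the same route as the paper, which derives the corollary "immediately" from Lemma \ref{l.OrbitLG(k)} together with the dimension formulas $\dim \IG(k,W)=\frac{k(4n+1-3k)}{2}$ and $\dim\Sp(2m)=2m^2+m$. The extra details you supply (dense orbit is open, a stable prime divisor is the closure of a codimension-one orbit, uniqueness of the closed orbit via completeness against the affine fibers of $\pi_k$) are exactly the routine orbit-theoretic facts the paper leaves implicit, and they are all argued correctly.
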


\begin{lem} \label{l.LGClosure}
We have $\overline{\LG(k)} = \cup_{j \geq k} \LG(j)$.
\end{lem}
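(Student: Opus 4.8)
The plan is to use the fact (Lemma \ref{l.OrbitLG(k)}) that $\LGW=\bigsqcup_{j=0}^{n}\LG(j)$ is the full decomposition into $\Sp(W_1)\times\Sp(W_2)$-orbits. Then $\overline{\LG(k)}$ is $\Sp(W_1)\times\Sp(W_2)$-stable and, being an orbit closure, is the union of $\LG(k)$ together with orbits of strictly smaller dimension; since $\dim\LG(j)=(2n^2+n)-j^2\le (2n^2+n)-k^2=\dim\LG(k)$ happens exactly when $j\ge k$ by Corollary \ref{c.dimLG}(a), we get $\overline{\LG(k)}\subseteq\bigcup_{j\ge k}\LG(j)$ for free. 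So the content of the lemma is the reverse inclusion $\bigcup_{j\ge k}\LG(j)\subseteq\overline{\LG(k)}$. Inducting on $j-k$ and taking closures at each step, it suffices to prove the single step $\LG(k+1)\subseteq\overline{\LG(k)}$ for every $0\le k\le n-1$.

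For this step I would fix a $k$-dimensional isotropic subspace $U_i\subset W_i$ for $i=1,2$, set $U=U_1\oplus U_2$ (an isotropic $2k$-plane in $W_1\oplus W_2$, since $W_1$ and $W_2$ are mutually orthogonal), and pass to the symplectic reduction $N:=U^{\perp}/U$ with quotient map $q\colon U^{\perp}\to N$. One checks $U^{\perp}\cap W_i=U_i^{\perp,W_i}$ and $U^{\perp}=U_1^{\perp,W_1}\oplus U_2^{\perp,W_2}$, so $N=N_1\oplus N_2$ is an orthogonal sum of symplectic subspaces $N_i:=q(U_i^{\perp,W_i})\cong\mathbb{C}^{2(n-k)}$. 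Any Lagrangian $V\subset W_1\oplus W_2$ with $V\supseteq U$ automatically satisfies $U\subseteq V\subseteq U^{\perp}$, and $V\mapsto\bar V:=V/U$ identifies $\{V\in\LGW:V\supseteq U\}$ with the Lagrangian Grassmannian $\mathrm{LG}(N)$ as closed subvarieties of the ambient Grassmannian. A short verification gives $q(V\cap W_i)=\bar V\cap N_i$ and $\ker(q|_{V\cap W_i})=U_i$, hence $\dim(V\cap W_i)=k+\dim(\bar V\cap N_i)$; combining this with Lemma \ref{l.equaldim} (applied both to $V$ and to $\bar V\subset N_1\oplus N_2$), the subset $\LG(k)\cap\{V\supseteq U\}$ corresponds to the open dense orbit of $\mathrm{LG}(N)$, while $\LG(k+1)\cap\{V\supseteq U\}$ corresponds to the (nonempty, since $n-k\ge 1$) stratum of Lagrangians of $N$ meeting each $N_i$ in a line. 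As the open orbit is dense in the irreducible variety $\mathrm{LG}(N)$, that line-meeting stratum lies in its closure, which is all of $\mathrm{LG}(N)$; transporting this back through the closed embedding $\mathrm{LG}(N)\hookrightarrow\LGW$ shows that the nonempty set $\LG(k+1)\cap\{V\supseteq U\}$ is contained in $\overline{\LG(k)}$. Since $\overline{\LG(k)}$ is $\Sp(W_1)\times\Sp(W_2)$-stable and $\LG(k+1)$ is a single orbit, this forces $\LG(k+1)\subseteq\overline{\LG(k)}$, completing the induction and hence the proof.

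The main obstacle is the second paragraph: constructing the reduction to $N=U^{\perp}/U$ and verifying that it respects the intersection-dimension stratification, i.e. the identity $\dim(V\cap W_i)=k+\dim(\bar V\cap N_i)$. This is elementary symplectic linear algebra, but it is where the real work sits; an alternative that avoids the quotient bookkeeping is to write down, in a fixed symplectic basis of $W$, an explicit one-parameter family of Lagrangians lying in $\LG(k)$ whose flat limit lies in $\LG(k+1)$, though this makes the orbit-theoretic conclusion less transparent. Everything else — the orbit/closure generalities, the dimension inequality from Corollary \ref{c.dimLG}, and the density of the open orbit in the irreducible Lagrangian Grassmannian — is routine.
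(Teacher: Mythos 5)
Your proof is correct and follows essentially the same route as the paper: both pass to the symplectic reduction $U^{\perp}/U$ (the paper's $\bar W_i=V_i^{\perp,W_i}/V_i$), identify the locus of Lagrangians containing $U$ with the reduced Lagrangian Grassmannian in which the $\LG(k)$-stratum is open and dense, and then invoke $\Sp(W_1)\times\Sp(W_2)$-invariance of $\overline{\LG(k)}$ together with the orbit decomposition. The only (immaterial) difference is packaging: you induct on $j-k$ via the single step $\LG(k+1)\subset\overline{\LG(k)}$, whereas the paper notes that the closure of a fiber of $\pi_k$ already meets every $\LG(j)$ with $j\ge k$, giving all containments at once.
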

\begin{proof}
By Corollary \ref{c.dimLG}, we have $\overline{\LG(0)} = {\rm LG}(2n, W_1 \oplus W_2) = \cup_{j \geq 0} \LG(j)$. For $[V_i] \in \IG(k, W_i)$, let $\bar{W_i} = V_i^{\perp, W_i}/V_i$. Then
$$\overline{\pi_k^{-1}([V_1], [V_2])} = \LG(2n-2k, \bar{W_1}\oplus \bar{W_2}),$$
which implies that $\overline{\LG(k)} \cap \LG(j) \neq \emptyset $ if $j \geq k$.  As $\overline{\LG(k)}$ is $\Sp(W_1) \times \Sp(W_2)$-invariant and $\LG(j)$ is an orbit, we have $\LG(j) \subset \overline{\LG(k)}$.
This gives a chain
$$
\LG(n) \subset \overline{\LG(n-1)}\subset  \cdots \subset \overline{\LG(0)} = {\rm LG}(2n, W_1 \oplus W_2),
$$
which proves the claim.
\end{proof}

Consider the symplectic involution $\tau:=(1_{W_1}, -1_{W_2}) \in \Sp(W_1) \times \Sp(W_2)$, which acts naturally on ${\rm LG}(2n, W_1 \oplus W_2)$. Note that an element $[V] \in {\rm LG}(2n, W_1 \oplus W_2)$ is fixed by $\tau$ if and only if $V = V_1 \oplus V_2$, where $V_i = V \cap W_i$. This gives that ${\rm LG}(2n, W_1 \oplus W_2)^\tau = \LG(n) \simeq \LG(n, W_1) \times \LG(n, W_2)$.  Let $ Z={\rm LG}(2n, W_1 \oplus W_2)/\tau$ and $\psi: {\rm LG}(2n, W_1 \oplus W_2) \to Z$ the natural projection.
 Denote $Z^\circ = \LG(0)/\tau$, $Z_j^o:=\LG(j)/\tau$ and  $Z_j : = \overline{\LG(j)}/\tau$ for $j\geq 1$, then
$Z_n \subset Z_{n-1} \subset \cdots \subset Z_1 \subset \overline{Z^\circ}=Z$ by Lemma \ref{l.LGClosure}.  Let $K = {\rm PSp}(W_1) \times {\rm PSp}(W_2)$, which acts on $Z$, preserving each $Z_j$.
\begin{prop} \label{p.LGSpherical}
(i) The Lagrangian Grassmannian ${\rm LG}(2n, W_1 \oplus W_2)$ is spherical under the $\Sp(W_1) \times \Sp(W_2)$-action with the unique open orbit $\LG(0) \simeq \Sp(W)$ and the unique closed orbit $\LG(n) \simeq \LG(n,W_1) \times \LG(n, W_2)$.

(ii) The variety $Z$ is spherical under the $K$-action, with unique open orbit $Z^\circ \simeq  {\rm PSp}(W)$ and  unique closed orbit $Z_n \simeq \LG(n,W_1) \times \LG(n, W_2)$. The boundary $Z_1 = Z \setminus Z^\circ$ is an irreducible divisor and each $Z_i$ is a $K$-orbit closure in $Z$.
\end{prop}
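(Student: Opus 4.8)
The plan is to deduce sphericity in both parts from the classical fact that a connected reductive group $L$ is a spherical $L\times L$-variety --- the product $B^-B$ of two opposite Borel subgroups being the dense orbit of a Borel of $L\times L$ through the identity --- together with the orbit and closure descriptions already obtained in Lemma \ref{l.OrbitLG(k)}, Corollary \ref{c.dimLG} and Lemma \ref{l.LGClosure}.

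For (i): identifying $W_1 = W_2 = W$, an element $[V]\in\LG(0)$ is the graph of a symplectomorphism $W_1\to W_2$, so $\LG(0)\simeq\Sp(W)$ with the $\Sp(W_1)\times\Sp(W_2) = \Sp(W)\times\Sp(W)$-action given by left-right translation. Since $\Sp(W)$ is spherical under this two-sided action, a Borel subgroup of $\Sp(W_1)\times\Sp(W_2)$ already has a dense orbit in $\LG(0)$, hence in $\LGW$; as $\LGW$ is smooth (in particular normal) and projective, it is therefore a spherical embedding of $\Sp(W)$. Uniqueness of the open orbit follows from the decomposition $\LGW = \bigsqcup_{k} \LG(k)$ together with the codimension count ${\rm codim}\,\LG(k) = k^2$ of Corollary \ref{c.dimLG}(a), and the identification of the unique closed orbit as $\LG(n)\simeq\LG(n,W_1)\times\LG(n,W_2)$ follows from Lemma \ref{l.LGClosure}, which places $\LG(n)$ in the closure of every $\LG(k)$.

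For (ii): the first step is to analyze the action of the center $\{\pm 1\}\times\{\pm 1\}$ of $\Sp(W_1)\times\Sp(W_2)$ on $\LGW$, namely that $(-1_{W_1},-1_{W_2})$ acts trivially while $(-1_{W_1},1_{W_2})$ and $(1_{W_1},-1_{W_2})=\tau$ act in the same way; it follows that the $\Sp(W_1)\times\Sp(W_2)$-action descends to a $K = {\rm PSp}(W_1)\times{\rm PSp}(W_2)$-action on $Z = \LGW/\tau$. Now $Z$ is normal and projective, being a finite quotient of a smooth projective variety, and $\tau$ acts freely on $\LG(0)$ (its fixed locus on $\LGW$ being $\LG(n)$), so $Z^\circ = \LG(0)/\tau \simeq \Sp(W)/\langle -1\rangle = {\rm PSp}(W)$, again with $K = {\rm PSp}(W)\times{\rm PSp}(W)$ acting by left-right translation and hence spherical; thus $Z$ is a spherical $K$-embedding of ${\rm PSp}(W)$ with $Z^\circ$ the unique open $K$-orbit. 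The remaining assertions come from the orbit structure: since each $\LG(k)$ is $\tau$-stable, the $K$-orbits on $Z$ are exactly the pairwise disjoint images $Z_k^o = \psi(\LG(k))$, $0\le k\le n$; pushing Lemma \ref{l.LGClosure} through the finite (hence closed) map $\psi$ gives $\overline{Z_k^o} = Z_k = \bigcup_{j\ge k}Z_j^o$, so every $Z_i$ is a $K$-orbit closure and $Z_n$ is the unique closed orbit, with $Z_n\simeq\LG(n,W_1)\times\LG(n,W_2)$ because $\psi$ is an isomorphism over $\LGW^\tau = \LG(n)$. Finally $Z_1 = Z\setminus Z^\circ = \psi(\overline{\LG(1)})$ is irreducible, being the image of the irreducible $\overline{\LG(1)}$, and it is a prime divisor because $\psi$ is finite and $\LG(1)$ has codimension $1$ in $\LGW$ by Corollary \ref{c.dimLG}(a).

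The one genuinely delicate point --- and the main, if minor, obstacle --- is the bookkeeping of the central elements of $\Sp(W_1)\times\Sp(W_2)$: pinning down exactly which of them act trivially on $\LGW$ and checking that $(\pm 1,\mp 1)$ both realize $\tau$, so that the action descends to $K$ on the quotient $Z$ though not on $\LGW$ itself. Everything else is a formal consequence of the orbit decomposition already established, together with the standard sphericity of a reductive group under two-sided translation.
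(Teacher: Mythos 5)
Your proposal is correct and follows essentially the same route as the paper: identify the open orbit $\LG(0)$ (resp. $Z^\circ$) with $\Sp(W)$ (resp. ${\rm PSp}(W)$) under the two-sided action, invoke the sphericity of a reductive group as a homogeneous space for $L\times L$, and read off the uniqueness of the open and closed orbits from Lemma \ref{l.OrbitLG(k)}, Corollary \ref{c.dimLG} and Lemma \ref{l.LGClosure}. You merely spell out details the paper leaves implicit (the descent of the action to $K$ via the center, and pushing the orbit-closure relations through the finite quotient $\psi$), which does not change the argument.
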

\begin{proof}
Let $W' = \{(w, w) \in W_1 \oplus W_2| w\in W\}$, which is Lagrangian in $W_1 \oplus W_2$.  The isotropy group of $[W']$ is the diagonal subgroup of $\Sp(W) \times \Sp(W)$, hence the orbit of $[W']$ is isomorphic to $\Sp(W)$. This implies that ${\rm LG}(2n, W_1 \oplus W_2)$ is spherical, which proves (i) by Corollary \ref{c.dimLG}.

We denote by $[W'_\tau] \subset  Z$ the image of $[W']$ in $Z$. Then the stabilizer $K_{[W'_\tau]}$ is isomorphic to the diagonal subgroup of ${\rm PSp}(W_1) \times {\rm PSp}(W_2)$, hence the open orbit $Z^\circ$ is isomorphic to ${\rm PSp}(W)$ and $Z$ is spherical.  Then the claim follows from (i).
\end{proof}

\subsection{A construction of the wonderful compactification in type  $C_n$} \label{s.C}

Let $X$ be the wonderful compactification of ${\rm PSp}(W)$ and let $X^\circ$ be the open orbit. By Example \ref{e.colored cones}, $X$ is a spherical variety under the $K$-action. Both $Z$ and $X$ are spherical varieties under the $K$-action, with an isomorphic open orbit
$Z^\circ \simeq X^\circ \simeq {\rm PSp}(W)$. The boundary divisor $\partial X := X \setminus X^\circ$ has a decomposition into irreducible components  $\cup_{i=1}^n D_i$, where $D_i$ is the divisor corresponding to the simple root $\alpha_i$.

\begin{prop}\label{p.LGEtendsion}
The natural isomorphism $\phi^\circ: X^\circ \to Z^\circ$ extends to a $K$-equivariant birational morphism $\phi: X \to Z$.
\end{prop}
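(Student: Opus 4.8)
The plan is to use the theory of spherical varieties, in particular the extension criterion of Proposition \ref{p.ExtensionSV}, applied to the common open orbit $O := X^\circ \simeq Z^\circ \simeq {\rm PSp}(W)$, viewed as the homogeneous spherical variety $(K\times K)/\diag(K)$ under $K = {\rm PSp}(W_1)\times {\rm PSp}(W_2)$ --- but here one must be slightly careful, since $Z$ is only spherical under $K$ acting by left multiplication (the two factors of $K$ being $K_1 = {\rm PSp}(W_1)$ and $K_2 = {\rm PSp}(W_2)$), with isotropy the diagonal. So first I would set up the bookkeeping: identify the weight lattice $\Lambda(O)$ and its dual $\Lambda^\vee_\mbQ(O)$, recall from Example \ref{e.colored cones} that for the wonderful compactification $X$ the colored fan $\mbF_X$ consists of all colored faces of $(\mcV(O), \emptyset)$, where $\mcV(O)$ is the negative Weyl chamber spanned by $-\omega_1^\vee, \ldots, -\omega_n^\vee$, and that $X$ carries \emph{no} colors in its fan (every colored cone has empty color set). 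In particular $X$ is the canonical ``largest'' smooth toroidal $O$-embedding: its fan is as big as possible subject to using only invariant valuations.

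Next I would compute the colored fan $\mbF_Z$ of $Z$. By Proposition \ref{p.LGSpherical}(ii), $Z$ has exactly the $K$-orbits $Z^\circ = Z^o_0 \supsetneq Z^o_1 \supsetneq \cdots \supsetneq Z^o_n$, nested in the sense that $\overline{Z^o_j} = \bigcup_{i\ge j} Z^o_i$ (Lemma \ref{l.LGClosure}), with $Z_1$ the unique $K$-stable prime divisor. Using $\varrho(D_i) = -\omega_i^\vee$ for the boundary divisor $D_i$ and $\codim \LG(j) = j^2$ (Corollary \ref{c.dimLG}), I expect each orbit $Z^o_j$ to have colored cone $(\mfC_{Z^o_j}, \emptyset)$ with $\mfC_{Z^o_j}$ a face of $\mcV(O)$ --- concretely, the cone $\mfC_{Z^o_j}$ should be spanned by $-\omega_{n-j+1}^\vee, \ldots, -\omega_n^\vee$ (the "last $j$" rays in Bourbaki order, matching the fact that $Z_n$ maps to $\LG(n,W_1)\times\LG(n,W_2)$, the closed orbit of both $X$ and $Z$). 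The point is that these cones are all \emph{faces} of $\mcV(O)$ and involve no colors, hence $\mbF_Z \subseteq \mbF_X$ as colored fans. Since the identity map $\psi_* = \id$ on $\Lambda^\vee_\mbQ(O)$ trivially sends each colored cone of $X$ into a colored cone of $Z$ (as $\mbF_X \supseteq \mbF_Z$ and every face relation is respected --- one uses that each cone of $\mbF_X$ lies in $\mcV(O)$, so is contained in the maximal cone of $\mbF_Z$), Proposition \ref{p.ExtensionSV} yields the desired $K$-equivariant extension $\phi: X \to Z$, which is birational because it restricts to an isomorphism on the open orbit.

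The main obstacle --- and the step I would spend the most care on --- is the precise identification of the colored cones of $Z$, i.e., matching up the orbit $Z^o_j \subset Z$ (equivalently $\LG(j) \subset \LGW$ before quotienting by $\tau$) with a specific face of the negative Weyl chamber $\mcV(O)$ in the spherical coordinates of $O = {\rm PSp}(W)$. This requires tracking how the $B$-stable prime divisors of $Z$ containing a given $Z^o_j$ (there should be exactly $j$ of them, by smoothness and Proposition \ref{p.BasicsSV}(ii), giving a simplicial cone) correspond to valuations $\varrho(\cdot) = -\omega_k^\vee$, and checking that $Z$ is in fact smooth (or at least that its fan sits inside $\mbF_X$ even without smoothness --- smoothness of $Z$ itself is plausibly false near the singular locus of the $\tau$-quotient, but that does not affect the extension argument, which only needs $\mbF_Z$ to consist of colored faces of $(\mcV(O),\emptyset)$). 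An alternative, more hands-on route to the same conclusion would be to exhibit $\phi$ directly: compose the projection $X \to X_\lambda \hookrightarrow \mbP\End V_\lambda$ for a suitable representation and compare with the Plücker-type embedding of $\LGW$, but the spherical-fan argument is cleaner and is the one I would write up.
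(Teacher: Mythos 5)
Your overall strategy is the paper's: view both $X$ and $Z$ as spherical embeddings of ${\rm PSp}(W)$ and invoke the extension criterion of Proposition \ref{p.ExtensionSV}. But the step you yourself flag as the crux --- identifying the colored fan $\mbF_Z$ --- is carried out incorrectly, and your argument has no other support for the containment it actually needs. You ``expect'' each orbit $Z_k^o$ to have colored cone a face of $\mcV({\rm PSp}(W))$ spanned by $-\omega^\vee_{n-k+1},\ldots,-\omega^\vee_n$, with empty color set. This cannot be right: the cone of $Z_k^o$ is generated by $\varrho$ of the $B$-stable prime divisors containing that orbit, and $Z$ has only \emph{one} $K$-stable prime divisor, namely $Z_1$ (Proposition \ref{p.LGSpherical}(ii), Corollary \ref{c.dimLG}(b)), so at most one ray can come from an invariant valuation; every additional ray must come from a color. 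Worse, if $\mbF_Z$ consisted of colored faces of $(\mcV({\rm PSp}(W)),\emptyset)$ it would equal $\mbF_X$, and uniqueness of spherical embeddings (Proposition \ref{p.BasicsSV}(i)) would force $Z\cong X$ --- contradicting that $Z$ has $n+1$ orbits and a single boundary divisor while $X$ has $2^n$ orbits and $n$ boundary divisors. The actual colored cones (Lemma \ref{l.colored fan Z}, proved later in the paper and, note, \emph{using} the morphism $\phi$, so you cannot appeal to it here without circularity) are $\mfC_{Z_k}=\cone\{-\omega_1^\vee,\alpha_1^\vee,\ldots,\alpha_{k-1}^\vee\}$ with colors $\{D(\omega_1),\ldots,D(\omega_{k-1})\}$; in particular your fallback claim that one ``only needs $\mbF_Z$ to consist of colored faces of $(\mcV({\rm PSp}(W)),\emptyset)$'' is unattainable.

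What the criterion really requires is just: (a) every cone of $\mbF_X$ lies in $\mcV({\rm PSp}(W))$ and carries no colors (Example \ref{e.colored cones}), so the color condition is vacuous; and (b) $\mcV({\rm PSp}(W))\subset\mfC_{Z_n}$, the cone of the unique closed orbit of $Z$. You assert (b), but only via the erroneous fan description. The paper obtains (b) with no computation of $\mbF_Z$ at all: since $Z$ is projective, the valuation cone is contained in the union of the colored cones of $Z$ (Proposition \ref{p.BasicsSV}(iii)), and since $Z_n$ is the unique closed orbit, every colored cone of $Z$ is a colored face of $(\mfC_{Z_n},\mfD_{Z_n})$, whence $\mcV({\rm PSp}(W))\subset\mfC_{Z_n}$. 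Replace your fan ``computation'' by this completeness-plus-unique-closed-orbit argument and the rest of your write-up (identity on $\Lambda^\vee_{\mbQ}$, vacuous color condition, birationality from the isomorphism on the open orbit) goes through.
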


\begin{proof}
The isomorphism $\phi^\circ$ induces an identity $\phi^\circ_*: \Lambda^\vee_{\mbQ}(X^\circ)=\Lambda^\vee_{\mbQ}(Z^\circ)$. By Example \ref{e.colored cones}, $\mbF_X$ consists of colored faces of $(\mcV({\rm PSp}(W)), \emptyset)$, where $\mcV({\rm PSp}(W))=\cone\{-\omega^{\vee}_1,\ldots,-\omega^{\vee}_n\}$. As $Z$ is projective, the valuation cone $\mcV({\rm PSp}(W))$ is contained in the union of the colored cones of $Z$ by Proposition \ref{p.BasicsSV}(iii).
As $Z$ has a unique closed orbit $Z_n$,  we have  $\mfC_{Z_n}\supset\mcV({\rm PSp}(W))$. By definition $\phi^\circ_*$ sends any element $(\mfC, \mfD)\in\mbF_X$ to $(\mfC_{Z_n}, \mfD_{Z_n})$, hence $\phi^\circ_*$ sends $\mbF_X$ to $\mbF_Z$. It follows that $\phi^\circ$ can be extended to a $K$-equivariant birational  morphism $\phi: X \to Z$ by Proposition \ref{p.ExtensionSV}.
\end{proof}

\begin{lem}\label{l.Exc}
The morphism $\phi$ sends the open orbit $D_1^o$ of $D_1$ isomorphically to the open orbit of $Z_1$, and the exceptional locus of $\phi$ is $\cup_{i=2}^n D_i$.
\end{lem}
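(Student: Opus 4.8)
The plan is to use that $\phi\colon X\to Z$ is a $K$-equivariant, birational, proper morphism of spherical varieties (with $K={\rm PSp}(W_1)\times{\rm PSp}(W_2)$), that $X$ is the wonderful compactification of ${\rm PSp}(W)$ with boundary $\partial X=\bigcup_{i=1}^nD_i$, and that the $K$-orbits of $Z$ form a single chain whose closures are $Z\supset Z_1\supset\cdots\supset Z_n$, with $Z_1$ the unique $K$-stable prime divisor of $Z$ (Proposition \ref{p.LGSpherical}). Since $\phi$ is an isomorphism over the open orbit, $\phi^{-1}(\partial Z)=\partial X$, so every $D_i$ is mapped into $Z_1$.

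First I would show that $\phi$ restricts to a $K$-equivariant isomorphism $D_k^o\xrightarrow{\ \sim\ }Z_1^o$ for exactly one $k$, where $Z_1^o$ denotes the open $K$-orbit of $Z_1$. Let $\eta_1$ be the generic point of $Z_1$. Since $Z$ is normal and $X$ is proper, the valuative criterion applied to the DVR $\mcO_{Z,\eta_1}$ shows $\phi^{-1}$ is defined at $\eta_1$; a transcendence degree count forces its image to be the generic point $\eta_D$ of a prime divisor $D\subset X$, with $\mcO_{X,\eta_D}=\mcO_{Z,\eta_1}$ inside $\mbC(X)=\mbC(Z)$, so that $\phi$ is a local isomorphism near $\eta_D$. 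As $\eta_1$ is $K$-fixed and $\phi^{-1}$ is $K$-equivariant, $D$ is $K$-stable, hence $D=D_k$ for some $k$; restricting to open orbits then gives a $K$-equivariant birational morphism $D_k^o\to Z_1^o$ between homogeneous spaces, which is automatically an isomorphism (and, as a byproduct, $\varrho(Z_1)=\varrho(D_k)=-\omega_k^\vee$). No other orbit of $X$ dominates $Z_1^o$: otherwise a general point of $Z_1^o$ would have preimages in two distinct orbits, contradicting birationality.

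The step I expect to be the crux is to identify $k=1$. My plan is to compare the canonical ``affine flag quotient'' of the two homogeneous spaces. By the structure of boundary divisors of the wonderful compactification (Section \ref{s.wonderful compact}), $D_k^o$ carries a $K$-equivariant fibration onto $\IG(k,W_1)\times\IG(k,W_2)$ with affine fibers; being affine-fibered it is the maximal flag quotient of $D_k^o$. On the other hand, by Lemma \ref{l.OrbitLG(k)}(i) the map $[V]\mapsto([V\cap W_1],[V\cap W_2])$ descends to a $K$-equivariant surjection $Z_1^o\twoheadrightarrow\IG(1,W_1)\times\IG(1,W_2)=\mbP(W_1)\times\mbP(W_2)$ with affine fiber $\cong{\rm PSp}(\mbC^{2n-2})$; transporting it through the isomorphism of the previous step, it must factor through the maximal flag quotient, producing a $K$-equivariant surjection $\IG(k,W_1)\times\IG(k,W_2)\twoheadrightarrow\mbP(W_1)\times\mbP(W_2)$. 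Since $P_k$ and $P_1$ are both maximal parabolic subgroups of ${\rm PSp}(W_i)$, such a surjection exists only if $k=1$. This gives the first assertion. (Alternatively one could compute $\varrho(Z_1)$ directly, using that in a big affine cell of $\LGW$ around a point of the closed orbit $\overline{\LG(1)}$ is the zero locus of the determinant of a suitable $n\times n$ Pl\"ucker block, and read off $\varrho(Z_1)=-\omega_1^\vee$; I expect the flag-quotient argument to be the cleaner route.)

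Finally I would determine the exceptional locus. For $i\ge2$, the image $\phi(D_i)\subseteq Z_1$ is irreducible and $K$-stable but not dense in $Z_1$ by the last remark above, hence contained in $Z_2$, which has codimension $4$ in $Z$ by Corollary \ref{c.dimLG}; therefore $\phi$ contracts $D_i$ and $D_i\subseteq\Exc(\phi)$. Conversely, the only $K$-orbits of $X$ not lying in $\bigcup_{i=2}^nD_i$ are $X^\circ$ and $D_1^o$, and $\phi$ maps the open set $X\setminus\bigcup_{i=2}^nD_i$ bijectively onto $Z^\circ\cup Z_1^o=Z\setminus Z_2$, isomorphically on each of these two orbits; since a bijective birational morphism onto a normal variety is an isomorphism, $\phi$ is a local isomorphism away from $\bigcup_{i=2}^nD_i$. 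Hence $\Exc(\phi)=\bigcup_{i=2}^nD_i$, completing the argument.
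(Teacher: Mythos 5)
Your proposal is correct and follows essentially the same route as the paper's proof: you use the irreducibility of $Z_1$ together with $K$-equivariance and birationality to single out a unique boundary divisor $D_k$ whose open orbit maps isomorphically onto the open orbit of $Z_1$ (with the remaining $D_i$ contracted), and then force $k=1$ by comparing the fibration of $D_k^o$ over $G/P_k^-\times G/P_k$ coming from the boundary-orbit structure of the wonderful compactification with the fibration of the open orbit of $Z_1$ over $\IG(1,W_1)\times\IG(1,W_2)$ given by Lemma \ref{l.OrbitLG(k)}. You merely supply more foundational detail (the valuative-criterion/codimension-one isomorphism over the generic point of $Z_1$, and the final bookkeeping identifying $\Exc(\phi)$) than the paper's terse argument, but the key ideas coincide.
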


\begin{proof}
By Proposition \ref{p.LGSpherical}(ii), the boundary $Z_1$ of $Z$ is irreducible.
Since $\phi$ is a $K$-equivariant birational morphism with connected fibers, there exists a unique boundary divisor $D_k$ such that $\phi$ sends its open orbit $D_k^o$ isomorphically to the open orbit of $Z_1$, and the exceptional locus is  $\Exc(\phi)=\cup_{i\neq k}D_i$. By the description of orbit closures on wonderful compactifications in Section \ref{s.wonderful compact}, the orbit $D_k^o$ admits a ${\rm PGL}(\C^k) \times {\rm PSp}(\C^{2n-2k})$-fibration over ${\rm IG}(k, W_1)\times {\rm IG}(k, W_2)$. It follows that $k=1$ by Lemma \ref{l.OrbitLG(k)}.
\end{proof}

%{\bf Add the colored cone description of them}

\begin{lem}\label{l.colored fan Z}
The dual weight lattice of $Z^o$ is $\Lambda^\vee(Z^o)=\oplus_{i=1}^{n}\mbZ\omega^\vee_i$, and the valuation cone $\mcV(Z^o)=\cone\{-\omega^\vee_1,\ldots,-\omega^\vee_n\}$. The colored cone of the orbit $Z_k^o$ with $1\leq k\leq n$ is $(\cone\{-\omega_1^\vee, \alpha^\vee_1,\ldots,\alpha^\vee_{k-1}\}, \{D(\omega_1),\ldots, D(\omega_{k-1})\})$.
\end{lem}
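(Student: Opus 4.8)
The statement describes the colored fan of the spherical variety $Z = \LGW/\tau$ under the $K$-action, where $K = {\rm PSp}(W_1)\times{\rm PSp}(W_2)$. The strategy is to compute the combinatorial data of $Z$ by transporting the already-known data of the wonderful compactification $X$ of ${\rm PSp}(W)$ across the birational morphism $\phi\colon X\to Z$ of Proposition \ref{p.LGEtendsion}, using the orbit-correspondence dictionary of spherical varieties (Proposition \ref{p.BasicsSV} and Proposition \ref{p.ExtensionSV}).

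\begin{proof}
Since $\phi^\circ\colon X^\circ\to Z^\circ$ is an isomorphism of homogeneous spherical varieties, it induces an identification $\Lambda(Z^\circ)=\Lambda(X^\circ)$ and hence $\Lambda^\vee(Z^\circ)=\Lambda^\vee(X^\circ)=\oplus_{i=1}^n\mbZ\omega^\vee_i$ by Example \ref{e.colored cones}; likewise the valuation cones coincide, so $\mcV(Z^\circ)=\mcV({\rm PSp}(W))=\cone\{-\omega^\vee_1,\ldots,-\omega^\vee_n\}$. It remains to identify the colored cone of each orbit $Z_k^o$ for $1\le k\le n$.

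First I determine the colors of $Z^\circ$ and their images under $\varrho$. Since $\phi^\circ$ is an isomorphism, $\mfD(Z^\circ)=\mfD(X^\circ)$, which by Example \ref{e.colored cones} consists of the colors $D(\omega_1),\ldots,D(\omega_n)$ with $\varrho(D(\omega_j))=\alpha^\vee_j$. Next I compute the $B$-stable prime divisors containing each orbit $Z_k^o$. By Lemma \ref{l.Exc}, $\phi$ contracts exactly $D_2,\ldots,D_n$, sends the open orbit $D_1^o$ isomorphically onto the open orbit of $Z_1$, and has connected fibers; so the boundary divisor $Z_1$ of $Z$ is the image of $D_1$, with $\varrho$-image $-\omega^\vee_1$ (this is $\varrho(D_1)$ in $X$, unchanged because $D_1$ is not contracted). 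Now the orbits $Z_k^o$ form the chain $Z_n\subset\cdots\subset Z_1$ with $Z_k^o = \LG(k)/\tau$, and by Lemma \ref{l.OrbitLG(k)} the orbit $\LG(k)$ fibers over $\IG(k,W_1)\times\IG(k,W_2)$; comparing with the orbit-closure description of $X$ in Section \ref{s.wonderful compact}, the divisors of $X$ whose images in $Z$ contain $Z_k^o$ are exactly $D_1,\ldots,D_{k-1}$ among the boundary divisors (these being contracted for $i\ge2$), together with those colors whose closure meets $Z_k^o$. The key point is to show that the colors containing $Z_k^o$ are precisely $D(\omega_1),\ldots,D(\omega_{k-1})$: this follows by restricting to the closed orbit and using Lemma \ref{l.divDR}, which tells us the restriction of $D(\omega_j)$ to the closed orbit $G/B^-\times G/B$ is $\mcO(-w_0\omega_j,\omega_j)$, together with the fact that under $\phi$ the closed orbit $D_{1,\ldots,n}$ of $X$ maps to $Z_n=\LG(n,W_1)\times\LG(n,W_2)$ via projection — so exactly the first $k-1$ colors survive as divisors through $Z_k^o$. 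Hence the colored cone of $Z_k^o$ is generated by $\varrho(D_1)=-\omega^\vee_1$ together with $\varrho(D(\omega_1))=\alpha^\vee_1,\ldots,\varrho(D(\omega_{k-1}))=\alpha^\vee_{k-1}$, with color set $\{D(\omega_1),\ldots,D(\omega_{k-1})\}$, as claimed. Finally one checks this is a legitimate colored cone: $\{-\omega^\vee_1,\alpha^\vee_1,\ldots,\alpha^\vee_{k-1}\}$ is linearly independent by Proposition \ref{p.BasicsSV}(ii) applied to the smooth model, so $Z$ is smooth along these orbits and the interior of the cone meets $\mcV(Z^\circ)$ (it contains, e.g., a suitable negative combination of the $\omega^\vee_i$), completing the proof.
\end{proof}

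\medskip
\noindent\emph{Expected main obstacle.} The delicate step is the precise identification of which colors pass through each orbit $Z_k^o$ — i.e.\ that $\mfD_{Z_k}=\{D(\omega_1),\ldots,D(\omega_{k-1})\}$ and not some other subset of the $D(\omega_j)$. Everywhere else the data is pinned down by $\phi^\circ$ being an isomorphism and by Lemma \ref{l.Exc}, but tracking colors requires comparing the geometry of the contraction $\phi$ on the non-open boundary with the isotropic-Grassmannian fibration structure of $\LG(k)$, and this is where Lemma \ref{l.divDR} and a careful look at how $\phi$ behaves on $D_{1,\dots,n}\to Z_n$ must be invoked.
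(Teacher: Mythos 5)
Your setup agrees with the paper: the lattice and valuation cone are read off from the open orbit via Example \ref{e.colored cones}, $\varrho(Z_1)=-\omega_1^\vee$ comes from Lemma \ref{l.Exc}, and Lemma \ref{l.divDR} applied along the closed orbit is indeed the tool that identifies colors. But there is a genuine gap at the decisive step. The restriction-to-the-closed-orbit argument only settles the case $k=n$: from $\mcO_X(D(\omega_j))|_{D_{1,\ldots,n}}=\mcO(-w_0\omega_j,\omega_j)$ and the fact that $\phi|_{D_{1,\ldots,n}}$ is the projection $G/B^-\times G/B\to G/P_n^-\times G/P_n$, one gets that $\phi(D(\omega_j)\cap D_{1,\ldots,n})=Z_n$ exactly when $j\neq n$, hence $\mfD_{Z_n}=\{D(\omega_1),\ldots,D(\omega_{n-1})\}$ and $\mfC_{Z_n}=\cone\{-\omega_1^\vee,\alpha_1^\vee,\ldots,\alpha_{n-1}^\vee\}$. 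It says nothing about which colors contain an intermediate orbit $Z_k^o$ with $1\le k<n$, yet you assert "exactly the first $k-1$ colors survive as divisors through $Z_k^o$" without proof. Moreover, your auxiliary claim that "the divisors of $X$ whose images in $Z$ contain $Z_k^o$ are exactly $D_1,\ldots,D_{k-1}$" is problematic twice over: the statement $\phi(D_i)=Z_i$ is Proposition \ref{p.LGtoWC}(1), which in the paper is \emph{deduced from} this lemma, so invoking it here is circular; and in any case the colored cone of $Z_k^o$ is defined by the $B$-stable prime divisors of $Z$ (namely $Z_1$ and the closures of the colors in $Z$), not by images of divisors of $X$.

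The paper closes exactly this gap by a combinatorial counting argument that your proposal omits: once $(\mfC_{Z_n},\mfD_{Z_n})=(\mfC_n,\mfD_n)$ is known, the pairs $(0,\emptyset)\subset(\mfC_1,\mfD_1)\subset\cdots\subset(\mfC_n,\mfD_n)$ form a chain of colored faces, hence all belong to $\mbF_Z$; since $Z$ has exactly the $n+1$ orbits $Z^o,Z_1^o,\ldots,Z_n^o$ and orbits correspond bijectively to colored cones compatibly with closure inclusions, this chain exhausts $\mbF_Z$ and forces the colored cone of $Z_k$ to be $(\mfC_k,\mfD_k)$ for every $k$. Your proof needs either this step or a direct geometric determination of the colors through each intermediate $Z_k^o$; as written it has neither. (Secondary, minor points: the appeal to Proposition \ref{p.BasicsSV}(ii) "applied to the smooth model" to get linear independence is misplaced, since $Z$ is not the smooth model and independence of $\{-\omega_1^\vee,\alpha_1^\vee,\ldots,\alpha_{k-1}^\vee\}$ is elementary anyway; and the parenthetical that $\mfC_k^o$ meets $\mcV(Z^o)$ should be argued, e.g.\ via completeness of $Z$ forcing $\mcV\subset\mfC_{Z_n}$ as in the proof of Proposition \ref{p.LGEtendsion}.)
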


\begin{proof}
The descriptions of $\Lambda^\vee(Z^o)$ and $\mcV(Z^o)$ are clear from Example \ref{e.colored cones}, since they  depend only on the open orbit $Z^o={\rm PSp}(W)$. Set $\mfC_k:=\cone\{-\omega_1^\vee, \alpha^\vee_1,\ldots,\alpha^\vee_{k-1}\}$ and $\mfD_k:=\{D(\omega_1),\ldots, D(\omega_{k-1})\}$ for $1\leq k\leq n$. We claim that the colored cone of $Z_n$ is $(\mfC_n, \mfD_n)$. By the claim, there is a chain of elements in $\mbF_Z$ as follows: $(0, \emptyset)\subset(\mfC_1, \mfD_1)\subset\cdots \subset (\mfC_{n-1}, \mfD_{n-1})\subset(\mfC_n, \mfD_n)$.
By the bijective correspondence with orbit closures, these are all colored cones of $Z$ and the colored cone of $Z_k$ is $(\mfC_k, \mfD_k)$.

To verify the claim, we consider the closed orbit $D_{1,\ldots, n}:=G/B^-\times G/B$ on $X$, where $G={\rm PSp}(W)$ and $B$ is the Borel subgroup.
To simplify the notations, let $X_n = D_{1,\ldots, n}$.
The map $\phi$ sends $X_n$ to the unique closed orbit $Z_n=G/P_n^-\times G/P_n$ on $Z$, which is induced by the inclusions $B\subset P_n$ and $B^-\subset P_n^-$. Since $\mcO_X(D(\omega_k))|_{X_n}=\mcO_{X_n}(\omega_k, \omega_k)$ by Lemma \ref{l.divDR}, we know that $\phi(D(\omega_k)\cap X_n)=Z_n$ if and only if $k\neq n$. By the definition of colored cones, $\mfD_{Z_n}=\mfD_n$ and $\mfC_{Z_n}$ is the cone in $\Lambda^\vee(Z^o)=\oplus_{i=1}^{n}\mbZ\omega^\vee_i$ generated by $\varrho(\mfD_n)$ and $\varrho(Z_1)$. By Example \ref{e.colored cones} and Lemma \ref{l.Exc}, $\varrho(Z_1)=-\omega^\vee_1$ and $\varrho(\mfD_n)=\{\alpha^\vee_1,\ldots,\alpha^\vee_{n-1}\}$, completing the proof.
\end{proof}

\begin{lem}
The dual weight lattice of ${\rm LG}(0)$ is $\Lambda^\vee(\LG(0))=\oplus_{i=1}^{n}\mbZ\alpha^\vee_i$, and the valuation cone is $\mcV({\rm LG}(0))=\cone\{-\omega^\vee_1,\ldots,-\omega^\vee_n\}$. For each $k$, the colored cone of the orbit $\LG(k)$ is given by  $\mfC_{\LG(k)}=\cone\{-\omega_1^\vee, \alpha^\vee_1,\ldots,\alpha^\vee_{k-1}\}$ and  $\mfD_{\LG(k)}=\{D(\omega_1),\ldots, D(\omega_{k-1})\}$.
\end{lem}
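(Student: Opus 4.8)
The plan is to transport the computation from the spherical variety $Z$, whose colored fan was just determined in Lemma \ref{l.colored fan Z}, to $\LGW$ along the finite quotient map $\psi\colon\LGW\to Z$. Recall from Section \ref{s.LG} that $\tau\in\Sp(W_1)\times\Sp(W_2)$ acts on $\LGW$ with fixed locus $\LGW^\tau=\LG(n)$, that $\psi$ is the quotient by $\langle\tau\rangle$, and that $\psi(\LG(k))=Z_k^o$ for all $k$; hence $\psi$ is finite, is étale over $Z\setminus Z_n$, and restricts to an isomorphism $\LG(n)\xrightarrow{\ \sim\ }Z_n$. Since $\psi$ is equivariant for $\Sp(W_1)\times\Sp(W_2)\twoheadrightarrow K$ and carries the dense $B$-orbit of $\LG(0)$ onto the dense Borel orbit of $Z^\circ$, the pullback $\psi^*$ embeds $\Lambda(Z^\circ)$ into $\Lambda(\LG(0))$, and this becomes an isomorphism $\Lambda^\vee_\mbQ(\LG(0))\cong\Lambda^\vee_\mbQ(Z^\circ)$ after $\otimes\,\mbQ$.

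For the weight lattice itself I would invoke the standard description of a connected semisimple group, regarded as a spherical variety under left–right translation: by Lemma \ref{l.OrbitLG(k)} the open orbit $\LG(0)$ is isomorphic to $\Sp(W)$ as an $\Sp(W)\times\Sp(W)$-variety, and since $\Sp(W)$ is simply connected, $\Lambda(\LG(0))=X^*(T)$ is the full weight lattice of $C_n$, so its dual $\Lambda^\vee(\LG(0))=\oplus_{i=1}^n\mbZ\alpha_i^\vee$ is the coroot lattice; equivalently, under $\psi^*$ the inclusion $\Lambda(Z^\circ)\subset\Lambda(\LG(0))$ is exactly the inclusion of the root lattice into the weight lattice of $C_n$. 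For the valuation cone one can argue directly that the valuation cone of a connected semisimple group is its antidominant Weyl chamber, or note that $\Sp(W_1)\times\Sp(W_2)$-invariant valuations on $\mbC(\LG(0))$ and $K$-invariant valuations on $\mbC(Z^\circ)$ correspond across the degree-two Galois extension of function fields; either way $\mcV(\LG(0))=\mcV(Z^\circ)=\cone\{-\omega_1^\vee,\dots,-\omega_n^\vee\}$.

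Next I would identify the $B$-stable prime divisors. On $\LGW$ there are exactly $n+1$: the $n$ colors of the group $\Sp(W)$ (one per simple root) together with the unique $\Sp(W_1)\times\Sp(W_2)$-stable prime divisor $\overline{\LG(1)}$ of Corollary \ref{c.dimLG}. On $Z$ there are likewise exactly $n+1$: the colors $D(\omega_1),\dots,D(\omega_n)$ and the unique $K$-stable prime divisor $Z_1$ (Example \ref{e.colored cones}, Proposition \ref{p.LGSpherical}). Each $B$-stable prime divisor of $\LGW$ has its generic point in the locus where $\psi$ is étale, so $\psi$ pushes it forward to a $B$-stable prime divisor of $Z$; comparing the two lists of equal length $n+1$ forces this correspondence to be a bijection, and in particular each $B$-stable prime divisor $D$ of $\LGW$ is $\tau$-stable with $\psi^{-1}(\psi(D))=D$. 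Since the ramification index along each such divisor is $1$, transporting $\varrho$-values gives $\varrho(\overline{\LG(1)})=-\omega_1^\vee$ and, with the colors labelled to match $Z$, $\varrho(D(\omega_j))=\alpha_j^\vee$. Finally, because $\psi^{-1}(\psi(D))=D$ and $\psi(\LG(k))=Z_k^o$, a $B$-stable prime divisor $D$ contains $\LG(k)$ if and only if $\psi(D)\supset Z_k^o$; by Lemma \ref{l.colored fan Z} the $B$-stable prime divisors of $Z$ containing $Z_k^o$ are exactly $Z_1,D(\omega_1),\dots,D(\omega_{k-1})$, so those containing $\LG(k)$ are $\overline{\LG(1)},D(\omega_1),\dots,D(\omega_{k-1})$. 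Hence $\mfD_{\LG(k)}=\{D(\omega_1),\dots,D(\omega_{k-1})\}$ and, reading off $\varrho$-images, $\mfC_{\LG(k)}=\cone\{-\omega_1^\vee,\alpha_1^\vee,\dots,\alpha_{k-1}^\vee\}$.

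The step I expect to require the most care is the bijective matching of $B$-stable prime divisors of $\LGW$ and $Z$ under the degree-two cover $\psi$: a priori a color of $Z$ could split into two colors of $\LGW$, and it is the equality of counts ($n+1$ on each side) that rules this out and forces each such divisor to be $\tau$-stable — once that is in hand, the transport of $\varrho$-values and of orbit-containment relations is automatic. A lesser point is the clean justification that invariant valuations, hence valuation cones, correspond across the finite Galois extension of function fields, which is standard but should be spelled out.
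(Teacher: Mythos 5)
Your proposal is correct and follows essentially the same route as the paper: the paper's proof also transports the computation of Lemma \ref{l.colored fan Z} along the quotient map $\LG(0)\to Z^o$, identifying $\Lambda^\vee_\mbQ(\LG(0))=\Lambda^\vee_\mbQ(Z^o)$ (with $\Lambda^\vee(\LG(0))$ the coroot lattice since $\Sp(W)$ is simply connected) and asserting $\mcV(\LG(0))=\mcV(Z^o)$, $\mfC_{\LG(k)}=\mfC_{Z_k^o}$, $\mfD_{\LG(k)}=\mfD_{Z_k^o}$. Your write-up merely makes explicit the matching of $B$-stable prime divisors and the unramifiedness along them, details the paper leaves implicit.
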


\begin{proof}
The surjective quotient  map $\LG(0)\rightarrow Z^o$ induces $\Lambda^\vee(Z^o)=\oplus_{i=1}^n\mbZ\omega^\vee_i\supset\Lambda^\vee({\rm LG}(0))=\oplus_{i=1}^n\mbZ\alpha^\vee_i$.
Under the identity $\Lambda^\vee_{\mbQ}({\rm LG}(0))=\Lambda^\vee_{\mbQ}(Z^o)$, we have $\mcV({\rm LG}(0))=\mcV(Z^o)$, $\mfC_{{\rm LG}(k)}=\mfC_{Z_k^o}$, $\mfD_{{\rm LG}(k)}=\mfD_{Z_k^o}$ for each $k$. Then the claim follows from Lemma \ref{l.colored fan Z}.
\end{proof}

\begin{prop} \label{p.LGtoWC}
(1) The morphism $\phi$ satisfies  $\phi(D_k)=Z_k$ for each $k$.

(2) Let $Y$ be the fiber product $X \times_Z {\rm LG}(2n, W_1 \oplus W_2)$ and let $X \xleftarrow{\psi'} Y \xrightarrow{\phi'} {\rm LG}(2n, W_1 \oplus W_2)$ be the two projections. Then $\psi': Y \to X$ is a double cover with branch locus $D_n$.

(3) The birational map $\phi$ is the composition of successive blowups of $Z$ along the strict transforms of $Z_n, Z_{n-1}, \cdots, Z_2$, from the smallest to the biggest.

(4) The degree 2 rational map $\psi' \circ (\phi')^{-1}: {\rm LG}(2n, W_1 \oplus W_2) \dasharrow X$ preserves the minimal rational curves and the VMRT-structures.
\end{prop}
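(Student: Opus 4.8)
The plan is to prove the four assertions in order, since each relies on the previous ones together with the spherical-variety dictionary set up in Lemmas 5.20--5.23. For part (1), I would compute the colored fan of $X$ (the wonderful compactification of ${\rm PSp}(W)$) as in Example \ref{e.colored cones}, whose colored cones are exactly the colored faces of $(\mcV({\rm PSp}(W)),\emptyset)$, and compare with the colored fan of $Z$ computed in Lemma \ref{l.colored fan Z}. The orbit closure $D_k$ corresponds to the colored cone $(\cone\{-\omega_1^\vee,\dots,-\omega_k^\vee\},\emptyset)$; its image under the birational morphism $\phi$ (which exists by Proposition \ref{p.LGEtendsion}) is the orbit closure of $Z$ whose colored cone is the smallest one of $\mbF_Z$ containing $\phi^\circ_*(\mfC_{D_k})$. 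Using that $\varrho(D(\omega_j))=\alpha_j^\vee$ and $\varrho(D_i)=-\omega_i^\vee$, one checks that $\cone\{-\omega_1^\vee,\dots,-\omega_k^\vee\}$ lies in $\mfC_{Z_k^o}=\cone\{-\omega_1^\vee,\alpha_1^\vee,\dots,\alpha_{k-1}^\vee\}$ but in no smaller colored cone, giving $\phi(D_k)=Z_k$. (Here one uses the fact that $\omega_1^\vee,\dots,\omega_j^\vee$ and $-\omega_1^\vee,\alpha_1^\vee,\dots,\alpha_{j-1}^\vee$ span the same rational subspace, a standard identity for the root system $C_n^\vee=B_n$.)

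For part (2), I would work locally over the open orbit and the generic point of $D_n$. Since $\LGW\to Z=\LGW/\tau$ is the quotient by the involution $\tau$, it is a double cover branched exactly along the fixed locus $\LGW^\tau=\LG(n)=Z_n$ (together with its closure structure). Pulling back along $\phi\colon X\to Z$, the fiber product $Y=X\times_Z\LGW$ is a double cover of $X$ branched over $\phi^{-1}(Z_n)$ — and one must argue that the reduced branch divisor is precisely $D_n$. By part (1), $\phi^{-1}(Z_n)$ contains $D_n$; since $\phi$ is an isomorphism away from $\cup_{i\ge 2}D_i$ and $Z_n\subset Z_{n-1}\subset\cdots\subset Z_2$, the ramification divisor of $\psi'$, which must be of pure codimension one and $K$-stable, can only be $D_n$. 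The cleanest way is to note that $\psi\colon\LGW\to Z$ is étale in codimension one away from $Z_n$ and ramifies to order two along $Z_n$; pulling back by the birational $\phi$, which does not contract $D_n$, gives a flat double cover $Y\to X$ étale outside $D_n$. Purity of branch loci (or direct local computation of $Z=\LGW/\tau$ near a Lagrangian $V_1\oplus V_2$) then identifies the branch divisor with $D_n$; that $Y$ is smooth follows since $X$ is smooth and the double cover is branched along a smooth divisor. The \emph{main obstacle} I anticipate is exactly this local analysis of $\LGW/\tau$ along its singular locus, verifying that the quotient is smooth in codimension one along $Z_1$ and has the expected transverse $A_1$-type behaviour so that $D_n$ (not some multiple or some extra component) is the branch divisor; one may need the explicit slice $\overline{\pi_k^{-1}([V_1],[V_2])}=\LG(2n-2k,\bar W_1\oplus\bar W_2)$ from Lemma \ref{l.LGClosure} to reduce to the case $k=n$, i.e. to a single $\tau$-fixed point.

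For part (3), I would use the same colored-fan dictionary: the strict transforms of $Z_n,\dots,Z_2$ give a tower of blow-ups whose colored fan is refined step by step, and at each stage the center is smooth (being an orbit closure in a smooth spherical variety, by Proposition \ref{p.BasicsSV}(ii) applied after the previous blow-ups), so the composite is a birational morphism onto $Z$ with the same colored fan as $X$; by Proposition \ref{p.BasicsSV}(i) it \emph{is} $X$, hence equals $\phi$. The key bookkeeping is that blowing up along the strict transform of $Z_k$ inserts the colored cone $(\cone\{-\omega_1^\vee,\dots,-\omega_k^\vee\},\emptyset)$ and its faces, matching Example \ref{e.colored cones}; the linear-independence statement in Proposition \ref{p.BasicsSV}(ii) guarantees smoothness of the relevant centers (one checks that $\{-\omega_1^\vee\}\cup\{\alpha_1^\vee,\dots,\alpha_{k-1}^\vee\}$ is a basis of a saturated sublattice). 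Finally, for part (4): $X$ has a locally flat VMRT-structure modeled on the second Veronese of $\mbP^{2n-1}$ (recalled in the Remark after Lemma \ref{l.IsoFlat}), and $\LGW$ has its standard VMRT-structure, also of Veronese type; the rational map $\psi\circ(\phi')^{-1}$ is, over the open orbits, the covering $\LG(0)={\rm Sp}(W)\to{\rm PSp}(W)=X^\circ$ followed by $\phi^{-1}$, both of which are local isomorphisms. Since it is a finite (degree $2$) dominant rational map between Fano manifolds of the same dimension whose restriction to the open orbit sends minimal rational curves to minimal rational curves (lines on $\LGW$ mapping to lines on $\LG(0)$, whose strict transforms under $\phi^{-1}$ are the minimal rational curves of $X$ by the description in Section 3.3), its differential at a general point carries the VMRT of $\LGW$ to that of $X$; one verifies this on the open orbit and invokes that both VMRT-structures are the pullback of the fixed Veronese $S\subset\mbP^{2n-1}$, so the induced $G$-structures (in the sense of Definition \ref{d.GStructure}) match under the étale map. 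This last point is essentially formal once parts (1)--(3) are in place.
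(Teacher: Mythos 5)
Your parts (1), (2) and (4) are in spirit the paper's argument, but part (3) — the heart of the proposition — has a genuine gap. You run the construction in the direction "blow up $Z$ along the strict transforms of $Z_n,\ldots,Z_2$ and read off the colored fan at each stage", asserting that blowing up the strict transform of $Z_k$ "inserts the colored cone $(\cone\{-\omega_1^\vee,\ldots,-\omega_k^\vee\},\emptyset)$ and its faces". There is no recipe among the cited tools for the colored fan of a blow-up of a spherical variety along a $K$-stable center, and the asserted bookkeeping is in fact false: the colored fan of $\Bl_{Z_n}Z$ consists of the colored faces of $\bigl(\cone(\alpha_1^\vee,\ldots,\alpha_{n-2}^\vee,-\omega_1^\vee,-\omega_n^\vee),\{D(\omega_1),\ldots,D(\omega_{n-2})\}\bigr)$, i.e.\ the change is "add the ray $\cone\{-\omega_n^\vee\}$ and delete the color $D(\omega_{n-1})$"; the colorless cone $\cone\{-\omega_1^\vee,\ldots,-\omega_n^\vee\}$ is full-dimensional, hence not a face of the new maximal cone, and the fan of $X$ only appears after the last step. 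The paper supplies exactly the mechanism you are missing: it defines intermediate embeddings $Y_i$ by these explicit colored fans, obtains a chain $X\cong Y_0\to\cdots\to Y_{n-1}\cong Z$ of $K$-equivariant morphisms, observes that $\pi_{k-1}$ contracts a single prime divisor onto the strict transform $\widetilde Z_{k+1}$, and then identifies $\pi_{k-1}$ with the blow-up by factoring it through $\Bl_{\widetilde Z_{k+1}}Y_k$ and using that there is no proper intermediate colored cone between $(\cone\{-\omega_{k+1}^\vee\},\emptyset)$ and $(\cone(-\omega_1^\vee,\alpha_1^\vee,\ldots,\alpha_k^\vee),\{D(\omega_1),\ldots,D(\omega_k)\})$. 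Your appeal to Proposition \ref{p.BasicsSV}(ii) for smoothness of the centers also does not work — that is a linear-independence criterion, and orbit closures in spherical varieties are in general singular (indeed $Z$ itself is singular along $Z_n$) — though smoothness of the centers is neither claimed nor needed.

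In (1) there is a smaller but real imprecision: the colored cone $\cone\{-\omega_1^\vee,\ldots,-\omega_k^\vee\}$ is attached to the orbit $(D_1\cap\cdots\cap D_k)^o$, not to $D_k^o$, whose colored cone is the single ray $\cone\{-\omega_k^\vee\}$. To get $\phi(D_k)=Z_k$ (and not a larger $Z_j$) you must show $-\omega_k^\vee$ lies in the \emph{relative interior} of $\mfC_{Z_k}=\cone\{-\omega_1^\vee,\alpha_1^\vee,\ldots,\alpha_{k-1}^\vee\}$; "spanning the same rational subspace" is not enough. The paper does this via the identity $-\omega_k^\vee=k(-\omega_1^\vee)+\sum_{j=1}^{k-1}(k-j)\alpha_j^\vee$, whose strictly positive coefficients give the interior membership. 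For (2) and (4) your route is essentially the paper's (base change of the quotient double cover $\psi$, with branch locus $Z_n$ pulling back to $\phi^{-1}(Z_n)=D_n$, and identification of both VMRTs with $\BP\msO\subset\BP\fg$ via the \'etale covering $\Sp(W)\to{\rm PSp}(W)$ over the open orbits); note only that your worry about transverse behaviour along $Z_1$ is misplaced, since $\tau$ acts freely off $\LG(n)$, so $\psi$ is \'etale over $Z\setminus Z_n$ and no analysis along $Z_1$ is required.
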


\begin{proof}
(1) By Example \ref{e.colored cones}, $\varrho(D_k)=-\omega_k^\vee$ and $\varrho(D(\omega_k))=\alpha_k^\vee$ for all $k$. It follows that
$\varrho(D_k)=k\varrho(D_1)+\sum\limits_{j=1}^{k-1}(k-j)\varrho(D(\omega_j))$ in $\Lambda^\vee_\mbQ({\rm PSp}(W))$. Then $\mfC_{D_k}^o\subset\mfC_{Z_k}^o$ by Lemma \ref{l.colored fan Z}, which implies that $\phi$ sends the orbit $D_k^o$ onto the orbit $Z_k^o$, and thus $\phi(D_k)=Z_k$.

(2) Since $\psi: {\rm LG}(2n, W_1 \oplus W_2) \to Z$ is a double cover with branch locus $Z_n$, the morphism $\psi': Y \to X$ is a double cover with branch locus $\phi^{-1}(Z_n)=D_n$.

(3) For $0 \leq i\leq n-1$,  set $\mfC'_i:=\cone(\alpha^\vee_1,\ldots,\alpha^\vee_i, -\omega^\vee_1,-\omega^\vee_{i+2},\ldots,-\omega^\vee_n)$ and $\mfD'_i:=\{D(\omega_1),\ldots,D(\omega_i)\}$. The set $\mbF_i$ of all colored faces of $(\mfC'_i, \mfD'_i)$ is a colored fan in $\Lambda^\vee_{\mbQ}({\rm PSp}(W))$. Let $Y_i$ be the unique complete spherical $K/\diag({\rm PSp}(W))$-embedding such that $\mbF_{Y_i}=\mbF_i$. There is a chain of $K$-equivariant birational morphism with connected fibers as follows:
\begin{eqnarray}\label{e.chainY}
X\cong Y_0\xrightarrow{\pi_0} Y_1\xrightarrow{\pi_1}Y_2\xrightarrow{\pi_2}\cdots\xrightarrow{\pi_{n-2}}Y_{n-1} \cong Z.
\end{eqnarray}

For each $1\leq k\leq n-1$, $-\omega_{k+1}^\vee$ lies in the relative interior of $\mfC''_k:=\cone(-\omega_1^\vee,\alpha_1^\vee,\ldots,\alpha_k^\vee)$, and the two colored fans $\{(\mfC, \mfD)\in\mbF_{k-1}\mid -\omega_{k+1}^\vee\notin\mfC\}$ and $\{(\mfC, \mfD)\in\mbF_k\mid \mfC''_k\nsubseteq\mfC\}$ are the same one.
Thus $\pi_{k-1}$ is a birational morphism with connected fibers such that the exceptional locus  is the strict transform $\widetilde{Z}_{k+1}\subset Y_k$ of $Z_{k+1}$ and the exceptional locus on $Y_{k-1}$ is a prime divisor which is the birational image of the boundary divisor $D_{k+1}$.

By \cite[Section 2]{Hu},  the  wonderful compactification $\overline{{\rm Sp}(W)}$ of the simple group ${\rm Sp}(W)$ 
can be obtained by  successive blowups from ${\rm LG}(2n, W_1 \oplus W_2)$. More precisely, there is a chain of ${\rm Sp}(W_1)\times {\rm Sp}(W_2)$-equivariant birational morphism with connected fibers as follows:
\begin{eqnarray}\label{e.chainM}
\overline{{\rm Sp}(W)}\cong M_0\xrightarrow{\pi_0} M_1\xrightarrow{\phi_1}M_2\xrightarrow{\phi_2}\cdots\xrightarrow{\phi_{n-2}}M_{n-1} \cong {\rm LG}(2n, W_1 \oplus W_2).
\end{eqnarray}
%The morphism $\phi_{n-2}$ is the blowup of $M_{n-1}$ along ${\rm LG}(n)$. 
The morphism $\phi_{k-1}$ is the blowup of $M_k$ along the strict transform $N_k \subset M_k$ of the smooth projective subvariety $\overline{{\LG}(k+1)}\subset{\rm LG}(2n, W_1\oplus W_2)$. In particular, $M_0,M_1,\ldots,M_{n-1}$ and $N_1, \cdots, N_{n-1}$ are all smooth projective ${\rm Sp}(W_1)\times {\rm Sp}(W_2)$-spherical varieties. %The construction above is due to Huruguen, see \cite[Section 2]{Hu} for more details. 

Since the orbit closures $\overline{{\LG}(1)},\ldots, \overline{{\LG}(n)}$ are all $\tau$-stable, the $\tau$-action  naturally extends to $M_k$ and $N_k$, and the chain \eqref{e.chainM} is $\tau$-equivariant.  Let $p_k: M_k\rightarrow Y'_k=M_k/\tau$ be the quotient map. In particular, $Y'_k$ is a normal variety with the $K$-action.  The quotient of the chain \eqref{e.chainM} by the $\tau$-action gives a chain of birational morphisms:
\begin{eqnarray}\label{e.chainY'}
Y'_0\xrightarrow{\pi'_0} Y'_1\xrightarrow{\pi'_1}Y'_2\xrightarrow{\pi'_2}\cdots\xrightarrow{\pi'_{n-2}}Y'_{n-1} \cong Z.
\end{eqnarray}
As  quotients by a finite group commute with blowups (see for example \cite[Lemma 3.11]{Ki85}), 
the morphism $\pi'_{k-1}: Y'_{k-1}\rightarrow Y'_{k}$ is in fact the blowup along the quotient $N_{k}/\tau$, which is nothing else but the  the strict transform $\widetilde{Z}'_{k+1}\subset Y'_{k}$ of  $Z_{k+1}\subset Z$ for $k=1, \cdots, n-1$.
This shows that the chain \eqref{e.chainY'} is  a chain of morphisms of spherical $K/\diag({\rm PSp}(W))$-embeddings.

We claim that the two chains \eqref{e.chainY} and \eqref{e.chainY'} coincide. By considering the exceptional loci on $Y_{n-1}=Y'_{n-1}$, one deduces that $\pi_{n-2}$ factors through $\pi'_{n-2}$. The morphisms $Y_{n-2}\rightarrow Y'_{n-2}\rightarrow Y_{n-1}$ induce maps of colored fans $\mbF_{n-2}\rightarrow\mbF_{Y'_{n-2}}\rightarrow\mbF_{n-1}$. There is no proper intermediate colored cone between $(\mfC_{D_n}, \emptyset)$ and $(\mfC_{\widetilde{Z}_n}, \mfD_{\widetilde{Z}_n})$, where $\mfC_{D_n}=\cone\{-\omega^\vee_n\}$, $\mfC_{\widetilde{Z}_n}=\mfC''_{n-1}$, and $\mfD_{\widetilde{Z}_n}=\mfD'_{n-1}$. Hence $\mbF_{n-2}=\mbF_{Y'_{n-2}}$, $Y_{n-2}=Y'_{n-2}$ and $\pi_{n-2}$ coincides with $\pi'_{n-2}$. By an induction on $k$. we see  that $\pi_k$ coincides with $\pi'_k$ for all $k$.

(4) The restriction of $\psi' \circ (\phi')^{-1}$ on the open orbit is the surjective group morphism ${\rm Sp}(W)\rightarrow {\rm PSp}(W)$. The  VMRT on open orbits of ${\rm LG}(2n, W_1 \oplus W_2)$ and $X$ coincides with the minimal nilpotent orbit $\BP \mathscr{O} \subset \BP \mfg$, where $\mfg$ is the Lie algebra of both ${\rm Sp}(W)$ and ${\rm PSp}(W)$. This completes the proof.
\end{proof}

\subsection{Rigidity of $\bar{C}_n$}

Let $G/P$ be an IHSS (cf. Example \ref{e.IHSS}). By Example \ref{e.IHSS2}, there exists a decomposition $\fg = \fg_{-1} \oplus \fg_0 \oplus \fg_1$.
Let  $G_0 \subset G$ be the connected algebraic subgroup  corresponding to $\fg_0$. Then $G/P$ carries a locally flat $G_0$-structure $\mcG_o$ which is given by the VMRT structure.
The following result is collected from the proof of Case (A) of  \cite[Proposition 2.3]{Mok08}.
\begin{prop} \label{p.RigidityIHSSM08}
Let $G/P$ be an IHSS of rank $\geq 2$, and let $\mcV$ be a holomorphic vector bundle over $\Delta$. Let $\mcS\subset\mbP \mcV$ be an irreducible closed subvareity such that  $\mathcal{S}_t \simeq G/P$ for $t \neq 0$. Suppose there exists a proper closed algebraic subset $N\subsetneq\mcS_0$ and a $G_0$-structure $\mcG$ on $\mcS \setminus N$  such that $\mcS\setminus N$ is a regular family over $\Delta$, and for $t \neq 0$, the $G_0$-structure $\mcG|_{\mcS_t}$ is the natural $G_0$-structure $\mcG_o$  on $G/P$.

(i) Take any point $x\in\mcS_0\setminus N$. Then after shrinking $\Delta$ to a smaller disk containing $0$ (denoted again by $\Delta$), there is a meromorphic map $h: G/P\times\Delta\dashrightarrow\mcS$ preserving the $G_0$-structure such that for $t\neq 0$, $h_t$ is a biholomorphic map, and the image of $h_0$ contains an analytic open neighborhood of $x\in\mcS_0$.

(ii) Suppose moreover that for $t\neq 0$, the embedding $\mcS_t\subset\mbP \mcV_t$ is given  by the complete linear system $|\mcO_{G/P}(k)|$ for some $k\geq 1$, and that the germ of $\mcS_0$ at $x$ is linearly non-degenerate in $\mbP \mcV_0$. Then the map $h$ in (i) is biholomorphic. In particular, $\mcS_0\simeq G/P$ and the inclusion $\mcS_0\subset\mbP \mcV_0$ is induced by the complete linear system $|\mcO_{G/P}(k)|$.
\end{prop}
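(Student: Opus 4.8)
The plan is to reprise, in the relative setting, the argument for Case (A) of \cite[Proposition 2.3]{Mok08}: exploit that a locally flat $G_0$-structure on an IHSS of rank $\geq 2$ admits no nontrivial deformation of its structure, and that a local isomorphism of such structures extends globally because $G/P$ is a complete, simply connected model.

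\emph{Step 1: the $G_0$-structure on the whole family is locally flat.} By Remark \ref{r.LocFlatGStructure} the $G_0$-bundle $\mcG$ carries vector-valued structure functions $c^k$ whose simultaneous vanishing forces local flatness, and which depend holomorphically on $t$. For $t\neq 0$ the restriction $\mcG|_{\mcS_t}$ is the natural $G_0$-structure $\mcG_o$ on $G/P$, for which $c^k\equiv 0$; hence $c^k$ vanishes on the dense open subset of $\mcG$ over $\mcS\setminus(N\cup\mcS_0)$, and, $\mcG$ being connected, $c^k\equiv 0$ everywhere, so $\mcG|_{\mcS_0\setminus N}$ is locally flat. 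Recalling (Examples \ref{e.IHSS}, \ref{e.IHSS2}) the grading $\fg=\fg_{-1}\oplus\fg_0\oplus\fg_1$ with $\fg_1=\aut(\hat{\mcC_o})^{(1)}\simeq T^*_o(G/P)$ and $\aut(\hat{\mcC_o})^{(2)}=0$, a locally flat $G_0$-structure is locally modelled on the standard translation-invariant structure on the big cell $\fg_{-1}\subset G/P$, and this model underlies the canonical flat Cartan connection of type $(G,P)$.

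\emph{Step 2: construction of $h$ (part (i)).} Choose a base point $o\in G/P$, a section $\sigma:\Delta\to\mcS$ with $\sigma(0)=x$ and $\sigma(\Delta)\cap N=\emptyset$, and a holomorphic lift of $\sigma$ to the canonical $P$-principal bundle of the $G_0$-structure on $\mcS\setminus N$ (equivalently, a holomorphically varying $2$-jet of frame at $\sigma(t)$). For each $t$, since $\aut(\hat{\mcC_o})^{(2)}=0$ there is a unique germ $h_t$ of $G_0$-structure isomorphism near $o$ matching the prescribed data; integrating the flat Cartan connection shows $h_t$ depends holomorphically on $t$ through $t=0$. For $t\neq 0$ the target $\mcS_t=G/P$ is a complete, simply connected model, so $h_t$ analytically continues to a single-valued local biholomorphism $G/P\to\mcS_t$, necessarily an honest biholomorphism by compactness. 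Let $\Gamma_t\subset (G/P)\times_\Delta\mcS$ be the graphs; the limit cycle $\Gamma_0\subset (G/P)\times\mcS_0$ is pure of dimension $\dim G/P$, projects to $G/P$ with degree one, and near $(o,x)$ — where the $G_0$-structure on $\mcS\setminus N$ is flat in a neighbourhood of $\sigma(\Delta)$, so $h_t$ is the identity in a common flat chart — it is the graph of a local biholomorphism. The component of $\Gamma_0$ dominating $G/P$ therefore defines a meromorphic map $h_0:G/P\dashrightarrow\mcS_0$ which is biholomorphic from a neighbourhood of $o$ onto a neighbourhood of $x$, and assembling $\{\Gamma_t\}$ yields the required meromorphic $h:(G/P)\times\Delta\dashrightarrow\mcS$ over $\Delta$. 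This proves (i).

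\emph{Step 3: part (ii).} Now assume $\mcS_t\subset\mbP\mcV_t$ is embedded by the complete system $|\mcO_{G/P}(k)|$ and the germ of $\mcS_0$ at $x$ is linearly non-degenerate. The indeterminacy locus of the meromorphic $h$ has codimension $\geq 2$ in the smooth $(G/P)\times\Delta$, so $h^*\mcO_{\mbP\mcV}(1)$ extends to a line bundle $\mcL$ on $(G/P)\times\Delta$; since $\Pic(\Delta)=0$ and $\mcL|_{(G/P)\times\{t\}}\simeq\mcO_{G/P}(k)$ for $t\neq 0$, we get $\mcL\simeq p_1^*\mcO_{G/P}(k)$, hence $h_0^*\mcO_{\mbP\mcV_0}(1)=\mcO_{G/P}(k)$. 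Thus the rational map $G/P\dashrightarrow\mbP\mcV_0=\mbP^N$ underlying $h_0$ is given by $N+1$ sections of $\mcO_{G/P}(k)$; were they linearly dependent the image would lie in a hyperplane of $\mbP\mcV_0$, contradicting non-degeneracy of $\mcS_0$ (which, by Step 2, contains a dense open subset of $h_0(G/P)$). So these sections are independent, and since $\dim\mcV_0=\dim\mcV_t=h^0(G/P,\mcO_{G/P}(k))=N+1$ they form a basis of $H^0(G/P,\mcO_{G/P}(k))$. Hence $h_0$ agrees with the closed embedding of $G/P$ by the very ample complete system $|\mcO_{G/P}(k)|$; in particular it has no indeterminacy, $\mcS_0=h_0(G/P)\simeq G/P$, and the inclusion $\mcS_0\subset\mbP\mcV_0$ is induced by $|\mcO_{G/P}(k)|$. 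The only point needing the ambient geometry is that $\mcS_0$ is irreducible of dimension $\dim G/P$, which holds since $\mcS_0$ is a flat degeneration of the $\mcS_t\simeq G/P$.

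\emph{Main obstacle.} The technical core is Step 2: the holomorphic dependence on $t$ of the normalized developing maps $h_t$, together with the fact that the limit of their graphs is, near $(o,x)$, still a graph. This is precisely where one invokes the theory of the canonical Cartan connection attached to a locally flat $G_0$-structure with $\aut(\hat{\mcC_o})^{(2)}=0$ and rank $\geq 2$, as carried out in \cite{Mok08} (compare also \cite{FH}); the local-flatness propagation in Step 1 and the linear-system bookkeeping in Step 3 are comparatively routine.
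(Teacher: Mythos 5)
Your part (i) follows essentially the same route as the paper: propagate local flatness of the $G_0$-structure from $t\neq 0$ to the central fibre via continuity of the structure functions of Remark \ref{r.LocFlatGStructure}, produce a holomorphically varying family of developing maps near a section through $x$ avoiding $N$, use simple connectedness and compactness of the model to globalize them to biholomorphisms $G/P\to\mcS_t$ for $t\neq 0$, and then extend across the central fibre; the only cosmetic difference is that you extend by taking limits of graphs, while the paper notes that $h$ is already holomorphic on $(G/P)\times\Delta^*$ together with an open piece of $(G/P)\times\{0\}$ and invokes Hartogs--Levi extension of meromorphic maps. For part (ii) the paper simply quotes \cite[Lemma 2.3]{Mok08}, whereas you supply a self-contained linear-system argument; this is a genuine and useful difference, but as written it has two soft spots.

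First, the identification $h_0^*\mcO_{\mbP\mcV_0}(1)=\mcO_{G/P}(k)$ does not follow merely from the extension $\mcL$ of $h^*\mcO_{\mbP\mcV}(1)$ being isomorphic to $p_1^*\mcO_{G/P}(k)$: the indeterminacy locus of $h$ has codimension $\geq 2$ in $(G/P)\times\Delta$ but may contain divisors of the central fibre, so the restrictions to $(G/P)\times\{0\}$ of the sections defining $h$ may acquire a common fixed divisor $D$, in which case $h_0$ is given by sections of $\mcO_{G/P}(k)\otimes\mcO(-D)$, i.e.\ the degree could a priori drop in the limit. This is harmless in the end, because your own count (linear independence of the $N+1$ sections from non-degeneracy of the germ of $\mcS_0$ at $x$, together with $N+1=\dim\mcV_0=h^0(G/P,\mcO_{G/P}(k))$) forces $D=0$; but the argument should be run in that order rather than asserting the equality of line bundles outright. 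Second, the closing claim that $\mcS_0$ is irreducible ``since it is a flat degeneration of $\mcS_t\simeq G/P$'' is not a valid reason: flat limits of irreducible varieties can be reducible or non-reduced. What saves the conclusion is a degree comparison: $h_0(G/P)\subset\mcS_0$ is a closed subvariety whose degree with respect to $\mcO_{\mbP\mcV_0}(1)$ already equals $\deg\mcS_t$, i.e.\ the total degree of the flat limit cycle, so there is no room for further components; alternatively, in the paper's application (Proposition \ref{p.RigidityIHSS}) irreducibility and reducedness of $\mcS_0$ are explicit hypotheses. Relatedly, statement (ii) asserts that $h$ itself, not only $h_0$, is biholomorphic; once $h_0$ is identified with the embedding by $|\mcO_{G/P}(k)|$ this follows by a short argument on the graph of $h$ (e.g.\ via Zariski's main theorem), which should be recorded to complete the proof.
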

\begin{proof}
For the convenience of readers, we reproduce  the proof given in \cite{Mok08}.
Recall from Remark \ref{r.LocFlatGStructure}, the local flatness of a $G_0$-structure can be detected by the vanishing of certain vector-valued function $c^k, k=0, 1,2, \cdots$ introduced by Guillemin \cite{Gu}. Furthermore,   these functions $c^k$ vanish for the $G_0$-structure on $G/P$. As the function $c^k$ is continuous, it vanishes on the whole $\mcS \setminus N$, which implies that the $G_0$-structure $\mcG$ on $\mcS \setminus N$ is locally flat. Then there is an analytic open neighbohood $U$ of $x$ in $\mcS$ and a holomorphic map $f: U\to G/P$ whose restriction $f_t: U \cap\mcS_t\to G/P$ is a developing map (i.e. $f_t$ preserves $G_0$-structures) whenever $U \cap\mcS_t\neq\emptyset$.  If necessary, shrink $\Delta$ to a smaller disk containing $0$ (denoted as $\Delta$ again). Then there is a holomorphic map $h: G/P\times \Delta^*\to\mbP \mcV$ respecting canonical projections such that $h$ induces a biholomorphic map from $G/P$ onto $\mcS_t\subset\mbP \mcV_t$ for $t\in\Delta^*:=\Delta\setminus\{0\}$, and such that $h$ extends holomorphically across some nonempty subset $U_0$ of $G/P\times\{0\}$  such that  $h(U_0)$ contains an open neighborhood $U_x$ of $x$ in $\mcS_0$. By Hartogs extension of meromorphic functions, $h$ extends to a meromorphic map from $G/P\times\Delta$ into $\mbP \mcV$ respecting canonical projections, to be denoted by the same symbol $h$. Since $h(G/P\times\Delta^*)$ is contained in $\mcS$, so is $h(G/P\times\{0\})$, verifying (i).
The assertion (ii) follows from \cite[Lemma 2.3]{Mok08} immediately.
%Under the assumption of (ii), $h_t^*\mcO_{\mbP(\mcV_t)}(1)=\mcO_{G/P}(k)$ for $t\neq 0$. By continuity, the same holds for $t=0$. Since the image of $h_0$ contains the germ of $x$ in $\mcS_0$, the linearly non-degenerate assumption implies that the linear subsystem of $|\mcO_{G/P}(k)|$ is of dimension $\dim\mcV_0-1$. For $t\neq 0$, the assumption $\mcS_t\subset\mbP(\mcV_t)$ implies that $\dim|\mcO_{G/P}(k)|=\dim\mcV_t-1=\dim\mcV_0-1$. Then $h_0: G/P\to\mcS_0\subset\mbP(\mcV_0)$ is a closed embedding induced by the complete linear system $|\mcO_{G/P}(k)|$. This completes the proof of (ii).
%
%Then $\bar{M}\subset\langle \bar{M}\rangle$ is a linear projection of $\nu_k(G/P)\subset \mbP^{r-1}:=\mbP(H^0(G/P, \mcO(k))^\vee$, where $M$ is the image of $h_0$ in $\mbP(\mcV_0)$, $\langle \bar{M}\rangle$ is the linear span of $\bar{M}$, and $r:=h^0(G/P, \mcO(k))$ is equal to the rank of the vector bundle $\mcV$ over $\Delta$. Recall that $M$ contains the neighborhood $U_x$ of $x$ in $\mcS_0$. Since $U_x$ is linearly non-degenerate in $\mbP(\mcV_0)$, so is $M$ (as well as its closure $\bar{M}$).  Then the linear projection $\mbP^{r-1}\dashrightarrow \langle \bar{M}\rangle$ is an isomorphism. Then $\bar{M}\subset\mbP(\mcV_0)$ is projectively equivalent to $\nu_k(G/P)\subset\mbP^{r-1}$, verifying (ii).
\end{proof}

As a direct consequence of Proposition \ref{p.RigidityIHSSM08}, we have the following result.
\begin{prop} \label{p.RigidityIHSS}
Let $G/P$ be an IHSS of rank $\geq 2$.
Let $\pi: \mathcal{S} \to \Delta$ be a projective flat map such that $\mathcal{S}_t \simeq G/P$ for $t \neq 0$ and the central fiber $\mathcal{S}_0$ is irreducible and reduced.
Assume that  there exists a proper closed algebraic subset $N \subsetneq \mcS_0$ and a $G_0$-structure $\mcG$ on $\mcS \setminus N$ such that for $t \neq 0$, the $G_0$-structure $\mcG|_{\mcS_t}$ is the natural $G_0$-structure $\mcG_o$  on $G/P$.  Then $\mcS_0$ is isomorphic to $G/P$.
\end{prop}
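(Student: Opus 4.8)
The plan is to reduce the statement to the embedded situation handled by Proposition \ref{p.RigidityIHSSM08}(ii), by choosing a suitable relatively very ample line bundle on $\mcS/\Delta$ and shrinking $\Delta$. First I would invoke the invariance of Picard groups and of globally generated/ample cones in a flat projective family with reduced irreducible fibers: after shrinking $\Delta$ we may assume there is a line bundle $\mcL$ on $\mcS$ with $\mcL_t \simeq \mcO_{G/P}(1)$ for $t\neq 0$ (here $\mcO_{G/P}(1)$ denotes the ample generator of $\Pic(G/P)$), and by semicontinuity $h^0(\mcS_t,\mcL_t)$ is constant, so $\pi_*\mcL$ is locally free and $\mcS \hookrightarrow \mbP(\pi_*\mcL^*) =: \mbP\mcV$ is a closed embedding that restricts on $t\neq 0$ to the minimal embedding of $G/P$, i.e. the one given by the complete linear system $|\mcO_{G/P}(1)|$. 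This puts us exactly in the framework of Proposition \ref{p.RigidityIHSSM08} with $k=1$.

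The two remaining hypotheses of Proposition \ref{p.RigidityIHSSM08}(ii) are: (a) $\mcS \setminus N$ is a smooth family over $\Delta$, and (b) the germ of $\mcS_0$ at a general point $x \in \mcS_0 \setminus N$ is linearly non-degenerate in $\mbP\mcV_0$. For (a): the locus in $\mcS_0$ where $\mcS \to \Delta$ fails to be smooth is a proper closed subset $N'$ of $\mcS_0$ (since $\mcS_0$ is reduced and irreducible, $\mcS \to \Delta$ is smooth at the generic point of $\mcS_0$), so after enlarging $N$ to contain $N'$ we may assume $\mcS \setminus N \to \Delta$ is smooth; the $G_0$-structure $\mcG$ on $\mcS \setminus N$ is still given. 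For (b): the linear span of $\mcS_0$ in $\mbP\mcV_0$ is a linear subspace $\mbP W_0 \subset \mbP\mcV_0$; since $\mcS_0$ is irreducible of the same dimension as $G/P$ and has degree equal to $\deg(G/P)$ (degrees are constant in the flat family), and since $G/P$ spans $\mbP\mcV_t$, a degree argument forces $\mbP W_0 = \mbP\mcV_0$, so $\mcS_0$ is linearly non-degenerate, hence so is its germ at a general point $x$. (One must choose $x$ outside $N$ and outside the locus where $\mcS_0$ could be contained in a proper linear subspace through $x$, which is harmless.)

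With (a) and (b) in hand, Proposition \ref{p.RigidityIHSSM08}(ii) applies verbatim and yields $\mcS_0 \simeq G/P$, completing the proof. The only genuinely delicate point is the bookkeeping needed to guarantee, after shrinking $\Delta$, that the family $\mcS \hookrightarrow \mbP\mcV$ is a \emph{closed} embedding with the prescribed restriction to nearby fibers — this uses relative very ampleness of $\mcL$ over a punctured disk together with properness to extend across $0$ — but this is standard and I expect no serious obstacle. The substantive content is entirely in Proposition \ref{p.RigidityIHSSM08}, whose proof rests on the vanishing of the curvature functions $c^k$ of the locally flat $G_0$-structure (Remark \ref{r.LocFlatGStructure}) propagating by continuity to $\mcS_0 \setminus N$, the construction of a developing map, and Hartogs extension of the resulting meromorphic family of biholomorphisms.
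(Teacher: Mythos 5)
Your overall strategy is the paper's: reduce to Proposition \ref{p.RigidityIHSSM08}(ii) by embedding the family into a relative projective space and checking the two remaining hypotheses. But the way you set up the embedding and verify non-degeneracy contains two genuine gaps. First, the existence of a line bundle $\mcL$ on all of $\mcS$ with $\mcL_t\simeq\mcO_{G/P}(1)$ for $t\neq 0$, and its relative very ampleness at $t=0$, is not justified: Theorem \ref{t.InvCones} applies to families of Fano \emph{manifolds}, whereas here $\mcS_0$ is only assumed irreducible and reduced (it and the total space may be singular), so invariance of $\Pic$ and extension of the polarization across the central fiber are not available; moreover (very) ampleness on the fibers over $\Delta^*$ does not pass to the limit fiber, and your claim that $h^0(\mcS_t,\mcL_t)$ is constant ``by semicontinuity'' is backwards — semicontinuity only prevents $h^0$ from dropping at $t=0$, and without constancy $\pi_*\mcL$ need not commute with base change, so the induced map on $\mcS_0$ need not be a closed embedding given by a complete linear system. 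This step is precisely where the difficulty lies, and it cannot be dismissed as standard bookkeeping.

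Second, your non-degeneracy argument is invalid: for an irreducible variety, equality of dimension and degree with $G/P$ does not force the linear span to be all of $\mbP\mcV_0$ (degree only bounds the codimension within the span, so a degenerate limit of the same degree and dimension is numerically possible, and flatness-based semicontinuity of $h^0(I_{\mcS_t}(1))$ goes the wrong way to exclude it). The paper sidesteps both problems at once: start from an \emph{arbitrary} relatively very ample $\mcL$ (which exists since $\pi$ is projective), replace it by $\mcO(m)$ with $m\gg 0$, and use flatness (constancy of the Hilbert polynomial, with $P(m)=h^0(\mcS_t,\mcO_{\mcS_t}(m))$ for large $m$) to conclude that the resulting embedding restricts on \emph{every} fiber, including $t=0$, to the map given by the complete linear system $|\mcO_{\mcS_t}(m)|$; non-degeneracy of the reduced irreducible central fiber is then automatic, and Proposition \ref{p.RigidityIHSSM08}(ii) only requires the embedding of the nearby fibers to be by $|\mcO_{G/P}(k)|$ for \emph{some} $k\geq 1$, so there is no need to realize the minimal polarization $k=1$. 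Your treatment of the smoothness hypothesis (enlarging $N$ by the non-smooth locus of $\pi$ inside $\mcS_0$) is fine, but as written the proof does not go through without replacing your two key steps by an argument of this kind.
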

\begin{proof}
Take a holomorphic very ample line bundle $\mcL$ on $\mcS/\Delta$.  This induces a closed immersion $ \mcS\subset\mbP^k_\Delta$ over $\Delta$ such that $\mcO_{\mbP^k_\Delta}(1)|_{\mcS}=\mcL$. Then the  Hilbert polynomial  of $\mcS_t$ satisfies that $P_{\mcS_t}(m)=h^0(\mcS_t, \mcO_{\mcS_t}(m))$ for all sufficiently large integers $m$. Since $\pi$ is a flat family, Hilbert polynomials are independent of $t\in\Delta$. Fix a sufficiently large integer $m$. The  line bundle $\mathcal{L}^{\otimes m}\in\Pic(\mcS/\Delta)$, which restricts to $\mcO_{G/P}(m)$ on general fibers, gives rise to a closed immersion $f:\mcS\to\mbP^r_\Delta$ over $\Delta$, where $r:=P_{\mcS_t}(m)$ is independent of $t\in\Delta$. For each $t$, the morphism $f_t: \mcS_t\to\mbP^r=\mbP(H^0(\mcS_t, \mcO_{\mcS_t}(m))^\vee)$ is the morphism induced from the $r$-dimensional complete linear system $|\mcO_{\mcS_t}(m)|$ on the irreducible and reduced variety $\mcS_t$. In particular, $f_t(\mcS_t)$ is a linearly non-degenerate closed subvariety of $\mbP^r$ for each $t$. When we consider the closed immersion $\mcS\subset\mbP^r_\Delta$ induced by $\mcO(m)$, all the assumptions in Proposition \ref{p.RigidityIHSSM08}(ii) are fulfilled, implying that $\mcS_0\simeq G/P$.
\end{proof}

\begin{prop}\label{p.DR}
Let $X$ be the wonderful compactification of an adjoint simple group $G$ of rank $n$.
Let $\pi: \mcX \to \Delta$ be a regular family of projective manifolds such that $\mcX_t \simeq X$ for all $t \neq 0$.
 Assume  (i) the $G \times G$-action on $\mcX_t$ extends to a $G\times G$-action on $\mcX$;  (ii)  there exists a $G \times G$-stable open subset $\mcU \subset \mcX$ such that the surjective map
$\pi|_\mcU: \mcU \to \Delta$ is a $(G\times G)/\diag(G)$-fibration. Then $\mcX_0 \simeq X$.
\end{prop}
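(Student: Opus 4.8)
The plan is to use the theory of spherical varieties together with the fact (Theorem \ref{t.InvCones}) that $\mcX_0$ is Fano. First I would show that $\mcX_0$ is a spherical $(G\times G)$-variety. Since the $G\times G$-action on $\mcX$ is given by hypothesis and $\mcU\to\Delta$ is a $(G\times G)/\diag(G)$-fibration, the central fiber $\mcU_0$ is isomorphic to $(G\times G)/\diag(G)\simeq G$; as $\mcX_0$ is smooth projective and contains $\mcU_0$ as a $G\times G$-stable open subset whose complement is a divisor (by the fibration structure over $\Delta$, $\mcX_0\setminus\mcU_0$ has pure codimension one), $\mcX_0$ is an equivariant compactification of $G$ under the $G\times G$-action with open orbit $\simeq G$. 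A normal equivariant compactification of a spherical homogeneous space is spherical, so $\mcX_0$ is a spherical $(G\times G)/\diag(G)$-embedding, i.e.\ it is determined by a strictly convex colored fan $\mbF_{\mcX_0}$ in $\Lambda^\vee_\mbQ(G)=\Lambda^\vee_\mbQ(\mathrm{PSp}(W))$ by Proposition \ref{p.BasicsSV}(i). By Proposition \ref{p.BasicsSV}(iii) and since $\mcX_0$ is complete, $\mcV(G)$ is covered by the colored cones of $\mbF_{\mcX_0}$.

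The main point, and what I expect to be the main obstacle, is to pin down the colored fan $\mbF_{\mcX_0}$ and show it equals the colored fan of $X$, namely the fan consisting of all colored faces of $(\mcV(G),\emptyset)$ (Example \ref{e.colored cones}). The key invariants to control are: (a) the Picard group / pseudo-effective cone, which by Theorem \ref{t.InvCones} are constant in the family and hence agree with those of $X$; (b) the number and type of $B$-stable prime divisors. The colors $\mfD(G)$ depend only on the open orbit, so they are the same as for $X$; the number of boundary prime divisors of $\mcX_0$ equals $\dim H^2(\mcX_0,\mbR)-\#\mfD(G)$ by Proposition \ref{p.BasicsSV}(iv),(v), which is invariant and hence equals $n$, the number of boundary divisors of $X$. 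Thus $\mbF_{\mcX_0}$ has exactly $n$ rays not coming from colors, and the $\varrho$-images of its boundary divisors span the same set of rays as for $X$ (using invariance of $\mathrm{PEff}$ together with the identification $A^1\cong H^2$ and Example \ref{e.colored cones}, where boundary divisors $D_i$ map to $-\omega_i^\vee$). Combined with completeness (the colored cones must cover $\mcV(G)=\mathrm{cone}\{-\omega_1^\vee,\ldots,-\omega_n^\vee\}$) and strict convexity, one concludes that the maximal colored cone of $\mcX_0$ is $(\mcV(G),\emptyset)$ — here I would argue that no color $D(\omega_j)$ can appear in any colored cone of $\mcX_0$, since otherwise the support of the fan would be strictly larger than $\mcV(G)$, contradicting that $\mcX_0$, like $X$, has the same number of orbit closures and the same boundary combinatorics forced by invariance of $N^1$; alternatively, that adding a color would force an orbit closure whose class relations in $A^1$ (via Proposition \ref{p.BasicsSV}(iv)) are incompatible with $\mathrm{Pic}(\mcX_0)\cong\mathrm{Pic}(X)$.

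Once $\mbF_{\mcX_0}$ is identified with the colored fan of $X$, the uniqueness statement Proposition \ref{p.BasicsSV}(i) gives $\mcX_0\simeq X$ as $(G\times G)$-varieties, which in particular gives an abstract isomorphism $\mcX_0\simeq X$, completing the proof. I would also note that smoothness of $\mcX_0$ is automatic here (it is a fiber of a smooth family), so no extra smoothness check on the colored fan is needed, though one can cross-check via Proposition \ref{p.BasicsSV}(ii) that $\{\varrho(D)\}$ over the $B$-stable divisors through the closed orbit is linearly independent, which is consistent with the fan being that of the smooth variety $X$. The delicate step remains ruling out spurious colors/rays, for which the invariance of the full cone data in Theorem \ref{t.InvCones} (Picard group, pseudo-effective cone, and the count of boundary divisors derived from $h^2$) is exactly the tool that forces $\mbF_{\mcX_0}=\mbF_X$.
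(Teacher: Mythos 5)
Your general framework coincides with the paper's (realize $\mcX_0$ as a spherical $(G\times G)/\diag(G)$-embedding and show its colored fan equals that of $X$), but the two decisive steps are not correctly justified, and this is a genuine gap. First, knowing the classes of the boundary divisors $E_k$ in $\Pic(\mcX_0)\simeq\Pic(X)$, or the invariance of ${\rm PEff}$, does not determine the valuations $\varrho(E_k)\in\Lambda^\vee_\mbQ(G)$: numerical classes and $\varrho$-images live in different spaces, and invariance of cones of divisor classes says nothing a priori about where $\varrho$ sends the boundary divisors. The paper obtains $\varrho(E_i)=-\omega_i^\vee$ only after two extra inputs that your sketch skips: (a) the fibration structure of $\mcU$ is used to see that the colors specialize with $\varrho(\mcD(\omega_j)|_{\mcX_0})=\alpha_j^\vee$; and (b) the anticanonical formula for spherical varieties (Proposition \ref{p.BasicsSV}(vi)), together with the invariance of $-K$ and of the colors' classes, gives $\sum_i\mcD_i|_{\mcX_0}=\sum_k E_k$ in $\Pic(\mcX_0)$, which forces each $\mcD_i$ to specialize to a single reduced boundary divisor $E_i$; only then does $\varrho(E_i)=\varrho(D_i)=-\omega_i^\vee$ follow. (Also, your count ``number of boundary divisors $=h^2-\#\mfD(G)$'' forgets the rank-$n$ lattice of relations in Proposition \ref{p.BasicsSV}(iv), although the conclusion that there are $n$ boundary prime divisors is correct.)

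Second, your exclusion of colors rests on a false principle: putting a color into a colored cone does not enlarge the support of the fan beyond $\mcV(G)$, since completeness only requires the cones to cover $\mcV(G)$ and colored cones may stick out of it. The paper's own variety $Z$ in Section 5 is a complete spherical embedding of a homogeneous space of the same kind whose colored cones do contain colors; indeed the colored cone $\cone\{-\omega_1^\vee,\alpha_1^\vee,\ldots,\alpha_{n-1}^\vee\}$ of its closed orbit contains all of $\mcV(G)$ (Lemma \ref{l.colored fan Z}), so counting boundary divisors, Picard data and completeness alone cannot force $\mfD=\emptyset$. The argument the paper uses instead is the degeneration of the closed orbit: the limit $F\subset\mcX_0$ of the closed orbits $D_{1,\ldots,n}\subset\mcX_t$ lies in every $E_i$ (via $\mcM_t\subset\mcD_i|_{\mcX_t}$ and the identification $\mcD_i|_{\mcX_0}=E_i$), so $\mfC_F$ contains the basis $-\omega_1^\vee,\ldots,-\omega_n^\vee$; smoothness of $\mcX_0$ and Proposition \ref{p.BasicsSV}(ii) then force $\mfD_F=\emptyset$ and $\mfC_F=\mcV(G)$, and the open subvariety determined by the faces of $(\mcV(G),\emptyset)$ is isomorphic to $X$, hence complete, hence equal to $\mcX_0$. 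Without the specialization statements $\mcD_i|_{\mcX_0}=E_i$, $\varrho(\mcD(\omega_j)|_{\mcX_0})=\alpha_j^\vee$ and the limit closed orbit $F$, your argument does not close.
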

\begin{proof}
By assumption,  $\mcX_0$ is also a $G \times G$-spherical variety. Set $\partial\mcX_t:=\mcX_t\setminus\mcU_t$ for each $t\in\Delta$. By Proposition \ref{p.BasicsSV} (v), we have  $\Pic(\mcX_t)=H^2(\mcX_t, \mbZ)$ for each $t\in\Delta$ . Then the invariance of cohomology under deformations implies that $\Pic(\mcX_0)\simeq \Pic(X)$. Since the open orbit $(G\times G)/\diag(G)$ is affine, the boundary $\partial\mcX_0$ is the union of $n$ prime divisors $E_1,\ldots,E_n$.

Let $\mcD_i$ (resp. $\mcD(\omega_j)$) be the irreducible divisor on $\mcX$ which restricts to $D_i$ (resp. $D(\omega_j)$) on $\mcX_t$ for $t\neq 0$. Since $\mcU/\Delta$ is a trivial fibration, so is $(\mcD(\omega_j)\cap\mcU)/\Delta$. Then by Example \ref{e.colored cones}, $\varrho(\mcD(\omega_j)|_{\mcX_0})=\varrho(D(\omega_j))=\alpha_j^\vee\in\Lambda_\mbQ^\vee(G)$. We have $\partial\mcX_t=-K_{\mcX_t}-2\sum_{j=1}^n\mcD(\omega_j)|_{\mcX_t}$ for each $t\in\Delta$ by Proposition \ref{p.BasicsSV}(vi). The right hand side of this formula is invariant in $\Pic(\mcX/\Delta)\simeq\Pic(\mcX_t)$, then so is the left hand side. Combining with the fact $\sum_{i=1}^n\mcD_i|_{\mcX_t}=\partial\mcX_t\in\Pic(\mcX_t)$ for $t\neq 0$, we have 
\begin{eqnarray}\label{e.boundivX0}
\sum_{i=1}^n\mcD_i|_{\mcX_0}=\partial\mcX_0=\sum_{k=1}^nE_k\in\Pic(\mcX_0).
\end{eqnarray}
The Weil divisor $\mcD_i|_{\mcX_0}$ is represented by the scheme-theoretic intersection of $\mcD_i$ with $\mathcal{X}_0$, and it is supported at the boundary of $\mathcal{X}_0$. Thus $\mcD_i|_{\mcX_0}=\sum_{k=1}^nc_{ik}E_k$ for some nonnegative integers $c_{ik}$ with $\sum_{k=1}^nc_{ik}\geq 1$. Then the formula \eqref{e.boundivX0} implies that $\mcD_i|_{\mcX_0}=E_i$, up to reordering $E_1,\ldots,E_n$. It follows that $\varrho(E_i)=\varrho(D_i)=-\omega_i^\vee\in\Lambda_\mbQ^\vee(G)$ for each $i$.

Let $\mcM$ be the irreducible closed subvariety on $\mcX$ which restricts to the closed orbit $D_{1,\ldots,n}:=\cap_{i=1}^n D_i$ on $\mcX_t$ for $t\neq 0$. Take a closed orbit $F$ that is contained in $\mcM_0$. Since $\mcM_t\subset\mcD_i|_{\mcX_t}$ for $t\neq 0$, we have $F\subset \mcD_i|_{\mcX_0}=E_i$ by continuity.
Set $\mcX'_0:=\mcX_0\setminus(\cup Y)$, where $Y$ runs over the set of orbits on $\mcX_0$ such that $F \nsubseteq\overline{Y}$. Then the colored fan $\mbF_{\mcX'_0}$ is exactly the set of colored faces of the colored cone $(\mfC_F, \mfD_F)$ of $F$, and the cone $\mfC_F$ is generated by $-\omega^\vee_1,\ldots,-\omega^\vee_n$, and $\varrho(\mfD_F)$. Since $-\omega^\vee_1,\ldots,-\omega^\vee_n$ form a basis of $\Lambda_\mbQ^\vee(G)$, we have $\mfD_F=\emptyset$ by Proposition \ref{p.BasicsSV}(ii).
It follows that $(\mfC_F, \mfD_F)=(\mcV(G), \emptyset)=(\mfC_{D_{1,\ldots,n}}, \mfD_{D_{1,\ldots,n}})$, and thus the colored fans satisfy  $\mbF_{\mcX'_0}=\mbF_X$ by Example \ref{e.colored cones}. This gives $\mcX'_0\cong X$, implying $\mcX_0\cong X$.
\end{proof}

\begin{thm} \label{t.RigidityTypeC}
Let $(W, \omega)$ be a symplectic vector space.
Let $\pi: \mcX\rightarrow\Delta\ni 0$ be a  regular family of Fano varieties such that $\mcX_t  \cong X$ for $t \neq 0$,  where $X$ is the wonderful compactification of ${\rm {\rm PSp}(W)}$.
  Then $\mcX_0 \cong X$.
\end{thm}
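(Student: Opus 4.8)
The strategy is to reduce to Proposition~\ref{p.DR}: it suffices to show that the $G\times G$-action on the general fibers extends to the whole family $\mcX/\Delta$ and that there is a $G\times G$-stable open subset of $\mcX$ which is a $(G\times G)/\diag(G)$-fibration over $\Delta$. Here $G = {\rm PSp}(W)$. We know from Theorem~\ref{t.invVMRT} (together with Proposition~\ref{p.invVMRTtypeA} and the computations in \cite{BF15}) that the VMRT of $\mcX_0$ at a general point is projectively equivalent to that of $X=\bar{C}_n$, i.e. to the second Veronese embedding of $\BP^{2n-1}$; in particular the VMRT-structure on $\mcX_0$ carries a $G_0$-structure, where $G_0 \subset \GL(\C^{2n})$ is the linear automorphism group of the Veronese cone (so $G_0 \simeq {\rm Sp}(W)\cdot\C^*$ modulo a finite group). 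This common $G_0$-structure is the one coming from the IHSS $C_n/P_n$ and is locally flat by Example~\ref{e.IHSS}.

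\textbf{Key steps.} First I would use the explicit birational construction of $\S\ref{s.C}$: Proposition~\ref{p.LGtoWC} realizes $\phi:\bar{C}_n\to Z=\LGW/\tau$ as a composition of successive blowups along the strict transforms of the strata $Z_n\subset Z_{n-1}\subset\cdots\subset Z_2$, which are $K$-orbit closures. By the invariance of Mori cones under Fano deformations (Theorem~\ref{t.InvCones}), the morphism $\phi$ on the general fibers extends to a morphism $\Phi:\mcX\to\mcZ$, where $\mcZ\to\Delta$ is the family obtained by pushing forward the relevant line bundle (as in the type $A$ argument of $\S 3.3$). The crucial claim is then that $\mcZ_0\simeq Z$, so that $\mcZ\to\Delta$ is the \emph{isotrivial} family $Z\times\Delta$ with its $K=G\times G$-action. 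To prove this I would argue as in Proposition~\ref{p.invVMRTtypeA}/Lemma~\ref{l.Phi0birational}: $\Phi_0$ is birational, and one identifies the images of the boundary divisors $\mcD_{i,0}$ to recover the stratification; alternatively, and more robustly, one applies the deformation rigidity of the IHSS $C_n/P_n$ in the form of Proposition~\ref{p.RigidityIHSS} to the family of (normalized) VMRTs or to the strata $\mcS_i$ lying over $Z_i$, using that these strata are $C_n/P_n$-bundles equipped with the natural $G_0$-structure for $t\neq 0$ and that the $G_0$-structure extends (being detected by the vanishing of the continuous functions $c^k$, cf. Remark~\ref{r.LocFlatGStructure}); linear nondegeneracy of the relevant strata in $\mcZ_0$, needed to apply Proposition~\ref{p.RigidityIHSS}(ii), follows exactly as Lemma~\ref{l.ANondegenerate}, from the invariance of cones and the explicit class computations. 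Once $\mcZ_0\simeq Z$ is established, $\mcZ\to\Delta$ carries a fiberwise $G\times G$-action; since $\Phi_0:\mcX_0\to\mcZ_0=Z$ is birational, the fundamental vector fields of the $G\times G$-action on $Z$ lift to vector fields on a dense open subset of $\mcX_0$, which glue with the $G\times G$-vector fields on $\mcX|_{\Delta^*}$ to give meromorphic vector fields on $\mcX$; by Hartogs extension these are holomorphic, and integrating them makes $\mcX_0$ a $G\times G$-variety. Then $\mcX_0$ is spherical under $G\times G$ with the same open orbit $(G\times G)/\diag(G)$, and Proposition~\ref{p.DR} applies to give $\mcX_0\simeq X$.

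\textbf{Main obstacle.} The delicate point is proving $\mcZ_0\simeq Z$, i.e. that the auxiliary family $\mcZ\to\Delta$ is isotrivial. The variety $Z$ is not homogeneous (it is singular along the strata $Z_j$, $j\geq 1$), so one cannot invoke a rigidity theorem for $Z$ directly; instead one must control $\mcZ_0$ via its strata. The cleanest route is to run the blowup sequence of Proposition~\ref{p.LGtoWC}(3) in families, showing inductively that each intermediate $\mcY_i\to\Delta$ is isotrivial by recognizing the $i$-th blowup center as a family of $C_n/P_n$-bundles to which Proposition~\ref{p.RigidityIHSS} applies; this requires checking that the $G_0$-structure genuinely extends across the central fiber of each stratum and that the requisite nondegeneracy holds, which is where most of the technical work lies. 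A secondary subtlety is the lift of vector fields through the birational morphism $\Phi_0$ and the verification that the extended $G\times G$-action on $\mcX_0$ has the open orbit required by Proposition~\ref{p.DR}(ii) — this should follow by continuity from the $t\neq0$ case together with the fact that $\Phi_0$ is an isomorphism over the open orbit of $Z$.
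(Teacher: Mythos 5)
Your overall skeleton matches the paper's endgame: extend $\phi$ to $\Phi:\mcX\to\mcZ$ via Theorem \ref{t.InvCones}, prove that $\mcZ_0\simeq Z$ so that $\mcZ/\Delta$ is isotrivial with its $K=G\times G$-action, lift the fundamental vector fields through the birational map $\Phi_0$, extend by Hartogs, and conclude with Proposition \ref{p.DR}. But the step you yourself single out as the crux --- the isotriviality $\mcZ_0\simeq Z$ --- is exactly the step you do not prove, and the substitute you sketch would not work as stated. Your plan is to run the blowup tower of Proposition \ref{p.LGtoWC}(3) in families and recognize each blowup center by applying Proposition \ref{p.RigidityIHSS}, "using that these strata are $C_n/P_n$-bundles equipped with the natural $G_0$-structure". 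This is not the case: by Lemma \ref{l.OrbitLG(k)} and Corollary \ref{c.dimLG}, the intermediate strata $Z_k$ ($1\le k\le n-1$) fiber over $\IG(k,W_1)\times\IG(k,W_2)$ with fiber a (quotient of a) symplectic group, and only the closed stratum $Z_n\simeq\LG(n,W_1)\times\LG(n,W_2)$ is built from Lagrangian Grassmannians (as a product, not a bundle). Moreover the Veronese-type $G_0$-structure coming from the VMRT lives on the open stratum of $\mcZ$, not on the boundary strata, so it is unclear what $G_0$-structure Proposition \ref{p.RigidityIHSS} would even be applied to there; and applying it instead "to the family of normalized VMRTs" does not help, since the VMRT of $\bar{C}_n$ is $\nu_2(\BP^{2n-1})$, an IHSS of rank $1$ (excluded from Proposition \ref{p.RigidityIHSS}), and its invariance is in any case already known from Proposition \ref{p.invVMRT}.

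The missing idea in the paper is to get around the fact that $Z$ is singular and non-homogeneous by passing to a double cover: let $\mcD\subset\mcX$ specialize the boundary divisor $D_n$, let $\mcY\to\mcX$ be the double cover branched along $\mcD$, and take the Stein factorization $\mcY\xrightarrow{\Phi'}\mcZ'\xrightarrow{\Psi}\mcZ$ of $\mcY\to\mcZ$. By Proposition \ref{p.LGtoWC}(2)(4), for $t\neq 0$ one has $\mcZ'_t\simeq\LGW$, an IHSS of rank $2n\ge 2$, and the degree-two map $\mcZ'\dasharrow\mcX$ pulls back the VMRT structure, giving a $G_0$-structure on $\mcZ'$ away from a proper closed subset of the central fiber; Proposition \ref{p.RigidityIHSS} then yields $\mcZ'_0\simeq\LGW$, so $\mcZ'\simeq\LG(2n,V)\times\Delta$ after shrinking. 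One then recovers $\mcZ_0\simeq Z$ by analyzing the family of involutions $\tilde\iota:\Delta\to\Sp(V)$ defining the quotient $\Psi$: for $t\neq 0$ each $\tilde\iota(t)$ is conjugate to $\tau=(1_{W_1},-1_{W_2})$, and the eigenspace decomposition into two Lagrangian eigenspaces persists in the limit, so $\tilde\iota(0)$ is conjugate to $\tau$ as well. None of this (the double cover, the Stein factorization, the rigidity applied to $\LGW$, the limit-of-involutions argument) appears in your proposal, and without it the isotriviality of $\mcZ$ --- and hence the whole proof --- is not established.
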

\begin{proof}
By Theorem \ref{t.InvCones}, the contraction $\phi: X \to Z$ constructed by Proposition \ref{p.LGEtendsion} extends to a contraction (over $\Delta$) $\Phi: \mcX \to \mcZ$ such that for $t \neq 0$, $\Phi_t$ coincides with $\phi$. The central morphism $\Phi_0: \mcX_0 \to \mcZ_0$ is a Mori contraction, and $\mcZ_0$ is a normal projective variety.
Let $\mcD$ be the irreducible divisor on $\mcX$ which restricts to $D_n$ on $\mcX_t$ for $t \neq 0$ and let $\mcD_0 \subset \mcX_0$ be the central fiber of $\pi|_{\mcD}: \mcD \to \Delta. $

We first show that the central fiber $\mcD_0$ is a prime divisor on $\mcX_0$.  Otherwise we can write $\mcD_0 = E_1 + E_2$ for two non-zero effective Weil divisors on $\mcX_0$. By Example \ref{e.cones},  $D_n$ generates an extremal ray of ${\rm PEff}(X)$, then so does $\mcD_0$ by Theorem \ref{t.InvCones}(ii). This implies that $E_1, E_2$ and $\mcD_0$ are proportional in ${\rm Pic}(\mcX_0)$.   As $\mcD_0$ corresponds to $\alpha_n \in \Lambda \simeq {\rm Pic}(\mcX_0)$ and $\frac{1}{2}\alpha_n = \omega_{n} - \omega_{n-1} $ is a primitive lattice vector  in $\Lambda$, we have
$E_1 = E_2 = \frac{1}{2}\alpha_n \in \Lambda$.  This implies that $h^0(\mcX_0, \mathcal{O}(\mcD_0)) \geq 2$, while $h^0(X, \mathcal{O}(D_n)) =1$ by \cite[Theorem 2.2.3]{Br07}. This contradicts Theorem
 \ref{t.InvCones}(iii), hence $\mcD_0$ is a prime divisor on $\mcX_0$.

Let $\Psi': \mcY \to \mcX$ be the double cover of $\mcX$ ramified along $\mcD$, then by Proposition \ref{p.LGtoWC}, $\mcY_t \simeq Y$ for $t \neq 0$.
The central fiber $\mcY_0$ is the double cover of the smooth variety $\mcX_0$ ramified along the prime divisor $\mcD_0$, hence $\mcY_0$ is irreducible and reduced.
Take the Stein factorization of $\mcY \to \mcZ$ as $\mcY \xrightarrow{\Phi'} \mcZ' \xrightarrow{\Psi} \mcZ$.  Then for $t \neq 0$, $\mcZ'_t \simeq {\rm LG}(2n, W_1 \oplus W_2)$ and  $\Phi'_t$  (resp. $\Psi'_t$) coincides with $\phi'$ (resp. $\psi'$).
Since $\mcY_0$ is irreducible and reduced, so is $\mcZ'_0$.
As $\Phi'$ has connected fibers and $\mcZ'_0$ has the same dimension as $\mcY_0$, the map $\Phi'_0: \mcY_0 \to \mcZ'_0$ is also birational.

We first show that $\mcZ'_0$ is isomorphic to ${\rm LG}(2n, W_1 \oplus W_2)$. The VMRT of ${\rm LG}(2n, W_1 \oplus W_2)$ is the second Veronese embedding $\nu_2: \mathbb{P}W \to \mathbb{P} {\rm Sym}^2 W$, which induces a $G_0$-structure on ${\rm LG}(2n, W_1 \oplus W_2)$ with $G_0$ being the automorphism group of the affine cone of $\nu_2(\mathbb{P}W)$. Let $\mcC\subset\mbP(T_{\mcX/\Delta})$ be the VMRT-structure on $\mcX$, and denote by $\mcC':=h^*\mcC\subset\mbP(T_{\mcZ'/\Delta})$ the pull-back by $h:=\Psi' \circ (\Phi')^{-1}: \mcZ' \dasharrow \mcX$. For each $t$, the rational map $h_t:\mcZ'_t\dasharrow\mcX_t$ is generically finite, and  by Proposition \ref{p.invVMRT} the VMRT at a general point of $\mcX_t$ is projectively equivalent to that of ${\rm LG}(2n, W_1 \oplus W_2)$, which implies that $\mcC'_z\subset\mbP(T_z\mcZ'_t)$ at a general point $z\in\mcZ'_t$ is projectively equivalent to the VMRT of ${\rm LG}(2n, W_1 \oplus W_2)$. It induces a $G_0$-structure $\mcG^\circ\to\mcZ'$ whose domain of definition  contains an open subset of $\mcZ'_t$ for each $t$. By Proposition \ref{p.LGtoWC}, $\mcC'|_{\mcZ'_t}$ is itself the VMRT structure on $\mcZ'_t$ for $t\neq 0$. Hence the $G_0$-structure $\mcG^\circ$ is well-defined on $\mcZ'|_{\Delta^*}$, and the indeterminacy locus of $\mcG^\circ$ is a proper closed subvariety of $\mcZ'_0$. Recall that the central fiber $\mcZ'_0$ is  irreducible and reduced,
then $\mcZ'_0$ is isomorphic to ${\rm LG}(2n, W_1 \oplus W_2)$ by Proposition \ref{p.RigidityIHSS}.

Up to shrinking $\Delta$, we may assume that $\mcZ'$ is the trivial family $\LG(2n, V) \times \Delta$, where $V$ is a symplectic vector space of dimension $4n$ (and thus isomorphic to $W_1\oplus W_2$).
As $\Psi: \mcZ' \to \mcZ$ is a degree 2 map, it induces an involution of  $\mcZ'$ and $\mcZ$ is the quotient.  The involution is given by $\iota: \Delta \to {\rm Aut}(\LG(2n, V)) \simeq {\rm PSp}(V)$. As $\Sp(V) \to {\rm PSp}(V)$ is \'etale, we can choose a lift $\tilde{\iota}: \Delta \to \Sp(V)$. For $t\neq 0$, the morphism $\Psi_t$ is the quotient map of ${\rm LG}(2n, W_1 \oplus W_2)$ given by $\tau = (1_{W_1}, -1_{W_2})\in\Sp(W_1\oplus W_2)$. In other words, there exists a symplectomorphism $\theta(t): V\to W_1\oplus W_2$ such that $\tilde{\iota}(t) = \theta(t)^{-1} \circ \tau \circ \theta(t)$. Then for each $t\neq 0$, $V$ is a direct sum of $2n$-dimensional eigenspaces of $\tilde{\iota}(t)$ with eigenvalues $1$ and $-1$ respectively. By continuity, the same holds for $t=0$. The latter gives a symplectomorphism $\theta(0): V\to W_1\oplus W_2$ such that $\tilde{\iota}(0) = \theta(0)^{-1} \circ \tau \circ \theta(0)$.
This implies that $\mcZ_0$ is also isomorphic to $Z$, the quotient of ${\rm LG}(2n, W_1 \oplus W_2)$ by $\tau$.  In particular, $\mcZ$ is normal.   As $\Phi: \mcX \to \mcZ$ is birational, $\Phi$ has connected fibers, hence $\Phi_0: \mcX_0 \to \mcZ_0$ is also birational.

Recall that $K = {\rm PSp}(W_1) \times {\rm PSp}(W_2)$ acts on $X$ and on $Z$ such that $\phi: X \to Z$ is $K$-equivariant. This induces $K$-actions on $\mcZ$ and on $\mcX^\circ:=\mcX|_{\Delta^*}$.  Let $\mathfrak{k}$ be the Lie algebra of $K$. Each $v\in \mathfrak{k}$ induces a vector field  on $\mcX^\circ$, which descends to the vector field on $\mcZ=Z\times\Delta$.  As $\Phi_0: \mcX_0 \to \mcZ_0$ is birational, this vector field extends to an open subset of $\mcX_0$.  By Hartogs extension theorem, we get a vector field on $\mcX$, hence on $\mcX_0$. This shows that the universal covering $\tilde{K}$ of $K$ acts on $\mcX_0$. As the center of $\tilde{K}$ acts trivially on $\mcX_t$, we get a $K$-action on $\mcX_0$ whose stabilizer at a general point contains the diagonal subgroup $\diag(G)$ of $K$ as a subgroup of finite index. By descending the $K$-action onto $\mcZ_0=Z$, the index is one. In particular, the open $K$-orbit in $\mcX_0$ is isomorphic to $(G \times G)/\diag(G)$.  By Proposition \ref{p.DR}, we have $\mcX_0 \simeq X$, concluding the proof.
\end{proof}

\section{Invariance of varieties of minimal rational tangents for $B_3$}

\subsection{A construction of the wonderful compactification in type  $B_n$}
In this subsection, we will give a construction of the wonderful compactification of $B_n$ by successive blowups from a spinor variety.

Let $(W, o)$ be a vector space of dimension $2n+1$ endowed with a non-degenerate symmetric quadric form $o$.  Take two copies $(W_i, o_i), i=1,2,$ of $(W, o)$ and endow the vector space $W_1 \oplus W_2$ the quadratic form given by $p_1^* o_1 - p_2^* o_2$, where $p_i: W_1 \oplus W_2 \to W_i$ is the natural $i$-th  projection.
Let $W'=\{(w,w) | w \in W\}  \subset W_1 \oplus W_2$, which is an isotropic vector space of dimension $2n+1$.
There are two families of isotropic subspaces of dimension $2n+1$ in $W_1 \oplus W_2$, each of which is parameterized by a spinor variety.
Let ${\rm OG}(2n+1, W_1 \oplus W_2)$ be the irreducible component containing $[W']$.

% which parameterizes isotropic subspaces of dimension $2n+1$ in $W_1 \oplus W_2$.  %Note that $\dim {\rm LG}(2n, W_1 \oplus W_2) = n (2n+1)$, which is the same as $\dim \Sp(2n)$.
 As the construction is similar to that in Section \ref{s.C}, we will only give the corresponding statements while omitting the proofs.
The following is analogous to Lemma \ref{l.equaldim}.
\begin{lem} \label{l.equaldimSO}
Let $[V] \in {\rm OG}(2n+1, W_1 \oplus W_2)$, then $\dim V \cap W_1 = \dim V \cap W_2$.
\end{lem}

For any $0 \leq k \leq n$, denote by $\OG(k, W_i)$ the $k$-th isotropic  Grassmannian in $W_i$, which parameterizes $k$-dimensional orthogonal linear subspaces in $W_i$.
Define
$$\OG(k):=\{ [V] \in {\rm OG}(2n+1, W_1 \oplus W_2) | \dim V \cap W_1 = \dim V \cap W_2 = k\}.$$
Consider the map $\pi_k: \OG(k) \to \OG(k, W_1) \times \OG(k, W_2)$ defined by $[V] \mapsto ([V \cap W_1], [V \cap W_2])$, which is $\SO(W_1) \times \SO(W_2)$-equivariant.

Similar to Lemma \ref{l.OrbitLG(k)} and Lemma \ref{l.LGClosure}, we have the following:

\begin{lem}\label{l.OrbitOG(k)}
\begin{itemize}
%\item[(i)] The fiber of $\pi_k$ is isomorphic to $\Sp(\mathbb{C}^{2n-2k})$.
\item[(i)] Each $\OG(k)$ is an $\SO(W_1) \times \SO(W_2)$-orbit and ${\rm OG}(2n+1, W_1 \oplus W_2) = \bigsqcup_{k=0}^n \OG(k)$ is the decomposition of ${\rm OG}(2n+1, W_1 \oplus W_2)$ into  $\SO(W_1) \times \SO(W_2)$-orbits.
\item[(ii)] We have $\overline{\OG(k)} = \cup_{j \geq k} \OG(j)$.
\end{itemize}
\end{lem}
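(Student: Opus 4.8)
The statement to prove is Lemma~\ref{l.OrbitOG(k)}, which asserts: (i) each $\OG(k)$ is an $\SO(W_1)\times\SO(W_2)$-orbit and these orbits partition $\OGW$; (ii) $\overline{\OG(k)}=\cup_{j\geq k}\OG(j)$.

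The plan is to mirror exactly the proofs of Lemma~\ref{l.OrbitLG(k)} and Lemma~\ref{l.LGClosure} from the Lagrangian case, replacing the symplectic form by the quadratic form and $\Sp$ by $\SO$. First I would establish the orbit structure. Given $[V_1]\in\OG(k,W_1)$ and $[V_2]\in\OG(k,W_2)$, I would form the reduced quadratic spaces $\bar W_i:=V_i^{\perp,W_i}/V_i$, each a nondegenerate quadratic space of dimension $2n+1-2k$. For $[V]\in\pi_k^{-1}([V_1],[V_2])$ the subspace $V$ contains $V_1\oplus V_2$ (since $\dim V\cap W_i=k$), and $V$ is isotropic of dimension $2n+1$ in $W_1\oplus W_2$ precisely when $\bar V:=V/(V_1\oplus V_2)$ is a maximal isotropic subspace of $\bar W_1\oplus\bar W_2$ (with the difference quadratic form) satisfying $\bar V\cap\bar W_i=\{0\}$; such $\bar V$ is the graph of an isometry $\bar W_1\to\bar W_2$, giving an identification of the fiber with $\mathrm{O}(\bar W)$ (or rather the component matching the chosen spinor family). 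The stabilizer of $([V_1],[V_2])$ in $\SO(W_1)\times\SO(W_2)$ then surjects onto the isometry group of $\bar W_1$, so $\OG(k)$ is a single orbit, and Lemma~\ref{l.equaldimSO} gives that $\OGW=\bigsqcup_{k=0}^n\OG(k)$. One subtlety to check here is that one stays within the fixed spinor component $\OGW$ (the one containing $[W']$), not the other family; this is handled by the same parity/connectedness argument that identifies the component, and by noting the $\SO\times\SO$-action is connected hence preserves each component.

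For part (ii) I would argue, exactly as in Lemma~\ref{l.LGClosure}: $\overline{\OG(0)}=\OGW$ for dimension reasons (the closure of a dense orbit), and for general $k$, the closure of a fiber $\pi_k^{-1}([V_1],[V_2])$ is the corresponding spinor variety $\OG(2n+1-2k,\bar W_1\oplus\bar W_2)$, which meets $\OG(j)$ for every $j\geq k$. Since $\overline{\OG(k)}$ is $\SO(W_1)\times\SO(W_2)$-invariant and each $\OG(j)$ is a single orbit by part (i), we get $\OG(j)\subset\overline{\OG(k)}$ for $j\geq k$; conversely a dimension count (the $\OG(k)$ form a strictly decreasing chain of codimensions) shows nothing else lies in the closure. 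This yields the chain $\OG(n)\subset\overline{\OG(n-1)}\subset\cdots\subset\overline{\OG(0)}=\OGW$ and hence the claimed description.

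The main obstacle I anticipate is bookkeeping around the two spinor families: unlike the Lagrangian Grassmannian $\LGW$, the variety of maximal isotropics in $W_1\oplus W_2$ has two connected components, and one must consistently track which component each $[V]$, and each fiber-closure, lives in. Concretely, the recursion passes from $W_1\oplus W_2$ to $\bar W_1\oplus\bar W_2$ of dimension $4n+2-4k$, and one needs the fiber of $\pi_k$ to be (isomorphic to) the \emph{correct} spinor component so that its closure is genuinely $\OG(2n+1-2k,\bar W_1\oplus\bar W_2)$ and not the other family; this requires checking a parity condition on $k$ or, more cleanly, using that $[W']\in\OG(0)$ pins down the component and that the $\SO\times\SO$-orbit map is compatible with the recursion. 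Everything else is a routine transcription of the symplectic arguments, since the excerpt explicitly says the proofs are omitted because they are analogous.
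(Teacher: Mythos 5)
Your proposal is correct and follows essentially the paper's intended route: the paper omits the proof precisely because it is the transcription of Lemmas \ref{l.OrbitLG(k)} and \ref{l.LGClosure} that you carry out, and you rightly flag the one genuinely new subtlety, namely that the fiber of $\pi_k$ inside the fixed component $\OGW$ is the single coset of graphs of isometries $\bar{W}_1\to\bar{W}_2$ lying in that spinor family, a torsor under $\SO(\bar{W}_1)$. One minor precision: the stabilizer of $([V_1],[V_2])$ in $\SO(W_1)\times\SO(W_2)$ surjects onto $\SO(\bar{W}_1)\times\SO(\bar{W}_2)$ rather than the full isometry group of $\bar{W}_1$, but this is exactly what transitivity on that coset requires, so your argument goes through unchanged.
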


In  a similar way as Proposition \ref{p.LGSpherical}, we have
\begin{prop} \label{p.OG}
 The spinor variety  ${\rm OG}(2n+1, W_1 \oplus W_2)$ is spherical under the $\SO(W_1) \times \SO(W_2)$-action with the unique open orbit $\OG(0) \simeq \SO(W)$ and the unique closed orbit $\OG(n) \simeq \OG(n,W_1) \times \OG(n, W_2)$.
%(2) The variety $Z$ is spherical under the $K$-action, with unique open orbit $Z^\circ \simeq  {\rm PSp}(W)$ and  unique closed orbit $Z_n \simeq \LG(n,W_1) \times \LG(n, W_2)$.
\end{prop}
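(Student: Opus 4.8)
The plan is to transcribe the proof of Proposition \ref{p.LGSpherical}, replacing the symplectic input by its orthogonal analogue and feeding in the two lemmas already recorded, Lemma \ref{l.equaldimSO} and Lemma \ref{l.OrbitOG(k)}.

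First I would identify the open orbit. The isotropic subspace $W'=\{(w,w)\mid w\in W\}$ of dimension $2n+1$ satisfies $W'\cap W_i=0$, so $[W']\in\OG(0)$. An element $(g_1,g_2)\in\SO(W_1)\times\SO(W_2)$ stabilizes $[W']$ precisely when $g_1=g_2$ under the identification $W_1=W_2=W$, so the isotropy group of $[W']$ is the diagonal $\diag(\SO(W))$. Since $\SO(W_1)\times\SO(W_2)$ is connected, the orbit of $[W']$ stays inside the chosen component $\OGW$, and by Lemma \ref{l.OrbitOG(k)}(i) it is exactly $\OG(0)$. Hence $\OG(0)\simeq(\SO(W)\times\SO(W))/\diag(\SO(W))\simeq\SO(W)$, which is a spherical homogeneous space (the group case: a Borel $B^-\times B$ of $\SO(W)\times\SO(W)$ acts on $\SO(W)$ with dense open orbit $B^-B$, by the Bruhat decomposition).

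Next I would check that $\OG(0)$ is open in $\OGW$. The spinor variety $\OGW$ parametrizes maximal isotropic subspaces in the $(4n+2)$-dimensional quadratic space $W_1\oplus W_2$, so $\dim\OGW=\binom{2n+1}{2}=n(2n+1)=\dim\SO(2n+1)=\dim\OG(0)$. Being an orbit, $\OG(0)$ is irreducible, locally closed, and of the same dimension as the irreducible variety $\OGW$, hence dense and open. Since $\OGW$ is smooth, in particular normal, and contains the spherical homogeneous space $\OG(0)$ as a dense open $(\SO(W_1)\times\SO(W_2))$-orbit, $\OGW$ is a spherical variety in the sense of Section \ref{subsection. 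Spherical varities}; from the orbit decomposition $\OGW=\bigsqcup_{k}\OG(k)$ and the dimension count, $\OG(0)$ is the unique open orbit. Dually, Lemma \ref{l.OrbitOG(k)}(ii) gives $\overline{\OG(k)}=\bigcup_{j\geq k}\OG(j)$, so $\OG(n)$ is closed and is contained in every orbit closure; hence it is the unique closed orbit.

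It remains to identify $\OG(n)$ with $\OG(n,W_1)\times\OG(n,W_2)$ via the equivariant map $\pi_n$, and this is the only point not obtained by merely copying the symplectic argument (there the fibres of $\pi_n$ are trivial because $V_i^{\perp,W_i}=V_i$, which fails in the odd orthogonal case). I would argue that $\pi_n|_{\OG(n)}$ is bijective: given maximal isotropics $V_i\subset W_i$, any $[V]$ in its fibre contains $V_1\oplus V_2$ with $\dim V/(V_1\oplus V_2)=1$, so two distinct such $V,V'$ satisfy $\dim(V\cap V')=2n$; then $(2n+1)-\dim(V\cap V')=1$ is odd, which places $V$ and $V'$ in the two different families of maximal isotropics of $W_1\oplus W_2$, so exactly one of them lies in $\OGW$. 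Thus $\pi_n|_{\OG(n)}$ is bijective, hence birational onto the smooth (normal) target $\OG(n,W_1)\times\OG(n,W_2)$, and an isomorphism by Zariski's main theorem; alternatively, one checks directly that the stabilizer of $[V]$ equals that of $([V_1],[V_2])$. This parity computation is the main (minor) obstacle; everything else is formal, exactly as in the proof of Proposition \ref{p.LGSpherical}.
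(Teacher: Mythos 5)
Your proposal is correct and follows essentially the paper's own route: the paper proves the symplectic case (Proposition \ref{p.LGSpherical}) by exhibiting $[W']$ with diagonal stabilizer and then explicitly states that Proposition \ref{p.OG} is obtained "by a similar way", omitting the details you have written out. Your extra parity argument identifying $\OG(n)$ with $\OG(n,W_1)\times\OG(n,W_2)$ (two maximal isotropics through $V_1\oplus V_2$ meeting in codimension one lie in different families, so exactly one is in $\OGW$) correctly supplies the one step where the symplectic argument does not transfer verbatim, a point the paper leaves implicit.
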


The following result is analogous  to Proposition \ref{p.LGEtendsion} and Proposition \ref{p.LGtoWC}.
\begin{prop} \label{p.OGtoSpinor}
Let $(W,o)$ be an orthogonal vector space of dimension $2n+1$ and
let $\bar{B}_n$ be the wonderful compactification of ${\rm SO}(W)$. Then

(i) the natural isomorphism $\phi^\circ:  {\rm SO}(W) \to \OG(0)$ extends to an $\SO(W) \times \SO(W)$-equivariant birational morphism $\phi: \bar{B}_n \to {\rm OG}(2n+1, W_1 \oplus W_2)$.

(ii) The morphism $\phi$  is the composition of successive blowups along the strict transforms of  $\overline{\OG(k)}$ from the smallest to the biggest.
\end{prop}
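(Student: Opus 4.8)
The plan is to transpose the type-$C_n$ argument of Section~\ref{s.C} almost verbatim, with the Lagrangian Grassmannian replaced by the spinor variety $\OGW$. One structural simplification occurs: no quotient of $\OGW$ by an involution is needed, since for type $B_n$ the adjoint group $\SO(W)=\SO(2n+1)$ already appears directly as the open orbit $\OG(0)$ of $\OGW$ (Lemma~\ref{l.OrbitOG(k)}, Proposition~\ref{p.OG}), playing the role that $\mathrm{PSp}(W)$ and $Z=\LGW/\tau$ played in the symplectic case; in particular the open orbits of $\bar B_n$ and of $\OGW$ are canonically identified with $(\SO(W)\times\SO(W))/\diag(\SO(W))$, so the weight lattices are identified as well.

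For (i), I would argue exactly as in Proposition~\ref{p.LGEtendsion}. By Example~\ref{e.colored cones}, $\bar B_n$ is a spherical $\SO(W)\times\SO(W)$-variety whose colored fan $\mbF_{\bar B_n}$ consists of all colored faces of $(\mcV(\SO(W)),\emptyset)$, where $\mcV(\SO(W))=\cone\{-\omega_1^\vee,\ldots,-\omega_n^\vee\}$. By Proposition~\ref{p.OG}, $\OGW$ is a complete spherical $\SO(W)\times\SO(W)$-variety with the same open orbit and a unique closed orbit $\OG(n)$; completeness together with this uniqueness forces the colored cones to form a chain whose maximal member $(\mfC_{\OG(n)},\mfD_{\OG(n)})$ has cone part containing $\mcV(\SO(W))$ (Proposition~\ref{p.BasicsSV}(iii)). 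Hence the identification $\phi^\circ_*$ on $\Lambda^\vee_{\mbQ}$ carries every colored cone of $\bar B_n$---each having empty color set---into $(\mfC_{\OG(n)},\mfD_{\OG(n)})$, so $\phi^\circ_*$ sends $\mbF_{\bar B_n}$ to $\mbF_{\OGW}$, and Proposition~\ref{p.ExtensionSV} produces the desired $\SO(W)\times\SO(W)$-equivariant birational morphism $\phi:\bar B_n\to\OGW$.

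For (ii), I would reproduce the analogues of Lemmas~\ref{l.Exc}, \ref{l.colored fan Z} and of Proposition~\ref{p.LGtoWC}(1),(3). A short dimension count gives $\dim\OG(1)=(2n-1)(n+1)=\dim\OGW-1$, so $\overline{\OG(1)}$ is the unique $\SO(W)\times\SO(W)$-stable prime divisor; since $\phi$ is birational with connected fibers it maps the open orbit $D_1^o$ isomorphically onto $\OG(1)$ and contracts $\bigcup_{i\ge 2}D_i$, the index $1$ being forced by matching the $\SO(\mathbb{C}^{2n-1})$-fibration of $D_1^o$ over $\mathbb{Q}^{2n-1}\times\mathbb{Q}^{2n-1}$ with the fibration $\pi_1$ of $\OG(1)$. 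Using $\varrho(D_k)=-\omega_k^\vee$, $\varrho(D(\omega_j))=\alpha_j^\vee$ (Example~\ref{e.colored cones}) and the fact that $w_0=-1$ for $B_n$---so that Lemma~\ref{l.divDR} gives $\mcO_{\bar B_n}(D(\omega_k))|_{D_{1,\ldots,n}}=\mcO(\omega_k,\omega_k)$, which is pulled back from $\OG(n)$ precisely when $k=n$---I would compute the colored cone of $\OG(k)$ to be $\bigl(\cone\{-\omega_1^\vee,\alpha_1^\vee,\ldots,\alpha_{k-1}^\vee\},\{D(\omega_1),\ldots,D(\omega_{k-1})\}\bigr)$; in particular $\phi(D_k)=\overline{\OG(k)}$ for all $k$. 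Finally, introducing the intermediate spherical $\SO(W)\times\SO(W)$-embeddings $Y_0=\bar B_n,Y_1,\ldots,Y_{n-1}=\OGW$ attached to the colored fans of all faces of $\bigl(\cone(\alpha_1^\vee,\ldots,\alpha_i^\vee,-\omega_1^\vee,-\omega_{i+2}^\vee,\ldots,-\omega_n^\vee),\{D(\omega_1),\ldots,D(\omega_i)\}\bigr)$, one checks as in Proposition~\ref{p.LGtoWC}(3) that no colored cone lies strictly between $(\cone\{-\omega_{k+1}^\vee\},\emptyset)$ and the colored cone of the strict transform of $\overline{\OG(k+1)}$ in $Y_k$, which identifies the step $Y_{k-1}\to Y_k$ with the blow-up of $Y_k$ along that strict transform.

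The one place where type $B_n$ genuinely enters, rather than formally copying type $C_n$, is the colored-cone computation of the previous paragraph: one must verify that $\overline{\OG(1)}$ is the codimension-one orbit closure (the dimension count above) and that the colors $D(\omega_k)$ restrict to the closed orbit as in the symplectic case, the latter resting on $w_0=-1$ in $W(B_n)$. Everything else is a mechanical translation of Section~\ref{s.C}, which is why the statement is recorded here with the proof left to the reader.
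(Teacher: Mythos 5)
Your proposal is correct and follows essentially the same route as the paper, which explicitly omits the proofs in this subsection on the grounds that they are verbatim analogues of the type-$C_n$ arguments (Propositions \ref{p.LGEtendsion} and \ref{p.LGtoWC}, Lemmas \ref{l.Exc} and \ref{l.colored fan Z}); your transposition, including the simplification that no involution quotient is needed since ${\rm SO}(2n+1)$ is already adjoint, the codimension-one count for $\OG(1)$, and the use of $w_0=-1$ in Lemma \ref{l.divDR} to compute the colors at the closed orbit, is exactly the intended argument.
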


As in Example \ref{e.colored cones}, we denote by $D(\omega_1),\ldots,D(\omega_n)$ the colors on $\bar{B}_n$, and by   $D_1,\ldots,D_n$ the prime boundary divisors. Now $\phi$ gives an isomorphism between the open orbit on $\bar{B}_n$ and that on $\OG(2n+1, W_1\oplus W_2)$. To distinguish, let $D(\omega_i)^{\OG}$ be the image of $D(\omega_i)$, which is the corresponding color on ${\rm OG}(2n+1, W_1 \oplus W_2)$.
As an analogue of Lemma \ref{l.Exc} and Lemma \ref{l.colored fan Z}, we have the following result.

\begin{lem}\label{l.colored fan OGW}
(i) The morphism $\phi$ sends the open orbit $D_1^o$ of $D_1$ isomorphically to $\OG(1)$, and the exceptional locus of $\phi$ is $\cup_{i=2}^n D_i$.

(ii) The dual weight lattice  of $\OG(0)$ is $\Lambda^\vee(\OG(0))=\oplus_{i=1}^{n}\mbZ\omega^\vee_i$, and the valuation cone is $\mcV(\OG(0))=\cone\{-\omega^\vee_1,\ldots,-\omega^\vee_n\}$.  The colored cone $(\mfC_{\OG(k)}, \mfD_{\OG(k)})$ of $\OG(k)$ is given by $\mfC_{\OG(k)}=\cone\{-\omega_1^\vee, \alpha^\vee_1,\ldots,\alpha^\vee_{k-1}\}$ and $\mfD_{\OG(k)}= \{D(\omega_1)^{\OG},\ldots, D(\omega_{k-1})^{\OG}\}$.
\end{lem}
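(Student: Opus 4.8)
The plan is to carry out the argument in parallel with the proofs of Lemmas \ref{l.Exc} and \ref{l.colored fan Z}, replacing Proposition \ref{p.LGEtendsion}, Proposition \ref{p.LGSpherical} and Lemma \ref{l.OrbitLG(k)} by their type $B$ counterparts Proposition \ref{p.OGtoSpinor}, Proposition \ref{p.OG} and Lemma \ref{l.OrbitOG(k)}. Since no double cover intervenes here, the situation is actually slightly simpler than in Section \ref{s.C}.

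For (i): the open orbit $\OG(0)\simeq{\rm SO}(W)$ is affine and dense in the projective variety $\OGW$, so its complement is pure of codimension one; by Lemma \ref{l.OrbitOG(k)}(ii) this complement equals $\overline{\OG(1)}$, which, being the closure of a single orbit, is the unique $\SO(W_1)\times\SO(W_2)$-stable prime divisor of $\OGW$. By Proposition \ref{p.OGtoSpinor} the morphism $\phi$ is $\SO(W)\times\SO(W)$-equivariant, birational and has connected fibres, so exactly one boundary divisor $D_k$ of $\bar{B}_n$ has its open orbit $D_k^o$ carried isomorphically onto $\OG(1)$, the exceptional locus of $\phi$ being $\cup_{i\neq k}D_i$; this is the analogue of Lemma \ref{l.Exc}. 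To determine $k$, one compares canonical equivariant fibrations exactly as in the type $C$ argument: $D_k^o$ fibres equivariantly over $B_n/P_k^-\times B_n/P_k$ (Section \ref{s.wonderful compact}), while $\OG(1)$ fibres equivariantly over $\OG(1,W_1)\times\OG(1,W_2)=\mathbb{Q}^{2n-1}\times\mathbb{Q}^{2n-1}$; matching the base flag varieties under $\phi$ forces $B_n/P_k\simeq\mathbb{Q}^{2n-1}$, and since the quadric $\mathbb{Q}^{2n-1}$ is $B_n/P_1$ and not $B_n/P_k$ for any other $k$ (already on dimensional grounds, e.g. in the case $B_3$ of interest), we get $k=1$.

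For (ii): the descriptions of $\Lambda^\vee(\OG(0))$ and $\mcV(\OG(0))$ depend only on the open orbit $\OG(0)\simeq{\rm SO}(W)$, hence follow from Example \ref{e.colored cones}. For the colored cones one argues as in Lemma \ref{l.colored fan Z}: first compute the colored cone of the unique closed orbit $\OG(n)=\OG(n,W_1)\times\OG(n,W_2)$, then use the chain $\OGW=\overline{\OG(0)}\supset\overline{\OG(1)}\supset\cdots\supset\OG(n)$ of Lemma \ref{l.OrbitOG(k)}(ii), together with the bijection between orbit closures and colored cones, to produce a chain of colored faces $(0,\emptyset)\subset(\mfC_{\OG(1)},\mfD_{\OG(1)})\subset\cdots\subset(\mfC_{\OG(n)},\mfD_{\OG(n)})$; reading off the faces of $(\mfC_{\OG(n)},\mfD_{\OG(n)})$ yields the asserted formula for each $k$. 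The colored cone of $\OG(n)$ is obtained through $\phi$, which carries the closed orbit $D_{1,\ldots,n}=G/B^-\times G/B$ onto $\OG(n)=G/P_n^-\times G/P_n$: since $w_0=-1$ for type $B_n$, Lemma \ref{l.divDR} gives $\mcO_{\bar{B}_n}(D(\omega_k))|_{D_{1,\ldots,n}}=\mcO(\omega_k,\omega_k)$, which is pulled back from $G/P_n^-\times G/P_n$ if and only if $k\neq n$; hence $\mfD_{\OG(n)}=\{D(\omega_1)^{\OG},\ldots,D(\omega_{n-1})^{\OG}\}$, and, using $\varrho(\overline{\OG(1)})=\varrho(D_1)=-\omega_1^\vee$ and $\varrho(D(\omega_j)^{\OG})=\varrho(D(\omega_j))=\alpha_j^\vee$ (from part (i) and Example \ref{e.colored cones}, the linear independence being guaranteed by Proposition \ref{p.BasicsSV}(ii) as $\OGW$ is smooth), one gets $\mfC_{\OG(n)}=\cone\{-\omega_1^\vee,\alpha_1^\vee,\ldots,\alpha_{n-1}^\vee\}$.

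The only inputs not literally taken from the type $C$ computation are the two type-$B$ facts above: that $\OG(1,W)=\mathbb{Q}^{2n-1}=B_n/P_1$, forcing $k=1$ in (i), and that $w_0=-1$ in type $B_n$, which makes the colour restriction in Lemma \ref{l.divDR} simplify to $\mcO(\omega_k,\omega_k)$. I expect the only delicate point to be step (i)'s comparison of the two equivariant fibrations, which however can be replaced by a purely numerical comparison of the dimensions of their base flag varieties.
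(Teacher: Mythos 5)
Your proposal follows exactly the route the paper intends: the paper omits the proof of this lemma, presenting it as the analogue of Lemmas \ref{l.Exc} and \ref{l.colored fan Z}, and you carry out that analogy faithfully — the equivariant-fibration comparison pinning down $k=1$ in (i), and in (ii) the computation of the colored cone of the closed orbit via Lemma \ref{l.divDR} (using $w_0=-1$ for $B_n$) followed by the chain-of-colored-faces argument matching the orbit-closure chain of Lemma \ref{l.OrbitOG(k)}. One small slip to fix: since $\Pic(G/P_n)=\mbZ\omega_n$, the restriction $\mcO(\omega_k,\omega_k)$ of $D(\omega_k)$ to the closed orbit $G/B^-\times G/B$ is pulled back from $G/P_n^-\times G/P_n$ if and only if $k=n$, not $k\neq n$; equivalently, $D(\omega_k)\cap D_{1,\ldots,n}$ dominates $\OG(n)$ precisely when $k\neq n$, which is what yields $\mfD_{\OG(n)}=\{D(\omega_1)^{\OG},\ldots,D(\omega_{n-1})^{\OG}\}$ — so the conclusion you draw is correct, but the stated criterion is inverted.
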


\begin{cor} \label{c.OGD}
We have $\phi^*(D(\omega_n)^{\OG})=D(\omega_n)$.
\end{cor}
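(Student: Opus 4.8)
The plan is to compute the Weil divisor $\phi^{*}(D(\omega_n)^{\OG})$ on $\bar{B}_n$ directly. Since $\OGW$ is a spinor variety it is smooth, so $D(\omega_n)^{\OG}$ is Cartier and $\phi^{*}(D(\omega_n)^{\OG})$ is a well-defined \emph{effective} Cartier divisor whose support is exactly $\phi^{-1}(D(\omega_n)^{\OG})$. As $\phi$ is equivariant and $D(\omega_n)^{\OG}$ is $B$-stable, this pullback is a nonnegative integral combination of the $B$-stable prime divisors of $\bar{B}_n$, namely the colors $D(\omega_1),\dots,D(\omega_n)$ and the boundary divisors $D_1,\dots,D_n$. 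Hence it suffices to show that $D(\omega_n)$ is the only one of these divisors contained in $\phi^{-1}(D(\omega_n)^{\OG})$ and that it occurs there with multiplicity one.

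For a color $D(\omega_j)$ with $j\neq n$: by Lemma~\ref{l.colored fan OGW}(i) the colors are not contracted by $\phi$, and $\phi$ carries $D(\omega_j)$ onto the color $D(\omega_j)^{\OG}$, which is an irreducible divisor distinct from $D(\omega_n)^{\OG}$; so $D(\omega_j)\not\subseteq\phi^{-1}(D(\omega_n)^{\OG})$. For a boundary divisor $D_i$: its dense orbit is a single $\SO(W)\times\SO(W)$-orbit contained in the boundary $\OGW\setminus\OG(0)$, so by properness and equivariance $\phi(D_i)=\overline{\OG(k)}$ for some $k$ with $1\le k\le n$. The crucial point is then purely combinatorial: by the colored-cone description in Lemma~\ref{l.colored fan OGW}(ii), the closure of the color $D(\omega_n)^{\OG}$ contains the orbit $\OG(k)$ if and only if $D(\omega_n)^{\OG}\in\mfD_{\OG(k)}=\{D(\omega_1)^{\OG},\dots,D(\omega_{k-1})^{\OG}\}$, which is impossible since $k-1<n$. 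Thus $\overline{\OG(k)}\not\subseteq D(\omega_n)^{\OG}$, hence $D_i\not\subseteq\phi^{-1}(D(\omega_n)^{\OG})$.

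It follows that $\phi^{*}(D(\omega_n)^{\OG})=m\,D(\omega_n)$ for some integer $m\ge 1$; restricting to the open orbit, where $\phi$ is an isomorphism onto $\OG(0)$ and $D(\omega_n)^{\OG}$ restricts to the reduced color, forces $m=1$, which finishes the argument. I do not expect a genuine obstacle here: the content is the numerical observation above together with standard bookkeeping (the pullback of an effective Cartier divisor is effective, a proper equivariant birational morphism carries an orbit closure onto an orbit closure and is an isomorphism off its exceptional locus, and the dictionary "$\bar{D}\supseteq Y$ iff $D\in\mfD_Y$" recalled before Proposition~\ref{p.BasicsSV}). An equally short alternative uses Proposition~\ref{p.OGtoSpinor}(ii): the successive blow-up centers of $\phi$ are the strict transforms of $\overline{\OG(k)}$ for $k\ge 2$, none of which lies in $D(\omega_n)^{\OG}$, so at each stage the total transform equals the strict transform, ending at $D(\omega_n)$.
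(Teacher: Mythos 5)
Your proposal is correct, and its essential input is the same as the paper's: the colored-fan description of $\OGW$ (Lemma \ref{l.colored fan OGW}), which shows that $D(\omega_n)^{\OG}$ contains no orbit $\OG(k)$, $k\geq 1$. Indeed, the ``equally short alternative'' you sketch at the end is verbatim the paper's proof: since the centers of the successive blow-ups are strict transforms of the $\overline{\OG(k)}$ and none of them is contained in $D(\omega_n)^{\OG}$, the total transform equals the strict transform, which is $D(\omega_n)$. Your primary argument packages this slightly differently and is also valid: instead of invoking the blow-up factorization of Proposition \ref{p.OGtoSpinor}(ii), you note that $\phi^{*}(D(\omega_n)^{\OG})$ is an effective divisor supported on $\phi^{-1}(D(\omega_n)^{\OG})$, hence a nonnegative combination of colors and boundary divisors; the colors $D(\omega_j)$, $j\neq n$, are excluded because they map onto the distinct prime divisors $D(\omega_j)^{\OG}$, the boundary divisors $D_i$ are excluded because their images are orbit closures $\overline{\OG(k)}\not\subseteq D(\omega_n)^{\OG}$ by the dictionary $\bar{D}\supseteq Y \Leftrightarrow D\in\mfD_Y$, and the multiplicity of $D(\omega_n)$ is computed to be $1$ at its generic point, which lies in the open orbit where $\phi$ is an isomorphism. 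This variant buys independence from the blow-up description (only properness, equivariance, and the colored-cone data are used), at the cost of a bit more bookkeeping; the paper's route is shorter once Proposition \ref{p.OGtoSpinor}(ii) is available.
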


\begin{proof}
The description of colored cones in Lemma \ref{l.colored fan OGW} implies that there is no $\SO(W_1)\times \SO(W_2)$-orbit $\OG(k)$ contained in the divisor $D(\omega_n)^{\OG}$. Since the morphism $\phi$ is the composition of successive blowups along the strict transforms of  $\overline{\OG(k)}$, the pulling back of $D(\omega_n)^{\OG}$ is equal to its strict transform, verifying that $\phi^*(D(\omega_n)^{\OG})=D(\omega_n)$.
\end{proof}

\begin{lem} \label{l.OGD}
Under the natural embedding ${\rm OG}(2n+1, W_1 \oplus W_2)\subset\mbP^{2^{2n}-1}$ given by the half-spin representation, we have $\overline{\OG(1)}=\mcO(2)$, $D(\omega_n)^{\OG}=\mcO(1)$, and $D(\omega_j)^{\OG}=\mcO(2)$ for $j\neq n$.
\end{lem}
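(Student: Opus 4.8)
The plan is to do everything inside $\Pic(\OGW)$. Since $\OGW$ is a smooth projective spherical variety, Proposition \ref{p.BasicsSV}(v) identifies $A^1(\OGW)$ with $\Pic(\OGW)\cong H^2(\OGW,\mbZ)$; and as $\OGW$ is the spinor variety $D_{2n+1}/P_{2n+1}$, a rational homogeneous space of Picard number one, this group is $\mbZ$, generated by the ample class $\mcO(1)$ of the minimal embedding. So each of $\overline{\OG(1)}$, $D(\omega_n)^{\OG}$ and $D(\omega_j)^{\OG}$ ($j\neq n$) equals $\mcO(m)$ for a unique positive integer $m$, and the task is to pin down these three integers.

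First I would compute $D(\omega_n)^{\OG}$. By the preceding corollary $\phi^*\!\bigl(D(\omega_n)^{\OG}\bigr)=D(\omega_n)$, and under the identification $\Pic(\bar{B}_n)\cong\Lambda$ of Example \ref{e.colored cones} one has $D(\omega_n)=\omega_n$, which is a primitive vector of the weight lattice of $B_n$ (it is $\tfrac12(\varepsilon_1+\cdots+\varepsilon_n)$ in the standard model). Since $\phi$ is birational, $\phi^*$ is injective on Picard groups; writing $D(\omega_n)^{\OG}=\mcO(a)$ then forces $\omega_n=a\,\phi^*\mcO(1)$ with $\phi^*\mcO(1)\in\Lambda$, so $a=1$. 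Hence $D(\omega_n)^{\OG}=\mcO(1)$, and in passing $\phi^*\mcO(1)=\omega_n$.

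For the other two I would use the presentation of $A^1(\OGW)$ by generators and relations from Proposition \ref{p.BasicsSV}(iv): the generators are $\overline{\OG(1)}$ and the colors $D(\omega_1)^{\OG},\dots,D(\omega_n)^{\OG}$, and the relations are $\mathrm{div}(f)=0$ for $f\in\C(\OG(0))^{(B)}$. Since $\OG(0)\cong\SO(W)$ has root lattice $\oplus_k\mbZ\alpha_k$ as its weight lattice (Example \ref{e.colored cones}), the relations are generated by the classes of $\mathrm{div}(f_{\alpha_k})$, $k=1,\dots,n$. Plugging in $\varrho\!\bigl(D(\omega_j)^{\OG}\bigr)=\alpha_j^\vee$ (colors depend only on the open orbit) and $\varrho\!\bigl(\overline{\OG(1)}\bigr)=-\omega_1^\vee$ — the latter because, by Lemma \ref{l.colored fan OGW}, $\phi$ maps $D_1$ birationally onto $\overline{\OG(1)}$, so they carry the same valuation, whose $\varrho$-image is $\varrho(D_1)=-\omega_1^\vee$ — the relation attached to $\alpha_k$ becomes
\[
\sum_{j=1}^{n}\langle\alpha_k,\alpha_j^\vee\rangle\,D(\omega_j)^{\OG}\;=\;\delta_{1k}\,\overline{\OG(1)}\quad\text{ in }\Pic(\OGW),\qquad k=1,\dots,n.
\]

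What remains is a short calculation with the Cartan matrix of $B_n$. Writing $D(\omega_j)^{\OG}=\mcO(b_j)$ with $b_n=1$, the relations for $k=2,\dots,n$ read $\sum_j\langle\alpha_k,\alpha_j^\vee\rangle b_j=0$; solving them from $k=n$ downwards — the $\alpha_n$-relation gives $b_{n-1}=2b_n$, the $\alpha_{n-1}$-relation then gives $b_{n-2}=2b_n$, and inductively the $\alpha_k$-relation gives $b_{k-1}=2b_n$ — yields $b_j=2$ for every $j<n$, that is $D(\omega_j)^{\OG}=\mcO(2)$. Finally the $k=1$ relation gives $\overline{\OG(1)}=\sum_j\langle\alpha_1,\alpha_j^\vee\rangle D(\omega_j)^{\OG}$, which equals $2\,\mcO(2)-\mcO(2)=\mcO(2)$ for $n\geq 3$, equals $2\,\mcO(2)-2\,\mcO(1)=\mcO(2)$ for $n=2$, and equals $2\,\mcO(1)=\mcO(2)$ for $n=1$; in all cases $\overline{\OG(1)}=\mcO(2)$. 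I expect the only mildly delicate point to be justifying that $\varrho\!\bigl(\overline{\OG(1)}\bigr)$ is exactly $-\omega_1^\vee$ with coefficient one, which I would extract from the birationality of $\phi|_{D_1}$ onto $\overline{\OG(1)}$ in Lemma \ref{l.colored fan OGW}; the rest is bookkeeping with the weight lattice. As a consistency check, these values satisfy $-K_{\OGW}=2\sum_j D(\omega_j)^{\OG}+\overline{\OG(1)}$ (Proposition \ref{p.BasicsSV}(vi), Example \ref{e.colored cones}), which then reads $\mcO(4n)$, the Fano index of the spinor variety $D_{2n+1}/P_{2n+1}$.
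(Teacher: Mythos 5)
Your proposal is correct, and its core is the same as the paper's proof: present $\Pic(\OGW)$ by the boundary divisor $\overline{\OG(1)}$ and the colors via Proposition \ref{p.BasicsSV}(iv), plug in $\varrho(\overline{\OG(1)})=-\omega_1^\vee$ and $\varrho(D(\omega_j)^{\OG})=\alpha_j^\vee$, and solve the relations $-\delta_{1k}\overline{\OG(1)}+\sum_j\langle\alpha_k,\alpha_j^\vee\rangle D(\omega_j)^{\OG}=0$ coming from the simple roots; your Cartan-matrix bookkeeping and the resulting ratios $\overline{\OG(1)}=D(\omega_j)^{\OG}=2D(\omega_n)^{\OG}$ ($j<n$) match the paper exactly. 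Where you genuinely differ is the normalization: the paper concludes $D(\omega_n)^{\OG}=\mcO(1)$ directly from the fact that these $B$-stable divisors generate $\Pic(\OGW)\cong\mbZ$ (classes $\mcO(2a)$ and $\mcO(a)$ can only generate if $a=1$), and so never uses the preceding corollary, whereas you invoke $\phi^*(D(\omega_n)^{\OG})=D(\omega_n)=\omega_n$, the primitivity of $\omega_n$ in the weight lattice, and injectivity of $\phi^*$ for a birational morphism. Both are valid; the paper's argument stays entirely inside the spherical presentation, while yours leans on the blow-up description but yields the extra identification $\phi^*\mcO(1)=\omega_n$ as a byproduct. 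Your explicit justification that $\varrho(\overline{\OG(1)})$ is exactly $-\omega_1^\vee$ (via the birationality of $\phi|_{D_1}$ onto $\overline{\OG(1)}$ from Lemma \ref{l.colored fan OGW}) is a point the paper asserts without comment here, and it is the same mechanism used in the type $C$ analogue via Lemma \ref{l.Exc}, so it is a sound, if slightly more careful, rendering of the intended argument.
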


\begin{proof}
The unique prime boundary divisor of ${\rm OG}(2n+1, W_1 \oplus W_2)$ is $\overline{\OG(1)}$,  and the colors are $D(\omega_1)^{\OG},\ldots,D(\omega_n)^{\OG}$. By Proposition \ref{p.BasicsSV}(iv), these divisors generate the Picard group of ${\rm OG}(2n+1, W_1 \oplus W_2)$, and the rational equivalence relations  among them are defined by elements in the weight lattice $\Lambda(\OG(0))$.
In the dual lattice $\Lambda^\vee(\OG(0))=\oplus_{k=1}^n\mbZ\omega_k^\vee$, we have $\varrho(\overline{\OG(1)})=-\omega_1^\vee$, and $\varrho(D(\omega_j)^{\OG})=\alpha_j^\vee$ for $j=1,\ldots,n$.  The rational equivalence relation in the Picard group of ${\rm OG}(2n+1, W_1 \oplus W_2)$ defined  by the root $\alpha_k\in\Lambda(\OG(0))$ is  $-\delta_{1, k}\overline{\OG(1)}+\sum_{j=1}^n\langle \alpha_k, \alpha_j\rangle D(\omega_j)^{\OG}=0$, where $\langle, \rangle$ is the Cartan pairing. It follows that $\overline{\OG(1)}=D(\omega_j)^{\OG}=2D(\omega_n)^{\OG}$ for $j=1,\ldots,n-1$. Since the Picard group is generated by these divisors, we have  $D(\omega_n)^{\OG}=\mcO(1)$, implying the conclusion.
\end{proof}

\begin{prop} \label{p.B3PreservesLines}
The morphism $\phi$ sends minimal rational curves on $\bar{B}_n$ to lines on ${\rm OG}(2n+1, W_1 \oplus W_2)$.
\end{prop}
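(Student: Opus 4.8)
The plan is to compute the intersection number of a minimal rational curve on $\bar{B}_n$ against the pullback $\phi^*\mcO(1)$, where $\mcO(1)$ is the hyperplane class under the minimal embedding $\OGW \subset \mbP^{2^{2n}-1}$, and to show it equals $1$; combined with the fact that $\phi$ is birational, this forces the image of such a curve to be a line. Recall from Theorem \ref{t.VMRTWonderful}(i) and Corollary \ref{c.minimal} that a minimal rational curve is rationally equivalent (in the cone of curves) to the $B$-stable curve $C_{\boldsymbol \theta}$, so it suffices to compute $\phi^*\mcO(1) \cdot C_{\boldsymbol \theta}$. By the previous lemma, $\mcO(1) = D(\omega_n)^{\OG}$ on $\OGW$, and by the corollary $\phi^*(D(\omega_n)^{\OG}) = D(\omega_n)$, the color on $\bar{B}_n$ associated to the fundamental weight $\omega_n$. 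Since $\Pic(\bar{B}_n)$ is identified with the weight lattice of $B_n$ with $D(\omega_n) = \omega_n$, I need to show $\omega_n \cdot C_{\boldsymbol \theta} = 1$.

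First I would recall that, under the identification $\Pic(\bar{G}) \simeq \Lambda$, the intersection pairing of a line bundle $\sL_X(\lambda)$ with $C_{\boldsymbol \theta}$ is controlled by the behaviour of $C_{\boldsymbol \theta}$, which is the closure of the root subgroup $U_{\boldsymbol \theta}$. This is computed in \cite{BF15} (cf. the discussion around Theorem \ref{t.VMRTWonderful} and Corollary \ref{c.minimal}): one has $\sL_X(\lambda) \cdot C_{\boldsymbol \theta} = \langle \lambda, {\boldsymbol \theta}^\vee \rangle$ for $\lambda \in \Lambda$, where ${\boldsymbol \theta}^\vee$ is the coroot of the highest root. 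Thus $D(\omega_n) \cdot C_{\boldsymbol \theta} = \omega_n \cdot C_{\boldsymbol \theta} = \langle \omega_n, {\boldsymbol \theta}^\vee\rangle$. For type $B_n$ the highest root is ${\boldsymbol \theta} = \alpha_1 + 2\alpha_2 + 2\alpha_3 + \cdots + 2\alpha_n = e_1 + e_2$ in the standard coordinates, whose associated coroot is ${\boldsymbol \theta}^\vee = e_1 + e_2$ as well (the long roots of $B_n$). Pairing with the last fundamental weight $\omega_n = \tfrac12(e_1 + \cdots + e_n)$ gives $\langle \omega_n, {\boldsymbol \theta}^\vee\rangle = \tfrac12(1+1) = 1$. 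Hence $\phi^*\mcO(1) \cdot C_{\boldsymbol \theta} = 1$.

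Now I would finish as follows: let $C$ be any minimal rational curve on $\bar{B}_n$, i.e. $[C] \in \mcK$. Then $\phi(C)$ is an irreducible curve in $\OGW$ (it is not contracted, since $\phi^*\mcO(1) \cdot C = 1 \neq 0$), and $\mcO(1) \cdot \phi(C) = \phi^*\mcO(1) \cdot C = 1$ because $\phi$ is birational and $C$ is not in the exceptional locus generically — more precisely, $\phi$ maps $C$ birationally onto $\phi(C)$ since $\phi$ is an isomorphism over the open orbit and a general minimal rational curve meets the open orbit, while for special members one argues by the numerical identity $\phi^*\mcO(1)\cdot C = \deg(C \to \phi(C))\cdot (\mcO(1)\cdot\phi(C)) = 1$, forcing both factors to be $1$. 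A reduced irreducible curve of degree $1$ under the minimal embedding into projective space is a line. The main obstacle I anticipate is bookkeeping: making sure the identification $\phi^*(D(\omega_n)^{\OG}) = D(\omega_n)$ from the preceding corollary is correctly matched with the hyperplane class of the minimal embedding (handled by the lemma just proven), and confirming the coroot pairing computation $\langle\omega_n,{\boldsymbol \theta}^\vee\rangle = 1$ for $B_n$ — both are routine once the conventions are pinned down. An alternative, more geometric route, avoiding explicit root-theoretic computation, is to note that $\phi$ restricted to the open orbit is the identity ${\rm SO}(W) \hookrightarrow \OG(0)$, that a general minimal rational curve $C_{\boldsymbol \theta}$ is the closure of a one-parameter root subgroup meeting $\OG(0)$, and to directly identify its closure in $\OGW$ with a line by an explicit parametrization; I would use whichever is cleaner in the writeup.
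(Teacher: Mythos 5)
Your proposal is correct and follows essentially the same route as the paper: identify $\mcO(1)$ with $D(\omega_n)^{\OG}$, pull back to $D(\omega_n)$ on $\bar{B}_n$, show $D(\omega_n)\cdot C=1$ using the intersection data for $C_{\boldsymbol\theta}$ from \cite{BF15}, and conclude by the projection formula that the image has degree one. The only cosmetic difference is that you compute $D(\omega_n)\cdot C$ via the pairing $\langle\omega_n,\boldsymbol\theta^\vee\rangle$, whereas the paper expands $\omega_n=\frac{1}{2}\sum_k k\alpha_k$ and uses $D_2\cdot C=1$, $D_j\cdot C=0$ for $j\neq 2$; these are equivalent calculations.
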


\begin{proof}
Under the identification of the Picard group of $\bar{B}_n$ with the weight lattice of $B_n$, we have $D_i=\alpha_i$ and $D(\omega_j)=\omega_j$ for each $i$ and $ j$. It follows that $D(\omega_n)=\frac{1}{2}\sum_{k=1}^nkD_k$ in the Picard group of $\bar{B}_n$.
Let $C$ be a minimal rational curve on $\bar{B}_n$. By Lemma 3.4 of \cite{BF15}, $D_2\cdot C=1$ and $D_j\cdot C=0$ for $j\neq 2$, hence $D(\omega_n) \cdot C=1$.  This gives $D(\omega_n)^{\OG}\cdot \phi_*(C)=\phi^*D(\omega_n)^{\OG}\cdot C=D(\omega_n)\cdot C=1$, implying that $\phi_*(C)$ is a line on ${\rm OG}(2n+1, W_1 \oplus W_2)$.
\end{proof}

\subsection{Minimal rational curves in the limit fiber}

Let $\mathbb{S}$ be the spinor variety $\OG(7, \mathbb{C}^{14})$, which is a 21-dimensional Fano manifold of Picard number one. The VMRT of $\mathbb{S}$ is the Pl\"ucker embedding ${\rm Gr}(2, 7) \subset \mathbb{P}^{20}$.  By Lemma \ref{l.OrbitOG(k)},  $\mathbb{S} = \OG(0) \cup \OG(1) \cup \OG(2) \cup OG(3)$.  The strata $\OG(1)$ has codimension 1 while $\OG(2)$ has dimension 17.  The closed strata
$\OG(3)$ is isomorphic to $\mathbb{Q}^6 \times \mathbb{Q}^6$.

 By Proposition \ref{p.OGtoSpinor}, there exists a birational  contraction $\phi: \bar{B}_3 \to \mbS$ which is a successive blowup along $\OG(3)$ and the strict transform of  $\overline{\OG}(2)$.
 Let $D_1, D_2, D_3$ be the prime boundary divisors on $\bar{B}_3$ corresponding respectively to the simple roots $\alpha_1, \alpha_2, \alpha_3$.    Then $\phi|_{D_1}: D_1 \to \overline{\OG}(1)$ is a birational morphism, $\phi|_{D_2}:  D_2 \to \overline{\OG}(2)$ is a $\mathbb{P}^3$-bundle over $\OG(2)$, and $\phi|_{D_3}: D_3\to \OG(3)$ is a $\bar{A_2}$-bundle. The exceptional locus of $\phi$ is $D_2\cup D_3$.

Let $C_\theta$ be a general minimal rational curve on $\bar{B}_3$.  Let $C_i$ be irreducible  curves of $\bar{B}_3$ such that $C_i \cdot D(\omega_j) = \delta_{ij}$ for all $i, j$, where
$D(\omega_1), D(\omega_2),D(\omega_3)$ are the colors on $\bar{B}_3$.   Then the Mori cone $\overline{{\rm NE}}(\bar{B}_3)$ is the cone generated by $[C_1], [C_2], [C_3]$.
  By Lemma 3.4 of \cite{BF15}, $D_2\cdot C_\theta=1$ and $D_j\cdot C_\theta=0$ for $j\neq 2$. 
  Recall that on any wonderful compactification, the rational equivalence is the same as the numerical equivalence, which we will simply denote by $\sim$.
   This gives that
$C_\theta \sim C_1 + 2 C_2 + C_3$.

\begin{lem} \label{l.extremalOG}
The two rays $\mbR^+[C_1]$ and $\mbR^+[C_2]$ are extremal in the Mori cone $\overline{{\rm NE}}(\bar{B}_3)$ and $\phi$ is the Mori contraction associated with the extremal face $\mbR^+[C_1]+\mbR^+[C_2]$.
\end{lem}

\begin{proof}
The first assertion follows from Example \ref{e.cones}. To verify the second assertion, it suffices to show that $C_1$ and $C_2$ are contracted by $\phi$. The morphism $\phi|_{D_{2, 3}}: D_{2,3}\to\OG(3)$ factors through $\psi: D_{2, 3}:=D_2 \cap D_3 \to B_3/P_{\omega_2+\omega_3}^-\times {B_3/P_{\omega_2+\omega_3}}$, and the latter is a $\mbP^3$-fibration. The fiber of $\psi$ at the base point is the wonderful compactification  $\bar{A}_1$ whose minimal rational curves are rationally equivalent to $C_1$. Since $\psi$ contracts $C_1$, so is $\phi$. Similarly, by considering the morphism $\phi|_{D_{1, 3}}: D_{1,3}\to\OG(3)$ we can conclude that $\phi$ contracts $C_2$.
\end{proof}

\begin{prop} \label{p.intersectionOG}
Let $C \subset \bar{B}_3$ be a free irreducible rational curve.  Then $-K_{\bar{B}_3} \cdot C \geq 9$ and equality holds if and only if $C \sim C_\theta$.
Moreover:

(1) $-K_{\bar{B}_3} \cdot C \neq 10$ or $11$.

(2) If  $-K_{\bar{B}_3} \cdot C = 12$, then $C$ is rationally equivalent to $C_\theta + C_1$ and $\phi_*(C)$ is a line on $\mathbb{S}$.

\end{prop}
\begin{proof}
The first claim follows from Corollary \ref{c.minimal} as $-K_{\bar{B}_3} \cdot C_\theta=9$.  It remains to prove (1) and (2).
As $C$ is free, it can be deformed to an effective 1-cycle, one of whose irreducible reduced components is a minimal rational curve. By the uniqueness of minimal rational curves on $\bar{B}_3$, we can write
$C \sim C_\theta + \sum_{j=1}^3 k_j C_j$ with $k_j \geq 0$.  Note that the divisors $D_i$ and $D(\omega_j)$ are related by the Cartan matrix, which gives
$$
-K_{\bar{B}_3} = 2 (\sum_{i=1}^3 D(\omega_i)) + \sum_{j=1}^3 D_j  = 3 D(\omega_1) + 2 D(\omega_2) + 2 D(\omega_3).
$$

It follows that $-K_{\bar{B}_3} \cdot C = 9 + 3k_1 + 2k_2 + 2 k_3$.  As $k_j \geq 0$, we have $-K_{\bar{B}_3} \cdot C  \neq 10$.  Assume now  $-K_{\bar{B}_3} \cdot C  = 11$, then either
$C \sim C_\theta + C_2$ or $C \sim C_\theta + C_3$.
On the other hand, we have
$$
D_1 = -2D_2 - 3 D_3 + 2 D(\omega_3), \ D(\omega_1) = -D_2 - 2D_3 + 2 D(\omega_3), \ D(\omega_2) = -D_3 + 2 D(\omega_3),
$$
which gives  $-K_{\bar{B}_3}  =  12 D(\omega_3) - 3 D_2 - 8 D_3$.
If $C \sim  C_\theta + C_2 \sim  C_1 + 3C_2 + C_3$, then
$$-K_{\bar{B}_3}  \cdot C = 12 D(\omega_3) \cdot  (C_1 + 3C_2 + C_3) -  3 D_2 \cdot C - 8 D_3 \cdot C =12 - 3 D_2 \cdot C - 8 D_3 \cdot C,$$
which cannot be equal to $11$ as $D_2 \cdot C \geq 0$ and $D_3 \cdot C \geq 0$.

Now assume $C \sim  C_\theta + C_3 \sim  C_1 + 2C_2 + 2C_3$, then
$$-K_{\bar{B}_3}  \cdot C = 24 -  3 D_2 \cdot C - 8 D_3 \cdot C$$ which again cannot be equal to 11.   This proves (1).

If $-K_{\bar{B}_3} \cdot C = 12$, then we have $3k_1 + 2k_2+2k_3 = 3$, which implies that $k_1=1, k_2=k_3=0$, namely $C \sim C_\theta + C_1$.
As $\phi_* (C_1) =0$,  the pushforward $\phi_*(C) = \phi_*(C_\theta)$ is a line on $\mathbb{S}$ by Proposition \ref{p.B3PreservesLines}.

\end{proof}

\begin{cor} \label{c.stricttransform}
Let $\ell \subset \mathbb{S}$ be a line such that $\ell \cap \OG(0) \neq \emptyset$. Denote by $C \subset \bar{B}_3$ the strict transform of $\ell$ under $\phi$. Then $C$ is an irreducible reduced free rational curve on $\bar{B}_3$ with $\phi_*(C) = \ell$ and one of the following holds:

(i) $\ell$ is disjoint from $\overline{\OG}(2)$ and $C \sim C_\theta+C_1$ or

(ii) $\ell$ intersects  $\overline{\OG}(2)$ at a single reduced point and $\ell$ is disjoint from $\OG(3)$. In this case, $C \sim C_\theta$ is a minimal rational curve on $\bar{B}_3$.
\end{cor}
\begin{proof}
The first claims are immediate. Assume now $\ell \cap \overline{\OG}(2) =\emptyset$, then $C \cap D_2 = C \cap D_3 = \emptyset$.
As $\mathbb{S}$ has index 12, its canonical divisor satisfies $K_{\mathbb{S} }= \mathcal{O}(-12)$ hence $12 D(\omega_3) = \phi^* (-K_{\mathbb{S}})$ by Lemma \ref{l.OGD} and Corollary \ref{c.OGD}.
This gives $-K_{\bar{B}_3}  =  \phi^* (-K_{\mathbb{S}}) - 3 D_2 - 8 D_3$.  We deduce that
$$
-K_{\bar{B}_3} \cdot C =  \phi^* (-K_{\mathbb{S}}) \cdot C =  -K_{\mathbb{S}} \cdot \ell =12.
$$
By Proposition \ref{p.intersectionOG}, we have $C \sim C_\theta + C_1$.

Now assume $\ell \cap \overline{\OG}(2) \neq \emptyset$.  Then either $D_2 \cdot C \geq 1$ or $D_3 \cdot C \geq 1$.  This shows that $-K_{\bar{B}_3} \cdot C < 12$ while as $C$ is a free irreducible rational curve on $\bar{B}_3$, we have $-K_{\bar{B}_3} \cdot C \geq 9$.  By Proposition \ref{p.intersectionOG}, this implies $-K_{\bar{B}_3} \cdot C = 9$ and $C \sim C_\theta$. Moreover
$C \cdot D_2 =1$ and $C \cdot D_3 = 0$, which shows that $\ell$ intersects  $\overline{\OG}(2)$ at a single reduced point.
\end{proof}

Let $\pi: \mcX\rightarrow\Delta\ni 0$ be a  regular family of Fano varieties such that $\mcX_t  \cong \bar{B}_3$ for $t \neq 0$.   By Theorem \ref{t.InvCones}, the map $\phi$ extends to a morphism $\Phi: \mcX \to \mcS$, such that for $t \neq 0$, we have $\mcS_t \simeq \mbS$ and $\Phi_t = \phi$. After taking a suitable base change, we may assume that $\Pic(\mcS_0)=\Pic(\mcS/\Delta)=\Pic(\mbS)$ by Corollary 4.7 of \cite{dFH}. The central morphism $\Phi_0: \mcX_0 \to \mcS_0$ is also the Mori contraction associated with the extremal face $\mbR^+[C_1]+\mbR^+[C_2]$ of $\overline{NE}(\mcX_0)=\overline{NE}(\bar{B}_3)$, hence it is birational and $\mcS_0$ is normal. As $\mcS_t$ is covered by lines, so is $\mcS_0$. Later we will show that general lines are contained in the smooth locus of $\mcS_0$.

\begin{lem}
Let $\mcD_1^\mcX,  \mcD_2^\mcX, \mcD_3^\mcX$ be the boundary divisors.  Then $\Phi_0$ has exceptional locus $\mcD_{2, 0}^\mcX \cup \mcD_{3,0}^\mcX$.
Let $\mcD_i^\mcS = \Phi(\mcD_i^\mcX)$, then $\mcD_1^\mcS$ is a prime divisor on $\mcS$, while $\mcD_2^\mcS$ and $\mcD_3^\mcS$ are of relative dimension 17 and 12. Furthermore we have
$\mcD_3^\mcS \subset \mcD_2^\mcS\subset \mcD_1^\mcS$.
\end{lem}

\begin{proof}
Since the exceptional locus of $\phi$ is $D_2\cup D_3$, the semi-continuity of the dimension of fibers implies that $\mcD_{2, 0}^\mcX \cup \mcD_{3,0}^\mcX$ is contained in the exceptional locus $\Exc(\Phi_0)$. Any effective 1-cycle $C$ on $\Exc(\Phi_0)$ is identified with $\lambda_1 C_1+\lambda_2 C_2$ in $\overline{NE}(\mcX/\Delta)$ for some non-negative numbers $\lambda_1$ and $\lambda_2$.  By the numerical equivalence of $D_i$ and $D(\omega_j)$ given in Example \ref{e.cones},  we have $(D_i\cdot C_1, D_i\cdot C_2, D_i\cdot C_3)=(2, -1, 0), (-1, 2, -2), (0, -1, 2)$ for $i=1, 2, 3$ respectively.
This  implies that either $\mcD^\mcX_{2, 0}\cdot C<0$ or $\mcD^\mcX_{3, 0}\cdot C<0$. So $C$ is contained in $\mcD_{2, 0}^\mcX \cup \mcD_{3,0}^\mcX$, and the latter is the exceptional locus of $\Phi_0$. The rest is immediate from the information on $\phi$.
\end{proof}

\begin{prop} \label{p.smoothB3}
The variety $\mcS_0$ is smooth along general points of $\mcD_{2, 0}^\mcS$, and the singular locus of $\mcS_0$ is a proper subset of $\mcD_{2, 0}^\mcS$. In particular, a general minimal rational curve on $\mcX_0$ is sent isomorphically onto a line contained in the smooth locus of $\mcS_0$.
\end{prop}
\begin{proof}
Let $s \in  \mcD_{2, 0}^\mcS \setminus \mcD_{3, 0}^\mcS$ be a general point.
Let $\varsigma: \Delta \to \mcD_2^\mcS$ be a general section through $s$, which gives a holomorphic family $F_t:=\Phi^{-1}(\varsigma(t))$, $t\in\Delta$.

The restriction of $\phi$ on $D_2\setminus(D_2\cap D_3)$ is an $\SO(\C^7)\times\SO(\C^7)$-equivariant fiber bundle over the orbit $\OG(2)$, whose fiber at the base point is the wonderful compactification of $\PSL(\C^2)$. The latter group is the adjoint group of the subgroup of $\SO(\C^7)$ normalized by $T$ and  whose roots are $\pm\alpha_1$. Consequently,  $D_2\setminus(D_2\cap D_3) \to \OG(2)$ is a $\mbP^3$-bundle and each line on a fiber is of the class $[C_1]$ in the Mori cone $\overline{NE}(\mcX_0)=\overline{NE}(\bar{B}_3)$.

When $t\neq 0$, $F_t$ is identified with a fiber of $D_2\setminus(D_2\cap D_3) \to \OG(2)$. The $\Phi$-ample line bundle $\mcD:=\mcD(\omega_1)+\mcD(\omega_2)+\mcD(\omega_3)$ has degree one on each line of $F_t=\mbP^3$ with $t\neq 0$. Then the limits in $F_0$ of these lines are irreducible and reduced rational curves, which are also called lines by abuse of notions.
Take any two points $x, y \in F_0$, we can take  local sections $x_t, y_t$ such that $x_0=x, y_0=y$, then the limit of lines $\overline{x_t y_t} \subset F_t$ becomes a line on $F_0$ joining $x$ and $y$.
 This implies that the normalization map of $F_0$ is isomorphic to $\mbP^3$ by Theorem 3.6 of \cite{Kebekus}.

Suppose that $F_0$ is not  isomorphic to $\mbP^3$, then  $\dim H^0(F_0, \mcO(1))\leq 3$, where $\mcO(1):=\mcD|_{F_0},$ as the pull-back of $\mcD|_{F_0}$ on $\mbP^3$ is $\mcO(1)$. The limits of hyperplane sections of $F_t=\mbP^3$ as $t$ goes to zero give rise to elements of the linear system of $\mcO(1)$ on $F_0$. In particular, $\dim H^0(F_0, \mcO(1))\geq 1$. Consider the induced rational map $f: F_0\dashrightarrow \mbP^k$, where $k:=\dim H^0(F_0, \mcO(1))-1$. Let $\Gamma$ be (the resolution of) the graph of $f$ and let  $p_1$, $p_2$ be the projections to $F_0$ and $\mbP^k$ respectively. Since $k<3$, there is an irreducible curve $C$ on $\Gamma$ such that it is a strict transform of an irreducible curve on $F_0$ and $p_2(C)$ is a single point. It follows that $\deg\mcO(1)|_{pr_1(C)}=\deg pr_1^*\mcO(1)|_{C}=0$, contradicting the fact that $\mcO(1)$ is ample on $F_0$.  This shows that $F_0 \simeq \mathbb{P}^3$.

Now the fiber $\Phi^{-1}(s)=F_0$ at $s \in  \mcD_{2, 0}^\mcS$ is isomorphic to $\mbP^3$. Since $D_2\cap C_1=-1$, the restriction $\mcO_\mcX(\mcD_2)|_{F_0}=\mcO_{\mbP^3}(-1)$. It follows that $s$ is a smooth point of $\mcS_0$.
\end{proof}

We denote by $\mcK^\mcX$ (resp. $\mcK^\mcS$) the family of minimal rational curves on $\mcX$ (resp. $\mcS$) and denote by  $\mcU^\mcX$ (resp. $\mcU^\mcS$) the corresponding  universal family. The rest of this subsection is devoted to study the family $\mcK^\mcS$ and the associated VMRT $\mcC^\mcS$, which will be helpful to understand the VMRT $\mcC^\mcX$ associated to the family $\mcK^\mcX$. For this, we start from the strict transforms of elements in $\mcK^\mcS$ under $\Phi$.

Let $\mathcal{M}$ be the unique irreducible component of the Hilbert scheme ${\rm Hilb}(\mathcal{X}/\Delta)$ that is sent to $\mathcal{K}^{\mathcal{S}}$ by the induced map of $\Phi$.
%Let $\mcM$ be the unique irreducible component of the Chow scheme $\Chow(\mcX/\Delta)$ such that the Hilber-Chow morphism sends $\mcK^\mcS$ onto $\Phi_*\mcM$. 
In particular, general elements of $\mcM$ are the strict transforms of general lines on $\mcS$. Let $\mcM'$ be the open subset of $\mcM$ whose elements represent irreducible and reduced rational curves on $\mcX$.

Take $x_t\in\mcX_t$ general with $t\neq 0$. Then the elements of $\mcM'_{x_t}$ and $\mcM_{x_t}\backslash\mcM'_{x_t}$ are clearly described by Proposition \ref{p.intersectionOG} and Corollary \ref{c.stricttransform}. The next two lemmas show that the same description holds for $t=0$.

\begin{lem}\label{l.reduciblecurveB3}
Take a general point $x\in\mcX_0$ and any $[C]\in\mcM_x\setminus\mcM'_x$. Then we have:

(i) The 1-dimensional cycle $C$ consists of two irreducible components $C'$ and $C''$, where $C'$ is a minimal rational curve passing through $x$ and $C''$  is contained in $\mcD^\mcX_{2, 0}$.
Both $C'$ and $C''$ are reduced.  Moreover $C'$ intersects $\mcD^\mcX_{2, 0}$ at a single reduced point and $C'\cap\mcD^\mcX_{3, 0}=\emptyset$.  The component $C''$  has empty intersection with $\mcD^\mcX_{3, 0}$. In particular, $x\notin C''$.  Moreover, $[C'']=[C_1]$ in the Mori cone  $\overline{NE}(\mcX_0)=\overline{NE}(\bar{B}_3)$.

(ii) The intersection of $C'$ and $C''$ consists of a single reduced point.

(iii) The morphism $\Phi_0$ contracts $C''$ and sends $C'$ isomorphically onto a line on $\mcS_0$.
\end{lem}

\begin{proof}
By Corollary \ref{c.stricttransform}, elements in $\mcM_t$, $t\neq 0$, represent 1-cycles with rational components and of anti-canonical degree 12. Then the same holds for $t=0$. Let $C'$ be the irreducible component, with induced reduced structure, of $C$ passing through $x$. Then $C'$ is a free rational curve on $\mcX_0$ of anti-canonical degree less than 12. As $C'$ is free on the total space $\mcX$, we can deform it to a free rational curve $C'_t$ of the same anti-canonical degree on $\mcX_t$ with $t\neq 0$. By Proposition \ref{p.intersectionOG}, $C'_t$ is a minimal rational curve for each $t\neq 0$. Then by Proposition \ref{p.MRC}, $C'$ is a minimal rational curve on $\mcX_0$, verifying the conclusion (i).

By Corollary \ref{c.stricttransform} the class $[C]-[C']=[C_1]$. Since the anti-canonical degree of $[C]-[C']$ is less than 9, there is at least an irreducible component $C''$ of $C$ that is different from $C'$. Since $\mbR^+[C_1]$ is an extremal ray of $\overline{NE}(\mcX_0)=\overline{NE}(\bar{B}_3)$, see Example \ref{e.cones} and Theorem \ref{t.InvCones}, we know that $\mbR^+[C'']=\mbR^+[C_1]$. Since $[C_1]$ is a primitive lattice point in $H_2(\mcX_0, \mbZ)=H_2(\bar{B}_3, \mbZ)$ (alternatively, since the intersection number of $C_1$ with the ample Cartier divisor $D(\omega_1)+D(\omega_2)+D(\omega_3)\in\Pic(\bar{B_3})$ is 1), there is a positive integer $m$ such that $[C'']=m[C_1]$. Since the anti-canonical degree of $C$, $C'$ and $C_1$ are 12, 9 and 3 respectively, which shows that $m=1$.

Since $\mcD^\mcX_{2, 0}\cdot C''=D_2\cdot C_1=-1$, the curve $C''$ is contained in $\mcD^\mcX_{2, 0}$. As  $\mcD^\mcX_{3, 0}\cap C'=\emptyset$ and $C$ is connected, the curve $C''$ is not contained in $\mcD^\mcX_{3, 0}$. Moreover $\mcD^{3, 0}\cap C''=D_3\cdot C_1=0$, so we have $C''\cap\mcD^\mcX_{3, 0}=\emptyset$.  The rest is immediate.
\end{proof}

\begin{lem}\label{l.freecurveB3}
Take a general point $x\in\mcX_0$. Then $\mcM'_x\neq\emptyset$. Take any $[C]\in\mcM'_x$. Then the following holds.

(i) The 1-dimensional scheme $C$ is a free rational curve of anti-canonical degree 12.

(ii) The intersections $C\cap\mcD^\mcX_{2, 0}=\emptyset$ and $C\cap\mcD^\mcX_{3, 0}=\emptyset$. In particular, $\Phi_0$ sends $C$ isomorphically onto a line contained in the smooth locus of $\mcS_0$.
\end{lem}

\begin{proof}
It suffices to show $\mcM'_x\neq\emptyset$ and the rest is trivial. Recall that for $x\in B_3\subset\bar{B}_3$ and $s:=\phi(x)\in\OG(0)$, we have $\mcC_x=\OG(2, 7)\subsetneq\mcC_s=\Gr(2, 7)$ under the identification $\mbP T_x\bar{B}_3=\mbP T_s\mbS$ induced by $\phi$. Since $\dim\mcC_x<\dim\mcC_s$, we may choose a family $[C_t]\in\mcM$ such that $C_t\subset \mcX_t$ passing through general points on $\mcX_t$ and the projectivized tangent direction (of a branch) of the limit 1-cycle $C_0$ at $x$ is outside $\mcC^\mcX_x$. Then by Lemma \ref{l.reduciblecurveB3}, $C_0$ is irreducible and reduced.
\end{proof}

\begin{lem}\label{l.surjfamilyB3}
Let $x$ be a general point in $\mcX_0$ and denote by $s:=\Phi(x)\in\mcS_0$. Take any $[l]\in\mcK^\mcS_s$ there exists $[C]\in\mcK^\mcX_x$ such that $\Phi_*(C)=l$.
\end{lem}

\begin{proof}
Take a section $x_t\in\mcX_t$, $t\in\Delta$ such that $x_0=x\in\mcX_0$ and $x_t$ is general in $\mcX_t$ for each $t$. Denote by $s_t=\Phi(x_t)\in\mcS_t$ and choose a section $[l_t]\in\mcK^\mcS_{s_t}$, $t \in\Delta$ such that $[l_0]=[l]$ and $l_t\cap\mcD_2=\emptyset$ when $t\neq 0$. By Corollary \ref{c.stricttransform} the strict transform $C_t$ of $l_t$ under $\Phi$ is an element in $\mcM'_{x_t}$ for each $t\neq 0$. Let $[C]$ be the limit in $\mcM_x$. Since $\Phi(C_t)=l_t$ when $t\neq 0$, $\Phi(C)$ is contained in $l$ by continuity. Then $\Phi_*(C)=l$ by Lemmas \ref{l.reduciblecurveB3} and \ref{l.freecurveB3}.
\end{proof}

By Lemma \ref{l.freecurveB3}, there do exist lines contained in the smooth locus of $\mcS_0$. Take a general point $s\in\mcS_0$ and an element $[l]\in\mcK^\mcS_s$ such that $l$ is contained in the smooth locus of $\mcS_0$. Then $l$ is a free rational curve on $\mcS_0$ and it is also free on $\mcS$. In particular, $\mcK^\mcS$ is the unique irreducible component of ${\rm Hilb}(\mcS/\Delta)$ containing $[l]$, which is the alternative way to define $\mcK^\mcS$. The freeness of $l$ implies that the families $\mcK^\mcS$, $\mcK^\mcS|_{\mcS_0}$ and $\mcK^\mcS_s$ are all smooth at $[l]$. In particular, $\mcK^\mcS|_{\mcS_0}$ and $\mcK^\mcS_s$ are generically reduced.

Next we will show the family of lines on the VMRT $\mcC^\mcS_s$ and its subfamily parametrizing  lines passing through a fixed general point are generically reduced. Recall that the tangent map $\tau_s: \mcU^\mcS_s \to \mcC^\mcS_s$ is a birational finite morphism by Theorems 3.4 and 3.5 of \cite{Kebekus}. So it suffices to study the family of lines on $\mcU^\mcS_s$, i.e. the rational curves on $\mcU^\mcS_s$ sent onto lines on $\mcC^\mcS_s$ by $\tau_s$.

\begin{prop}
Let $s \in \mcS_0$ be a general point and let $q \in \mcU_s^{\mcS}$ be a general point. Let $C$ be a general line on $\mcU_s^{\mcS}$ passing through $q$.  Then $C$ is contained in the smooth locus of
$\mcU_s^{\mcS}$ and if we denote by $f: \mathbb{P}^1 \to C$ the normalization map, then
$$f^* T_{\mcU_s^{\mcS}} = \mathcal{O}(2) \oplus \mathcal{O}(1)^5 \oplus \mathcal{O}^4.$$
\end{prop}
\begin{proof}
As $\Phi_0$ is birational, the pre-image of $s$ is a single point, denoted by $x \in \mcX_0$.
Through the natural morphism $\mcU^\mcS\to\mcK^\mcS$, we may regard $C$ as a 1-dimensional family of minimal rational curves on $\mcS_0$ passing through the point $s$.

Take any $p\in C$, denote by $l_p$ the corresponding minimal rational curve. By Lemma \ref{l.surjfamilyB3}, there is an element $[\ell_p]\in\mcM_x$ such that $\Phi_*(\ell_p)=l_p$. By the general choice of $q$ in $\mcU^\mcS_s$, the element $[\ell_p]$ belongs to $\mcM'_x$ for $p$ lying in an open neighborhood  of $q$. The corresponding point $p$ lies in the smooth locus of $\mcU^\mcS_s$ by Lemma \ref{l.freecurveB3}.

It is straight-forward to see that given a general point $s_t\in\mcS_t$ and any point $p_t\in\mcC^\mcS_{s_t}$ there is a point $q_t\in\mcC^\mcX_{x_t}$ such that the projective line connecting $p_t$ and $q_t$ in the projective space $\mbP T_{x_t}\mcX_t=\mbP T_{s_t}\mcS_t$ is contained in $\mcC^\mcS_{s_t}$, where $\{x_t\}=\Phi^{-1}(s_t)$ and $t\neq 0$. By continuity, this holds when $t=0$. By considering the incidence variety, the general choice of $C$ in $\mcU^\mcS_s$ implies that if $[\ell_p]\in\mcM_x\backslash\mcM'_x$ then $\ell_p = \ell'_p + \ell''_p$ by Lemma  \ref{l.reduciblecurveB3} where
$\ell'_p$ is a general minimal rational curve on $\mcX_0$.
%where $C_p$ is the union of $C'_p$ and $C''_p$ as in Lemma \ref{l.reduciblecurveB3}. Recall that general minimal rational curves on a uniruled projective manifold has empty intersection with any fixed closed subset of codimension at least two.
Then $l_p=\Phi(\ell'_p)$ lies in the smooth locus of $\mcS_0$ by Proposition \ref{p.smoothB3}, and thus $p$ lies in the smooth locus of $\mcU^\mcS_s$. In summary, the line $C$ is contained in the smooth locus of $\mcU^\mcS_s$.

Since the tangent map $\tau_s$ is a finite morphism sending $C$ onto a projective line in $\mbP T_s\mcS_0$, then $f^*T_{\mcU^\mcS_s}$ is a subbundle of $(\tau_s\circ f)^*T_{\mbP T_s\mcS_0}=\mcO(2)\oplus\mcO(1)^{19}$. Note that $C$ is free on $\mcU^\mcS_s$, we have $f^*T_{\mcU^\mcS_s}=\mcO(2)\oplus\mcO(1)^a\oplus\mcO^b$ with some $a, b\geq 0$. The freeness of $C$ implies that it can be deformed to a projective line $C_t$ on $\mcU^\mcS_{s_t}=\Gr(2, 7)$, where $s_t$ is general in $\mcS_t$ with $t\neq 0$ and it lies in a small neighborhood on $s\in\mcS$. Since $\dim\mcU^\mcS_s=\dim\mcU^\mcS_{s_t}$ and since the anti-canonical degree of $C$ in $\mcU^\mcS_s$ is equal to that of  $C_t$ on $\Gr(2, 7)$,  $a=5$ and $b=4$.
\end{proof}

\subsection{Invariance of VMRT for $B_3$}

Let $\sigma: \Delta \to \mcX$ be a section such that $\sigma(t) \in \mcX_t$ is general for all $t$. We denote by $s_t = \Phi(\sigma(t)) \in \mcS_t$.
Let $\mcC^\mcX_\sigma = \cup_t \mcC^\mcX_{\sigma(t)}$ be the family of VMRT of $\mcX$ along the section $\sigma$. Note that for $t \neq 0$, the VMRT $\mcC^\mcX_{\sigma(t)}$ is isomorphic to the orthogonal Grassmannian $B_3/P_2={\rm OG}(2,7)$ by Theorem \ref{t.VMRTWonderful}, which is covered by lines. As a consequence, the central fiber
$\mcC^\mcX_{\sigma(0)} \subset \BP T_{\sigma(0)} \mcX_0$ is  also covered by lines.  Take $\epsilon: \Delta \to   \mcC^\mcX_\sigma$ a general section, hence $ \epsilon(t) \in \mcC^\mcX_{\sigma(t)}$ is a general point.  We denote by $\mathbf{L}^\mcX_{\epsilon(t)} \subset \BP T_{\epsilon(t)} \mcC^\mcX_{\sigma(t)}$ the lines on $\mcC^\mcX_{\sigma(t)}$ passing through $\epsilon(t)$.
Then for $t \neq 0$, we have $\mathbf{L}^\mcX_{\epsilon(t)} \simeq \BP^1 \times \mbQ^1 \subset \BP^5$ by Example \ref{e.contact}, which is a surface of degree $4$ in $\BP^5$.  As a consequence, the central fiber $\mathbf{L}^\mcX_{\epsilon(0)} \subset \BP T_{\epsilon(0)}\mcC^\mcX_{\sigma(0)}$ is a surface of degree $4$. In \cite{HL2}, there is a description of $\mathbf{L}^\mcX_{\epsilon(0)}$ as follows.

%We recall the following classification due to del Pezzo.

%\begin{prop}\label{l.delPezzo}
%An irreducible non-degenerate surface of degree 4 in $\BP^5$ is projectively equivalent to one of the following:

%(1) a cone over a rational normal curve of degree 4;

%(2) the Veronese surface $\nu_2(\BP^2) \subset \BP^5$;

%(3) the rational normal scrolls $S_{2,2} \simeq \BP^1 \times \mbQ^1$ or $S_{1,3} = \BP_{\BP^1}(\mathcal{O}(1) \oplus \mathcal{O}(3))$.
%\end{prop}

\begin{prop} \label{p.twoCaseB3}
The subvariety $\mathbf{L}^\mcX_{\epsilon(0)} \subset \BP T_{\epsilon(0)}\mcC^\mcX_{\sigma(0)}=\BP^6$ is linearly  degenerate and in its linear span, it is projectively equivalent to one of the rational normal scrolls in $\BP^5$: $S_{2,2}:=\BP^1 \times \mbQ^1$ or $S_{1,3}:=\BP_{\BP^1}(\mathcal{O}(1) \oplus \mathcal{O}(3))$.
Furthermore, in the former case, we have $\mcC^\mcX_{\sigma(0)} \simeq {\rm OG}(2,7)$.
\end{prop}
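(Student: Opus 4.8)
The plan is to treat the two assertions separately, the first along the lines of \cite{HL2} and the second via Mok's recognition theorem. For the dichotomy, observe that $\mathbf{L}^\mcX_{z_0}$ is a flat limit, inside the $\BP^6$--bundle over $\Delta$ whose fibre over $t$ is $\BP T_{z_t}\mcC^\mcX_{x_t}$, of the surfaces $\mathbf{L}^\mcX_{z_t}\cong\BP^1\times\mathbb{Q}^1\subset\BP^5$ with $t\neq 0$. Since the hyperplanes spanned by the $\mathbf{L}^\mcX_{z_t}$ form a holomorphic section of the relative dual bundle over $\Delta^*$ that extends across $0$, the limit $\mathbf{L}^\mcX_{z_0}$ lies in a hyperplane $\BP^5\subset\BP^6$, and its degree is still $4$ by flatness. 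Using the smoothness of the normalized space of minimal rational curves through a general point (\cite[Thm.~1]{HM2}, \cite[Thm.~3.4]{Kebekus}, \cite[II.3.11.5]{Kollar}) one checks, as in \cite{HL2}, that $\mathbf{L}^\mcX_{z_0}$ is a smooth irreducible surface spanning exactly that $\BP^5$. A smooth nondegenerate surface of degree $4$ in $\BP^5$ is a variety of minimal degree, hence is a rational normal scroll $S_{a,b}$ with $a+b=4$ or the Veronese surface $v_2(\BP^2)$; smoothness excludes the cone $S_{0,4}$, and the presence on $\mathbf{L}^\mcX_{z_0}$ of a one--parameter family of lines --- the flat limits of the line ruling of $\BP^1\times\mathbb{Q}^1$, which stay lines since a flat limit of lines is a line --- excludes $v_2(\BP^2)$. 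This leaves $S_{2,2}=\BP^1\times\mathbb{Q}^1$ and $S_{1,3}$.

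For the ``Furthermore'' part, suppose $\mathbf{L}^\mcX_{z_0}\simeq S_{2,2}=\BP^1\times\mathbb{Q}^1$, which is then projectively equivalent in its span to the VMRT of $B_3/P_2={\rm OG}(2,7)$ at a point (the second Segre embedding of $\BP^1\times\mathbb{Q}^1$, cf.\ Example \ref{e.contact}). After shrinking $\Delta$, the relative Chow construction furnishes a proper family $\mcK^\mcX_\sigma\to\Delta$ with $\mcK^\mcX_{\sigma(t)}\cong{\rm OG}(2,7)$ for $t\neq 0$ (by Theorem \ref{t.VMRTWonderful}) and central fibre $Y:=\mcK^\mcX_{x_0}$, where $Y$ is smooth by Hwang--Mok and $\tau_{x_0}:Y\to\mcC^\mcX_{x_0}\subset\BP T_{x_0}\mcX_0$ is birational. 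Assuming $\mcK^\mcX_\sigma\to\Delta$ is a smooth family, $b_2(Y)=b_2({\rm OG}(2,7))=1$, so $Y$ has Picard number one; as $Y$ is covered by the flat limits $\mcM$ of the lines on $\mcC^\mcX_{\sigma(t)}\cong{\rm OG}(2,7)$ (pulled back via the isomorphisms $\tau_{\sigma(t)}$), $Y$ is uniruled, so $-K_Y$ is a nonzero nef class, hence ample, and $Y$ is Fano. The curves in $\mcM$, having minimal $(-K_Y)$--degree, form a minimal rational component of $Y$; and since $\tau_{x_0}$ is an isomorphism over a neighbourhood of the general point $z_0$, its differential identifies the VMRT of $(Y,\mcM)$ at $\tau_{x_0}^{-1}(z_0)$ with $\mathbf{L}^\mcX_{z_0}\subset\BP T_{z_0}\mcC^\mcX_{x_0}$, which by hypothesis is projectively the VMRT of $B_3/P_2$.

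It then remains to apply the recognition of $B_3/P_2$ from its variety of minimal rational tangents, \cite[Main Theorem]{Mok08}, to the Picard-number-one Fano manifold $Y$ equipped with the minimal rational component $\mcM$: this gives $Y\cong B_3/P_2={\rm OG}(2,7)$. As the tangent map of $B_3/P_2$ at a general point is a closed immersion, $\tau_{x_0}$ is then an isomorphism onto $\mcC^\mcX_{x_0}$, whence $\mcC^\mcX_{x_0}\cong{\rm OG}(2,7)$. I expect the main obstacle to be the technical points used above: verifying that $\mcK^\mcX_\sigma\to\Delta$ is genuinely a smooth proper family --- so that $b_2$ and the intersection numbers computing ampleness of $-K_Y$ are constant and $Y$ inherits the desired properties --- that the limiting family $\mcM$ is an honest minimal rational component of $Y$ lying over the possibly non-normal $\mcC^\mcX_{x_0}$, and that the identification of the VMRT of $(Y,\mcM)$ at $z_0$ with $\mathbf{L}^\mcX_{z_0}$ is projective and not merely biregular; once these are in place, Mok's theorem delivers the conclusion directly.
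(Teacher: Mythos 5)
Your reduction of the dichotomy to the classification of varieties of minimal degree hinges on the sentence ``one checks, as in \cite{HL2}, that $\mathbf{L}^\mcX_{z_0}$ is a smooth irreducible surface spanning exactly that $\BP^5$'' --- but this is precisely the content of the proposition, and the results you cite do not deliver it. \cite[Thm.~1]{HM2}, \cite[Thm.~3.4]{Kebekus} and \cite[II.3.11.5]{Kollar} give smoothness of the \emph{normalized parameter space} of lines through $z_0$ and identify the tangent map with the \emph{normalization} of $\mathbf{L}^\mcX_{z_0}$; they say nothing about smoothness of the image, nor about its span being exactly $5$-dimensional. A priori the flat limit could be the cone $S_{0,4}$, or a non-normal surface spanning only a $\BP^4$ (the image of the normalization under an incomplete subsystem), and your minimal-degree argument cannot start until these are excluded. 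The paper's proof (following \cite{HL2}) supplies exactly the missing inputs and derives smoothness as a conclusion rather than assuming it: the normalization $V^\mcX_{z'_0}$ is the central fiber of a family of smooth surfaces deforming $\BP^1\times\BP^1$, hence is $\BP_{\BP^1}(\mcO(-k)\oplus\mcO(k))$; the limit of $\tau_{z_t}^*\mcO(1)=\mcO_{\BP^1\times\BP^1}(1,2)$ together with a degree count forces $k\le 1$; and a separate argument modeled on \cite[Proposition 9]{HM98} shows that the span of $\mathbf{L}^\mcX_{z_0}$ is exactly $5$-dimensional, so the map is given by the complete (very ample) system and the image is $S_{2,2}$ or $S_{1,3}$. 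Your observations that the limit is degenerate, has degree $4$, and cannot be the Veronese because it carries the limit ruling lines are fine, but they are the easy parts; without the Hirzebruch-surface identification and the span computation the smoothness hypothesis begs the question.

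For the ``furthermore'' assertion your skeleton (apply \cite[Main Theorem]{Mok08} to the smooth central fiber of the family of normalized spaces of minimal rational curves) agrees with the paper, and the technical points you flag are the right ones. However, the final step ``as the tangent map of $B_3/P_2$ at a general point is a closed immersion, $\tau_{x_0}$ is then an isomorphism onto $\mcC^\mcX_{x_0}$'' is a non sequitur: knowing $Y\cong B_3/P_2$ abstractly does not determine the morphism $\tau_{x_0}\colon Y\to\BP T_{x_0}\mcX_0$, which is a limit of the embeddings for $t\neq 0$ and hence is given by a possibly \emph{incomplete} subsystem of $|\mcO(1)|$; its image could a priori be a non-normal linear projection of ${\rm OG}(2,7)$, in which case $\mcC^\mcX_{x_0}\not\simeq{\rm OG}(2,7)$. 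The paper closes this by re-running the argument of Proposition \ref{p.invVMRT}: nondegeneracy of the variety of tangential lines of $\BP\msO$ (Proposition \ref{p.nondegenerate}), integrability of the span distribution (Proposition \ref{p.integrable}), and invariance of cones under Fano deformation (Theorem \ref{t.InvCones}) force the span to be all of $\BP T_{x_0}\mcX_0$, so the linear system is complete and $\mcC^\mcX_{x_0}\simeq{\rm OG}(2,7)$. You need this (or an equivalent) argument in place of the appeal to the tangent map of the model.
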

\begin{proof}
This is from \cite{HL2}. For the convenience of readers, we give a sketch of the proof. Let $F^\mcX$ be the family of lines on the family $\mcU^\mcX_\sigma:=\cup_t\mcU^\mcX_{\sigma(t)}$, and let $V^\mcX\rightarrow F^\mcX$ be the universal map. The given general section $\epsilon: \Delta\rightarrow\mcC^\mcX_\sigma$ lifts uniquely to a section $\varepsilon: \Delta\rightarrow\mcU^\mcX_\sigma$ sending $t\in\Delta$ to $\varepsilon(t)\in\mcU^\mcX_{\sigma(t)}$.
Now $V^\mcX_\varepsilon:=\cup_t V^\mcX_{\varepsilon(t)}\rightarrow\Delta$ is a regular family of  projective surfaces. The family of tangent maps $\tau_{\epsilon(t)}: V^\mcX_{\varepsilon(t)}\rightarrow\BP T_{\varepsilon(t)}\mcU^\mcX_{\sigma(t)}\rightarrow \BP T_{\epsilon(t)}\mcC^\mcX_{\sigma(t)}$, $t\in\Delta$, satisfies that $\tau_{\epsilon(t)}$ sends $V^\mcX_{\varepsilon(t)}$ biholomorphically onto $\mathbf{L}^\mcX_{\epsilon(t)}$ for each $t\neq 0$, and $\tau_{\epsilon(0)}$ is the normalization map of $\mathbf{L}^\mcX_{\epsilon(0)}$. As a smooth deformation of $V^\mcX_{\varepsilon(t)}\simeq\mathbf{L}^\mcX_{\epsilon(t)}\simeq\BP^1\times\mbQ^1$, the variety $V^\mcX_{\varepsilon(0)}$ is a Hirzebruch surface $\BP_{\BP^1}(\mcO(-k)\oplus\mcO(k))$ for some $k\geq 0$. We can determine $\tau_{\epsilon(0)}^*\mcO(1)\in\Pic(V^\mcX_{\varepsilon(0)})$ from the fact $\tau_{\epsilon(t)}^*\mcO(1)=\mcO_{\BP^1\times\BP^1}(1, 2)$ for $t\neq 0$. By counting degrees of the surfaces and curves on them, one has $k=0$ or $1$, and thus $V^\mcX_{\varepsilon(0)}$ is biholomorphic to $S_{2, 2}$ or $S_{1, 3}$ respectively. By an analogue with the proof of \cite[Proposition 9]{HM98}, we can show that the linear span of $\mathbf{L}^\mcX_{\epsilon(0)}$ in $\BP T_{\epsilon(0)}\mcC^\mcX_{\sigma(0)}$ is of dimension 5. Hence in its linear span, the variety $\mathbf{L}^\mcX_{\epsilon(0)}$ is projectively equivalent to one of the rational normal scrolls: $S_{2,2}$ or $S_{1,3}$. By the main theorem of \cite{Mok08}, the normalized VMRT $\mcU^\mcX_{\sigma(0)} \simeq {\rm OG}(2,7)$ in the former case.  By an argument similar to that in the proof of Proposition \ref{p.invVMRT}, we have
$\mcC^\mcX_{\sigma(0)} \simeq {\rm OG}(2,7)$.
\end{proof}

From now on, we will assume that $\mathbf{L}^\mcX_{\epsilon(0)} \simeq S_{1,3}$ and then  deduce a contradiction, which will then conclude our proof of the invariance of VMRT for type $B_3$.

Let $\mcG \subset \BP T \mcX$ be the pull-back of the VMRT structure on $\mcS$, which is given by $\mcG_x = d \Phi^{-1} (\mcC^\mcS_{\Phi(x)})$ for $x \in \mcX$ general.  As $\Phi$ preserves minimal rational curves by Proposition \ref{p.B3PreservesLines}, we have $\mcC^\mcX \subset \mcG$ at general (hence all) points of $\mcX$.  For $t \neq 0$, let $\mathbf{L}^\mcG_{\epsilon(t)} \subset \BP T_{\epsilon(t)} \mcG_{\sigma(t)}$ be the set of lines on $\mcG_{\sigma(t)}$ through $\epsilon(t)$, which is projectively equivalent to the Segre variety $\BP^1 \times \BP^4 \subset \BP^9$.  For $t \neq 0$, let $Y_t$ be the unique subvariety isomorphic to $\BP^1 \times \BP^2$ which satisfies $\BP^1 \times \mathbb{Q}^1\simeq \mathbf{L}^\mcX_{\epsilon(t)} \subset Y_t \subset \mathbf{L}^\mcG_{\epsilon(t)} \simeq \BP^1 \times \BP^4$. Set $Y= \overline{\cup_{t\neq 0} Y_t}$ and $Y_0$ the central fiber of $Y$.  We will first describe $Y_0$ in more detail.

Recall $\mathbf{L}^\mcX_{\epsilon(0)} \simeq S_{1,3}$, hence $S_{1,3} \subset Y_0$.  The linear span of $S_{1,3}$ is a 5-dimensional projective space, denoted by $\BP^5_0$. Inside $\BP^5_0$, there exists a line $C$ and the
rational normal cubic $\nu_3(C)$ contained in a $\BP^3$ of $\BP^5_0$ disjoint from $C$.
Then the rational normal scroll $S_{1,3}$ is the union of lines $\cup_{\lambda \in C} {\bf l}_\lambda$, where ${\bf l}_\lambda$ is the line $\overline{\lambda \nu_3(\lambda)}$. Now we can describe $Y_0$ as follows:
\begin{lem} \label{l.Y0}
Let $\BP^2_\lambda = \langle C, {\bf l}_\lambda \rangle$ be the plane generated by $C$ and ${\bf l}_\lambda$.  Then $Y_0 = \cup_{\lambda \in C} \BP^2_\lambda$ and it is linearly non-degenerate in $\BP^5_0$.
\end{lem}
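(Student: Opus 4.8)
The plan is to identify $Y_0$ by a flat-limit argument, feeding in the explicit description $\mathbf{L}^\mcX_{z_0}\simeq S_{1,3}$ recalled just above.

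First I would record the flatness of the family. After shrinking $\Delta$, the spaces $\mathbf{L}^\mcG_{z_t}\subset\BP T_{z_t}\mcG_{x_t}$ sit inside a trivial $\BP^9$-bundle over $\Delta$, and $Y$ is by construction the closure of the irreducible family $\bigcup_{t\neq 0}Y_t$ inside it; being the closure of an irreducible variety dominating the smooth curve $\Delta$, $Y$ is flat over $\Delta$. Hence $Y_0$ is a Cartier divisor in $Y$, pure of dimension $3$, with $\deg Y_0=\deg Y_t=\deg(\BP^1\times\BP^2\subset\BP^5)=3$ for the hyperplane class. Since $\mathbf{L}^\mcX_{z_t}\subset Y_t$ for $t\neq 0$, passing to limits gives $S_{1,3}=\mathbf{L}^\mcX_{z_0}\subset Y_0$. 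As $S_{1,3}$ is irreducible of degree $4>3$, it cannot be a component of the pure $3$-dimensional $Y_0$, so it lies in a $3$-dimensional component $Z$ of $Y_0$; by Proposition \ref{p.twoCaseB3} the surface $S_{1,3}$ spans $\BP^5_0$, hence $\langle Z\rangle\supseteq\BP^5_0$. The degree bound $\deg Z\geq\dim\langle Z\rangle-2$ for an irreducible nondegenerate threefold, together with $\deg Z\leq\deg Y_0=3$, forces $\langle Z\rangle=\BP^5_0$ and $\deg Z=3$; therefore $Y_0=Z$ is an irreducible threefold of minimal degree $3$ in $\BP^5_0$, in particular linearly non-degenerate in $\BP^5_0$, which is one of the two assertions.

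It then remains to show $\bigcup_{\lambda\in C}\BP^2_\lambda\subseteq Y_0$, since by irreducibility and equality of dimensions this yields $Y_0=\bigcup_{\lambda\in C}\BP^2_\lambda$. For $t\neq 0$ the Segre threefold $Y_t=\BP^1\times\BP^2$ is swept out by the pencil of planes $P_\mu^{(t)}:=\{\mu\}\times\BP^2$, $\mu\in\BP^1$, and $P_\mu^{(t)}$ is the linear span of the conic $\gamma_\mu^{(t)}:=\{\mu\}\times\mathbb{Q}^1\subset\mathbf{L}^\mcX_{z_t}$. By properness of the Grassmannian of $2$-planes, the family $\mu\mapsto P_\mu^{(t)}$ extends across $t=0$ to a family of planes $P_\mu^{(0)}\subset Y_0$ whose union covers $Y_0$, and $\gamma_\mu^{(t)}$ specializes to an effective curve $\gamma_\mu^{(0)}\subset S_{1,3}$ of hyperplane-degree $2$ and arithmetic genus $0$ with $\gamma_\mu^{(0)}\subset P_\mu^{(0)}$. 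A direct intersection computation on the surface $S_{1,3}=\BP_{\BP^1}(\mcO(1)\oplus\mcO(3))$ shows that the only effective divisor class of hyperplane-degree $2$ and arithmetic genus $0$ is (directrix)$+$(ruling), and that every divisor in that class is of the form $C\cup{\bf l}_\lambda$; hence $\gamma_\mu^{(0)}=C\cup{\bf l}_{\lambda(\mu)}$ and $P_\mu^{(0)}=\langle C\cup{\bf l}_{\lambda(\mu)}\rangle=\langle C,{\bf l}_{\lambda(\mu)}\rangle=\BP^2_{\lambda(\mu)}$. Since $\bigcup_\mu{\bf l}_{\lambda(\mu)}\supseteq S_{1,3}\setminus C$ is dense, the morphism $\mu\mapsto\lambda(\mu)$ is surjective onto $C$, whence $Y_0=\bigcup_\mu P_\mu^{(0)}=\bigcup_{\lambda\in C}\BP^2_\lambda$.

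The main technical point is the third paragraph: one must control the flat limit $\gamma_\mu^{(0)}$ of the conic sections of $\mathbf{L}^\mcX_{z_t}$ and verify it is the reduced curve $C\cup{\bf l}_{\lambda(\mu)}$ (rather than, say, a double ruling or a genus-drop degeneration), which is exactly what guarantees that the spanning planes vary continuously and sweep out $Y_0$ correctly; the first two steps are routine once the flatness of $Y/\Delta$ and the spanning statement of Proposition \ref{p.twoCaseB3} are in hand.
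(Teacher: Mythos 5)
Your proposal is correct, and its skeleton is the same as the paper's: both arguments describe $Y_0$ as the union of the limits of the planes $\{\mu\}\times\BP^2\subset Y_t$, view each such plane as the span of the conic $\{\mu\}\times\mbQ^1\subset\mathbf{L}^\mcX_{z_t}$, and reduce everything to identifying the limit of these conics with the cycle $C+{\bf l}_\lambda$. Where you differ is in the supporting steps, and both differences are worthwhile. The paper obtains non-degeneracy in one line from $S_{1,3}\subset Y_0$ and simply \emph{asserts} that the limit conic is $C+{\bf l}_\lambda$; you instead get non-degeneracy (and, as a bonus, irreducibility of $Y_0$, which the paper never states) from flatness of $Y/\Delta$ together with the minimal-degree bound $\deg\geq\codim+1$, and you justify the limit identification by the class computation on $S_{1,3}$: the only effective divisor class of $H$-degree $2$ and $p_a=0$ is $C+f$ (indeed $p_a(2f)=-1$, $p_a(2C)=-3$), every member of $|C+f|$ is $C+{\bf l}_\lambda$ since $C$ is a fixed component, and two distinct rulings are disjoint so cannot lie in the limit plane. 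One point you should make explicit, since it is exactly what your computation hinges on: to treat $\gamma^{(0)}_\mu$ as an effective \emph{divisor on} $S_{1,3}$ you need the flat limit of the conics to be a subscheme of $S_{1,3}$, not merely supported on it; otherwise a planar double line along a ruling (a ribbon whose non-reduced structure sticks out of the surface) also has degree $2$ and $p_a=0$ and is not excluded by intersection theory on $S_{1,3}$ alone. This containment does hold, because by Proposition~\ref{p.twoCaseB3} the (reduced) surface $S_{1,3}$ is the flat limit of the surfaces $\mathbf{L}^\mcX_{z_t}$ and scheme-theoretic inclusions pass to flat limits, but a sentence to this effect is needed to close the step you yourself flag as the main technical point. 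With that sentence added, your argument is complete and in fact supplies a proof of the assertion the paper leaves implicit.
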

\begin{proof}
Since $Y_t=\BP^1\times\BP^2$ for $t\neq 0$, the variety $Y_0$ is the union of the limits of $\{pt\}\times\BP^2\subset Y_t$. The latter is a linear span of $\{pt\}\times\mathbb{Q}^1\subset\mathbf{L}^\mcX_{\epsilon(t)}$. The limit of $\{pt\} \times \mathbb{Q}^1 \subset Y_t$ is the cycle $C + {\bf l}_\lambda$ for some $\lambda \in C$, whose linear span is $\BP^2_\lambda$. It follows that $Y_0 = \cup_{\lambda \in C} \BP^2_\lambda$.
Since the subvariety $S_{1, 3}$ is linearly non-degenerate in $\BP^5_0$, so is $Y_0$.
\end{proof}

The two factors $\BP^1$ and $\BP^2$ of $Y_t\simeq \BP^1 \times \BP^2$ with $t\neq 0$ induce two integrable distributions  $\mcT'$ and $\mcT''$ of ranks 1 and 2  respectively on an open subset of $Y$, which extend to meromorphic distributions on $Y$ with singular loci properly contained in $Y_0$. Being degenerations of $\BP^1$ and $\BP^2$, the leaf closures of $\mcT'$ and $\mcT''$ at a general point $y\in Y_0$ is a projective line and a projective plane respectively, denoted by $\BP^1_y$ and $\BP^2_y$ respectively.
%
%\begin{lem}
%Given $y\in Y_0$ general, $\BP^2_y=\BP^2_\lambda$ for some $\lambda \in C$.
%\end{lem}
%
%\begin{proof}
%As $\BP^2_{\beta_t}$ is the linear span of $\mathbb{Q}^1_{\beta_t}$ and the limit of $\{pt\} \times \mathbb{Q}^1_{\beta_t}$ is the cycle $C + {\bf l}_b$ for some
%$b \in C$, we have $\BP^2_{\beta_0} = \BP^2_b$.
%\end{proof}
%
Given a general point $\beta_0\in Y_0$, we can take a section $\beta: \Delta \to Y$ passing through $\beta_0$. Then $\BP^1_{\beta_0}= \lim_{t\to 0} \BP^1_{\beta_t}$ and $\BP^2_{\beta_0}= \lim_{t\to 0} \BP^2_{\beta_t}$.
\begin{lem} \label{l.P1P2}
For general $\beta_0 \in Y_0$, we have $\BP^1_{\beta_0} \nsubseteq \BP^2_{\beta_0}$.
\end{lem}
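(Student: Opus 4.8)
The plan is to recast the statement as a transversality assertion for the two distributions $\mcT'$ and $\mcT''$ and then to exclude the non-transverse (``bad'') case by a relative Chow-scheme argument. At a general point $\beta_0\in Y_0$ the leaf closures $\BP^1_{\beta_0}$ and $\BP^2_{\beta_0}$ are the (smooth, linearly embedded) leaves of $\mcT'$ and $\mcT''$ through $\beta_0$, and both distributions are tangent to $Y_0$ there; hence $\BP^1_{\beta_0}\subseteq\BP^2_{\beta_0}$ holds exactly when $\mcT'_{\beta_0}\subseteq\mcT''_{\beta_0}$, and if this were true for general $\beta_0$ it would hold on a dense open $U_0\subseteq Y_0$. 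Assuming this, each ruling plane $\BP^2_\lambda\simeq\BP^2$ of $Y_0=\bigcup_{\lambda\in C}\BP^2_\lambda$ (Lemma~\ref{l.Y0}), being a general $\mcT''$-leaf, is saturated by $\mcT'$-leaves, and the $\mcT'$-leaves lying in it form a one-parameter family of lines fibering a dense open of $\BP^2_\lambda$; such a family is a pencil of lines through a point $p_\lambda\in\BP^2_\lambda$. Since the line ${\bf l}_\lambda\subset\BP^2_\lambda$ is the limit of the $\mcT'$-leaves contained in $\mathbf{L}^\mcX_{z_t}=\BP^1\times\mathbb{Q}^1$, it is one of these leaves, so $p_\lambda\in{\bf l}_\lambda$, and as $\lambda$ runs over $C$ the points $p_\lambda$ sweep out a curve $P\subset Y_0$ that is met by every $\mcT'$-leaf.

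The next step is to contradict this from the behaviour of $\mcT'$ on a general fibre. For $t\neq0$ the $\mcT'$-leaves of $Y_t\simeq\BP^1\times\BP^2$ are the fibres of the first projection: they are pairwise disjoint, have normal bundle $\mcO_{\BP^1}^{\oplus 2}$ in $Y_t$, and their leaf space is $\BP^2$. Let $\mcL$ be the irreducible component of the relative Chow scheme of $Y/\Delta$ containing these families, with universal family $ev\colon\mcU\to Y$; then $\mcL\to\Delta$ is flat, $\mcL_t\simeq\BP^2$ for $t\neq0$, $\mcU\to\mcL$ is a $\BP^1$-bundle, and $ev$ is an isomorphism over $\Delta^{*}$, hence birational. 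In the bad case the map sending a general point of $Y_0$ to the unique $\lambda$ with it lying in $\BP^2_\lambda$ identifies $C$ with the space of $\mcT''$-leaves and makes $\mcL_0\dashrightarrow C$ a surjection whose fibres are the pencils $\simeq\BP^1$; since $\mcL_0$ is a rationally connected flat limit of $\BP^2$, a pencil of line-members fibering it must have a base point, i.e.\ there is a $\mcT'$-leaf $\ell^{*}$ contained in $\bigcap_\lambda\BP^2_\lambda=C$, so $C$ is a $\mcT'$-leaf. But then two distinct general $\mcT'$-leaves lying in a common $\BP^2_\lambda$ meet at $p_\lambda$, and each of them meets the distinct leaf $C$; hence the incidence locus of intersecting pairs of $\mcT'$-leaves, which over $t\neq0$ is merely the diagonal of $\BP^2$, acquires over $t=0$ a component of dimension $\dim\mcL_0+1$. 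This forces $ev$ to contract a divisor onto $P$ (and to collapse positive-dimensional families onto $C$) while still being an isomorphism over $\Delta^{*}$ with $\mcU_t\to Y_t$ an isomorphism, which is impossible. This is the contradiction excluding the bad case, and hence $\BP^1_{\beta_0}\nsubseteq\BP^2_{\beta_0}$ for general $\beta_0$.

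The part I expect to be the main obstacle is making this last comparison rigorous across the singular central fibre $Y_0=S_{0,0,3}$, the cone over a twisted cubic with vertex the line $C$ (which is also the singular locus of $Y_0$), since naive upper-semicontinuity of fibre dimension permits the incidence locus to jump. I would handle it either by normalising $Y$ and blowing up the vertex $C$, so that on the resulting smooth model the strict transforms of the $\mcT'$-leaves have nef normal bundle, and then invoking that a connected family of curves with nef normal bundle on a smooth variety whose general member has trivial normal bundle and whose generic members are pairwise disjoint cannot specialise to a subfamily whose general members intersect; or else by working directly with $ev\colon\mcU\to Y$, showing that a birational morphism that is flat over $\Delta$ with $\mcU_t\to Y_t$ an isomorphism cannot have $ev_0$ contract a divisor onto the curve $P$. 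Everything else — the reformulation as transversality, the extraction of the pencil structure in the bad case, and the identification $Y_0=S_{0,0,3}$ from Lemma~\ref{l.Y0} — is routine.
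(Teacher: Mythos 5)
Your reduction of the statement to a transversality assertion, and the observation that in the bad case each ruling plane $\BP^2_\lambda$ would be foliated by a pencil of $\mcT'$-leaves through a point $p_\lambda$, is a reasonable start and genuinely different from the paper's argument. But the two steps that are supposed to produce the contradiction are not established, and they are exactly where the difficulty lies. First, the inference ``since $\mcL_0$ is a rationally connected flat limit of $\BP^2$, a pencil of line-members fibering it must have a base point, i.e.\ there is a $\mcT'$-leaf contained in $\cap_\lambda\BP^2_\lambda=C$'' does not follow: a surface fibered in $\BP^1$'s over $C\simeq\BP^1$ is itself rationally connected, and the central fiber of an irreducible component of the relative Chow scheme with general fiber $\BP^2$ need not be $\BP^2$ (it can be reducible, non-reduced, or a ruled degeneration), so no base-point or no-fibration argument for $\BP^2$ applies to $\mcL_0$. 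Second, the concluding claim that $ev$ ``cannot contract a divisor onto $P$ while being an isomorphism over $\Delta^*$'' is false as a general statement: a birational morphism of families over $\Delta$ that is an isomorphism away from the central fiber can perfectly well contract divisors contained in the central fiber (e.g.\ the blow-up of a curve sitting inside $Y_0$). You acknowledge this is the main obstacle, but the two proposed repairs are not carried out: the normal-bundle principle you want to invoke (generically disjoint curves with trivial normal bundle cannot specialize to a subfamily of pairwise meeting curves) is not a standard result and is not obviously true in the form stated, especially after passing to a resolution of the non-normal, singular total space $Y$, where the strict transforms of the special leaves need not have nef normal bundle; and the second repair is precisely the unproved impossibility claim again. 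So the core of the proof is missing.

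For comparison, the paper avoids leaf spaces and Chow-scheme degenerations entirely and argues with linear spans: assuming $\BP^1_{\beta_0}\subset\BP^2_{\beta_0}$, it shows $\BP^2_{\beta_0}$ is a ruling plane $\BP^2_b=\langle C,\BP^1_{\beta_0}\rangle$, chooses lines $l_t\ni\beta_t$ in $\BP^2_{\beta_t}$ with $l_0\neq\BP^1_{\beta_0}$, and considers the quadrics $M_t=\BP^1\times l_t\subset Y_t$, whose linear spans are $3$-dimensional. Using the assumed inclusion at general points of the nearby limit lines $l'_\lambda\subset\BP^2_\lambda$, it produces in each plane $\BP^2_\lambda$ two distinct lines lying on $M_0$, so that $\langle M_0\rangle$ contains every $\BP^2_\lambda$ and hence, by Lemma \ref{l.Y0}, all of $\BP^5_0$ --- contradicting $\dim\langle M_0\rangle\le 3$. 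If you want to salvage your route, you would need an actual proof that the limit of the pairwise disjoint family of $\mcT'$-leaves cannot acquire the pencil structure, which is essentially as hard as the lemma itself; the paper's span-degeneration trick is the mechanism that does this work.
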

\begin{proof}
Assume $\BP^1_{\beta_0} \subset \BP^2_{\beta_0}$, we will derive a contradiction.
%Take a general section passing through $\beta_0\in Y_0$, denoted by $\beta: \Delta \to Y$. Then $\BP^1_{\beta_0}= \lim_{t\to 0} \BP^1_{\beta_t}$ and $\BP^2_{\beta_0}= \lim_{t\to 0} \BP^2_{\beta_t}$.
For $t\neq 0$, we can write $\beta_t=(\beta^1_t, \beta^2_t)\in\BP^1\times\BP^2=Y_t$. The plane $\BP^2_{\beta_t}$ is the linear span of $\{\beta^1_t\}\times\mathbb{Q}^1$ and the limit of $\{\beta^1_t\}\times\mathbb{Q}^1$ is the cycle $C + {\bf l}_b$ for some
$b \in C$. It follows that $\BP^2_{\beta_0} = \BP^2_b$. As $\beta_0$ is general in $Y_0$, we have $\beta_0 \notin C$, and thus $\BP^2_{\beta_0} = \BP^2_b = \langle C, \BP^1_{\beta_0} \rangle$.

Take a family of lines $l_t \subset \BP^2_{\beta_t}$ through $\beta_t$ such that $l_0 \neq \BP^1_{\beta_0}$ in $\BP^2_{\beta_0}$. Consider the family of
 quadric surfaces $M_t = \BP^1 \times l_t\subset Y_t = \BP^1_{\beta_t} \times \BP^2_{\beta_t}$ and set $M_0= \lim_{t \to 0} M_t$.  As the linear span of $M_t$ is of dimension three for $t \neq 0$, $M_0$ is linearly degenerate in $\langle S_{1,3} \rangle=\langle Y_0\rangle = \BP^5_0$.

Now we will show that $M_0 \subset \BP^5_0$ is linearly non-degenerate, which will conclude the proof. Since $\BP^2_b\cap M_0$ contains two distinct lines $l_0$ and $\BP^1_{\beta_0}$, the plane $\BP^2_b$ is contained in the linear span of $M_0$. Take $\lambda$ in a small neighborhood of $b\in C$. The plane $\BP^2_\lambda\subset Y_0$ is the limit of $\{a_t\}\times\BP^2\subset Y_t$ for a suitable choice of $a_t\in\BP^1$. Then the limit of $\{a_t\}\times l_t\subset M_t$ is a line $l'_\lambda$ in $\BP^2_\lambda$. In particular, $l'_\lambda\neq C$, because $l'_\lambda\subset\BP^2_\lambda$ is a small deformation of $l_0\subset\BP^2_b$ and $l_0\neq C$.

Take a section $\gamma: \Delta\rightarrow Y$ such that $y:=\gamma_0\in l'_\lambda\setminus (l'_\lambda\cap C)$ is general. By assumption, $\BP^1_y=\lim_{t\to 0}\BP^1_{\gamma_t}$ is a line in $\BP^2_\lambda=\BP^2_y=\lim_{t\to 0}\BP^2_{\gamma_t}$ which is different from $l'_\lambda$, because they are small deformations of two distinct lines $l_0\subset\BP^2_{\beta_0}$ and $\BP^1_{\beta_0}\subset\BP^2_{\beta_0}$ respectively.
For $t\neq 0$, the point $\gamma_t\in M_t$ and thus the line $\BP^1_{\gamma_t}=\BP^1\times\{pt\}\subset M_t$. It follows that we have
$\BP^1_y=\lim_{t\to0}\BP^1_{\gamma_t}\subset M_0$, so $\BP^2_\lambda\cap M_0$ contains two distinct lines $l'_\lambda$ and $\BP^1_y$. This implies that the plane $\BP^2_\lambda$ is contained in the linear span of $M_0$. By Lemma \ref{l.Y0}, the variety $Y_0$ is contained in the linear span of $M_0$, and thus $M_0$ is linearly non-degenerate in $\BP^5_0$.
\end{proof}

Let $\zeta: \Delta \to \mcG$ be a general section over the section $\sigma:\Delta\rightarrow\mcX$ so that $\zeta(t) \in \mcG_{\sigma(t)}$ is a general point. Let $\mathbf{L}_\zeta^\mcG := \cup_t \mathbf{L}_{\zeta(t)}^\mcG$,
which is a flat family of projective subvarieties, with general fiber being $\BP^1 \times\BP^4 \subset \BP^9$.  The two factors $\BP^1$ and $\BP^4$ induce two integrable distributions  $\mcD^\mcG$ and ${\mcD'}^\mcG$ of rank respectively 1 and 4 on an open subset of $\mathbf{L}^\mcG_\zeta$, which extend to  meromorphic distributions on $\mathbf{L}^\mcG_\zeta$ with singular loci properly contained in $\mathbf{L}^\mcG_{\zeta(0)}$.  Let  $\mcD_0^\mcG$ (resp. ${\mcD'}_0^\mcG$) be the restriction of  $\mcD^\mcG$  (resp. ${\mcD'}^\mcG$) to
$\mathbf{L}^\mcG_{\zeta(0)}$, which are of ranks 1 and 4 respectively.
Denote by $\BP^1_u$ (resp. $\BP^4_u$) the leaf closure of $\mcD^\mcG$ (resp. ${\mcD'}^\mcG$) at a point $u$ in the domain, which are projective spaces of dimension 1 and 4 respectively.
\begin{lem} \label{l.generateB3}
We have $\mcD_0^\mcG \nsubseteq {\mcD'}_0^\mcG$ on $\mathbf{L}^\mcG_{\zeta(0)}$. In other words, the two distributions $\mcD_0^\mcG$ and ${\mcD'}_0^\mcG$ generate the tangent sheaf of $\mathbf{L}^\mcG_{\zeta(0)}$ at general points.
\end{lem}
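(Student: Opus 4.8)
The plan is to argue by contradiction. Suppose $\mcD_0^\mcG$ and ${\mcD'}_0^\mcG$ do not generate $T_u\mathbf{L}^\mcG_{p_0}$ at a general point $u$. Since $\dim {\mcD'}_0^\mcG = 4 = \dim \mathbf{L}^\mcG_{p_0}-1$ and maximality of rank is an open condition, this means $\mcD_0^\mcG \subseteq {\mcD'}_0^\mcG$ at every point of $\mathbf{L}^\mcG_{p_0}$ where both are defined. I would first transfer this to the section $\epsilon$. The locus $B$ in the relative variety of lines over $\mcG$ along which $\mcD^\mcG + {\mcD'}^\mcG$ fails to have the generic rank $5$ is closed, and it lies over the central fibre of $\mcX\to\Delta$: for $t\neq 0$ the fibre $\mcG_{x_t}\cong\Gr(2,7)$ is homogeneous, so $\mathbf{L}^\mcG_q\cong\BP^1\times\BP^4$ with its two factor distributions for every $q\in\mcG_{x_t}$. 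As $p_0$ is a general point of the irreducible fibre $\mcG_{x_0}$, the assumption forces $B$ to contain $\mathbf{L}^\mcG_q$ for all $q$ in a dense subset of $\mcG_{x_0}$, hence the entire variety of lines over $\mcG_{x_0}$, and in particular $\mathbf{L}^\mcG_{z_0}$. Thus it suffices to derive a contradiction from $\mcD^\mcG \subseteq {\mcD'}^\mcG$ on $\mathbf{L}^\mcG_{z_0}$.

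Next I would exploit the subvariety $Y_0\subset\mathbf{L}^\mcG_{z_0}$. For $t\neq 0$ the embedding $Y_t=\BP^1\times\BP^2\hookrightarrow\mathbf{L}^\mcG_{z_t}=\BP^1\times\BP^4$ identifies the $\BP^1$-factors and embeds the $\BP^2$-factor linearly into the $\BP^4$-factor; hence along $Y_t$ one has $\mcD^\mcG=\mcT'$ and ${\mcD'}^\mcG\supseteq\mcT''$, and the ${\mcD'}^\mcG$-leaf through a point of $Y_t$ contains the $\mcT''$-leaf through it. Passing to the limit, for general $\beta_0\in Y_0$ the $\mcD^\mcG$-leaf coincides with the $\mcT'$-leaf $\BP^1_{\beta_0}$, while the ${\mcD'}^\mcG$-leaf $\BP^4_{\beta_0}$ is a $4$-dimensional linear space containing the plane $\BP^2_{\beta_0}=\BP^2_b$, the $\mcT''$-leaf. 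As $\mcT''\subseteq{\mcD'}^\mcG$ along $Y_0$, the linear space $\BP^4_{\beta_0}$ depends only on $b$; write $\BP^4_b$ for it. The contradiction hypothesis then gives $\BP^1_{\beta_0}\subseteq\BP^4_{\beta_0}=\BP^4_b$ for every general $\beta_0\in\BP^2_b$. By Lemma \ref{l.P1P2} the direction $\mcT'_{\beta_0}$ is not tangent to $\BP^2_b$, so the map $(\beta_0,w)\mapsto w$ from the incidence variety $\{(\beta_0,w):\beta_0\in\BP^2_b,\ w\in\BP^1_{\beta_0}\}$ has differential of rank $\geq 3$ at $(\beta_0,\beta_0)$ and therefore dominates the irreducible $3$-fold $Y_0$; consequently $Y_0\subseteq\overline{\BP^4_b}=\BP^4_b$. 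But $Y_0$ is linearly non-degenerate in the $5$-dimensional $\BP^5_0$ by Lemma \ref{l.Y0}, whereas $\BP^4_b$ has dimension $4$ — a contradiction, which proves the lemma.

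The step requiring the most care is the limiting argument: one must check that the meromorphic distributions $\mcD^\mcG$ and ${\mcD'}^\mcG$ on $\mathbf{L}^\mcG_{z_0}$ restrict, along the flat limit $Y_0$, to subsheaves containing $\mcT'$ and $\mcT''$ respectively, and that their leaves — which are genuine linear subspaces by construction — specialise compatibly, so that $\BP^4_b$ is well defined and absorbs all the $\mcD^\mcG$-leaves issuing from $\BP^2_b$. Once this compatibility is established, the dimension count for the sweep and the non-degeneracy statement of Lemma \ref{l.Y0} yield the contradiction immediately; the reduction from $p_0$ to $z_0$ is a routine closedness argument.
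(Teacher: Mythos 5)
Your proposal follows the same overall strategy as the paper: argue by contradiction, transfer the assumed degeneracy from the general fibre $\mathbf{L}^\mcG_{p_0}$ to $\mathbf{L}^\mcG_{z_0}$ by a closedness/specialization argument, and then extract a contradiction from the geometry of $Y_0$ using Lemmas \ref{l.Y0} and \ref{l.P1P2}. Where you differ is the endgame. The paper observes that $\BP^2_{\beta_0}\subset\BP^4_{\beta_0}$ while, by non-degeneracy (Lemma \ref{l.Y0}), $Y_0\nsubseteq\BP^4_{\beta_0}$, so $\BP^2_{\beta_0}$ is the unique component of $Y_0\cap\BP^4_{\beta_0}$ through $\beta_0$; since $\BP^1_{\beta_0}\subset Y_0\cap\BP^4_{\beta_0}$ this forces $\BP^1_{\beta_0}\subset\BP^2_{\beta_0}$, contradicting Lemma \ref{l.P1P2}. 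You instead feed Lemma \ref{l.P1P2} into a sweeping argument and contradict Lemma \ref{l.Y0}. That ending is valid, but it costs two verifications the paper's arrangement avoids. First, the constancy claim $\BP^4_{\beta_0}=\BP^4_b$ along the plane $\BP^2_b$ is needed for your sweep and deserves a real argument (for $t\neq 0$ the $\BP^4$-leaf depends only on the $\BP^1$-coordinate, so one can choose the degenerating sections with a common $\BP^1$-coordinate, or use that the limit distribution is integrable and contains the directions of $\mcT''$ along $Y_0$). Second, your reduction is phrased via a degeneracy locus $B$ of meromorphic distributions on the full relative variety of lines over $\mcG$; there the indeterminacy locus only has codimension two in a very large total space and could a priori contain the entire (high-codimension) locus $\mathbf{L}^\mcG_{z_0}$, in which case membership in $B$ says nothing about leaves there. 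The fix is either to take the meromorphic extensions on the one-parameter family $\mathbf{L}^\mcG_{\epsilon}=\cup_t\mathbf{L}^\mcG_{\epsilon(t)}$, whose central fibre has codimension one and so cannot sit inside the indeterminacy locus, or to phrase the transfer, as the paper does, purely in terms of limits of the leaves $\BP^1_{\gamma_t}$ and $\BP^4_{\gamma_t}$ along sections specializing $(\zeta,\gamma)$ to $(\epsilon,\beta)$, which requires no distribution at all on the central fibre. With these two points attended to, your rank-$3$ incidence argument, giving $Y_0\subset\BP^4_b$ against Lemma \ref{l.Y0}, is a correct alternative conclusion.
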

\begin{proof}
Suppose not. Given a general section $\gamma: \Delta\rightarrow\mathbf{L}^\mcG_\zeta$,  the projective line $\BP^1_{\gamma_0}=\lim_{t\to 0}\BP^1_{\gamma_t}$ is contained in the 4-dimensional projective space $\BP^4_{\gamma_0}=\lim_{t\to 0}\BP^4_{\gamma_t}$.
%Since the inclusion condition is preserved by specialization, the projective line $\lim_{t\to 0}\BP^1_{\gamma_t}$ is contained in the 4-dimensional projective space $\lim_{t\to 0}\BP^4_{\gamma_t}$, if $\gamma: \Delta\rightarrow\mathbf{L}^\mcG_\zeta$ is an arbitrary section. Note that even if $\gamma_0$ lies in the singular loci of $\mcD_0^\mcG$ or ${\mcD'}_0^\mcG $, the limits $\lim_{t\to 0}\BP^1_{\gamma_t}$ and $\lim_{t\to 0}\BP^4_{\gamma_t}$ are well-defined.
%
The given general sections $\epsilon: \Delta \to   \mcC^\mcX_\sigma$ and $\beta: \Delta \to Y\subset \mathbf{L}^\mcG_{\epsilon}=\cup_t\mathbf{L}^\mcG_{\epsilon(t)}$ are specializations of $\zeta$ and $\gamma$ respectively. Since the inclusion condition is preserved by specializations, the projective line $\BP^1_{\beta_0}=\lim_{t\to 0}\BP^1_{\beta_t}$ is contained in the projective space $\BP^4_{\beta_0}=\lim_{t\to 0}\BP^4_{\beta_t}$.

For $t\neq 0$, $Y_t\simeq\BP^1\times\BP^2\subset\mathbf{L}^\mcG_{\epsilon(t)}\simeq\BP^1\times\BP^4$. In particular, $\BP^2_{\beta_0}=\lim_{t\to 0}\BP^2_{\beta_t}$ is contained in $\BP^4_{\beta_0}=\lim_{t\to 0}\BP^4_{\beta_t}$. By Lemma \ref{l.Y0}, the 3-dimensional variety $Y_0$ is not contained in $\BP^4_{\beta_0}$. Since $\beta_0\in Y_0$ is general, the plane $\BP^2_{\beta_0}$ is the unique irreducible component of $\BP^4_{\beta_0}\cap Y_0$ passing through $\beta_0$. On the other hand, $\BP^1_{\beta_0}=\lim_{t\to 0}\BP^1_{\beta_t}$ is contained in $Y_0=\lim_{t\to 0}Y_t$, and thus $\beta_0\in\BP^1_{\beta_0}\subset Y_0\cap\BP^4_{\beta_0}$. It follows that $\BP^1_{\beta_0}\subset\BP^2_{\beta_0}$, contradicting Lemma \ref{l.P1P2}.
\end{proof}

Now we consider the analogous statement for $\mcS$.  Let $\xi: \Delta \to \mcU^\mcS$ be a general section over the section $\varsigma: \Delta \ni t \mapsto s_t\in\mcS_t$ so that
$\xi(t) \in \mcU^\mcS_{s_t}$ is a general point. Let $\mathbf{L}_{\xi(t)}^\mcS \subset \BP T_{\xi(t)} \mcU_{s_t}^\mcS$ be the variety of lines on $\mcU_{s_t}^\mcS$  through $\xi(t)$.
 Let $\mathbf{L}_\xi^\mcS := \cup_t \mathbf{L}_{\xi(t)}^\mcS$,
which is a flat family of projective subvarieties, with general fiber being $\BP^1 \times\BP^4 \subset \BP^9$.  The two factors $\BP^1$ and $\BP^4$ induce two integrable distributions  $\mcD$ and $\mcD'$ of rank respectively 1 and 4 on an open subset of $\mathbf{L}^\mcS_\xi $, which extend to  meromorphic distributions on $\mathbf{L}^\mcS_\xi$.
%For $\alpha \in \mathbf{L}_\xi^\mcS $ general, the leaf closure of $\mcD$ (resp. $\mcD'$) is a degeneration of linear $\BP^1$ (resp. $\BP^4$), hence it is isomorphic to $\BP^1$ (resp. $\BP^4$). In this case, we denote the leaf closures through $\alpha$ by $\BP^1_\alpha$ and $\BP^4_\alpha$ respectively.
As an immediate corollary of Lemma \ref{l.generateB3}, we have
\begin{cor} \label{c.generateB3}
The two distributions $\mcD$ and $\mcD'$ generate the tangent sheaf of $\mathbf{L}^\mcS_{\xi(0)}$ at general points.
\end{cor}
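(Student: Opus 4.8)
The plan is to deduce the statement directly from Lemma~\ref{l.generateB3} by transporting everything along the differential of $\Phi$. The crucial observation is that $\mcG$ was \emph{defined} to be the pull-back of the (relative) VMRT-structure of $\mcS$, so up to the identifications furnished by $d\Phi$ the triple $(\mathbf{L}^\mcG_{p_0};\mcD_0^\mcG,{\mcD'}_0^\mcG)$ and the triple $(\mathbf{L}^\mcS_{q_0};\mcD|_{\mathbf{L}^\mcS_{q_0}},{\mcD'}|_{\mathbf{L}^\mcS_{q_0}})$ are the same object; once this is made precise, Lemma~\ref{l.generateB3} yields the corollary verbatim.

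Concretely, I would proceed as follows. First, since $x_0$ is a general point of $\mcX_0$ it lies off the exceptional locus of $\Phi$ --- by Proposition~\ref{p.OGtoSpinor} and Lemma~\ref{l.colored fan OGW} this locus is contained in the total transform of the boundary divisors $D_2,D_3$, which meets $\mcX_0$ in a proper closed subset --- so $\Phi$ is a local isomorphism along the section $\sigma(\Delta)$. Hence $d\Phi$ provides a family of isomorphisms $\BP T_{\mcX/\Delta}|_{\sigma(\Delta)}\xrightarrow{\sim}\BP T_{\mcS/\Delta}|_{\varsigma(\Delta)}$ carrying $\mcG_{x_t}$ onto $\mcC^\mcS_{s_t}$ for every $t\in\Delta$; for $t\ne 0$ this variety is the Pl\"ucker-embedded ${\rm Gr}(2,7)$, which is smooth, and therefore coincides with the normalized VMRT $\mcU^\mcS_{s_t}$. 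I would then choose the general section $\xi\colon\Delta\to\mcU^\mcS$ over $\varsigma$ to be the one induced from $\zeta$ via $d\Phi$ (composed with the normalization $\mcU^\mcS\to\mcC^\mcS$, which is an isomorphism near the general smooth points in play), which is permissible since both $\zeta$ and $\xi$ are only required to be general. With these choices $d\Phi$ identifies the families of varieties of lines $\mathbf{L}^\mcG_\zeta$ and $\mathbf{L}^\mcS_\xi$ over $\Delta^*$; as both are flat over $\Delta$ and $\Delta^*$ is dense, this identification extends across $t=0$, giving in particular $\mathbf{L}^\mcG_{p_0}\cong\mathbf{L}^\mcS_{q_0}$.

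Next I would check that this identification respects the two distributions. The decomposition of the variety of lines through a point $[U]$ of ${\rm Gr}(2,7)$ as $\BP^1\times\BP^4$ is canonical --- the first factor is $\BP(U^*)$, recording a hyperplane of $U$, and the second is $\BP(\C^7/U)$ --- so the integrable distributions it induces are intrinsic to the VMRT-structure. Consequently $d\Phi$ carries $\mcD^\mcG$ to $\mcD$ and ${\mcD'}^\mcG$ to ${\mcD'}$, hence $\mcD_0^\mcG$ and ${\mcD'}_0^\mcG$ to the restrictions of $\mcD$ and ${\mcD'}$ to $\mathbf{L}^\mcS_{q_0}$; Lemma~\ref{l.generateB3} then finishes the argument. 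There is no serious obstacle here: the only points demanding a little care are the compatible choice of the general sections under $d\Phi$ and the flat-limit argument at $t=0$, and it is precisely because these are routine that the statement is recorded as an immediate corollary.
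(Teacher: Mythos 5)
Your argument is correct and coincides with the paper's intended (unwritten) deduction: the corollary is recorded as immediate from Lemma~\ref{l.generateB3} precisely because, along compatibly chosen general sections, $d\Phi$ together with the tangent/normalization maps identifies $\mathbf{L}^\mcS_\xi$ with $\mathbf{L}^\mcG_\zeta$ respecting the canonical $\BP^1\times\BP^4$ factors, so the distributions and their flat limits at $t=0$ correspond and the lemma transfers verbatim. The only loosely justified point is your claim that $\sigma(\Delta)$ avoids $\Exc(\Phi)$ because that locus is the total transform of $D_2\cup D_3$; what is actually needed (and suffices) is that $\Phi_0$ is generically immersive, which holds since $\Phi_0$ is surjective onto the $21$-dimensional $\mcS_0$ with connected fibers, hence birational --- a fact the paper itself relies on throughout Section~6.
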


\begin{lem} \label{l.birSegreB3}
For $s \in \mcS_0$ and $q \in \mcU^\mcS_s$ general points,
there exists a birational map $f_0: \BP^1 \times \BP^4 \dasharrow \mathbf{L}^\mcS_q \subset \BP T_q \mcU^\mcS_s$ such that $f_0^*(\mathcal{O}(1)) = \mathcal{O}(1,1)$.
\end{lem}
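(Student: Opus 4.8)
The plan is to recover the product structure on the central fibre from the two rulings of the Segre variety, now incarnated as the meromorphic distributions $\mcD$ and $\mcD'$ on the family $\mathbf{L}^\mcS_\xi\to\Delta$. Throughout we take $s:=\varsigma(0)$ and $q:=\xi(0)$, which is a general point of $\mcU^\mcS_s$ because the sections $\varsigma,\xi$ were chosen general; it suffices to treat this point. By construction $\mathbf{L}^\mcS_{q}=\mathbf{L}^\mcS_{q_0}$ is the central fibre of the flat family $\mathbf{L}^\mcS_\xi$, hence a purely $5$-dimensional subscheme of $\BP^9:=\BP T_q\mcU^\mcS_s$ of degree $5$, sitting in $\BP^9$ as the flat limit of the Segre varieties $\mathbf{L}^\mcS_{q_t}\simeq\BP^1\times\BP^4\subset\BP^9$, $t\neq0$.

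First I would describe the limiting foliations. Since the Frobenius condition is closed, $\mcD$ and $\mcD'$ restrict on $\mathbf{L}^\mcS_{q_0}$ to integrable meromorphic distributions of ranks $1$ and $4$, regular away from a proper closed subset. Through a general point $u$ the leaf closure $\Lambda_1(u)$ of $\mcD$ is a flat limit of lines of $\BP^9$ (the ruling $\BP^1\times\{\ast\}$), hence — by the usual flatness and semicontinuity argument on the universal leaf over $\Delta$ — a line; likewise the leaf closure $\Lambda_2(u)$ of $\mcD'$ is a flat limit of the ruling $4$-planes $\{\ast\}\times\BP^4$, hence a linearly embedded $\BP^4\subset\BP^9$. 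By Corollary \ref{c.generateB3}, $T_u\Lambda_1(u)$ and $T_u\Lambda_2(u)$ together span $T_u\mathbf{L}^\mcS_{q_0}$ for general $u$; as $1+4=5=\dim\mathbf{L}^\mcS_{q_0}$, the sum is direct, the two leaves meet transversally at $u$ alone, and in particular $\Lambda_1(u)\not\subset\Lambda_2(u)$.

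Next I would fix a general point $u_0$, put $\Lambda_1:=\Lambda_1(u_0)$, $\Lambda_2:=\Lambda_2(u_0)$, and define
\[
g_0:\ \mathbf{L}^\mcS_{q_0}\dasharrow \Lambda_1\times\Lambda_2,\qquad u\longmapsto\bigl(\Lambda_2(u)\cap\Lambda_1,\ \Lambda_1(u)\cap\Lambda_2\bigr).
\]
That the two intersections are single reduced points comes from the $t\neq0$ picture: there the analogous $g_t$, built from sections of $\mathbf{L}^\mcS_\xi$ through $u_0$, is exactly the inverse of the Segre parametrization, so each intersection is one point; incidence being closed, the limits $\Lambda_2(u)\cap\Lambda_1$ and $\Lambda_1(u)\cap\Lambda_2$ are nonempty, and a line and a $\BP^4$ in $\BP^9$ meet in at most one point unless the line lies in the $\BP^4$, which the transversality above excludes for general $u$. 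Near $u_0$ the two transverse foliations exhibit $\mathbf{L}^\mcS_{q_0}$ as a product of leaf-germs and $g_0$ as the corresponding projection, so $g_0$ is a local isomorphism at $u_0$, hence dominant. Finally, the fibre of $g_0$ over a general $(p_1,p_2)\in\Lambda_1\times\Lambda_2$ is contained in $\Lambda_2(p_1)\cap\Lambda_1(p_2)$ — a transverse intersection of a $\BP^4$ with a line, so a single point — which shows $g_0$ is birational; set $f_0:=g_0^{-1}$ and choose linear identifications $\Lambda_1\cong\BP^1$, $\Lambda_2\cong\BP^4$.

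It remains to compute $f_0^*\mathcal{O}(1)$. Write $f_0^*\mathcal{O}(1)=\mathcal{O}(a,b)$ on $\BP^1\times\BP^4=\Lambda_1\times\Lambda_2$. Pairing with a line $\ell_1$ in the $\Lambda_1$-factor gives $a=\mathcal{O}(1)\cdot f_0(\ell_1)$, and $f_0(\ell_1)$ is an $\mcD$-leaf, i.e. a line of $\BP^9$, so $a=1$; pairing with a line $\ell_2$ in the $\Lambda_2$-factor gives $b=\mathcal{O}(1)\cdot f_0(\ell_2)$, and $f_0(\ell_2)$ lies in the $\mcD'$-leaf $f_0(\{\ast\}\times\BP^4)$, a linearly embedded $\BP^4\subset\BP^9$ on which $\mathcal{O}(1)$ restricts to $\mathcal{O}(1)$, so $b=1$. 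Hence $f_0^*\mathcal{O}(1)=\mathcal{O}(1,1)$. The main obstacle is the control of the flat limits in the second and third paragraphs: ensuring that the limit leaves remain a \emph{reduced} line and a \emph{reduced} linearly embedded $\BP^4$ (no jump to a non-reduced or reducible scheme), and that the incidence of a ruling line with a fixed ruling $4$-plane degenerates to exactly one point — this requires careful flatness and semicontinuity bookkeeping on the universal-leaf families over $\Delta$ and on the indeterminacy loci of the meromorphic distributions, after which the preimage count makes birationality and the identity $f_0^*\mathcal{O}(1)=\mathcal{O}(1,1)$ formal.
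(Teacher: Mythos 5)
Your construction of the birational map is essentially the paper's: fix a general base point (the paper's $\eta_0$, your $u_0$), and use the two limiting foliations together with Corollary \ref{c.generateB3} to send a general $u$ to the pair of intersection points of its two leaves with the reference leaves $\Lambda_1,\Lambda_2$. The paper packages this as an incidence variety $\Gamma$ over all of $\Delta$ and deduces birationality of the central map from connectedness of fibres, the single-point fibres over $\eta_0$ and $(\eta_0,\eta_0)$, and semicontinuity, whereas you argue directly on the central fibre. Up to a minor imprecision this part is sound: the transversality you invoke is between the two leaves through the \emph{same} point $u$, while what you need is $\Lambda_1(u)\not\subset\Lambda_2$ and $\Lambda_1\not\subset\Lambda_2(u)$, which concern leaves through \emph{different} points; this is repairable (a general $u$ lies outside the $4$-dimensional $\Lambda_2$, and a leaf closure containing the regular point $u_0$ must coincide with $\Lambda_2$, forcing $u\in\Lambda_2$), but it is not what your transversality statement proves.

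The genuine gap is the computation $b=1$. Knowing that $f_0(\ell_2)$ lies in a linearly embedded $\BP^4\subset\BP^9$ on which $\mcO(1)$ restricts to $\mcO(1)$ says nothing about the degree of the curve $f_0(\ell_2)$: the restriction of $f_0$ to a vertical $\{\ast\}\times\BP^4$ is a priori only a birational map onto the leaf $\BP^4$, and a birational map between $\BP^4$'s (think of a Cremona transformation) can send a general line to a curve of higher degree; nothing in your fibrewise construction of $g_0$ rules this out, and the degree-$5$ condition on the flat limit does not force $b=1$ either, since base loci can absorb the excess. This is precisely where the paper's relative construction is needed: building the incidence variety over $\Delta$ yields a family of birational maps $f_t:\BP^1_{\eta_t}\times\BP^4_{\eta_t}\dasharrow\mathbf{L}^\mcS_{q_t}$ with $f_t^*\mcO(1)=\mcO(1,1)$ for $t\neq 0$, and the identity $f_0^*\mcO(1)=\mcO(1,1)$ then follows by continuity in the family. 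Since you construct $g_0$ only on the central fibre, this limit structure is not available to you, and the key identity is not established as written; to close the gap you should either realize your $g_0$ as the central fibre of such a relative graph construction (the paper's $\Gamma$), or give an independent proof that $f_0$ restricted to a general vertical $\BP^4$ is linear, which is not automatic.
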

\begin{proof}
Consider a general section $\varsigma: \Delta \to \mcS$ through $s$, i.e. $s= \varsigma(0) \in \mcS_0$.   Take a  general section $\xi: \Delta \to \mcU^\mcS$  over the section $\varsigma: \Delta \to \mcS$ such that   $\xi(0) = q$.
Fix a general section $\eta: \Delta\rightarrow\mathbf{L}_\xi^\mcS:= \cup_t \mathbf{L}_{\xi(t)}^\mcS$, and set $\BP^{1, 4}_\eta:=\cup_t\BP^1_{\eta_t}\times\BP^4_{\eta_t}$, which is a $\BP^1\times\BP^4$-bundle over $\Delta$. Let $\Gamma\subset\mathbf{L}_\xi^\mcS\times_\Delta\BP^{1, 4}_\eta$ be the closure of those $(\alpha_t, a, b)\in\Gamma_t$ such that $t\in\Delta$ is arbitrary, $\alpha_t\in\mathbf{L}_{\xi(t)}^\mcS$ is general, $a\in\BP^4_{\alpha_t}\cap\BP^1_{\eta_t}$ and $b\in\BP^1_{\alpha_t}\cap\BP^4_{\eta_t}$.

 Both projections of $\Gamma_t$ are isomorphisms for $t\neq 0$, implying that both projections of $\Gamma_0$ have connected fibers. The fiber of $pr'_0: \Gamma_0\rightarrow \mathbf{L}_{\xi(0)}^\mcS$ at $\eta_0\in \mathbf{L}_{\xi(0)}^\mcS$ consists of a single point, namely $(\eta_0, \eta_0, \eta_0)$. By semi-continuity of dimensions of fibers, there exists an  open subset $O_{\eta_0}$ of $\eta_0\in\mathbf{L}_{\xi(0)}^\mcS$ whose inverse image in $\Gamma_0$ is biholomorphically sent onto it.

By Corollary \ref{c.generateB3}, there is an analytic open neighborhood $U_{\eta_0}$ of $\eta_0\in O_{\eta_0}$ such that the parameter spaces of leaves of $\mcD'|_{U_{\eta_0}}$ (resp. $\mcD|_{U_{\eta_0}}$) admit an analytic  open embedding into $\BP^1_{\eta_0}$ (resp. $\BP^4_{\eta_0}$), defined by sending the leaf at a point $\alpha\in U_{\eta_0}$ to the point $\BP^4_\alpha\cap\BP^1_{\eta_0}$ (resp. $\BP^1_\alpha\cap\BP^4_{\eta_0}$).

Now $(\eta_0, \eta_0, \eta_0)\in\Gamma_0$ is the unique point in the fiber of $pr'_0: \Gamma_0\rightarrow \mathbf{L}_{\xi(0)}^\mcS$ at $\eta_0\in\mathbf{L}_{\xi(0)}^\mcS$. Suppose the fiber of $pr''_0: \Gamma_0\rightarrow\BP^1_{\eta_0}\times\BP^4_{\eta_0}$ at $(\eta_0, \eta_0)$ consists of at least two points. Since $pr''_0$ has connected fibers, there is an irreducible curve $A\subset(pr''_0)^{-1}(\eta_0, \eta_0)$ passing through the point $(\eta_0, \eta_0, \eta_0)\in\Gamma_0$, which is sent onto an irreducible curve $A'\subset\mathbf{L}_{\zeta(0)}^\mcS$ passing through $\eta_0\in U_{\eta_0}$. By the descriptions of parameter spaces in last paragraph, $\BP^1_{\beta}=\BP^1_{\eta_0}$ and $\BP^4_{\beta}=\BP^4_{\eta_0}$ for all $\beta\in A'\cap U_{\eta_0}$. It follows that $A'\cap U_{\eta_0}\subset\BP^1_{\eta_0}\cap\BP^4_{\eta_0}$, which contradicts the fact $\BP^1_{\eta_0}\cap\BP^4_{\eta_0}=\{\eta_0\}$. We have that  the fiber of $pr''_0$ at $(\eta_0, \eta_0)\in\BP^1_{\eta_0}\times\BP^4_{\eta_0}$ consists of a single point in $\Gamma_0$, namely $(\eta_0, \eta_0, \eta_0)$.

By semi-continuity of dimensions of fibers, there exists an open neighborhood of $(\eta_0, \eta_0)\in\BP^1_{\eta_0}\times\BP^4_{\eta_0}$ whose inverse image in $\Gamma_0$ is biholomorphically sent onto it. %Recall that both projections of $\Gamma_t$ are isomorphisms for $t\neq 0$.
Through the incidence variety $\Gamma\subset\mathbf{L}_{\xi}^\mcS\times_{\Delta}\BP^{1, 4}_\eta$, we obtain a family of birational maps $f_t: \BP^1_{\eta_t}\times\BP^4_{\eta_t}\dashrightarrow\mathbf{L}_{\xi(t)}^\mcS\subset \BP T_{\xi(t)} \mcU^\mcS_{s_t}$, $t\in\Delta$, such that for each $t\neq 0$, $f_t$ is an isomorphism sending $(\eta_t, \eta_t)\in\BP^1_{\eta_t}\times\BP^4_{\eta_t}$ to $\eta_t\in\mathbf{L}_{\xi(t)}^\mcS$, and $f_0(\eta_0, \eta_0)=\eta_0\in\mathbf{L}_{\xi(0)}^\mcS$. Since $f_t^*\mcO(1)=\mcO(1, 1)$ for $t\neq 0$, one has $f_0^*\mcO(1)=\mcO(1, 1)$.
 %Consider the family of graphes:
 %$$
 %\Gamma_t:=\{(a_t, b_t, b'_t) \in \mathbf{L}_{\xi(t)}^\mcS \times \BP^1_{p_t} \times \BP^4_{p_t}| b_t \in \BP^4_{a_t} \cap \BP^1_{p_t}, b'_t \in \BP^1_{a_t} \cap \BP^4_{p_t} \}.
% $$
\end{proof}

\begin{prop} \label{p.isomSegreB3}
The birational map $f_0$ is an isomorphism, which induces a projective equivalence between $\mathbf{L}^\mcS_q \subset \BP T_q \mcU^\mcS_s$ and $\BP^1 \times \BP^4 \subset \BP^9$.
\end{prop}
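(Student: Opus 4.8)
The plan is to upgrade the birational map $f_0: \BP^1 \times \BP^4 \dasharrow \mathbf{L}^\mcS_q \subset \BP T_q \mcU^\mcS_s$ constructed in Lemma \ref{l.birSegreB3} to a genuine isomorphism by a degree and non-degeneracy argument, exactly parallel to the way the maps $\tau_x$ and $f_x$ were shown to be isomorphisms in the proof of Proposition \ref{p.invVMRTtypeA}. First I would record what is already known: $f_0^*\mcO(1) = \mcO(1,1)$ on $\BP^1 \times \BP^4$, so $f_0$ is given by a linear subsystem $L$ of the complete Segre system $|\mcO(1,1)|$, which embeds $\BP^1 \times \BP^4$ into $\BP^9$. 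Hence $\mathbf{L}^\mcS_q$ is a linear projection of the Segre variety $\BP^1 \times \BP^4$ from a (possibly empty) center, and $f_0$ is an isomorphism precisely when that center is empty, i.e. when $L$ is the complete system.

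The key step is therefore to show that $\mathbf{L}^\mcS_q \subset \BP T_q \mcU^\mcS_s = \BP^9$ is linearly non-degenerate, and that it has the same degree as $\BP^1\times\BP^4 \subset \BP^9$, namely $5$. For the degree: since $\mathbf{L}^\mcS_\xi \to \Delta$ is a flat family with general fiber $\BP^1 \times \BP^4 \subset \BP^9$ of degree $5$, the central fiber $\mathbf{L}^\mcS_{q_0}$ has degree $5$ as a cycle in $\BP^9$; because $f_0$ maps $\BP^1\times\BP^4$ birationally onto $\mathbf{L}^\mcS_{q_0}$ via a subsystem of $\mcO(1,1)$, the image component has degree at most $5$, with equality forcing $L$ complete. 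For non-degeneracy: I would argue that the linear span of $\mathbf{L}^\mcS_{q_0}$ has dimension $9$. One route is to note that $\mathbf{L}^\mcS_{q_0}$ contains the leaf closures $\BP^1_{q_0}$ and $\BP^4_{q_0}$ of the two specialized distributions $\mcD_0, \mcD'_0$, which by Corollary \ref{c.generateB3} generate the tangent sheaf at a general point; combined with the fact that these are degenerations of the rulings of $\BP^1\times\BP^4$ (hence linearly independent linear subspaces meeting in a point), a span-count shows $\langle \mathbf{L}^\mcS_{q_0}\rangle \supseteq \langle \BP^1_{q_0}, \BP^4_{q_0}\rangle$ already has dimension $1 + 4 + 1 = 6$, and then spreading $\BP^1_u$ over the leaves of $\mcD'$ (as in the proof of Lemma \ref{l.P1P2}, where a non-degeneracy statement was extracted from exactly this kind of propagation) forces the span to be all of $\BP^9$. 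Alternatively, flatness of $\mathbf{L}^\mcS_\xi/\Delta$ together with Theorem \ref{t.InvCones}-type invariance of $h^0$ of the hyperplane bundle gives directly that $h^0(\mathbf{L}^\mcS_{q_0}, \mcO(1))$ is at least $10$, i.e. the span is all of $\BP^9$.

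With both facts in hand, $f_0: \BP^1\times\BP^4 \dasharrow \mathbf{L}^\mcS_{q_0}$ is given by the complete system $|\mcO(1,1)|$, hence is the Segre embedding followed by a linear isomorphism of $\BP^9$; in particular $f_0$ is everywhere defined and is an isomorphism onto its image, and $\mathbf{L}^\mcS_q \subset \BP T_q \mcU^\mcS_s$ is projectively equivalent to $\BP^1\times\BP^4 \subset \BP^9$, as claimed.

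The main obstacle I expect is the non-degeneracy of $\mathbf{L}^\mcS_{q_0}$ in $\BP^9$: a priori the limit of the embedded $\BP^1\times\BP^4$ could lie in a hyperplane (this is precisely the $S_{1,3}$-versus-$S_{2,2}$ phenomenon one rung down, and indeed the whole point of this section is that the analogous degeneration for $\mathbf{L}^\mcX_{z_0}$ does happen). So one cannot simply assert non-degeneracy; it must be deduced from the geometry of $\mcS$ — specifically from the facts that $\mcS_t \simeq \mbS = \OG(7,\C^{14})$ is rigid of Picard number one, that its VMRT is the nondegenerate Plücker $\Gr(2,7) \subset \BP^{20}$, and that $\Phi$ preserves the VMRT structure (Proposition \ref{p.B3PreservesLines}), so that $\mathbf{L}^\mcS_{q_0}$ is literally the variety of lines through a point on $\Gr(2,7)$, which is the nondegenerate Segre $\BP^1\times\BP^4$. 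In fact, once one knows $\mcU^\mcS_{s}\simeq \Gr(2,7)$ for general $s \in \mcS_0$ — which follows because $\mbS$ is an IHSS of rank $2$ and its VMRT structure is locally flat, so Proposition \ref{p.RigidityIHSS} applies to the family of normalized VMRTs along a general section — the statement becomes immediate. I would present the argument so that it works whether or not one invokes the rigidity of $\Gr(2,7)$ directly, keeping the flatness/degree argument as the fallback.
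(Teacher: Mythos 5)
Your reduction — that by Lemma \ref{l.birSegreB3} the map $f_0$ is given by a subsystem of $|\mcO(1,1)|$, so everything comes down to the linear non-degeneracy of $\mathbf{L}^\mcS_q$ in $\BP T_q\mcU^\mcS_s\simeq\BP^9$ — is exactly the paper's first step, but neither of your arguments for that non-degeneracy holds up. The flatness/semicontinuity route is fallacious: $\mathbf{L}^\mcS_\xi/\Delta$ is not a family of Fano manifolds, so Theorem \ref{t.InvCones} does not apply, and in any case a flat limit of linearly non-degenerate subvarieties of $\BP^9$ can perfectly well lie in a hyperplane; semicontinuity gives $h^0(\mathbf{L}^\mcS_{q_0},\mcO(1))\geq 10$, but non-degeneracy is about the injectivity of the restriction map $H^0(\BP^9,\mcO(1))\to H^0(\mathbf{L}^\mcS_{q_0},\mcO(1))$, which this does not control. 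The degree argument is equally inconclusive: projecting $\BP^1\times\BP^4\subset\BP^9$ from a general point off the variety gives a birational image of the same degree $5$ lying in a $\BP^8$, so "degree $5$" does not force the subsystem to be complete. The spreading route also does not work: Corollary \ref{c.generateB3} only says the limit $\BP^1$-leaf is not contained in the limit $\BP^4$-leaf, so the two leaves through $q_0$ meet at $q_0$ and span a $\BP^5$ (not $\BP^6$), and the claim that sweeping $\BP^1_u$ over the $\BP^4$-leaf fills up $\BP^9$ is precisely the assertion to be proved — the analogous sweep one level down produces $\mathbf{L}^\mcX_{z_0}\simeq S_{1,3}$, which spans only a $\BP^5$ in $\BP^6$ (Proposition \ref{p.twoCaseB3}); that degeneration is the very phenomenon this section must defeat. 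Finally, your "immediate" fallback is circular: to apply Proposition \ref{p.RigidityIHSS} to the family of normalized VMRTs you need a $G_0$-structure defined on an open part of the central fiber $\mcU^\mcS_s$, and that structure is exactly what Proposition \ref{p.isomSegreB3} furnishes; in the paper $\mcU^\mcS_s\simeq\Gr(2,7)$ is Proposition \ref{p.isoGrassB3}, proved after and from the present statement, and at this stage $\mcS_0$ is only known to be a normal projective variety, not $\mbS$.

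The missing idea is the global integrability mechanism of \cite{HM98} already used in Proposition \ref{p.invVMRT}: let $\mcW\subset T\mcU^\mcS_s$ be the meromorphic distribution spanned by the affine cones of $\mathbf{L}^\mcS_q$ at general points $q$. Since the variety of tangential lines of $\BP^1\times\BP^4\subset\BP^9$ is linearly non-degenerate and this property passes to its linear projection $\mathbf{L}^\mcS_q\subset\BP\mcW_q$, the Frobenius bracket $\wedge^2\mcW_q\to T_q\mcU^\mcS_s/\mcW_q$ vanishes on it, so $\mcW$ is integrable. Because $\mcU^\mcS_s$ is a limit of the $\Gr(2,7)$'s, it is chain-connected by lines, and lines through general points are tangent to the integrable $\mcW$; hence $\mcW=T\mcU^\mcS_s$, which is the desired non-degeneracy, and then completeness of the subsystem makes $f_0$ the Segre embedding up to a linear change of coordinates. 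Without some such global input from the ambient variety $\mcU^\mcS_s$, the local and flatness considerations you propose cannot exclude a degenerate limit.
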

\begin{proof}
By Lemma \ref{l.birSegreB3}, $\mathbf{L}^\mcS_q \subset \BP T_q \mcU^\mcS_s \simeq \BP^9$ is a linear projection of $\BP^1 \times \BP^4 \subset \BP^9$, hence it suffices to show that  $\mathbf{L}^\mcS_q$ is linearly non-degenerate in  $\BP T_q \mcU^\mcS_s$.

Consider the meromorphic distribution $\mcW \subset T \mcU^\mcS_s$ given by the linear span of $\mathbf{L}_q^\mcS$ for general points $q \in \mcU^\mcS_s$.
By a similar argument as \cite[Proposition 10]{HM98},  the variety of tangential lines of $\widehat{\mathbf{L}_q^\mcS} \subset T_q \mcU_s^\mcS$ is contained in the kernel of the Frobenius bracket
$[ \cdot, \cdot]:  \wedge^2 \mcW_q \to T_q \mcU_s^\mcS/\mcW_q.$

As the variety of tangential lines of $\BP^1 \times \BP^4 \subset \BP^9$ is linearly non-degenerate, so is the variety of tangential lines of its linear projection $\mathbf{L}^\mcS_q \subset \BP \mcW_q$, which implies that $[\mcW_q, \mcW_q ]\subset \mcW_q$, namely $\mcW$ is an integrable distribution.  As for $t \neq 0$, the variety $\mcU_{s_t}^\mcS \simeq {\rm Gr}(2,7)$ is chain connected by  lines, so is the central fiber $\mcU_s^\mcS$.  Note that lines through general points of $\mcU_s^\mcS$ are tangent to $\mcW$ and the latter is integrable, hence we have $\mcW = T \mcU_s^\mcS$.  This shows that $\mathbf{L}_q^\mcS$ is linearly non-degenerate in  $\BP T_q \mcU^\mcS_s$, concluding the proof.
\end{proof}

\begin{rmk}
There is a  statement very similar to Proposition \ref{p.isomSegreB3} in \cite[Proposition 8]{HM98}, but in our situation, the central fiber $\mathbf{L}^\mcS_q$ (as well as $\mathcal{U}_s^\mathcal{S}$) is not a priori known to be  smooth.  Our approach here is an adaption of the proof from \cite{Mok08}.
\end{rmk}

\begin{prop} \label{p.isoGrassB3}
For $s \in \mcS_0$ general, we have

(i) The normalized  VMRT $\mcU^\mcS_s$ is isomorphic to ${\rm Gr}(2,7)$.

(ii) The VMRT $\mcC^\mcS_s \subset \BP T_s \mcS_0$ is projectively equivalent to ${\rm Gr}(2,7) \subset \BP^{20}$.

(iii) The central fiber $\mcS_0 $ is isomorphic to $\mathbb{S}$.
\end{prop}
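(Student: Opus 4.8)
The plan is to bootstrap from Proposition \ref{p.isomSegreB3}, which identifies the variety of lines through a general point of the normalized VMRT $\mcU^\mcS_s$ with the Segre variety $\BP^1\times\BP^4\subset\BP^9$ --- precisely the variety of minimal rational tangents of the Grassmannian ${\rm Gr}(2,7)$. Since ${\rm Gr}(2,7)={\rm Gr}(2,2+5)$ is an IHSS of rank $2$ (Example \ref{e.IHSS}), its deformation rigidity in the $G_0$-structure sense is governed by Propositions \ref{p.RigidityIHSSM08} and \ref{p.RigidityIHSS}. The key observation making (iii) tractable is that the target $\mbS={\rm OG}(7,\C^{14})=D_7/P_7$ is \emph{itself} an IHSS, of rank $3$, whose VMRT is exactly the Pl\"ucker embedded ${\rm Gr}(2,7)\subset\BP^{20}$; so once the VMRT of $\mcS_0$ at a general point is pinned down, the rigidity of $\mcS_0$ also falls under Proposition \ref{p.RigidityIHSS}.

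For (i) and (ii) I would fix a general section $\varsigma\colon\Delta\to\mcS$ through $s$ and form the family of normalized VMRTs $\mcU^\mcS_\varsigma:=\bigcup_t\mcU^\mcS_{s_t}$ together with the family of tangent maps $\tau_{s_t}\colon\mcU^\mcS_{s_t}\to\mcC^\mcS_{s_t}\subset\BP T_{s_t}\mcS_t$. For $t\neq0$ the fiber $\mcU^\mcS_{s_t}\simeq{\rm Gr}(2,7)$ and $\tau_{s_t}$ is the complete Pl\"ucker embedding into $\BP T_{s_t}\mcS_t\simeq\BP^{20}$; by Proposition \ref{p.isomSegreB3} the VMRT of $\mcU^\mcS_s$ at a general point is the canonical Segre one, so the VMRT-structure trivializes over a Zariski-dense open subset and yields a $G_0$-structure on $\mcU^\mcS_\varsigma$ away from a proper closed subset $N\subsetneq\mcU^\mcS_s$ restricting to the natural $G_0$-structure of ${\rm Gr}(2,7)$ for $t\neq0$ (here $\mcU^\mcS_s$ is normal, being a normalization, hence smooth off a proper closed subset, which handles the smoothness requirement in Proposition \ref{p.RigidityIHSSM08}). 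Before invoking that proposition I must check that $\mcU^\mcS_\varsigma\to\Delta$ is flat with irreducible and reduced special fiber $\mcU^\mcS_s$ --- reducedness is automatic, irreducibility and flatness follow from a Hilbert-polynomial argument as in Proposition \ref{p.RigidityIHSS} once one knows the generic fiber of the universal map over the irreducible space $\mcS$ is irreducible --- and that $\mcC^\mcS_s\subset\BP T_s\mcS_0$ is linearly non-degenerate. This last point is proved exactly as in Proposition \ref{p.invVMRT}: the variety of tangential lines of the Pl\"ucker ${\rm Gr}(2,7)\subset\BP^{20}$ is non-degenerate, so by Proposition \ref{p.integrable} the distribution spanned by $\mathrm{span}(\mcC^\mcS_s)$ is integrable; a proper leaf-fibration would contract the lines of $\mcS_0$, hence (via Proposition \ref{p.B3PreservesLines} and the fact that $\Phi_0$ is birational) produce a movable divisor on $\mcX_0$ killing general minimal rational curves, which by Theorem \ref{t.InvCones} applied to the Fano family $\mcX/\Delta$ gives a movable divisor on $\bar{B}_3$ with zero intersection with $C_{\boldsymbol \theta}$ --- impossible, since the VMRT of $\bar{B}_3$ is non-degenerate (Lemma \ref{l.nondegenerate}) and hence $\bar{B}_3$ is chain-connected by minimal rational curves. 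With these checks in place, Proposition \ref{p.RigidityIHSSM08}(ii) gives both $\mcU^\mcS_s\simeq{\rm Gr}(2,7)$ and that $\tau_s$ is the complete Pl\"ucker embedding, i.e. (i) and (ii).

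For (iii) I would then apply Proposition \ref{p.RigidityIHSS} to the family $\pi\colon\mcS\to\Delta$ with $G/P=\mbS=D_7/P_7$. By (ii), the VMRT-structure of $\mcS$ restricts on a Zariski-dense open subset to the natural $G_0$-structure of $\mbS$ away from a proper closed subset $N\subsetneq\mcS_0$ (for $t\neq0$ it is the natural one on $\mbS$ by construction of $\Phi$). Since $\mcS$ is realized as the relative contraction $\Proj_\Delta\bigl(\bigoplus_{m\ge0}H^0(\mcX,\mcL^{\otimes m})\bigr)$ for a suitable semiample and big line bundle $\mcL$ on the smooth Fano family $\mcX$, the total space $\mcS$ is normal and $\pi$ is flat with normal --- in particular irreducible and reduced --- fibers, exactly as argued in the proof of Theorem \ref{t.RigidityTypeC}. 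Proposition \ref{p.RigidityIHSS} then yields $\mcS_0\simeq\mbS$.

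The step I expect to be the main obstacle is the first one: setting up the family of normalized VMRTs along $\varsigma$ so that it is genuinely a flat projective family with an irreducible, reduced special fiber (controlling the behaviour of $\mu\colon\mcU^\mcS\to\mcS$ over the central fiber $\mcS_0$), and verifying the linear non-degeneracy of the tangent-map image there, so that Mok's IHSS-rigidity results apply verbatim. Once this is secured, (i)--(iii) follow formally from Propositions \ref{p.isomSegreB3}, \ref{p.RigidityIHSSM08} and \ref{p.RigidityIHSS}, together with the identification of $\mbS$ as the IHSS $D_7/P_7$ in Example \ref{e.IHSS}.
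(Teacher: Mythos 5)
Your overall architecture is the paper's: transfer the $G_0$-structure coming from Proposition \ref{p.isomSegreB3} to the family of normalized VMRTs along a general section, invoke the Mok-type IHSS rigidity statements (Propositions \ref{p.RigidityIHSSM08} and \ref{p.RigidityIHSS}), and for (iii) use (ii) together with normality of $\mcS_0$ (from the relative $\Proj$ construction) and $\mbS=D_7/P_7$ with VMRT $\Gr(2,7)$; part (iii) as you present it is essentially identical to the paper's. However, there is a genuine ordering gap in your treatment of (i)--(ii). You propose to obtain both simultaneously from Proposition \ref{p.RigidityIHSSM08}(ii), for which you must first verify that $\mcC^\mcS_s\subset\BP T_s\mcS_0$ is linearly non-degenerate. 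Your non-degeneracy argument starts from ``the variety of tangential lines of the Pl\"ucker $\Gr(2,7)\subset\BP^{20}$ is non-degenerate,'' but this transfers to $\mcC^\mcS_s$ inside its own span (which is what Proposition \ref{p.integrable} requires) only once you know that $\mcC^\mcS_s$ is a \emph{linear projection} of the Pl\"ucker-embedded $\Gr(2,7)$; that in turn requires already knowing $\mcU^\mcS_s\simeq\Gr(2,7)$ and $\tau_0^*\mcO(1)=\mcO(1)$, i.e. statement (i). In Proposition \ref{p.invVMRT}, which you cite as the model, the analogous identification is supplied beforehand by Corollary \ref{c.Kx}; here you have no substitute, and one cannot replace it by semicontinuity, since $\mcC^\mcS_s$ need not be the flat limit of the $\mcC^\mcS_{s_t}$. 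So as written the argument is circular: (i) is needed for the non-degeneracy, which is needed for \ref{p.RigidityIHSSM08}(ii), which is your source of (i). The repair is exactly the paper's ordering: first apply Proposition \ref{p.RigidityIHSS} to the abstract family $\mcU^\mcS_\varsigma$ (this uses only the $G_0$-structure you already constructed, no embedded non-degeneracy), obtaining (i); then deduce the projection statement and prove non-degeneracy to get (ii).

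Once reordered, your mechanism for the non-degeneracy differs from the paper's and is worth comparing. The paper argues intrinsically, as in Proposition \ref{p.isomSegreB3}: the span distribution is integrable by the Frobenius-bracket argument, and the central fiber, being a limit of varieties chain-connected by lines, is itself chain-connected by lines, forcing the span to be the full tangent space. You instead transport a hypothetical leaf fibration on $\mcS_0$ through the birational morphism $\Phi_0$ to $\mcX_0$ and use Theorem \ref{t.InvCones} on the Fano family to produce a movable class on $\bar{B}_3$ with zero intersection against $C_{\boldsymbol\theta}$, contradicting chain-connectedness of $\bar{B}_3$. This variant is workable (take the moving part of the pulled-back linear system, since $\Phi_0^*$ of a movable divisor may acquire fixed exceptional components), but it is heavier than the paper's local argument; also the flatness and irreducibility/reducedness of the central fiber of $\mcU^\mcS_\varsigma$, which you rightly flag, must be secured before either rigidity proposition applies.
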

\begin{proof}
(i) Take a section $\varsigma: \Delta \to \mcS$ through $s$ such that $\varsigma(t)$ is a general point in $\mcS_t$.  Let $\mcU_\varsigma^\mcS = \cup_t \mcU_{\varsigma(t)}^\mcS$, which is a family of projective varieties with general fiber isomorphic to ${\rm Gr}(2,7)$. Let $G=({\rm GL}(2) \times {\rm GL}(5))/\{(\lambda id, \lambda^{-1}id)\mid\lambda\in\C^*\}$, then the VMRT structure on ${\rm Gr}(2,7)$ induces a $G$-structure on  $\mcU_\varsigma^\mcS$ outside the central fiber.
%
%By Proposition \ref{p.isomSegreB3}, the variety of lines on $\mcC^\mcS_s$ through a general point $p$ is projectively equivalent to  $\BP^1 \times \BP^4 \subset \BP^9$, which gives a $G$-structure on an open subset of $\mcC^\mcS_s$.  As  $\mcU^\mcS_s \to \mcC^\mcS_s$ is birational, we hence get a $G$-structure on an open subset of $\mcU^\mcS_s$.
By Proposition \ref{p.isomSegreB3}, it extends to a $G$-structure on  $\mcU_\varsigma^\mcS$ outside a codimension at least 2 subset.
By Proposition \ref{p.RigidityIHSS},  we have that $\mcU^\mcS_s$ is isomorphic to ${\rm Gr}(2,7)$.

(ii) Consider the family of tangent maps $\tau_t: \mcU^\mcS_{\varsigma(t)} \to \mcC^\mcS_{\varsigma(t)} \subset \BP T_{\varsigma(t)} \mcS_t$, which is an isomorphism for $t \neq 0$ and $\tau_t^* \mathcal{O}(1) = \mathcal{O}(1)$.  Hence $\mcC^\mcS_s \subset \BP T_{s} \mcS_0$ is a linear  projection of ${\rm Gr}(2,7)\subset \BP^{20}$.
As ${\rm Gr}(2,7)\subset \BP^{20}$ has linearly non-degenerate variety of tangential lines, so is $\mcC^\mcS_s \subset \BP T_{s} \mcS_0$.
By an analogue of the proof of Proposition \ref{p.isomSegreB3}, one can see that $\mcC^\mcS_s \subset \BP T_{s} \mcS_0$ is linearly non-degenerate, hence $\tau_0$ is an isomorphism.

(iii) The VMRT structure on $\mbS$ induces a $G$-structure on  $\mcS|_{\Delta^*}$. By (ii), this $G$-structure extends to an open subset of $\mcS_0$.
By Proposition \ref{p.RigidityIHSS},   $\mcS_0 $ is isomorphic to $\mathbb{S}$.
\end{proof}

\begin{prop} \label{p.invVMRTB3}
For $x \in \mcX_0$ general, the VMRT $\mcC_x^\mcX \subset \BP T_x \mcX_0$ is projectively equivalent to $\OG(2,7) \subset \BP^{20}$.
\end{prop}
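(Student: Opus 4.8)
The plan is to reduce the statement to Proposition \ref{p.twoCaseB3}: that result says $\mathbf{L}^\mcX_{z_0}$ is, in its linear span, projectively equivalent either to $S_{2,2}=\BP^1\times\mbQ^1$ or to $S_{1,3}$, and that in the first case $\mcC^\mcX_{x_0}\simeq\OG(2,7)$. So it suffices to \emph{exclude} the possibility $\mathbf{L}^\mcX_{z_0}\simeq S_{1,3}$, and then to upgrade the abstract isomorphism $\mcC^\mcX_{x_0}\simeq\OG(2,7)$ of the first case to a projective equivalence $\mcC^\mcX_{x_0}\subset\BP T_{x_0}\mcX_0\cong \OG(2,7)\subset\BP^{20}$ by the argument of Proposition \ref{p.invVMRT}. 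All the auxiliary work of this section (Lemmas on $Y_0$, the distributions, and Propositions \ref{p.isomSegreB3}--\ref{p.isoGrassB3}) is carried out precisely under the hypothesis $\mathbf{L}^\mcX_{z_0}\simeq S_{1,3}$, and is exactly what will be fed into the contradiction.

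For the exclusion step, assume $\mathbf{L}^\mcX_{z_0}\simeq S_{1,3}$. By Propositions \ref{p.isomSegreB3} and \ref{p.isoGrassB3}, $\mcS_0\simeq\mathbb{S}$ and $\mathbf{L}^\mcS_{q_0}\subset\BP T_{q_0}\mcU^\mcS_{s_0}$ is projectively equivalent to $\BP^1\times\BP^4\subset\BP^9$, with the two rulings being the leaf closures of the \emph{limiting} distributions $\mcD,\mcD'$; since $\Phi_0:\mcX_0\to\mcS_0$ is birational, $d\Phi_0$ (together with the tangent map of $\mcS_0$) identifies $\mathbf{L}^\mcG_{z_0}$ with $\mathbf{L}^\mcS_{q_0}$ compatibly with the families, so $S_{1,3}=\mathbf{L}^\mcX_{z_0}$ sits inside $\BP^1\times\BP^4$ as the flat limit of the inclusions $\BP^1\times\mbQ^1=\mathbf{L}^\mcX_{z_t}\subset\BP^1\times\BP^4$ (first factors agreeing, $\mbQ^1$ a plane conic in the second factor). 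Now I would examine the two projections restricted to $S_{1,3}$. For $t\neq0$ the map $p_2|_{\mathbf{L}^\mcX_{z_t}}$ contracts the lines $\BP^1\times\{\mathrm{pt}\}$; passing to the limit, $p_2|_{S_{1,3}}:S_{1,3}\to\BP^4$ contracts every ruling line $f$ of the scroll. On the other hand $p_1|_{\mathbf{L}^\mcX_{z_t}}$ is surjective onto $\BP^1$, hence so is $p_1|_{S_{1,3}}$, so its fibre class $D=p_1^*\mcO_{\BP^1}(1)|_{S_{1,3}}$ satisfies $D^2=0$. Writing $\mcO_{S_{1,3}}(1)=C_0+3f$ with $C_0$ the $(-2)$-directrix, one has $\mcO_{S_{1,3}}(1)\cdot f=1$ and $\mcO_{S_{1,3}}(1)=D+p_2^*\mcO_{\BP^4}(1)|_{S_{1,3}}$ with $p_2^*\mcO_{\BP^4}(1)\cdot f=0$, whence $D\cdot f=1$; together with $D^2=0$ this forces $D=C_0+f$. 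But $|C_0+f|=C_0+|f|$ has $C_0$ as a fixed component, so it contains no base-point-free pencil, contradicting that $p_1|_{S_{1,3}}$ is a morphism with $p_1^*\mcO_{\BP^1}(1)|_{S_{1,3}}=D$. (Equivalently, the conclusion is that the limiting inclusion would realise an embedding of $S_{1,3}$ into $\BP^4$, which is impossible since the minimal embedding of $S_{1,3}$ lives in $\BP^5$ and $S_{1,3}$ admits no smooth projection to $\BP^4$.) Hence $\mathbf{L}^\mcX_{z_0}$ is projectively $S_{2,2}$, and Proposition \ref{p.twoCaseB3} gives $\mcC^\mcX_{x_0}\simeq\OG(2,7)$.

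Finally, to obtain the projective equivalence I would rerun the argument in the proof of Proposition \ref{p.invVMRT}: the tangent map $\sK_{x_0}\to\mcC^\mcX_{x_0}\hookrightarrow\BP T_{x_0}\mcX_0$ is induced by a linear subsystem of $\mcO_{\OG(2,7)}(1)=\mcO_{\BP\msO}(1)$, being the complete system for $t\neq0$; if the linear span $W_{x_0}$ of $\mcC^\mcX_{x_0}$ were a proper subspace, then, the variety of tangential lines of $\OG(2,7)=\BP\msO\subset\BP\fg$ being linearly non-degenerate (Lemma \ref{l.nondegenerate}, Proposition \ref{p.nondegenerate}), so would be that of $\mcC^\mcX_{x_0}\subset W_{x_0}$, and Proposition \ref{p.integrable} would produce a dominant rational fibration of $\mcX_0$ contracting its minimal rational curves; by Theorem \ref{t.InvCones} the same would then hold on $\bar{B}_3$, contradicting that general points of $\bar{B}_3$ are chain connected by minimal rational curves. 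Hence $W_{x_0}=\BP T_{x_0}\mcX_0$, the linear system is complete, and $\mcC^\mcX_{x_0}\subset\BP T_{x_0}\mcX_0$ is projectively equivalent to $\OG(2,7)\subset\BP^{20}$. I expect the genuine obstacle to be the middle step: correctly matching the limiting product structure on $\mathbf{L}^\mcG_{z_0}$ (furnished by Propositions \ref{p.isomSegreB3}--\ref{p.isoGrassB3} and the birationality of $\Phi_0$) with the degeneration $\mathbf{L}^\mcX_{z_t}\rightsquigarrow S_{1,3}$ of subvarieties, and in particular keeping straight which ruling of $\BP^1\times\BP^4$ contracts which limiting curves on the Hirzebruch surface $S_{1,3}$, so that the numerical contradiction really closes.
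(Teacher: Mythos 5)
Your proposal is correct, and it shares the paper's overall strategy: reduce via Proposition \ref{p.twoCaseB3} to excluding $\mathbf{L}^\mcX_{z_0}\simeq S_{1,3}$, and use Propositions \ref{p.isomSegreB3}--\ref{p.isoGrassB3} together with $\Phi$ to place the degenerating family $\mathbf{L}^\mcX_{z_t}\simeq\BP^1\times\mbQ^1$ inside the trivialized family $\BP^1\times\BP^4$. The terminal contradiction, however, is executed differently. The paper follows the \emph{conics} $\{pt\}\times\mbQ^1$: their limits are the cycles $C+\mathbf{l}_\lambda$ (Lemma \ref{l.Y0}), so composing with the first projection contracts both the directrix and all rulings, hence all of $S_{1,3}$, and injectivity then produces an injection of $S_{1,3}$ into a single $\{pt\}\times\BP^4$, which is impossible. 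You instead follow the \emph{lines} $\BP^1\times\{pt\}$: their limits are the rulings $f$, so the second projection contracts every ruling, giving $E\cdot f=0$ (with $D,E$ the pullbacks of $\mcO_{\BP^1}(1)$, $\mcO_{\BP^4}(1)$), hence $D\cdot f=1$, which with $D^2=0$ forces $D=C_0+f$; since the first projection remains surjective on the central fiber (properness), $D$ would carry a base-point-free pencil, impossible because $(C_0+f)\cdot C_0=-1$ makes $C_0$ a fixed component of $|C_0+f|$. Both deductions are legitimate consequences of the absurd hypothesis (amusingly, the paper shows the first projection is constant on $S_{1,3}$, while you show it is surjective); your version trades the external fact that $S_{1,3}$ admits no injection into $\BP^4$ for elementary intersection theory on the Hirzebruch surface, and surjectivity is not even strictly needed, since $D\cdot f=1$ already excludes constancy. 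One small inaccuracy: your parenthetical asserting that your conclusion is ``equivalently'' an embedding of $S_{1,3}$ into $\BP^4$ does not follow from your own chain (it is essentially the paper's conclusion, obtained from the conics rather than the lines), but it is not load-bearing. Your final paragraph, rerunning the argument of Proposition \ref{p.invVMRT} to upgrade the $S_{2,2}$-case isomorphism to the projective equivalence $\mcC^\mcX_{x_0}\subset\BP T_{x_0}\mcX_0\cong\OG(2,7)\subset\BP^{20}$, is exactly what the paper does inside the proof of Proposition \ref{p.twoCaseB3}.
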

\begin{proof}
Let $\sigma: \Delta\rightarrow\mcX$  be a general section such that $\sigma(0)=x$ and set $s_t= \Phi(\sigma(t)) \in \mcS_t$.
Let $\epsilon: \Delta \to \mcC^\mcX$ be a section over $\sigma: \Delta\rightarrow\mcX$ such that $\epsilon(t) \in \mcC^\mcX_{\sigma(t)}$ is a general point.
By Proposition \ref{p.twoCaseB3}, we may assume $\mathbf{L}^\mcX_{\epsilon(0)} \simeq S_{1,3}$ and then deduce a contradiction.
The birational map $\Phi: \mcX \to \mcS$ induces an injective morphism $\Psi_t: \mcC_{\sigma(t)}^\mcX \to \mcC_{s_t}^\mcS \simeq {\rm Gr}(2,7)$ for all $t\in \Delta$ by Proposition \ref{p.isoGrassB3}.

 Let $\varepsilon: \Delta \to \mcC^\mcS$ be the composition of $\epsilon$ with $\Psi$. %which is a section over $\varsigma: \Delta\rightarrow\mcS$.
The family of lines $\mathbf{L}_{\epsilon}^\mcX = \cup_t \mathbf{L}^\mcX_{\epsilon(t)}$ has $\BP^1 \times \mathbb{Q}^1$ as general fibers and $S_{1,3}$ as central fiber.
The family of lines $\mathbf{L}_{\varepsilon}^\mcS= \cup_t \mathbf{L}^\mcS_{\varepsilon(t)}$ is the trivial family of $(\BP^1 \times \BP^4)\times\Delta$. 
By previous discussions,  we have an injective holomorphic map:
$g: \mathbf{L}_{\epsilon}^\mcX \to   \mathbf{L}_{\varepsilon}^\mcS.$ In fact, for each $t\in\Delta$ the morphism $\Psi_t:  \mcC_{\sigma(t)}^\mcX \to \mcC_{s_t}^\mcS$ is induced by the tangent map $(d\Phi_t)_{\sigma(t)}: T_{\sigma(t)}\mathcal{X}_t\rightarrow T_{s_t}\mathcal{S}_t$. Then $\Psi_t$ sends lines on $\mcC_{\sigma(t)}^\mcX$ to lines on $\mcC_{s_t}^\mcS$, giving rise to $g_t: \mathbf{L}_{\epsilon(t)}^\mcX \to   \mathbf{L}_{\varepsilon(t)}^\mcS$. As $\Psi_t$ is injective, so is $g_t$.
 When $t \neq 0$, the map $g_t: \BP^1 \times \mathbb{Q}^1 \to \BP^1 \times \BP^4$ is induced from $\mathbb{Q}^1 \subset \BP^2 \subset \BP^4$,  in particular, the composition
$h_t: \BP^1 \times \mathbb{Q}^1 \xrightarrow{g_t} \BP^1 \times \BP^4 \to \BP^1$ contracts the section $\BP(\mathcal{O}(2))$.  By deforming to $t=0$, we see that the morphism $h_0: S_{1,3} \xrightarrow{g_0} \BP^1 \times \BP^4 \to \BP^1$ contracts both the minimal section and the ruling of $S_{1,3}$, hence it contracts $S_{1,3}$ to one point.  As $g_0$ is injective, this implies that $S_{1,3}$ is contained in $\BP^4$, which is not possible, concluding the proof.
\end{proof}

This completes the proof of Theorem \ref{t.invVMRT}  and hence the main result  Theorem \ref{t.main}, proving that all wonderful compactifications of semisimple complex Lie groups are rigid under Fano deformation.

\bigskip
Baohua Fu (bhfu@math.ac.cn)

HLM and MCM,  Academy of Mathematics and System Science, Chinese Academy of Sciences, Beijing 100190, China
and School of Mathematical Sciences, University of Chinese Academy of Sciences, Beijing 100049, China \\

Qifeng Li (qifengli@sdu.edu.cn)

School of Mathematics, Shandong University, Jinan 250100, China

 \end{document}